\def\C{{\mathbf C}}
\def\Z{{\mathbf Z}}
\def\Q{{\mathbf Q}}
\newtheorem{theorem}{Theorem}[subsection]
\newtheorem{lemma}[theorem]{Lemma}
\newtheorem{proposition}[theorem]{Proposition}
\newtheorem{corollary}[theorem]{Corollary}
\newtheorem{claim}[theorem]{Claim}
\theoremstyle{definition}
\newtheorem{definition}[theorem]{Definition}
\newtheorem{example}[theorem]{Example}
\theoremstyle{remark}
\newtheorem{remark}[theorem]{Remark}
\newcommand{\mm}[4]{\left(\begin{smallmatrix} #1 & #2\\ #3 & #4\end{smallmatrix}\right)}
\DeclareMathOperator{\tr}{tr}
\DeclareMathOperator{\SO}{SO}
\DeclareMathOperator{\GSp}{GSp}
\DeclareMathOperator{\GU}{GU}
\DeclareMathOperator{\SL}{SL}
\DeclareMathOperator{\GL}{GL}
\DeclareMathOperator{\diag}{diag}
\begin{document}
\title{Lifting laws and arithmetic invariant theory}
\author{Aaron Pollack}
\address{Department of Mathematics\\ Institute for Advanced Study\\ Princeton, NJ USA}
\email{aaronjp@math.ias.edu}
\thanks{This work partially supported by the NSF through grant DMS-1401858 and the Schmidt Fund at the IAS}

\begin{abstract} In this paper we discuss \emph{lifting laws} which, roughly, are ways of ``lifting" elements of the open orbit of one prehomogeneous vector space to elements of the minimal nonzero orbit of another prehomogeneous vector space.  We prove a handful of these lifting laws, and show how they can be used to help solve certain problems in arithmetic invariant theory.  Of the results contained in this article are twisted versions of certain parametrization theorems of Bhargava.\end{abstract}

\maketitle


\setcounter{tocdepth}{1}
\tableofcontents

\section{Introduction}\label{intro} Suppose $F$ is a field, $G$ a reductive group over $F$, and $V$ an $F$-linear finite dimensional representation of $G$.  The study of the orbits of $G(F)$ on $V(F)$ falls under the heading of \emph{invariant theory}.  When $F = \C$ is the field of complex numbers, the study of such orbits is geometric invariant theory.  When $F = \Q$ is the field of rational numbers, following \cite{bg, bgw}, the consideration of these orbits is called arithmetic invariant theory.  One can also replace the field $F$ by other rings, such as the integers $\Z$, and again consider the orbits of $G(\Z)$ on $V(\Z)$ (if $G$ and $V$ have a structure over the integers.)  This too falls under the heading of arithmetic invariant theory. That the study of the orbits $G(\Z)$ on $V(\Z)$ is a rich and useful subject was made evident in the seminal papers \cite{bhargavaI,bhargavaII, bhargavaIII,bhargavaIV} of Bhargava.

\subsection{Twisted orbit parametrizations}  The main results of this paper are twisted versions of the results of \cite{bhargavaI} and \cite{bhargavaII}.  Recall that in \cite{bhargavaI}, Bhargava found five explicit orbit parametrizations $G(\Z)\backslash V(\Z)$.  The central example of \cite{bhargavaI} is $G(\Z) = \SL_2(\Z) \times \SL_2(\Z) \times \SL_2(\Z)$ acting on $V(\Z) = \Z^2 \otimes \Z^2 \otimes \Z^2$, the triple tensor product of the defining two-dimensional representation of $\SL_2$.  Roughly, it is proved in \emph{loc. cit.} that the orbits parametrize pairs $(S,(I_1,I_2,I_3))$ where $S$ is a quadratic ring, i.e., a commutative ring that is free of rank two as a $\Z$-module, and $I_j$ are fractional $S$-ideal classes whose product $I_1I_2 I_3 = 1$ in the class group of $S$.  

The four other examples considered in \cite{bhargavaI} are as follows:
\begin{enumerate}
\item $G(\Z) = \SL_2(\Z)$, $V(\Z) = Sym^3(\Z^2)$;
\item $G(\Z) = \SL_2(\Z) \times \SL_2(\Z)$, $V(\Z) = \Z^2 \otimes Sym^2(\Z^2)$;
\item $G(\Z) = \SL_2(\Z) \times \SL_4(\Z)$, $V(\Z) = \Z^2 \otimes \wedge^2(\Z^4)$;
\item $G(\Z) = \SL_6(\Z)$, $V(\Z) = \wedge^3(\Z^6)$. \end{enumerate}
Bhargava parametrized the orbits $G(\Z)$ on $V(\Z)$ in the above cases in terms of tuples $(S, M,\ldots)$ where $S$ is again a quadratic ring, $M$ is a certain type of finite $S$-module, the $\ldots$ represents that there is possibly some auxiliary data, and all this data is taken modulo certain equivalences.

The above examples $(G,V)$ of \cite{bhargavaI} are all integral models of \emph{prehomogeneous vector spaces} (PVSs).  Recall that a pair $(G,V)$ of a linear algebraic group $G$ over a field $F$ and a finite dimensional $F$-rational representation $V$ of $G$ is a PVS if there is a (necessarily unique) $G$-invariant Zariski open $V^{open} \subseteq V$ for which $V^{open}(\overline{F})$ consists of one $G(\overline{F})$ orbit, where $\overline{F}$ is an algebraic closure of $F$.  The results of \cite{bhargavaI,bhargavaII, bhargavaIII,bhargavaIV} were preceded by work of Wright-Yukie \cite{wrightYukie} and Kable-Yukie \cite{kableYukie}.  In the paper \cite{wrightYukie}, the authors parametrize orbits $G(F) \backslash V(F)^{open}$ where $(G,V)$ are certain PVSs and $F$ is a field, in terms of finite \'{e}tale extensions of $F$.  The groups $G$ in \emph{loc. cit.} are all split.  In \cite{kableYukie}, the authors consider similar orbit problems, where now the group $G$ is no longer split.  The proofs in \cite{wrightYukie, kableYukie} are Galois-cohomological in nature.

Following \cite{bhargavaI,bhargavaII, bhargavaIII,bhargavaIV}, there has been much activity in the realm of orbit parametrizations and their applications.  For instance, the reader may see \cite{ganSavin, bg,bgw,taniguchi,bhargavaHo,wood,thorne,bhk} for many orbit parametrizations, although this list is not meant to be complete.  While there has been work giving parametrizations $G(\Z)\backslash V(\Z)$ over the integers and more general base rings (e.g., \cite{woodQuartic}) when the group $G$ is split, and parametrizations $G(F)\backslash V(F)$ over fields when $G$ is not split, there has been little work (e.g., some parts of \cite{ganSavin}) on orbit parametrizations $G(\Z)\backslash V(\Z)$ when $G$ is not split.  The main results of this article are of this latter form.

We first give a twisted version of the results of Bhargava from \cite{bhargavaI}.  To setup the result, suppose $A$ is one of the following\footnote{These rings $A$ all have a degree $3$ norm map.  For instance, in the final case, this map is the reduced norm of the central simple algebra.} types of ring:
\begin{enumerate}
\item $\Z$;
\item $\Z \times \Z$;
\item an order in an \'{e}tale cubic extension of $\Q$;
\item $\Z \times Q$, where $Q$ is an order in a quaternion algebra over $\Q$;
\item an order in a central simple algebra of degree $3$ over $\Q$. \end{enumerate}
If $A$ is as above, then there is a cubic polynomial action of $\GL_2(A)$ on $W_{A} := \Z \oplus A \oplus A \oplus \Z$, and this is an integral model for a PVS.  For instance, when $A=\Z$, this is the action of $\GL_2(\Z)$ on its symmetric cube representation, $\mathrm{Sym}^3(\Z^2)$.  When $A = \Z \times \Z \times \Z$, this is the action of $\GL_2(\Z)^3$ on $\Z^2 \otimes \Z^2 \otimes \Z^2$.  These two examples were considered in \cite{bhargavaI}.  In general, one can think of the action of $\GL_2(A)$ on $W_{A} = \Z \oplus A \oplus A \oplus \Z$ as a twisted symmetric cube representation.  See section \ref{sec:new} for more details.  

The group $\GL_2(A)$ has a degree $6$ norm map; define $\SL_2(A)$ to be its kernel.  The first result parametrizes the orbits $W_{A}^{open}\slash \SL_2(A)$ in terms of quadratic rings $S$ and certain $S\otimes A$-modules.  To be more precise, suppose $I \subseteq (S \otimes A)_{\Q}$, where for any $\Z$-module $M$, $M_{\Q} := M \otimes_{\Z} \Q$.  We say that $I$ is an $S \otimes A$ fractional ideal if
\begin{itemize}
\item $SI$ is contained in $I$;
\item $IA$ is contained in $I$;
\item $I = b_1 A + b_2 A$, $b_1, b_2 \in (S \otimes A)_{\Q}$, is free of rank two as an $A$-module. \end{itemize}
We are careful with the right and left actions of $S$ and $A$ above, since $A$ need not be commutative.  We say that $I$ is \emph{oriented} if it comes equipped with an $\SL_2(A)$ orbit of bases $(b_1,b_2)\SL_2(A)$.

\begin{theorem}\label{introThm1} There is an explicit bijection, to be given below, between the orbits $W_{A}^{open}/\SL_2(A)$ and triples of data $(S,I,\beta)/\sim$ where
\begin{itemize}
\item $S$ is an oriented quadratic ring with nonzero discriminant;
\item $I$ is an oriented fractional $S \otimes A$-ideal;
\item $\beta \in (S_{\Q})^\times$;
\item the pair $(I,\beta)$ is \textbf{balanced}.\end{itemize} 
This data is taken modulo the equivalence $(S,I,\beta) \sim (S',I',\beta')$ if there exists an orientation preserving $\varphi: S \stackrel{\sim}{\rightarrow} S'$ and $x \in (S' \otimes A_{\Q})^\times$ such that $I' = x\varphi(I)$ and $\beta' = n_{A\otimes S'_{\Q}/S'_{\Q}}(x)\varphi(\beta)$.  Here $\varphi(I)$ means that both the fractional ideal $I$ and its orientation are pushed forward via $\varphi$.\end{theorem}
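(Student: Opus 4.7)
The plan is to adapt Bhargava's strategy from \cite{bhargavaI} for the split cases $A = \Z$ and $A = \Z \times \Z \times \Z$. In those cases one extracts from $w \in W^{open}$ a common binary quadratic invariant whose discriminant determines $S$, and reads off the ideals from a natural multiplication table on $W$. The new feature in the twisted setting is that $A$ is noncommutative in general and $\SL_2(A)$ does not factor as a product, so rather than producing three ideals $I_1, I_2, I_3$ satisfying $I_1 I_2 I_3 = (1)$ in $S$, one produces a single $S \otimes A$-module $I$ together with a scalar $\beta \in S_\Q^\times$ satisfying the balanced condition, which involves the norm $n_{A \otimes S_\Q / S_\Q}$.

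\emph{Forward direction.} Given $w = (a,b,c,d) \in W_A^{open}$ with $a,d \in \Z$ and $b,c \in A$, the first step is to construct the oriented quadratic ring $S$. The cubic $\SL_2(A)$-action on $W_A$ preserves a degree-$4$ integer invariant (the ``discriminant of the twisted symmetric cube''), whose natural $\SL_2$-covariant is a binary quadratic form $Q_w(x,y) \in \Z[x,y]$ with nonzero $\Z$-discriminant $D(w)$, nonvanishing of which follows from $w \in W_A^{open}$. Define $S$ to be the oriented quadratic ring of discriminant $D(w)$. Next, upgrade the free rank-two $A$-submodule $A b + A c \subset (W_A)_\Q$ to an $S \otimes A$-module by letting a generator $\tau$ of $S$ act $A$-linearly via the endomorphism with characteristic polynomial $Q_w$; after choice of base point this $S \otimes A$-module is identified with a fractional $S \otimes A$-ideal $I \subset (S \otimes A)_\Q$, oriented by the $\SL_2(A)$-orbit of the basis $(b,c)$. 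The scalar $\beta \in S_\Q^\times$ is then pinned down by the identity
\[
 n_{A \otimes S_\Q / S_\Q}(y b_1 - x b_2) \;=\; \beta \cdot Q_w(x,y),
\]
where $(b_1, b_2)$ is the chosen basis of $I$; this is precisely the balanced condition.

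\emph{Inverse direction and equivalences.} Given $(S, I, \beta)$ satisfying the balanced condition, choose a basis $(b_1, b_2)$ of $I$ compatible with its orientation; the multiplication in $S \otimes A$ expanded in $(b_1, b_2)$ and rescaled by $\beta$ produces a cubic form on $\Z^2$ whose coefficients lie in $\Z \oplus A \oplus A \oplus \Z$, yielding an element $w \in W_A(\Z)$. That one lands in $W_A(\Z)$ and not merely $W_A(\Q)$ is equivalent to integrality of the balanced triple. Proving the two constructions are mutually inverse, and that the stated equivalence on triples matches the $\SL_2(A)$-orbit equivalence on $W_A$, is then a direct, if lengthy, computation modelled on \cite{bhargavaI}.

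The main obstacle I anticipate is the systematic treatment of \emph{noncommutativity} of $A$ in cases (4) and (5). There, the $S$-action on $I$, the norm $n_{A \otimes S_\Q / S_\Q}$ (which becomes the reduced norm in the quaternion and central-simple settings), and the cubic action of $\SL_2(A)$ on $W_A$ must all be tracked with consistent handedness, and the notions of ``fractional $S \otimes A$-ideal'' and its orientation must be set up so that the theorem's equivalence relation captures exactly the $\SL_2(A)$-ambiguity. A closely related obstacle is verifying that integrality of $w$ at each prime corresponds to the stated integrality conditions on $I$ and $\beta$, rather than merely generically over $\Q$.
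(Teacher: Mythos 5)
Your outline reproduces the general shape of Bhargava's argument, but at the three places where the twisted (non-split) cases actually differ from \cite{bhargavaI} it either asserts something false or waves at the hard step. First, the claimed covariant $Q_w(x,y)\in\Z[x,y]$ does not exist in general: the natural quadratic covariant of $v\in W_A$ is $A$-valued, namely the matrix $S(v)\in\tfrac12 M_2(A)$ (equivalently $R(v)\in M_2(A)$ with $R(v)^2=q(v)1_2$), satisfying $S(v)J_2S(v)=-\tfrac{q(v)}{4}J_2$; only for $A=\Z\times\Z\times\Z$ does it decompose into integral binary quadratic forms. The quadratic ring is built directly from the quartic invariant $D=q(v)$, and the $S$-action on the rank-two $A$-module is given by the explicit operator $\Omega(v)=\tfrac{q(v)+R_r(v)}{2}$, not by ``the endomorphism with characteristic polynomial $Q_w$''; relatedly, $Ab+Ac$ (with $b,c\in A$ the middle coordinates of $w$) is not a free rank-two $A$-module and is not the right candidate for $I$. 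Second, your identification of the balanced condition is wrong: the displayed identity equates a cubic form in $(x,y)$ with $\beta$ times a quadratic form, and even the corrected norm statement $n_{A_E}(x)\in\beta S$ for $x\in I$ is strictly weaker than balancedness outside the split cases $A=R^3$, $R\times M_2(R)$, $M_3(R)$. The condition actually needed is $\beta^{-1}(x,y)^{!}=\beta^{-1}\bigl(n_{A_E}(x),x^\#y,\,y^\#x,\,n_{A_E}(y)\bigr)\in W_A\otimes S$ for all $x,y\in I$, together with $N(I;\tau,(b_1,b_2))=N_{E/F}(\beta)$; the integrality of the $x^\#y$ and $y^\#x$ components is exactly what distinguishes the new cases (orders in cubic fields, quaternionic and central simple $A$) from the ones Bhargava treated.

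Third, and most importantly, ``a direct, if lengthy, computation modelled on \cite{bhargavaI}'' is precisely the step that does not transfer. In \cite{bhargavaI} one solves explicit systems of equations in split coordinates to produce, from $v$, the decomposition $\tau v+v'$ as a pure tensor; for non-split $A$ there are no such coordinates, and this is what the lifting law replaces. The paper's proof constructs $X(v)=\tfrac{\omega v+v^\flat}{2}\in W_A\otimes S_D$, proves it is rank one, and — crucially — proves the refined identity $\omega^3(\ell\,\epsilon(v))^{!}=\langle \ell^{!},\overline{X(v)}\rangle\,X(v)$ exhibiting $X(v)$ as an explicit pure tensor; the ideal is then $I(\ell)=\Phi^\ell(A^2)$ with $\beta(\ell)=\omega^{-3}\langle\ell^{!},X(v,-\omega)\rangle$, and the mutual-inverse and equivariance statements rest on the uniqueness of rank-one lifts and on $R_r(v\cdot g)=\det(g)\,g^{-1}R_r(v)g$. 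Without an argument supplying this lifting-law input (or a genuine substitute for it), both directions of your bijection — in particular that your forward construction lands in balanced triples and that the composite maps are the identity — remain unproved.
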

In the theorem, that $S$ is oriented means that a generator of the rank one $\Z$-module $S/\Z$ has been chosen.  Furthermore, $n_{A}: A \rightarrow \Z$ is a certain cubic norm on $A$, and $n_{A\otimes S_{\Q}/S_{\Q}}$ is its extension to $A\otimes S_{\Q} \rightarrow S_{\Q}$.  The condition that the pair $(I,\beta)$ is balanced means, very roughly, that the norm $n_{A\otimes S_{\Q}/S_{\Q}}(I)$ is the principal fractional $S$-ideal generated by $\beta$.  See section \ref{sec:statement} for the precise definition.

In the cases when $A$ is split, i.e., when $A = \Z, \Z \times \Z, \Z \times \Z \times \Z, \Z \times M_2(\Z),$ and $M_3(\Z)$, Theorem \ref{introThm1} gives the five orbit parametrizations of Bhargava from \cite{bhargavaI}.  The nonsplit cases are new.  Furthermore, although we have stated Theorem \ref{introThm1} over the base ring $\Z$, below we prove it for a more general class of base rings.  We also mention that Theorem \ref{introThm1} is proved uniformly in the ring $A$, even though there appear to be five cases.  When the ring $A$ is an order in an \'{e}tale cubic extension of $\Q$, Gan-Savin \cite{ganSavin} also considered the above orbit problem, although only classified the orbits over fields.

Theorem \ref{introThm1} is suggested by some of the beautiful arguments of Bhargava-Ho \cite{bhargavaHo}.  Namely, if one considers the orbit problems in Theorem \ref{introThm1}, with the integers $\Z$ replaced by a field, then certain arguments in \cite[section 6]{bhargavaHo}, when specialized to degenerate cases, should yield a parametrization of these orbits.  (The arguments of \cite{bhargavaHo} are quite different from our own, however.) It is an interesting question if the arguments suggested by \cite{bhargavaHo} can be made to work over the integers.  Instead of attempting to use the more geometric techniques of \emph{loc. cit.}, the proof of Theorem \ref{introThm1} blends twisted versions of some of the constructions of \cite{bhargavaI}, together with what we call ``lifting laws", which are discussed below.  To a first approximation, the lifting laws take the place of the solving of equations in \cite{bhargavaI}, although they also have many useful technical consequences.

The second main result is a twisted version of the parametrizations from \cite{bhargavaII}.  To setup this result, suppose $C$ is one of the following\footnote{These are integral models of the associative composition algebras.} rings:
\begin{enumerate}
\item $\Z$;
\item an order in an \'{e}tale quadratic extension of $\Q$;
\item an order in a quaternion algebra over $\Q$. \end{enumerate}
In each case, there is a involution $\tau: C \rightarrow C$, which is the identity in the case $C=\Z$ and nontrivial in the other cases.  If $m \in M_3(C)$, set $m^* = \,^tm^\tau$, the conjugate transpose of $m$.  Define $H_3(C) = \{h \in M_3(C): h^* = h\}$, the Hermitian $3 \times 3$ matrices over $C$. The group $\GL_3(C)$ acts on $H_3(C)$ via $h \cdot m = m^*hm$, and then $\GL_2(\Z) \otimes \GL_3(C)$ acts on $\Z^2 \otimes H_3(C)$.  Here the $\GL_2(\Z)$ action on $\Z^2$ is the usual one, twisted by the determinant. These actions are integral models of PVSs.

Similarly to the above, $\GL_3(C)$ has a degree $6$ norm map, coming from the reduced norm on $M_3(C)$.  Denote by $\SL_3(C)$ the kernel of this degree $6$ norm on $\GL_3(C)$.  There is likewise a notion of an oriented $T \otimes C$-ideal, where $T$ is a cubic ring, i.e., $T$ is commutative ring that is free of rank $3$ as a $\Z$-module.  We have the following result.

\begin{theorem}\label{introThm2} There is an explicit bijection, to be given below, between the orbits $(\Z^2 \otimes H_3(C))^{open}\slash \GL_2(\Z) \times \SL_3(C)$ and triples of data $(T,I,\beta)/\sim$ where
\begin{itemize}
\item $T$ is a cubic ring with nonzero discriminant;
\item $I$ is an oriented fractional $T \otimes C$-ideal;
\item $\beta \in (T_{\Q})^\times$;
\item the pair $(I,\beta)$ is \textbf{balanced}. \end{itemize}
These data are taken modulo the equivalence $\sim$, with $(T,I,\beta) \sim (T',I',\beta')$ if there exists $\varphi: T \stackrel{\sim}{\rightarrow} T'$ and $x \in (T' \otimes C_{\Q})^\times$ such that $I' = x\varphi(I)$, $\beta' = xx^\tau \varphi(\beta)$. Here $\varphi(I)$ means that both the fractional ideal and its orientation are pushed forward via $\varphi$.\end{theorem}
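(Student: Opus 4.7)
The plan is to mirror the proof of Theorem~\ref{introThm1}, replacing the quadratic-resolvent constructions of \cite{bhargavaI} with a twisted cubic-resolvent construction in the spirit of \cite{bhargavaII}. Starting from a pair $v = (A,B) \in \Z^2 \otimes H_3(C)$ in the open orbit, I first form the binary cubic form
\[
f_v(x,y) = n(Ax - By) \in \Z[x,y],
\]
where $n$ denotes the cubic reduced norm on $H_3(C)$ inherited from $M_3(C)$. Since $n$ is invariant under $\SL_3(C)$ and the $\GL_2(\Z)$-action on $\Z^2 \otimes H_3(C)$ is the standard one twisted by the determinant, the form $f_v$ descends to a $\GL_2(\Z)$-equivariant invariant of the orbit. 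The Delone--Faddeev correspondence then produces a cubic ring $T = T_v$ with nonzero discriminant, together with the orientation inherited from that of $\Z^2$.

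To construct $I$ and $\beta$, I invoke a lifting law analogous to the one underpinning Theorem~\ref{introThm1}. Working rationally, the element $A$ is invertible in $M_3(C_{\Q})$, so $A^{-1}B \in M_3(C_{\Q})$ has reduced characteristic polynomial equal (up to normalization) to $f_v$; hence the $\Q$-algebra $\Q[A^{-1}B] \subseteq M_3(C_{\Q})$ is canonically identified with $T_{\Q}$. Because this subalgebra commutes with right multiplication by $C$, the rank-three right $C$-module $C_{\Q}^3$ acquires a left $T_{\Q} \otimes_{\Q} C_{\Q}$-module structure. The lifting law then cuts out, inside $C_{\Q}^3$, an oriented fractional $T \otimes C$-ideal $I$, while the scalar $\beta \in T_{\Q}^\times$ records the ratio between the reduced norm on $I$ (pulled back through $A$) and the norm form $n_{T\otimes C/T}$. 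The balanced condition $n_{T\otimes C/T}(I) = \beta T$ is then immediate from this matching of norms.

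Conversely, given a balanced triple $(T, I, \beta)$, I choose a compatible $C$-basis $(b_1, b_2, b_3)$ of $I$ and pick $\theta_1, \theta_2 \in T$ extending $1$ to an oriented $\Z$-basis of $T$. Left multiplication by each $\theta_i$ on $I$ is right $C$-linear and in the chosen basis yields a matrix $M_i \in M_3(C)$. The Hermitian symmetry $M_i^* = M_i$ and the integrality of $M_i$ are both forced by the balanced condition, via the identification of the norm pairing on $I$ with $\beta \cdot (\,\cdot\,)^\tau$; this step, which packages the involution $\tau$ and the integrality into one compatibility, is the crux of the argument. Once the two constructions are in place, one checks that they are mutually inverse and that the stated equivalence relation matches the $\GL_2(\Z) \times \SL_3(C)$-action: the $\GL_2(\Z)$-factor amounts to a change of basis of $T/\Z$, while an element $g \in \SL_3(C)$ changes the $C$-basis of $I$ by some $x \in (T \otimes C_{\Q})^\times$, rescaling $\beta$ by $xx^\tau$. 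The main obstacle is formulating the lifting law tightly enough so that the Hermitian condition on $(A, B)$ and the integrality of the resulting pair correspond exactly to the balanced condition on $(I, \beta)$ with respect to $\tau$; granted such a lifting law, the remaining verifications follow the same pattern as in Theorem~\ref{introThm1}.
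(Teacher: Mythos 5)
Your overall strategy (extract the binary cubic $n(Ax+By)$, pass to a cubic ring $T$ with good basis, embed $T_{\Q}$ into $M_3(C_{\Q})$ via expressions in $A$ and $B$, and trade rank-one ``pure tensor'' data for a balanced ideal) is the same route the paper takes, but the proposal has two genuine gaps. First, essentially all of the technical content lies in the lifting law that you only invoke: one must prove that $X = -A\theta + B\omega + A^{\#}\times B^{\#}$ is a rank-one element of $H_3(C)\otimes T$ and, more refinedly, that it can be written as $\beta^{-1}b^{*}b$ for a $C$-basis $b=(b_1,b_2,b_3)$ of a balanced fractional $T\otimes C$-ideal; this is the content of Theorems \ref{B2Jlift} and \ref{B2liftC}, which in turn require verifying that $\omega\mapsto -A^{\#}B$, $\theta\mapsto B^{\#}A$ is a ring homomorphism with $X$ and $Y=\tfrac{1}{2}X\times_T X$ as eigenvectors (Propositions \ref{prop:Srevec}, \ref{Y_eig}), producing a basis at all (Proposition \ref{B2freeRk1}), and the uniqueness statements (Lemmas \ref{B2uniqueDelta}, \ref{B2lem:uniqueLift}, \ref{QimpliesAdmis}) needed to see that the two constructions are inverse. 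Deferring ``such a lifting law'' defers the proof. A smaller point: $A$ need not be invertible on the open orbit (the $x^3$-coefficient of a nondegenerate binary cubic can vanish), so $\Q[A^{-1}B]$ is not available until you first move $(A,B)$ by $\GL_2$.

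Second, your inverse construction is incorrect as stated. The matrices of multiplication by $\omega,\theta$ on $I$ in a $C$-basis are not Hermitian, and no balanced condition forces them to be: already for $C=\Z$ and $I=T$ with basis $(1,\omega,\theta)$ these are companion-type matrices, and in general they recover $-A^{\#}B$ and $B^{\#}A$, not the pair $(A,B)$ (their integrality, moreover, follows from $I$ being a $T\otimes C$-module and has nothing to do with balancedness). The correct recipe is to form $X_{I,b,\beta}=\beta^{-1}b^{*}b$, whose Hermitian symmetry is automatic since $(b_i^{*}b_j)^{*}=b_j^{*}b_i$; the balanced condition is exactly what places its entries in $C\otimes T$ and fixes the norm, and $(A,B)$ are read off as the coefficients of $-\theta$ and $\omega$ when $X_{I,b,\beta}$ is expanded over the good basis $(1,\omega,\theta)$, after which Lemma \ref{QimpliesAdmis} is needed to see the resulting pair is nondegenerate and reproduces $T$. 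Finally, you conflate the group action with the equivalence relation: $m\in\SL_3(C)$ acts by $(b_1,b_2,b_3)\mapsto (b_1,b_2,b_3)m$ with $\beta$ unchanged, whereas rescaling by $x\in(T\otimes C_{\Q})^{\times}$ with $\beta\mapsto xx^{\tau}\beta$ is the equivalence on triples, not the image of the $\SL_3(C)$-action.
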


Our remarks regarding Theorem \ref{introThm2} are nearly identical to those following Theorem \ref{introThm1}.  First, the balanced condition very roughly means that $n_{T_{\Q} \otimes C/T_{\Q}}(I)$ is the principal fractional $T$-ideal generated by $\beta$.  See section \ref{sec:statement} for the precise condition.  Second, the split cases of Theorem \ref{introThm2}, namely when $C = \Z, \Z \times \Z$ and $M_2(\Z)$ are the orbit parametrizations of Bhargava from \cite{bhargavaII}.  When $C = R$ or $R \times R$ is split, but the ground ring $\Z$ has been replaced by a more general ring $R$, these parametrizations are due to Wood \cite{wood}.  In fact, when $C = R$ or $R \times R$, Wood considers the orbits of $\GL_n(C)$ on the PVS $R^2 \otimes H_n(C)$ for general $n$.  Next, we again mention that in the text below, we prove Theorem \ref{introThm2} both uniformly in $C$ and over more general base rings than the integers.  The nonsplit cases of Theorem \ref{introThm2} are new.

As with Theorem \ref{introThm1}, Theorem \ref{introThm2} is suggested by some of the arguments of \cite[section 5]{bhargavaHo}, which when specialized to degenerate cases should yield a parametrization, over fields, of the orbits considered in Theorem \ref{introThm2}.  Again, the arguments suggested by \emph{loc. cit.} are quite different from our own.  For the proof of Theorem \ref{introThm2}, we blend twisted versions of constructions in \cite{bhargavaII}, some ideas from \cite{wood}, and lifting laws.

As mentioned, one of the key ingredients in the proof of the results above is what we call a \emph{lifting law}, and in fact this paper is as much about lifting laws as it is about the parametrizations Theorem \ref{introThm1} and Theorem \ref{introThm2} above.  Roughly speaking, a lifting law is a way of ``lifting" an element in an orbit of one PVS to an element in the minimal nonzero orbit of another.  Since the precise definition of a lifting law and its relevance might seem strange to a non-expert, we defer a discussion of lifting laws to section \ref{sec:LLintro} below.  In section \ref{sec:LLintro}, we will define precisely what we mean by a lifting law, and also give the motivating examples for this definition.

\subsection{Other results} While the main results in this paper are the twisted orbit parametrizations Theorem \ref{introThm1} and Theorem \ref{introThm2}, we also give a few other results.  

First, we use lifting laws to solve the orbit parametrization problems considered in Theorems \ref{introThm1} and \ref{introThm2} when the base ring $\Z$ is replaced by a field.  This is Theorem \ref{B1thmField} and Theorem \ref{B2F} below. These parametrizations when the base ring is a field are due to Wright-Yukie \cite{wrightYukie}, Kable-Yukie \cite{kableYukie}, and Taniguchi \cite{taniguchi}, whose methods are all via Galois cohomology. Of course, the more general form of Theorems \ref{introThm1} and \ref{introThm2} proved below covers this case when the base ring is a field.  However, the proofs we give in Theorem \ref{B1thmField} and Theorem \ref{B2F} are not specializations of the arguments used to prove Theorems \ref{introThm1} and \ref{introThm2}, and we feel that the arguments we use are sufficiently interesting to merit their inclusion. In any case, several of the technical results used to prove Theorem \ref{B1thmField} and Theorem \ref{B2F} are needed in the proof of Theorems \ref{introThm1} and \ref{introThm2}.

In this context, we also mention two other results, the work of Kato-Yukie \cite{katoYukie} and the recent National University of Singapore master's thesis of Tan \cite{tan}.  Note that, in Theorem \ref{introThm2} and then again in Theorem \ref{B2F}, the composition algebra $C$ is associative.  In the work \cite{katoYukie}, the authors overcome the difficulty of non-associativity and succeed in parametrizing the orbits on the PVS $H_3(C)^2$ when $C$ is an octonion algebra. Tan, in \cite{tan}, solves a variant of the orbit problem considered in \cite{kableYukie}.  Tan uses formulas of \cite{bhargavaII} to associate a twisted composition algebra to a non-degenerate element $(A,B) \in H_3(K)^2$, where $K$ is a quadratic \'{e}tale extension of the ground field $F$.  The works of Kato-Yukie \cite{katoYukie} and Tan \cite{tan} associate different (and more interesting) data to elements in $H_3(C)^2$ than we associate to such elements in Theorem \ref{B2F}.

While in sections \ref{bI} and \ref{bII} we discuss both lifting laws and their application to arithmetic invariant theory, in sections \ref{nccubes} and \ref{lowerRank} we only consider lifting laws.  In section \ref{nccubes}, we give a lifting law that is a variant of a result of Gan and Savin from \cite{ganSavin}.  See the beginning of section \ref{nccubes} for more details. The lifting laws in sections \ref{bI}, \ref{bII}, and \ref{nccubes} start with an element of the open orbit of some prehomogeneous vector space, and lift it to an element of the minimal orbit of another prehomogeneous vector space.  In section \ref{lowerRank}, we consider a few examples of lifting laws where one starts with elements in one of the non-open orbits of a prehomogeneous vector space.

This paper uses various algebraic devices and group representations, such as cubic norm structures, composition algebras, and the Freudenthal construction, with which the reader might not already be familiar. In section \ref{preliminaries} we discuss all of this background information.  After section \ref{preliminaries}, the remaining sections are almost entirely self-contained, and only rely on section \ref{preliminaries}.  The exception to this statement is that section \ref{nccubes} also uses results from section \ref{bI}.  Thus after section \ref{preliminaries}, the other sections may (almost) be read in any order.

We now briefly re-summarize the layout of the paper.  In section \ref{sec:LLintro}, we define and give an introduction to lifting laws, by way of more familiar examples.  In section \ref{sec:statement}, we give precise statements of our main results on twisted orbit parametrizations, which imply Theorems \ref{introThm1} and \ref{introThm2}.  In section \ref{preliminaries}, we discuss all the algebraic preliminaries needed for the rest of the paper.  In section \ref{bI} (resp. section \ref{bII}) we solve a class of orbit problems that includes those considered in Theorem \ref{introThm1} (resp. Theorem \ref{introThm2}).  In section \ref{nccubes} we give a variant of a result of Gan-Savin from \cite{ganSavin}, while in section \ref{lowerRank} we give some ``lower rank" lifting laws.

\subsection{A note on the proofs} The lifting laws are, in essence, complicated polynomial identities on certain prehomogeneous vector spaces.  To prove them, we often make crucial use of prehomogeneity, in the following way.  Suppose the vector space $V$ is prehomogeneous for the action of the group $G$, and suppose $P_1, P_2$ are polynomial functions $V \rightarrow W$ from $V$ to another $G$-representation $W$.  To prove $P_1(v) = P_2(v)$ for all $v \in V$, it suffices to prove this equality over an algebraic closure $k$ of $F$, and then by Zariski density to prove the identity on the open orbit for $G(k)$ on $V(k)$.  To prove $P_1 = P_2$ on the open orbit, it then suffices to verify $P_1(v_0) = P_2(v_0)$ for some well-chosen $v_0$, and to prove that $P_1$ and $P_2$ are suitably equivariant for the action of $G$.  (From the equivariance one obtains $P_1(v) = P_1(g v_0) = gP_1(v_0) = gP_2(v_0) = P_2(v)$.)  This sort of argument will be used frequently below without detailed explanation.

\subsection{Acknowledgments} We hope it is clear to the reader that this paper owes its existence to the extraordinary papers \cite{bhargavaI,bhargavaII} of Bhargava.  We also single out the beautiful works of Gross-Savin \cite{grossSavin} and Bhargava-Ho \cite{bhargavaHo}, from which the author first learned of various exotic algebraic structures and their relevance for automorphic forms and arithmetic invariant theory.  It is a pleasure to thank Wee Teck Gan, Benedict Gross, Wei Ho, and Gordan Savin for their very helpful comments on and conversations about an earlier version of this manuscript. We also thank the anonymous referee, whose comments on this paper have (hopefully) significantly improved the exposition, and whose remarks regarding section \ref{nccubes} led to a substantially simpler treatment of the lifting law in this section.

\subsection{Notation} Throughout the entire paper, \emph{we only work in characteristic $0$}.  We will always write $F$ for our ground field, a field of characteristic $0$, and $R$ will denote a subring of $F$.  For an $R$-module $M$, $M_F$ denotes $M\otimes_{R}F$.  The letter $C$ will always denote a composition algebra over $F$ or $R$, $J$ will denote a cubic norm structure over $F$ or $R$, and $A$ will denote a cubic norm structure that is also compatibly an associative $F$ or $R$-algebra.  We write $E$ for a finite \'{e}tale extension of $F$, which will always either be a quadratic or cubic extension.  We often write $S$ or $T$ for a particular subring of $E$, that is a finite $R$-module, although sometimes the letters $S$ and $T$ will have different meanings.  The letter $k$ will usually denote an arbitrary field of characteristic $0$, often a large extension field of $F$ such as an algebraic closure.

We write $V_3$ for the defining representation of $\GL_3$ on row or column vectors (depending on the context.)  We write $V_2$ or $W_2$ for the defining representation of $\GL_2$ on row or column vectors, again depending on context.  (In general, we use the notation $W_{*}$ for vector spaces that come equipped with a symplectic form, and $V_{*}$ for spaces that do not.)

\section{Lifting laws: Introduction}\label{sec:LLintro} While this paper is about the results Theorems \ref{introThm1} and \ref{introThm2} above, it is also about lifting laws, which is the main technical ingredient that we use to prove these results.  The purpose of this section is to give an introduction to lifting laws.  We begin by motivating the definition of a lifting law by considering the parametrization of the non-degenerate orbits of $\SL_2(\Z)^3$ on $\Z^2 \otimes \Z^2 \otimes \Z^2$ in \cite{bhargavaI}.  Then, we give the precise definition of a lifting law and mention its connection to the theta correspondence in automorphic forms.  Finally, we give two other examples of how lifting laws connect to arithmetic invariant theory: the parametrization of cubic rings by binary cubic forms and the parametrization of quaternion orders by ternary quadratic forms.  We remark that subsections \ref{subsec:LLb1} and \ref{subsec:LLbcfs} are more or less special cases of the lifting laws discussed below in sections \ref{bI} and \ref{nccubes}, respectively. 

\subsection{Bhargava's cubes and ideal classes in quadratic rings}\label{subsec:LLb1} Consider the parametrization in \cite{bhargavaI} of the $G(\Z) = \SL_2(\Z) \times \SL_2(\Z) \times \SL_2(\Z)$ orbits on the non-degenerate elements of $V(\Z) = \Z^2 \otimes \Z^2 \otimes \Z^2$.  Recall that the parametrization given in \cite{bhargavaI} is in terms of quadratic rings $S = \Z[\tau]$ and a balanced triple $(I_1,I_2,I_3)$ of fractional $S$-ideals.  That the triple $(I_1, I_2, I_3)$ is balanced\footnote{More generally, if $\beta \in (S_{\Q})^\times$, and $(I_1,I_2,I_3)$ are a triple of fractional $S$-ideals, one says that the data $((I_1,I_2,I_3),\beta)$ is balanced if $I_1 I_2 I_3 \subseteq \beta S$ and $N(I_1)N(I_2)N(I_3) = N(\beta)$.} means that the product of the ideals $I_1 I_2 I_3$ is contained in $S$, and the product of the norms of the ideals $N(I_1)N(I_2)N(I_3) = 1$.

Denote by $e_1, f_1$ a basis for the first copy of $\Z^2$ in $V(\Z)$, and similarly $e_2,f_2$, and $e_3,f_3$ bases for the second and third copy of $\Z^2$ in $V(\Z)$.  Suppose one is given a balanced triple $(I_1,I_2,I_3)$ of fractional $S$-ideals.  Choose a $\Z$-basis $\alpha_1, \beta_1$ for $I_1$, $\alpha_2, \beta_2$ for $I_2$, and $\alpha_3, \beta_3$ for $I_3$. Then associated to this data, one considers the pure tensor
\begin{equation}\label{eqn:pureTensor1}(\alpha_1 e_1 + \beta_1 f_1) \otimes (\alpha_2 e_2 + \beta_2 f_2) \otimes (\alpha_3 e_3 + \beta_3 f_3)\end{equation}
in $V(\Z) \otimes S_{\Q}$.  Since the triple $(I_1, I_2, I_3)$ is balanced, all the products $\alpha_1 \alpha_2 \alpha_3$, $\alpha_1 \alpha_2 \beta_3$, $\alpha_1 \beta_2 \alpha_3$ etc are in $S$.  Hence, there exists $v$ and $v_0$ in $V(\Z)$ so that
\begin{equation}\label{pureTensor}(\alpha_1 e_1 + \beta_1 f_1) \otimes (\alpha_2 e_2 + \beta_2 f_2) \otimes (\alpha_3 e_3 + \beta_3 f_3) = \tau v + v_0 \in V(\Z) \otimes_{\Z} S = \tau V(\Z) \oplus V(\Z).\end{equation}
The parametrization of the $G(\Z)$ orbits on $V(\Z)$ in \cite{bhargavaI} proceeds through this construction: one associates to the balanced triple $(I_1, I_2, I_3)$ the $G(\Z)$-orbit $G(\Z) v$.

To have the possibility of parametrizing the orbits of $G(\Z)$ on $V(\Z)$ via triples of balanced ideals, one must be able to invert this construction.  Thus given $v \in V(\Z)$, one asks the following question:  Does there exist a quadratic ring $S=\Z[\tau]$, and $v_0 \in V(\Z)$, so that $\tilde{v} = \tau v + v_0$ is explicitly given as a pure tensor in 
\[V'(\Q) := V(\Q) \otimes_{\Q} S_{\Q} \approx V(\Q) \oplus V(\Q)?\]
Answering this question is the many equations that must be solved on page 235 of \emph{loc. cit.}, and is a crucial step in the argument.

Set $G' = \SL_2(S_\Q) \times \SL_2(S_\Q) \times \SL_2(S_\Q)$, which then acts on $V'$.  Suppose $S$ and $v_0$ are such that $\tilde{v}  = \tau v + v_0$ is a pure tensor in $V'$.  Because $\tilde{v}$ is a pure tensor in $V'$, the stabilizer in $G'$ of the line $S_{\Q}\tilde{v}$ is a parabolic subgroup of $G'$.  Thus, another way of phrasing this question is the following: Given $v \in V(\Q)$, does there exist a quadratic extension $S_{\Q}$ of $\Q$ and a $v_0 \in V_0(\Q) = V(\Q)$ so that the stabilizer of the line $\Q \tilde{v}$ in $V'(\Q)$ is (essentially) a parabolic subgroup of $G'$? More informally, can the element $v \in V(\Q)$ be ``lifted'' to a pure tensor $\tilde{v}$ in $V'(\Q)$?  This discussion motivates the definition of a lifting law, which we now give.

\subsection{Definition of lifting law} Suppose that $F$ is a field, $(G, V)$ is a PVS over $F$, and $v \in V(F)$ is not $0$.  
\begin{definition}\label{defn:LL} A lifting law for $G, V$ and $v$ is the data of another finite-dimensional $F$-linear $G$-representation $V_0$, an element $v_0 \in V_0$, and an action of a (bigger) linear algebraic $F$-group $G'$ on $V' = V \oplus V_0$ so that
\begin{enumerate}
\item the $G$-linear action on $V' = V \oplus V_0$ factors through $G'$,
\item the stabilizer $S$ in $G'$ of the line $F\tilde{v}$, where $\tilde{v} = (v,v_0)$, satisfies $P' \supseteq S \supseteq [P',P']$, where $P'$ is a parabolic subgroup of $G'$. \end{enumerate}
\end{definition}
The second condition roughly means that $\tilde{v} = (v,v_0)$ is in the minimal nonzero orbit of $G'$ on $V'$.  Note that in the definition, the data $(G',V',v_0)$, and in particular the group $G'$, depends on the element $v \in V(F)$.

The sort of lifting problem considered in Definition \ref{defn:LL} is familiar from the theta correspondence, especially in the context of exceptional groups.  In this setting, the above lifting problem arises when computing the restriction of a theta function on one group $G_1'$ to a smaller group $G_1$.  The reader may see, for instance, work of Gross-Savin \cite{grossSavin}, Gan \cite{ganATM,ganSWC}, Lucianovic \cite{lucianovic}, or Weissman \cite{weissman} for some examples of these computations.

As a simple example of a lifting law, suppose $q$ is a non-degenerate quadratic form on the vector space $V$, $G = \SO(V,q)$ is the special orthogonal group preserving this quadratic form, and $v \in V$ satisfies $q(v) = \alpha \neq 0$.  Set $V_0 = F$, the trivial representation of $G$, with quadratic form $q_0(x) = \alpha x^2$.  Put on $V' = V \oplus V_0$ the quadratic form $q' = q \perp (-q_0)$, i.e., $q'((x,x_0))= q(x) - \alpha x_0^2$ for $x \in V$ and $x_0 \in V_0$.  With this quadratic form, $V'$ becomes a representation of the group $G'=\SO(V',q')$, and it is clear that the $G = \SO(V,q)$-action on $V'$ preserves $q'$, and thus factors through the bigger orthogonal group $G'$.  Furthermore, the lifted vector $\tilde{v} = (v,1)$ satisfies $q'(\tilde{v}) = 0$, and thus the stabilizer of the line $F\tilde{v}$ is a parabolic subgroup of $G'$.  Thus the association of $V',q'$ and $\tilde{v}$ to the vector $v \in V$ is a lifting law.

Observe that in the situation of the previous paragraph, the orbits of $G = \SO(V,q)$ on $V\setminus\{0\}$ are parametrized by the value of the quadratic form $q$, i.e., $v_1 = g v_2$ for some $g \in \SO(V,q)$ if and only if $q(v_1) = q(v_2)$.  Furthermore, it is the value of $q$ on $v$ that is the key input into the definition of the quadratic form $q'$ on $V'$ that enables one to lift $v$ to an isotropic vector $\tilde{v}$ in $V'$.  Thus this lifting law for orthogonal groups has something to do with the invariant theory of $\SO(V,q)$ acting on its defining representation $V$.

In fact, lifting laws are connected to many problems in arithmetic invariant theory, and are no doubt recognizable to experts in this context as well.  To make clear the ideas, we now give two more ``classical" examples of lifting laws.

\subsection{Binary cubic forms and cubic rings}\label{subsec:LLbcfs} Recall that classically, binary cubic forms parametrize cubic rings.  This parametrization goes back at least to \cite{deloneFaddeev}.  In this subsection, we briefly give the example of the lifting law that is related to this parametrization.  The example in this subsection is essentially from \cite[Lemma 3.3]{ggj} or \cite[Proposition 6.9]{ganSW}, but see also \cite{hms} and \cite{grossSavin}.

Consider the action of $G=\GL_2(F)$ on the space $V = \mathrm{Sym}^3(F^2)\otimes \det^{-1}$ of binary cubic forms $q(x,y) = ax^3 + bx^2y + cxy^2 + dy^2$.  Here the action is given by $(q \cdot g)(x,y) = \det(g)^{-1} q(g (x,y)^t)$.  As is well-known, the orbits of this action parametrize cubic $F$-algebras $L$, and the parametrization works over the integers and not just fields.  See for instance \cite{ggs} or \cite{bhargavaII}.  The $R$-algebra $T$ associated to the binary cubic form $q$ is specified by saying it has basis $1,\omega,\theta$, with multiplication table
\begin{equation}\label{multTable1} \omega \theta = -ad;\; \omega^2 = -ac + a \theta - b \omega;\; \theta^2 = -bd + c \theta - d \omega. \end{equation}

Conversely, if $T$ is a cubic ring over the integral domain $R$, and $1, \omega,\theta$ is a basis of $T$ as a free $R$-module of rank three, as in \cite{ggs,grossLucianovic} one says that $(1,\omega,\theta)$ is a \emph{good basis} if $\omega \theta \in R$.  One can then show that there exists $a,b,c,d \in R$ so that $\omega, \theta$ obey the above multiplication table.

From the point of view of cubic rings, the multiplication table (\ref{multTable1}) is perhaps a little mysterious.  However, from the point of view of lifting laws, this multiplication can be understood conceptually.  Suppose $T$ is a cubic ring over $R$ and $L = T_F$ is \'{e}tale over $F$.  As mentioned in the introduction, there is a cubic polynomial action of $\GL_2(L)$ on $V' := W_{L} = F \oplus L \oplus L \oplus F$.  Set $G' = \{g \in \GL_2(L): \det(g) \in F^\times\}$, and let $G'$ act on $V'$ by $vg = \det(g)^{-1} v \cdot g$, where $v \cdot g$ is the cubic polynomial action of $\GL_2(L)$ on $W_{L}$.

The following claim is essentially \cite[Lemma 3.3]{ggj} or \cite[Proposition 6.9]{ganSW}.
\begin{claim} Suppose that there exists $a,b,c,d$ so that $\omega,\theta$ obey the multiplication table (\ref{multTable1}).  Then the stabilizer of the line $F(a,-\omega,\theta,d) \subseteq V'$ in $G'$ is a parabolic subgroup of $G'$.  Conversely, suppose there exists $a,d \in F$ so that the stabilizer of the line $F(a,-\omega,\theta,d) \subseteq V'$ in $G'$ is a parabolic subgroup of $G'$.  Set $b = \tr_{L/F}(-\omega)$ and $c = \tr_{L/F}(\theta)$.  Then $\omega,\theta$ obey the multiplication table (\ref{multTable1}). \end{claim}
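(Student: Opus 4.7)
The plan is to reduce both directions to the split picture over an algebraic closure $\bar F$ of $F$. Since $L$ is \'etale of rank three, $L \otimes_F \bar F \simeq \bar F^3$ via the three $F$-embeddings $\sigma_1,\sigma_2,\sigma_3 : L \to \bar F$, and $W_L \otimes_F \bar F \simeq W_{\bar F^3}$ is canonically identified with the Bhargava cube space $\bar F^2 \otimes \bar F^2 \otimes \bar F^2$ in such a way that the four summands of $F \oplus L \oplus L \oplus F$ become the four weight spaces of the diagonal torus of $\GL_2^3$ (tensors with zero, one, two, three ``down'' factors $f_i$ respectively). Simultaneously $G'_{\bar F}$ becomes the equal-determinants subgroup of $\GL_2(\bar F)^3$. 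On this split PVS the minimal nonzero orbit is the punctured cone of pure tensors $v_1 \otimes v_2 \otimes v_3$, whose line-stabilizer in $\GL_2^3$ is the product of three Borels, so the line-stabilizer in $G'_{\bar F}$ is a parabolic; this orbit-stabilizer statement for Bhargava's cubes is the first technical input.

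For the forward direction, assume the multiplication table (\ref{multTable1}). Solving $\omega^2 = -ac + a\theta - b\omega$ for $\theta$ gives $\theta = a\eta^2 + b\eta + c$ with $\eta := \omega/a$, and substituting this into $\omega^3 = \omega \cdot \omega^2$ and cancelling produces
\[
a\eta^3 + b\eta^2 + c\eta + d = 0 \quad \text{in } L.
\]
Writing $\eta_i := \sigma_i(\eta)$, one has $\sigma_i(\omega) = a\eta_i$ and $\sigma_i(\theta) = a\eta_j\eta_k$ for $\{i,j,k\} = \{1,2,3\}$. Consider the pure tensor
\[
\widetilde v \;:=\; a\prod_{i=1}^{3}(e_i - \eta_i f_i) \;\in\; \bar F^2 \otimes \bar F^2 \otimes \bar F^2.
\]
Expanding $\widetilde v$ and collecting by weight (using Vieta's formulas for the cubic in $\eta$), its four weight-components read off as $(a, -\omega, \theta, d)$ exactly. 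Hence $(a,-\omega,\theta,d)$ lifts to a line of pure tensors, so its stabilizer in $G'_{\bar F}$ is a parabolic; since parabolicness descends along $F \to \bar F$ for $F$-rational subgroup schemes, the stabilizer in $G'$ is a parabolic as well.

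For the converse, suppose the line-stabilizer $S \subseteq G'$ is a parabolic. Its base change $S_{\bar F}$ is then a parabolic of $G'_{\bar F}$, so $(a,-\omega,\theta,d)$ lies over $\bar F$ on the minimal nonzero orbit and is of the form $\lambda \prod_i (e_i - \eta_i f_i)$ for some $\lambda \in \bar F^\times$ and $\eta_i \in \bar F$. Matching weight-components yields $\lambda = a$, $\sigma_i(\omega) = a\eta_i$, $\sigma_i(\theta) = a\eta_j\eta_k$, and $a\eta_1\eta_2\eta_3 = -d$; Vieta's formulas then give $b = \tr_{L/F}(-\omega)$, $c = \tr_{L/F}(\theta)$, and the cubic identity $a\eta^3 + b\eta^2 + c\eta + d = 0$, and reducing $\omega^2$, $\theta^2$, $\omega\theta$ as polynomials in $\eta$ modulo this cubic reproduces (\ref{multTable1}). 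The main obstacle I foresee is the bookkeeping: pinning down the identification of $W_L \otimes \bar F$ with the Bhargava cube space tightly enough that the weight-components line up with $(a,-\omega,\theta,d)$ with correct signs, and handling the degenerate case $a = 0$, where $\eta = \omega/a$ is not defined but the argument can be rerun using the palindromic symmetry $(a,b,c,d) \leftrightarrow (d,c,b,a)$ coming from the swap $e_i \leftrightarrow f_i$.
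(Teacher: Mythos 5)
Your plan is the right one and, apart from the issue below, it is essentially the mechanism the paper has in mind: the paper does not actually prove the Claim in the text (it defers to the two references cited just before it), but in subsubsection \ref{ssc:bcfs} it reformulates the multiplication table as the statement that $(a,-\omega,\theta,d)$ is rank one in $W_L$ together with $\tr_{L/F}(-\omega)=b$, $\tr_{L/F}(\theta)=c$; over $\overline{F}$ the rank one elements of $W_L\otimes\overline{F}\simeq \overline{F}^2\otimes\overline{F}^2\otimes\overline{F}^2$ are exactly the nonzero pure tensors, whose line stabilizers are the parabolics, and your Vieta matching is the explicit coordinate form of this. The converse input you isolate is also fine: a parabolic contains a Borel, and a Borel-stable line in the irreducible cube representation is its highest-weight line, hence spanned by a pure tensor. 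For $a\neq 0$ your computations ($\theta=a\eta^2+b\eta+c$, the cubic relation for $\eta=\omega/a$, and the component matching) do go through.

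The genuine gap is the degenerate case $a=0$: your only proposed remedy, the swap $e_i\leftrightarrow f_i$, works only when $d\neq 0$, and the case $a=d=0$ really occurs for \'{e}tale $L$ and belongs to both directions of the Claim. Concretely, take $f(x,y)=bx^2y+cxy^2$ with $bc\in F^\times$; then $L\simeq F\times F\times F$, one may take $\omega=(0,-b,0)$ and $\theta=(0,0,c)$, the table reads $\omega\theta=0$, $\omega^2=-b\omega$, $\theta^2=c\theta$, and $(0,-\omega,\theta,0)$ is the pure tensor $(be_1+cf_1)\otimes f_2\otimes e_3$. No element of the form $\lambda\prod_i(e_i-\eta_i f_i)$ has vanishing $e_1\otimes e_2\otimes e_3$-coordinate, so neither the substitution $\eta=\omega/a$ in the forward direction nor your parametrization of the pure tensor in the converse reaches this locus. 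Two ways to close it: either reduce to $a\neq 0$ by acting with $\GL_2(F)\subseteq G'$, using the equivariance recorded in subsubsection \ref{ssc:bcfs} (the action on binary cubics and $(\omega_0,\theta_0)\mapsto g(\omega_0,\theta_0)$), and check that hypothesis and conclusion of the Claim transform accordingly; or argue intrinsically that the table with $\tr_{L/F}(-\omega)=b$, $\tr_{L/F}(\theta)=c$ is equivalent to $(a,-\omega,\theta,d)$ being rank one, via $\omega^\#=a\theta$, $\theta^\#=-d\omega$, $\tr_{L/F}(\omega\theta)=-3ad$ when $a$ or $d$ is nonzero (the rank-one lemma in subsection \ref{subsec:Freud} only covers that case) plus a short separate computation when $a=d=0$; this second route needs no base change and no division by $a$.
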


The claim suggests that good bases for cubic rings may be understood conceptually in terms of lifting laws.  To make this precise, set $V_0 = M_2(F)$ the $2 \times 2$ matrices over $F$, with $G = \GL_2(F)$ action given by right translation: $m \cdot g = mg$ if $m \in M_2(F)$ and $g \in \GL_2(F)$.  Then, $G \subseteq G'$, and as a $G$-module, $V' \simeq V \oplus V_0$.  Furthermore, one can choose this isomorphism of $G$-modules so that
\[\tilde{v}:=(a,-\omega,\theta,d) \mapsto (v,v_0) = (ax^3 + bx^2y+ cxy^2 + dy^3, 1_2) \in V \oplus V_0.\]
Thus, finding a good basis for the cubic ring $T$ is the same as finding $\tilde{v} = (v,v_0) \in V \oplus V_0 \simeq V'$ with the line $F\tilde{v}$ in $V'$ stabilized by a parabolic subgroup of $G'$.  Thus the association of cubic rings with binary cubic forms is closely connected to a lifting law.

The above lifting law is used in an automorphic setting in, for instance, \cite{ganSW} and \cite{ggj}.  Implicit in \cite{ganSavin} is a generalization of the above lifting law, and in section \ref{nccubes} below we give a further extension of this lifting law.

\subsection{Ternary quadratic forms and quaternion orders} As is well-known, ternary quadratic forms $q(x,y,z) = ax^2 + by^2 + cz^2 + dyz + ezx + fxy$, modulo the action of $\GL_3$, parametrize quaternion algebras up to isomorphism.  When the coefficients of the ternary quadratic form $a,b,c,d,e,f$ are in some ring $R$ as opposed to a field, the orbits of $\GL_3(R)$ on them parametrize quaternion orders over $R$; see \cite{grossLucianovic} and \cite{voight}.

As with the case of binary cubic forms and cubic rings, the parametrization of quaternion orders over $R$ by ternary quadratic forms uses the notion of a \emph{good basis}.  More precisely, suppose $q(x,y,z)$ is a ternary quadratic form, as above.  As always, assume $R$ is an integral domain with fraction field $F$ of characteristic $0$.  Let $\GL_3$ act on the right of ternary quadratic forms via $(q \cdot g)(x,y,z) = \det(g)q(\,^tg^{-1} (x,y,z)^t)$ for $g \in \GL_3(R)$.  One associates to $q(x,y,z)$ a quaternion order $Q$ over $R$ with basis $1, v_1, v_2, v_3$, as follows.  First, denote by $e_1, e_2, e_3$ the standard basis of $V_3 = R^3$ (row vectors).  We have a quadratic form $q$ on $V_3$ via $q(xe_1 + ye_2 + ze_3) = q(x,y,z)$, for $x, y, z$ in $R$.  Set $Q = \mathrm{Clif}^{+}(V_3,q)$, the even part of the Clifford algebra associated to $(V_3,q)$, and $v_1 = e_2 e_3, v_2 = e_3 e_1, v_3 = e_1 e_2$.  Then $Q$ is a quaternion order over $R$, and the basis $v_1, v_2, v_3$ satisfies the multiplication table 
\begin{flalign}\label{multTable2} v_2 v_3 &= a v_1^*;&\nonumber \\ v_3 v_1 &= b v_2^*;& \nonumber \\ v_1 v_2 &= c v_3^* &\nonumber \\ v_1v_1^* &= bc; v_2 v_2^* = ca; v_3v_3^* = ab. \end{flalign}
Here $*$ is the involution on $Q$.

Conversely, if $Q$ is a quaternion order over $R$ (assumed to be free of rank $4$ over $R$), and $1,v_1, v_2,v_3$ is a basis of $Q$ as a free $R$-module, one says that $(v_1,v_2,v_3)$ is a \emph{good basis} (see \cite{grossLucianovic}) if there exist $a,b,c \in R$ so that the $v_i$ obey the multiplication table above.  Then every quaternion ring $Q$ over $R$ admits a good basis \cite{grossLucianovic}, and their parametrization through ternary quadratic forms in \emph{loc. cit.} proceeds through the use of good bases.

Good bases for quaternion orders are closely connected to a lifting law.  Namely, set $G = \GL_3$, $V = \mathrm{Sym}^2(V_3)^\vee \otimes \det = H_3(F)$, so that elements $v \in V(F)$ correspond to ternary quadratic forms.  Here we have identified $\mathrm{Sym}^2(V_3)^\vee$ with $H_3(F)$, the Hermitian $3 \times 3$ matrices over the constant associative composition algebra $F$.  The implied action of $\GL_3(F)$ on $H_3(F)$ is $h \cdot g = \det(g) g^{-1}h \,^tg^{-1}$.  The pair $(G,V)$ is a PVS.  If $v \in V^{open}$ corresponds to the quaternion algebra $Q_F$, set $V' = H_3(Q_F)$ and define $G' = \{(\delta, m) \in \GL_1(F) \times \GL_3(Q_F): \delta^{2} = N_6(m)\}$.  Here $N_6(m)$ is the degree six reduced norm on $M_3(Q_F)$, and $G'$ acts on $H_3(Q_F)$ via $h \mapsto \delta m^{-1} h \,^*m^{-1}$. Clearly $V \hookrightarrow V'$, and as a representation of $\GL_3(F)$, $V' = V \oplus V_0$, where $V_0 = M_3(F)$ with $\GL_3(F)$ action given by right translation, $m \mapsto mg$.

Now, suppose $q(x,y,z) = ax^2 + by^2 + cz^2 + dyz + ezx + fxy$ is as above.  As an element of $H_3(F)$, $q(x,y,z)$ corresponds to 
\[v = \left(\begin{array}{ccc} a& \frac{f}{2}& \frac{e}{2} \\ \frac{f}{2} & b &\frac{d}{2} \\ \frac{e}{2} & \frac{d}{2} & c\end{array}\right).\]
Suppose $(v_1, v_2, v_3)$ is the good basis of the quaternion algebra $Q$ corresponding to $q(x,y,z)$, so that the $v_i$ satisfy the multiplication table (\ref{multTable2}).  Define 
\[\tilde{v} = \left(\begin{array}{ccc} a & v_3 & v_2^* \\ v_3^* & b& v_1 \\ v_2 & v_1^* & c \end{array}\right),\]
so that $\tilde{v} \in H_3(Q_F)$.  Then the stabilizer of the line $F\tilde{v}$ in $G'$ is a parabolic subgroup of $G'$.  In terms of the decomposition $V' = V \oplus V_0 \simeq H_3(F) \oplus M_3(F)$, one can choose this isomorphism so that $\tilde{v} = (v,v_0)$, with $v_0 = 1_3 \in M_3(F)$.  Thus good bases in the setting of quaternion orders naturally correspond to lifting laws.  

We remark that an application of this correspondence to automorphic forms is given in \cite{lucianovic}. For the reader familiar with Tits' so-called second construction of cubic norm structures, the above decomposition $H_3(Q) = H_3(F) \oplus M_3(F)$ is a special case of this.  In fact, quite generally, Tits' construction can be regarded as a lifting law.

\section{Statement of twisted orbit parametrizations}\label{sec:statement} In the introduction, we gave rough statements of the main results Theorem \ref{introThm1} and Theorem \ref{introThm2}. The purpose of this section is to give precise statements of the more general versions of these results that we prove below.  The proofs will be given in subsections \ref{subsec:integralB1} and \ref{subsec:integralB2}, once the relevant lifting laws and necessary technical facts have been proved.  In the first part of this section, we discuss Theorem \ref{introThm1}, while in the second part of this section we discuss Theorem \ref{introThm2}.

\subsection{Twisted orbit parametrization yielding quadratic rings}\label{subsec:statement1}  In this subsection we assume $R$ is an integral domain, with fraction field $F$ of characteristic $0$.  We will shortly insist that $R$ satisfies an additional property, but for now we only assume these properties of $R$.

\subsubsection{Algebraic preliminaries}  The first parametrization theorem concerns the nondegenerate orbits of $\GL_2(A)$ on $W_{A} = R \oplus A \oplus A \oplus R$, for certain $R$-algebras $A$.  We begin by describing which $R$-algebras $A$ are considered. We introduce one non-standard piece of terminology.

\begin{definition} Suppose $A$ is an associative $F$-algebra.  We say that a pair $(A, n_{A}:A \rightarrow F)$ is an \textbf{associative cubic norm structure} (ACNS) over $F$ if it is one of the following:
\begin{enumerate}
\item $A= F$, with $n_{A}: F \rightarrow F$ given by $x \mapsto x^3$;
\item $A = F \times F$, with $n_{A}: F \times F \rightarrow F$ given by $(x,y) \mapsto xy^2$;
\item $A = E$, where $E$ is a cubic \'{e}tale extension of $F$, with $n_{A} = n_{E/F}: E \rightarrow F$ the usual degree three norm map;
\item $A = F \times Q$, where $Q$ is a quaternion $F$-algebra, with $n_{A}: F \times Q \rightarrow F$ given by $(x,y) \mapsto xn_{Q/F}(y)$ where $n_{Q/F}: Q \rightarrow F$ is the (degree two) reduced norm map;
\item $A$ a central simple $F$ algebra of degree $3$, with $n_{A}: A \rightarrow F$ the (degree three) reduced norm map. \end{enumerate}
\end{definition}
When $R=F$ is a field, these are the set of $R$-algebras that will be considered in the orbit parametrizations.  

In all of the cases above, there is a quadratic polynomial map $\#: A \rightarrow A$, which we write $y \mapsto y^\#$, that is uniquely determined by the property $y^\# = n_{A}(y) y^{-1}$ when $y$ is invertible.  This map is called the \emph{adjoint}.  To state the explicit orbit parametrization Theorem \ref{twistThm1} below (and for nearly everything else besides), we will need to use this adjoint map.  To see that $y \mapsto n_A(y) y^{-1}$ extends to a polynomial map on all of $A$, one need only verify this when $A$ is split.  In these cases, the adjoint map is given by the following rule in each of the cases above:
\begin{enumerate}
\item $A= F$, with $x^\# = x^2$;
\item $A = F \times F$, with $(x,y)^\# = (y^2,xy)$
\item $A = F \times F \times F$, with $\#$ the map $(\lambda_1,\lambda_2, \lambda_3) \mapsto (\lambda_2 \lambda_3, \lambda_3\lambda_1, \lambda_1\lambda_2)$.
\item $A = F \times Q$, where $Q$ is a quaternion $F$-algebra, with $(x,y)^\# = (n_{Q}(y),xy^*)$.  Here $n_{Q}$ is the degree two reduced norm map on $Q$, and $y^*$ the conjugate of $y \in Q$, so that $yy^* = n_{Q}(y)$ for all $y \in Q$.
\item $A = M_3(F)$, with $x \mapsto x^\#$ the adjoint map on $3 \times 3$ matrices. \end{enumerate}

There is an analogous notion of an associative cubic norm structure over $R$.
\begin{definition} Suppose $A$ is an associative $R$-algebra, that is free of finite rank over $R$.  We say a pair $(A, n_{A}:A \rightarrow R)$ is an \textbf{associative cubic norm structure} over $R$ if $A_{F} = A\otimes_{R}F$, with the induced $n_{A_F}: A_{F} \rightarrow F$, is an ACNS over $F$, and $y^\# \in A \subseteq A_F$ for all $y \in A$.\end{definition}

For the rest of this subsection, $A$ denotes\footnote{The reader unfamiliar with cubic norm structures might take $R=\Z$ and $A$ an order in a degree three field extension of $\Q$ to get a sense for the definitions and constructions below.} an ACNS over $R$. For such an $A$, the algebra $M_2(A)$ has a degree $6$ (reduced) norm map $\det_6: M_2(A) \rightarrow R$, with $\GL_2(A)$ being the preimage of $R^\times$.  One can define a cubic polynomial action of $\GL_2(A)$ on the $R$-module $W_{A} := R \oplus A \oplus A \oplus R$, which will be discussed in detail in subsection \ref{sec:new}.  As mentioned in the introduction, this action makes $W_{A} \otimes_{R} F$ a PVS for the action of $\GL_2(A_F)$.

There is a quartic form $q: W_A \rightarrow R$, whose definition is essentially due to Freudenthal.  The action of $\GL_2(A_F)$ on $W_{A_F}$ preserves this quartic form, up to scaling: $q(v \cdot g) = \det_6(g)^2 q(v)$, where $\det_6: \GL_2(A_F) \rightarrow F^\times$ is the degree $6$ (reduced) norm map.  The open orbit $W_{A_F}^{open}$ for the action of $\GL_2(A_F)$ consists of those $v \in W_{A_F}$ with $q(v) \neq 0$.

\subsubsection{Orbit problem} We now begin the description of the orbit problem.  We will parametrize the orbits of a certain group $G$ on $W_{A}^{open}$ in terms of quadratic rings $S$ over $R$ and certain $S \otimes A$-modules.

To define the group $G$, set $G_1 = \GL_1(R) \times \GL_2(A)$.  Let $(\lambda,g)\in \GL_1(R) \times \GL_2(A)$ act on $v \in W_{A}$ by $v \mapsto \lambda^{-1} (v \cdot g)$.  Define $G \subseteq G_1$ to consist of the pairs $(\lambda,g)$ where $\lambda^3 = \det_6(g)$.  The group $G$ acts on $W_{A}$ by the restriction of the $G_1$ action.

We now briefly discuss quadratic rings $S$ over $R$.  For the rest of this subsection, we will additionally assume that the ring $R$ satisfies $\frac{x^2 + x}{2}$ is in $R$ for all $x \in R$. For such a ring, every quadratic ring $S$ over $R$, i.e., every commutative $R$-algebra that is free of rank two over $R$, is of the form $S = S_{D} = R[y]/(y^2 - Dy + \frac{D^2-D}{4})$, where $D$ is congruent to a square modulo $4 R$.  If $S$ is a quadratic ring over $R$, and $(1, \tau)$ is a basis of $S$, we say that $(1,\tau)$ is a \emph{good basis} of $S$ if $\tau$ satisfies a quadratic equation $\tau^2 - D\tau + \frac{D^2-D}{4}$ for some $D \in R$ (necessarily a square modulo $4R$).  We write $E := S_{D} \otimes_{R} F = F[y]/(y^2 -D)$, and denote by $\omega$ the image of $y$ in $E$.

\subsubsection{Balanced modules} As mentioned, the orbits of $G$ on $W_{A}$ will be parametrized in terms of certain $S \otimes A$ modules.  To describe which modules show up, we need to make a few definitions.  We consider a class of $S_D \otimes_R A = S \otimes_R A$ modules $I \subseteq A_E$ as follows. 
\begin{definition} If $I \subseteq E \otimes A$, we say that $I$ is an $S\otimes A$-\textbf{fractional ideal} if
\begin{itemize}
\item $I$ is closed under left multiplication by $S$ and right multiplication by $A$;
\item $I$ is free of rank $2$ as an $A$-module;
\item $I_F = E \otimes A$ is free of rank one as an $A_{E}$-module. \end{itemize}
\end{definition}

If a fractional $S\otimes A$-ideal $I$ comes equipped with an $A$-basis, $I = b_1 A \oplus b_2 A$, we may define its norm as follows.
\begin{definition} Suppose $(\tau,1)$ is a good basis of $S$, and $I = b_1A \oplus b_2A$, with $b_1, b_2 \in E\otimes A$.  Then, there is a unique $g \in M_2(A_F)$ so that $(b_1,b_2) = (\tau \otimes 1, 1\otimes 1)g$.  Define the \textbf{norm} of the data $(I,(b_1,b_2))$ as $N(I;\tau, (b_1,b_1)) = \det_6(g)$ where $g$ is as above.  Concretely, if $b_1 = \tau \otimes m_{11} + 1 \otimes m_{21}$ and $b_2 = \tau\otimes m_{12} + 1\otimes m_{22}$, with $m_{ij} \in A_F$, then the norm $N(I;\tau,(b_1,b_2))$ is defined to be $\det_6\left(\mm{m_{11}}{m_{12}}{m_{21}}{m_{22}}\right)$.  The norm of $I$ depends on the data $\tau, b_1, b_2$. \end{definition} 

The orbits of $G$ on $W_{A}^{open}$ will be parametrized by what are called \emph{balanced} fractional $S \otimes A$-ideals.  To define the notion of balanced, and to then state the explicit bijection Theorem \ref{twistThm1} below, we need to define a certain cubic polynomial map $!:A^2 \rightarrow W_{A}$.  Thus suppose $(x,y) \in A^2$.  Associated to the pair $(x,y)$ is the element 
\begin{equation}\label{shriekFormula} (x,y)^{!} := (n_{A}(x), x^{\#}y, y^\#x, n_{A}(y)) \in W_{A}.\end{equation}
Note that if $a \in A$, then $(ax,ay)^{!} = n_{A}(a)(x,y)^{!} \in W_{A}$.  The fact that $W_{A}$ receives a cubic polynomial map from $A^2$ with this property is the sense that in which $W_A$ can be thought of as twisted symmetric cube representation of $\GL_2(A)$.  

One always has $q((x,y)^{!}) = 0$, and in fact, $(x,y)^{!}$ is in the minimal nonzero orbit for the action $\GL_2(A_F)$ on $W_{A_F}$. Base-changing the map (\ref{shriekFormula}) from $A$ to $A_{E}$, one obtains a map $!: A_E^2 \rightarrow W_{A} \otimes E$ given by the same formula as in (\ref{shriekFormula}).  

We can now define the notion of a balanced fractional $S\otimes A$-ideal.
\begin{definition} Suppose $\beta \in E^\times$, and $I$ is an $S\otimes A$-fractional ideal with $A$-basis $b_1, b_2$, and $\tau \in S$ is such that $(\tau,1)$ is a good basis of $S$.  We say the data $(I,(b_1,b_2),\tau,\beta)$ is \textbf{balanced} if 
\begin{equation}\label{eqn:betaShriek}\beta^{-1}(x,y)^{!} = \beta^{-1}(n_{A_E}(x),x^\#y,y^\# x, n_{A_E}(y)) \in W_{A} \otimes S \subseteq W_{A} \otimes E\end{equation}
for all $x, y \in I$ and if $N(I;\tau,(b_1,b_2)) = N_{E/F}(\beta)$. Here $n_{A_E}$ denotes the $E$-linear extension of $n_A: A \rightarrow F$ to $n_{A_E}:A_E \rightarrow E$.\end{definition}
The condition $\beta^{-1}(x,y)^{!} \in W_{A} \otimes S$ for all $x, y \in I$ implies that $n_{A_E}(x) \in \beta S$ for all $x \in I$.  When $A = R \times R \times R$, $R \times M_2(R)$, and $M_3(R)$, the condition $n_{A_E}(x) \in \beta S$ for all $x \in I$ is in fact equivalent to the condition (\ref{eqn:betaShriek}).

It is a fact that $\beta^{-1}(x,y)^{!} \in W_{A} \otimes S$ for all $x, y \in I$ if and only if $\beta^{-1} (b_1,b_2)^{!} \in W_{A} \otimes S$.  Suppose that $I$ is a fractional $S \otimes A$-ideal, with $A$-basis $b = (b_1,b_2)$, and $\beta \in E^\times$. Set
\[X(I,b,\beta) := \beta^{-1} b^{!} \in W_{A_E} = W_A \otimes_{R} E.\]
The element $X(I,b,\beta)$ is the analogue of the pure tensor in (\ref{eqn:pureTensor1}).

Finally, we define an equivalence relation on data $(I,b,\beta)$.
\begin{definition} The triple $(I,b,\beta)$ is said to be \emph{equivalent} to the triple $(I',b',\beta')$ if there exists $x \in A_E^\times$ so that $I' = xI$, $b' = xb$, and $\beta' = n_{A_E}(x) \beta$.  We write $[(I,b,\beta)]$ for the associated equivalence class.\end{definition}
The non-degenerate orbits of $G$ on $W_{A}$ will be parametrized in terms of equivalence classes of balanced data $(I,b,\beta)$.

\subsubsection{The explicit bijection} Suppose that $\tau,1$ is a good basis for $S$ and that $(I,b,\beta)$ is a balanced fractional $S \otimes A$-ideal.  Then there are $v, v' \in W_{A}$ so that $\beta^{-1}b^{!} = X(I,b,\beta) = \tau v + v' \in W_{A}\otimes S$.  One thus obtains a map from balanced data $(I,b,\beta)$ to $v \in W_{A}$:
\[(I,b,\beta) \mapsto X(I,b,\beta) = \tau v + v' \mapsto v \in W_A.\]
Note the similarity of this map with those in \cite{bhargavaI} and the construction of subsection \ref{subsec:LLb1}.  It will be checked below that the $v$ so obtained is in $W_{A}^{open}$. We can now state the parametrization theorem.
\begin{theorem}\label{twistThm1} The map $(I,(b_1,b_2),\tau,\beta) \mapsto v$ induces a bijection between $v \in W_{A}$ with $q(v) = D \neq 0$ and balanced data $(I,(b_1,b_2),\tau,\beta)$ up to equivalence, with $\tau^2 - D\tau + \frac{D^2-D}{4} = 0$. This bijection is equivariant for the action of the group $G$, with $(\lambda,g) \in G$ acting on the data $((b_1,b_2),\omega,\beta)$ by
\[ [((b_1,b_2),\omega,\beta)] \mapsto [((b_1,b_2)g, \lambda \omega, \beta)].\] 
Here $[((b_1,b_2),\tau,\beta)]$ means that the data is taken up to equivalence. 
\end{theorem}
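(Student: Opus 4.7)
The plan is to construct the map $\Phi : [(I, (b_1,b_2), \tau, \beta)] \mapsto v$ as in the statement, verify it is well-defined on equivalence classes, construct an inverse $\Psi$ using the lifting law of Section~\ref{bI}, and finally check $G$-equivariance.

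For $\Phi$: given balanced data, the element $X(I,b,\beta) = \beta^{-1} b^{!}$ lies in $W_A \otimes S$ by the balanced hypothesis, so the decomposition $X = \tau v + v'$ with $v, v' \in W_A$ is unique. I would first record the identity $(xb)^{!} = n_{A_E}(x)\, b^{!}$ for $x \in A_E^\times$, a direct consequence of the definition of $\#$ and the multiplicativity of $n_A$. This shows $X$, and hence $v$, is invariant under the equivalence $(b, \beta) \mapsto (xb,\, n_{A_E}(x)\beta)$, so $\Phi$ is well-defined on equivalence classes. To verify $q(v) = D$, I would extend $q$ $E$-linearly to $q_E$ on $W_{A_E}$, use that $q_E(b^{!}) = 0$ since $b^{!}$ lies in the minimal nonzero orbit, and expand $q_E(\tau v + v')$ via the quartic polarization of $q$; the norm condition $N(I;\tau,b) = N_{E/F}(\beta)$ controls the resulting mixed terms, and $\tau^2 = D\tau - \frac{D^2-D}{4}$ then forces $q(v) = D$.

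The construction of $\Psi$ is the crux. Given $v \in W_A$ with $q(v) = D \neq 0$, set $S = R[\tau]$ with $\tau^2 - D\tau + \frac{D^2-D}{4} = 0$. The lifting law of Section~\ref{bI}, applied over $E$, produces $v' \in W_A$ (essentially unique) such that $\tilde{v} := \tau v + v' \in W_{A_E}$ lies in the minimal $\GL_2(A_E)$-orbit; hence $\tilde{v} = \beta^{-1} b^{!}$ for some $b = (b_1,b_2) \in A_E^2$ and $\beta \in E^\times$, unique up to the prescribed equivalence. Setting $I = b_1 A + b_2 A \subseteq A_E$, the key verifications are: (i) $SI \subseteq I$; (ii) $I$ is free of rank two over $A$ and $I_F$ is free of rank one over $A_E$; (iii) the data is balanced with $N(I;\tau,b) = N_{E/F}(\beta)$. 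For (i), the line $F\tilde{v} \subseteq W_{A_E}$ is stabilized by a parabolic subgroup of $\GL_2(A_E)$, and scalar multiplication by $\tau \in E$ on $A_E^2$ preserves this line, so $(\tau b_1, \tau b_2) = (b_1, b_2) M_\tau$ for some $M_\tau \in M_2(A_F)$; the integrality $\tilde{v} \in W_A \otimes S$ then forces $M_\tau \in M_2(A)$, giving the $S$-module structure on $I$. Items (ii) and (iii) unwind from $q(v) \neq 0$ and the defining formulas.

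That $\Phi$ and $\Psi$ are mutually inverse is immediate from the uniqueness of the decomposition $X = \tau v + v'$ and of the lifting. For $G$-equivariance, under $(\lambda, g) \in G$ the output transforms as $v \mapsto \lambda^{-1}(v \cdot g)$, so $q(v) \mapsto \lambda^{-4}\det_6(g)^2 q(v) = \lambda^2 D$ and correspondingly $\omega \mapsto \lambda \omega$. The $\GL_2(A)$-equivariance $(bg)^{!} = (b^{!}) \cdot g$ of the cubic map $!$ (with any residual $\det_6(g)$ factor absorbed into $\beta$) then translates the replacement $b \mapsto bg$ together with the rescaling of $\omega$ into precisely the transformation $v \mapsto \lambda^{-1}(v\cdot g)$ of the output. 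The principal obstacle I anticipate is the integrality claim $M_\tau \in M_2(A)$ in step (i) of $\Psi$: this is where the construction genuinely departs from the split case of \cite{bhargavaI}, and the precise form of the balanced condition exists precisely to make this integrality work.
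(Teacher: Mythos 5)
Your overall architecture mirrors the paper's (lift $v$ to the rank-one element $X(v)=\tfrac{\omega v+v^\flat}{2}$, write it as $\beta^{-1}b^{!}$, set $I=b_1A+b_2A$, check balancedness and equivariance), but the step you yourself flag as the crux is a genuine gap, and the mechanism you propose for it is not the one that works. The matrix $M_\tau$ is determined by its action on the $A_F$-basis $b$ of $I_F$, not by its action on the line through $\tilde v$ (whose stabilizer is a large parabolic), and integrality of $\tilde v\in W_A\otimes S$ does not by itself force $M_\tau\in M_2(A)$. What actually makes this work is the explicit operator $R_r(v)\in M_2(A)$ of subsection \ref{sec:new} together with the refined lifting law Theorem \ref{b1A}: by Lemma \ref{equivModA}, any $(b,\beta)$ with $\beta^{-1}b^{!}=X(v)$ satisfies $b=x\ell\epsilon(v)$ for a unit $x$, and the identity $\omega\epsilon(v)=\epsilon(v)R_r(v)$ then gives $\omega b=bR_r(v)$; hence $\tau=\tfrac{D+\omega}{2}$ acts on $I$ by $\Omega(v)=\tfrac{D+R_r(v)}{2}$, whose integrality (like that of $v'=\tfrac{v^\flat-q(v)v}{2}$ itself) uses the standing hypothesis $\tfrac{x^2+x}{2}\in R$ --- an input that never enters your argument. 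Moreover, producing a basis at all with $\beta\in E^\times$ is Lemma \ref{goodL} (the trace trick $\ell=\tr_{E/F}(\ell_1\epsilon)$, using $\langle X(v),\overline{X(v)}\rangle=\omega q(v)\in E^\times$ so that $X(v)$ is a unit multiple of a shriek in every component of the \'etale algebra $E$); this is also not automatic.

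Second, your verification that $q(v)=D$ in the forward direction is insufficient as sketched: from $X=\tau v+v'$, the conditions $q_E(X)=0$, the norm condition, and $\tau^2=D\tau-\tfrac{D^2-D}{4}$ yield only two relations among the polarizations $(v,v,v,v),\dots,q(v')$ together with $\langle v,v'\rangle=D$, which do not pin down $q(v)$. One needs the full rank-one property of $X$ plus the uniqueness of rank-one lifts: first that $v$ has rank $4$ (since $\omega v=X-\overline{X}$ is a difference of rank-one elements with unit pairing), then Lemma \ref{uniqueLifts} gives $v'=tv^\flat$ and $\omega^2=t^2q(v)$, and the balanced pairing $\langle X,\overline{X}\rangle=\omega^3$ (Lemma \ref{equivBal}) forces $t=1$; this is exactly Lemma \ref{admisLem}. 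The same admissibility statement is what makes the two maps mutually inverse: your appeal to ``uniqueness of the decomposition and of the lifting'' silently assumes that the $X$ built from balanced data is the canonical lift $X(v)$, and, for the other composition, that the recovered triple is equivalent to the original one --- for which the paper needs Lemma \ref{equivModA} and the choice $b_0=\tr_{E/F}(b)$ with $b_0\epsilon=b$. With these lemmas supplied your outline does become the paper's proof, but as written the central integrality claim and the $q(v)=D$ claim are unproved.
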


Note that Theorem \ref{twistThm1} implies Theorem \ref{introThm1} of the introduction, by restricting to the action of $\SL_2(A) \subseteq G$.  Below we will also describe explicitly the inverse map $v \mapsto [(I,(b_1,b_2),\tau,\beta)]$.  This will involve producing the element $X = \tau v + v'$ from the element $v \in W_{A}$, which will be the lifting law in this context.

\subsection{Twisted orbit parametrization yielding cubic rings} In this subsection we assume $R$ is an integral domain, with fraction field $F$ of characteristic $0$. (We no longer assume $x^2 + x \in 2R$ for all $x \in R$.)  We will state a precise and more general version of Theorem \ref{introThm2} of the introduction.  This theorem will parametrize the non-degenerate orbits of (essentially) $\GL_2(R) \times \GL_3(C)$ on $H_3(C)^2 = R^2 \otimes H_3(C)$, for an associative composition ring $C$.  The orbits will be parametrized in terms cubic $R$-algebras $T$ and certain $T \otimes C$-modules.  We discuss each of these pieces in turn: composition rings, the module $H_3(C)$, cubic $R$-algebras, and the $T \otimes C$-modules that arise.

\subsubsection{Composition rings} We begin by discussing associative composition algebras.

\begin{definition} Suppose $C$ is an associative $F$-algebra.  We say that $(C, n_C: C\rightarrow F)$ is an \textbf{associative composition algebra} (ACA) if $C$ is one of the following:
\begin{enumerate}
\item $C = F$, with $n_C: F \rightarrow F$ given by $x \mapsto x^2$;
\item $C = E$, with $E$ a quadratic \'{e}tale extension of $F$, and $n_C = n_{E/F}: E \rightarrow F$ the (degree two) norm map;
\item $C$ is a quaternion $F$-algebra, with $n_{C}: C \rightarrow F$ the (degree two) reduced norm map. \end{enumerate}
If $C$ is such an $F$-algebra, then $C$ has the standard involution $\sigma: C \rightarrow C$ that satisfies $n_C(x) = xx^\sigma$.  If $C$ is an associative $R$-algebra, that is free of finite rank as an $R$-module, then we say that $(C, n_C: C \rightarrow R)$ is an ACA over $R$ if $(C_F, n_{C_F}: C_F \rightarrow F)$ is an ACA over $F$ and $C$ is stable under the involution $\sigma$ on $C_F$.\end{definition}

Throughout the rest of this subsection, $C$ will denote\footnote{The reader may wish to take $R = \Z$ and $C$ an order in a quadratic extension of $\Q$ to get a sense for the definitions and constructions below.} an ACA over $R$.  We set $H_3(C) = \{h \in M_3(C): h^* = h\}$ the Hermitian $3 \times 3$ matrices with coefficients in $C$.  Here $*: M_3(C) \rightarrow M_3(C)$ is $x \mapsto \,^tx^\sigma$, the transpose conjugate.  In coordinates, $H_3(C)$ consists of the elements
\[ \left(\begin{array}{ccc} c_1 & a_3 & a_2^\sigma \\ a_3^\sigma & c_2 & a_1 \\ a_2 & a_1^\sigma & c_3 \end{array}\right)\]
with $c_1, c_2, c_3 \in R$ and $a_1, a_2, a_3 \in C$. There is an action of $\GL_3(C)$ on $H_3(C)$ via $h \cdot m = m^*hm$, for $h \in H_3(C)$ and $m \in \GL_3(C)$.  From now on, we will often abuse notation and write $*$, instead of $\sigma$, for the involution on $C$.

\subsubsection{Binary cubic forms and cubic rings} We now give various facts and reminders that concern the relationship between binary cubic forms and cubic rings. See, for instance, \cite{ggs} (and also subsubsection \ref{ssc:bcfs}) for more details.

Suppose we have the binary cubic form $f(x,y) = ax^3 + bx^2y+ cxy^2 + dy^3$ with $a,b,c,d$ in the base ring $R$.  The based cubic $R$-algebra $T$ determined by $f$ is a free $R$-module of rank $3$, with basis $1, \omega,\theta$.  The multiplication table in $T$ is
\begin{itemize}
\item $\omega \theta = -ad$;
\item $\omega^2 = -ac + a \theta - b \omega$;
\item $\theta^2 = -bd + c \theta - d \omega$. \end{itemize}

Denote $L = T \otimes_R F$, and set $\omega_0 = \omega + \frac{b}{3}$ and $\theta_0 = \theta - \frac{c}{3}$, so that $\tr_{L/F}(\omega_0) = \tr_{L/F}(\theta_0) = 0$.  The basis $(1,\omega,\theta)$ of $T$ is called a \emph{good basis}.

\subsubsection{Orbit problem} For the rest of this subsection, $R, F$ and $C$ will be as above.  The letter $T$ will denote a cubic ring over $R$, and $L = T \otimes_R F$.

Set $G_1 = \GL_1(R) \times \GL_2(R) \times \GL_3(C)$, which acts on the right of $V := V_2(R) \otimes H_3(C)$.  Here $V_2(R) = R^2$ is the row vectors with coefficients in $R$ and the action of $\GL_2$ action is the usual right action of matrices on row vectors.  The group $\GL_3(C)$ acts on the right of $H_3(C)$ by $h \mapsto m^*hm$ for $h \in H_3(C)$ and $m \in \GL_3(C)$. Finally, $\lambda \in \GL_1(R)$ acts on $V$ by scaling by $\lambda$. We denote by $G \subseteq G_1$ the group of triples $(\lambda,g,m) \in \GL_1(R) \times \GL_2(R) \times \GL_3(C)$ with $\lambda = \det(g)^{-1}$ and $\det(g)^2 = N_6(m)$, where $N_6(m)$ is the degree $6$ reduced norm on $M_3(C)$.  Then $G$ also acts on $V$ by restriction of the action of $G_1$.  One has that the pair $(G(F),V(F))$ is a PVS. We will consider the orbits of $G$ on $V^{open}$.

\subsubsection{Balanced modules} Let $C_{L} = C \otimes_{R} L$.  We consider $T \otimes C$ submodules $I \subseteq L \otimes C$.  We make a few definitions.
\begin{definition} If $I \subseteq L \otimes C$.  We say $I$ is a $T\otimes C$-\textbf{fractional ideal} if
\begin{itemize}
\item $I$ is closed under left multiplication by $T$ and right multiplication by $C$;
\item $I$ is free of rank $3$ as a $C$-module;
\item $I \otimes_{R}F$ is free of rank one as an $L \otimes C$-module. \end{itemize} \end{definition}

\begin{definition} Suppose $I$ is a $T \otimes C$ fractional ideal, and $b_1, b_2, b_3$ is an ordered basis of $I$ i.e., that $I = b_1C + b_2C + b_3 C$ inside $L \otimes C$, where $b_1, b_2, b_3$ are in $L \otimes C$.  Then $(b_1,b_2,b_3) = (1\otimes 1,\omega\otimes 1,\theta\otimes 1)g$ for some unique $g \in M_3(C_F)$.  We define the norm of the ideal $I$, $N(I;(b_1,b_2,b_3);(1,\omega,\theta))$, with respect to the data $(b_1,b_2, b_3)$ and $(1,\omega,\theta)$ to be $N_6(g)$.  Of course, this norm depends on the choice of the good basis $(1,\omega,\theta)$ and the choice of ordered basis $(b_1,b_2,b_3)$. \end{definition}

We now define when the based fractional ideal $I,(b_1,b_2,b_3)$ is balanced.
\begin{definition} Suppose $\beta \in L^\times$, and the good basis $(1,\omega,\theta)$ of $T$ is fixed.  The data $((I,(b_1,b_2,b_3)),\beta)$ is said to be \textbf{balanced} if $y^\sigma x \in \beta C \otimes_R T$ for all $x, y$ in $I$ and $N((I,(b_1,b_2,b_3))) = N_{L/F}(\beta)$. Here $\sigma: L \otimes C \rightarrow L \otimes C$ is the $L$-linear extension of $\sigma: C \rightarrow C$. \end{definition}

Set $b = (b_1,b_2,b_3) \in (C_L^3)$, a row vector.  If $I = b_1 C + b_2 C + b_3 C$ and $\beta \in L^\times$, we set $X_{I,b,\beta} = \beta^{-1} b^* b \in H_3(C) \otimes T = H_3(C_T)$.  In coordinates,
\[X_{I,b,\beta} = \beta^{-1} b^* b = \beta^{-1} \left(\begin{array}{ccc} n_{C}(b_1) & b_1^* b_2 & b_1^* b_3 \\ b_2^* b_1 & n_{C}(b_2) & b_2^*b_3 \\ b_3^* b_1 & b_3^* b_2 & n_{C}(b_3) \end{array}\right).\] 
One sees easily that $X_{I,b,\beta}$ is in $H_3(C) \otimes T \subseteq H_3(C) \otimes L$ if and only if $\beta^{-1} y^\sigma x \in C\otimes_{R} T$ for all $x,y \in I$.  The element $X_{I,b,\beta}$ is, by construction, a ``pure tensor'' in $H_3(C) \otimes L$, just like the element $X(I,b,\beta)$ was in subsection \ref{subsec:statement1}.

We now define an equivalence relation on based fractional $T \otimes C$ ideals.  Suppose $I =  b_1C +  b_2C + b_3C \subseteq C_L$ is a $C_T = C \otimes_{R} T$-fractional ideal, and $\beta \in L^\times$ is a unit.  We fix the good basis $(1,\omega,\theta)$ of $T$.  Now say that $(b_1,b_2,b_3, \beta)$ is equivalent to $(b_1',b_2',b_3',\beta')$ if there exists $x \in C_{L}^\times$ so that
\[(b_1',b_2',b_3',\beta') = (xb_1, xb_2,xb_3, n_{C}(x)\beta).\]
It is clear that if $(b_1,b_2,b_3, \beta)$ is equivalent to $(b_1',b_2',b_3',\beta')$, then $X_{I,b,\beta} = X_{I',b',\beta'}$.

If the data $(I,(b_1,b_2,b_3),\beta)$ is balanced, then since $X_{I,b,\beta}$ is in $H_3(C_{T}) = H_3(C) \otimes_{R} T$, there are $A,B, D$ in $H_3(C)$ so that 
\begin{equation}\label{getAB} X_{b,\beta} = -A\theta + B\omega + D.\end{equation}
Associated to a based-balanced $T \otimes C$-module $(I,(b_1,b_2,b_3),\beta)$, and the good basis $(1,\omega, \theta)$ of $T$, one thus obtains a pair $(A,B) \in H_3(C)^2$ by equality (\ref{getAB}). Again, it is clear that the association $(I,(b_1,b_2,b_3),\beta) \mapsto (A,B)$ descends to the level of equivalence classes.  It will be checked below that if $L = T\otimes_{R} F$ is \'{e}tale, then this pair $(A,B)$ is in $V^{open}$.

We can now state the orbit parametrization theorem.
\begin{theorem}\label{twistThm2} Suppose that $L = T_F$ is \'{e}tale.  Then the association
\[(T,(1,\omega,\theta),(I,(b_1,b_2,b_3)),\beta) \mapsto (A,B)\]
induces a bijection between based-balanced fractional $T \otimes C$-ideals and pairs $(A,B)$ in $V^{open}$.  These bijections are equivariant for the action of the group $G$: If $h = (\lambda, m,g) = (\det(g)^{-1},m,g)$ with $N_6(m) = \det(g)^2$ is in $G$, then $h$ acts on the data via
\[ [((\omega_0,\theta_0),(b_1,b_2,b_3),\beta)]\mapsto [((\omega_0,\theta_0)g,(b_1,b_2,b_3)m,\beta)].\]
Here, $[((\omega_0,\theta_0),(b_1,b_2,b_3),\beta)]$ means that the data $(b_1,b_2,b_3,\beta)$ is taken up to equivalence.  \end{theorem}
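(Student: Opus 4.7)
The plan is to adapt the strategy behind Theorem~\ref{twistThm1}, replacing binary cubic forms by pairs of Hermitian forms. There are three ingredients: a forward map $[(I,b,\beta)] \mapsto (A,B)$ read off from the pure tensor $X_{I,b,\beta} = \beta^{-1}b^*b$; an inverse map furnished by a lifting law; and the $G$-equivariance check.

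\textbf{Forward map.} For based-balanced data $(T,(1,\omega,\theta),(I,b),\beta)$, the balancedness hypothesis forces $X_{I,b,\beta} \in H_3(C)\otimes_R T$, so expansion in the good basis produces a unique triple $(A,B,D) \in H_3(C)^3$ with $X_{I,b,\beta} = -A\theta + B\omega + D$. To show $(A,B) \in V^{open}$, I would compute the binary-cubic invariant $f_{A,B}(x,y) := N_3(Ax - By)$, where $N_3$ is the cubic norm on $H_3(C)$. Applying $N_3$ to the pure-tensor formula and using the balanced-norm identity $N(I;b;(1,\omega,\theta)) = N_{L/F}(\beta)$, one sees that $f_{A,B}$ is (up to normalization) the binary cubic attached to $T$ via Delone-Faddeev, so the \'{e}tale hypothesis on $L$ forces $f_{A,B}$ to have nonzero discriminant and hence $(A,B) \in V^{open}$.

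\textbf{The inverse map and the lifting law.} Conversely, given $(A,B) \in V^{open}$, set $f_{A,B}(x,y) = ax^3 + bx^2y + cxy^2 + dy^3 := N_3(Ax - By)$ and let $T$ be its Delone-Faddeev cubic ring with good basis $(1,\omega,\theta)$ as in Subsection~\ref{subsec:LLbcfs}. The heart of the proof is an explicit polynomial lift: I expect to produce a polynomial map $(A,B)\mapsto D \in H_3(C)$, built from the adjoint and Freudenthal cross-product on $H_3(C)$, such that
\[
X := -A\theta + B\omega + D \in H_3(C_T)
\]
factors as $X = \beta^{-1} b^* b$ for some $\beta \in L^\times$ and row $b = (b_1, b_2, b_3) \in C_L^3$; then $I := b_1C + b_2C + b_3C$ will automatically be a $T\otimes C$-fractional ideal and $(I,b,\beta)$ balanced. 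This is a lifting law in the sense of Definition~\ref{defn:LL}: a larger group containing $\GL_3(C_L)$ acts on $H_3(C_L)$, and the rank-one elements form its minimal nonzero orbit. The existence of $D$ and of the rank-one factorization will be proved by the prehomogeneity principle described in the ``note on the proofs'': both sides are $G$-equivariant polynomial expressions in $(A,B)$, so one may verify them on a single convenient element in each open $G(\bar F)$-orbit, and the split cases $C = F,\, F\times F,\, M_2(F)$ (covered by \cite{bhargavaII} and \cite{wood}) provide the base cases.

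\textbf{Bijection and equivariance.} With the lifting law in hand, injectivity follows from uniqueness of rank-one factorizations: two factorizations $X = \beta^{-1}b^*b = {\beta'}^{-1}{b'}^*b'$ differ by $b' = xb$, $\beta' = n_C(x)\beta$ for some $x \in C_L^\times$, which is precisely the specified equivalence. Surjectivity is the lifting law. $G$-equivariance is a direct computation: the substitution $b \mapsto bm$ transforms $X \mapsto m^*Xm$, while the substitution $(\omega_0,\theta_0) \mapsto (\omega_0,\theta_0)g$ on the trace-zero basis corresponds under Delone-Faddeev to the $\GL_2$-twist on $f_{A,B}$, whose effect on $(A,B)$ is exactly the $\GL_2\times \GL_3(C)$-action on $V$. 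The main obstacle throughout is the lifting law --- producing $D$ and proving the rank-one factorization; once it is established, the rest of the theorem is formal.
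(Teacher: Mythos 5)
Your architecture matches the paper's (expand $X_{I,b,\beta}=-A\theta+B\omega+D$, invert via a lifting law with $D=A^\#\times B^\#$, identify the equivalence relation with uniqueness of rank-one factorizations as in Lemma \ref{equivMod}, then check equivariance), but the way you propose to establish the central lifting step has a genuine gap. The statement ``$X=-A\theta+B\omega+A^\#\times B^\#$ factors as $\beta^{-1}b^*b$'' is an \emph{existence} statement, not an identity between two $G$-equivariant polynomial maps: $b$ and $\beta$ are not polynomial functions of $(A,B)$, so the prehomogeneity/Zariski-density principle from the ``note on the proofs'' does not apply to it, and verifying it on one representative of the open orbit over $\overline{F}$ (or quoting the split cases of Bhargava and Wood) says nothing about its validity over $R$. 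Worse, the content of the theorem is integral: one needs $b$ so that $I=b_1C+b_2C+b_3C$ is free of rank $3$ over $C$ \emph{as an $R$-module}, full in $C_L$, and with $N(I;b;(1,\omega,\theta))=N_{L/F}(\beta)$; an arbitrary factorization of $X$ over $C_L$ (which is all a field-level or orbit-level argument could give) does not ``automatically'' produce a balanced fractional $T\otimes C$-ideal. The paper resolves exactly this by making the factorization explicit: the $R$-algebra homomorphism $S_r:T\to M_3(C)$ with $S_r(\omega)=-A^\#B$, $S_r(\theta)=B^\#A$ (Proposition \ref{prop:Srevec}), the element $\epsilon\in M_3(C)\otimes L$ of Definition \ref{epDef}, the genuinely polynomial identity $Q((A,B))(v\epsilon)^*(v\epsilon)=(vYv^*)X$ of Theorem \ref{B2liftC} (this is the identity that prehomogeneity proves), and the existence of an auxiliary $v_0\in C^3$ with $(v_0^*v_0,Y)\in L^\times$ (Proposition \ref{B2freeRk1}), after which $b=v_0\epsilon$, $\beta=(v_0^*v_0,Y)/Q$, and Proposition \ref{balancedProp} verifies balancedness. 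Some replacement for this apparatus is unavoidable in your plan.

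A secondary gap is in the round trip and in your forward map. Starting from balanced data and writing $X_{I,b,\beta}=-A\theta+B\omega+D$, you must know not only that $n(Ax+By)$ is the cubic form of $T$ (so that $(A,B)$ is nondegenerate and the good basis is the right one), but also that $D=A^\#\times B^\#$, i.e.\ that $X_{I,b,\beta}$ is \emph{admissible}; otherwise the composite $(I,b,\beta)\mapsto(A,B)\mapsto(I',b',\beta')$ need not return the same $X$ and hence the same equivalence class. ``Applying $N_3$ to the pure-tensor formula'' does not by itself deliver this: in the paper it is the content of Lemma \ref{QimpliesAdmis}, which rests on the rank-one uniqueness Lemmas \ref{B2uniqueDelta} and \ref{B2lem:uniqueLift} together with the key pairing identity $(X,Y)=Q((A,B))$ from Theorem \ref{B2Jlift}, including a nontrivial argument ruling out a proportionality factor $t\neq 1$. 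Finally, note the sign convention: the paper's cubic invariant is $n(Ax+By)$, matched to $X=-A\theta+B\omega+\cdots$; your $N_3(Ax-By)$ would have to be normalized consistently or the dictionary with the good basis breaks.
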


Note the similarity of the map above with those of \cite{bhargavaII}.  Below, we will also explicitly give the inverse map.  As with Theorem \ref{twistThm1}, constructing the inverse will involve producing the good basis $(1,\omega,\theta)$ and the element $X= -A\theta + B\omega + D$ out of the pair $(A,B)$, which will involve the lifting law in this context.

\begin{remark} In the context of Theorem \ref{twistThm1}, constructing $X = \tau v + v'$ from $v \in W_{A}$, and showing that $X$ is abstractly ``rank one'' (this is the general stand-in for being a pure tensor), is easy.  This is done in Theorem \ref{b1J}.  Similarly, in the context of Theorem \ref{twistThm2}, constructing $X = -A\theta + B\omega + D$ from the pair $(A,B)$ and proving that $X$ is rank one is again easy.  This is done in the (first part) of Theorem \ref{B2Jlift}.  Both Theorem \ref{b1J} and \ref{B2Jlift} occur in the context of arbitrary cubic norm structures.  Where we use the hypotheses that $A$ is an associative cubic norm structure, and $C$ is an associative composition algebra, is to write the rank one elements $X$ as ``pure tensors'' in a precise way.  Writing $X$ as a pure tensor is what is needed for the orbit parametrizations.  These \emph{refined} lifting laws, which write the rank one elements $X$ as pure tensors, are Theorem \ref{b1A} and Theorem \ref{B2liftC} below.\end{remark}

\section{Preliminaries}\label{preliminaries}  In this section, we discuss the algebraic preliminaries needed for the various other sections, such as composition algebras, cubic norm structures, certain reductive groups and representations related to these objects, and facts pertaining to the association of binary cubic forms with cubic rings.  To ease the burden on the reader, all the necessary preliminaries are given in this section, even though some of this is a repetition of the algebraic preliminaries given in prior sections.  

In subsection \ref{subsec:23rings} we discuss quadratic and cubic rings over a base ring $R$, and composition algebras.  (As always in this paper, $R$ is an integral domain with fraction field $F$ of characteristic $0$.)  In subsection \ref{subsec:CNS} we discuss cubic norm structures over $F$ and $R$, and in subsection \ref{subsec:Freud} we discuss the Freudenthal construction $W_J$ for general cubic norm structures $J$.  Finally, in subsection \ref{sec:new}, we discuss some facts and constructions related to the Freudenthal construction that apply when $J = A$ is an associative cubic norm structure.  Subsections \ref{subsec:23rings}, \ref{subsec:CNS} and \ref{subsec:Freud} consist entirely of a review of well-known material.  Subsection \ref{sec:new} contains some algebraic constructions and facts related to these objects that may be new, or at least, for which we were not able to find a reference\footnote{We have recently learned of a forthcoming work by Wei Ho and Matthew Satriano that will consider a generalization of the space $V_A$ constructed in subsection \ref{sec:new}.}.

The reader only interested in the results of section \ref{bII} can skip subsubsection \ref{sssec:QR}, subsection \ref{subsec:Freud} and subsection \ref{sec:new}.  The reader only interested in the results of section \ref{bI} can skip subsubsection \ref{ssc:bcfs}.

\subsection{Quadratic and cubic rings}\label{subsec:23rings} In this subsection we discuss quadratic and cubic rings over a base ring $R$, and composition algebras.

\subsubsection{Quadratic rings over $R$}\label{sssec:QR} We now discuss quadratic rings over base rings $R$, satisfying some property.
In this paragraph, we assume that $R$
\begin{itemize}
\item is an integral domain, with fraction field $F$ of characteristic $0$;
\item $\frac{x^2 +x}{2} \in R$ for all $x \in R$. \end{itemize}

\begin{definition} A commutative $R$-algebra $S$ is said to be a quadratic ring over $R$ if $S$ is a free of rank $2$ as an $R$-module. \end{definition}
We parametrize quadratic rings over $R$: Define
\[\mathrm{QuadRing}(R) :=\{D \in R: D \equiv \Box \text{ modulo } 4R\}/\left((R^\times)^2\right).\]
If $D \in \mathrm{QuadRing}(R)$, then $\frac{D^2-D}{4} \in R$ and set $S_{D} = R[x]/(x^2 - Dx + \frac{D^2-D}{4})$.  The $S_{D}$ are exactly the quadratic rings over $R$, up to isomorphism.

To see this, suppose $S = R[x]/(x^2 + ax +b)$ is some quadratic ring over $R$.  Set $D = a^2 - 4b$.  Then $D \in \mathrm{QuadRing}(R)$.  Set $y = x - 2b + \frac{a^2 +a}{2}$.  Then $y^2 - Dy + \frac{D^2-D}{4} = 0$, and thus $S \simeq S_{D}$, so every quadratic ring appears.  Furthermore, one can check $S_{D} \hookrightarrow S_{D'}$ if and only if $D = \alpha^2 D'$ for some $\alpha \in R$, and thus $S_{D} \cong S_{D'}$ implies $D$ and $D'$ define the same class in $\mathrm{QuadRing}(R)$.

\subsubsection{Composition algebras} We now discuss composition algebras.  A composition $F$-algebra $C$ is a not-necessarily associative unital $F$-algebra $C$, for which there exists a non-degenerate quadratic form $n_C: C \rightarrow F$ that satisfies the identity $n_C(xy) = n_C(x)n_C(y)$ for all $x,y \in C$.  If such an $n_C$ exists, it is unique, and is called the norm on the composition algebra.  Composition algebras have an involution, $x \mapsto x^*$ that is order-reversing and satisfies $x + x^* = \tr_C(x) \in F$ and $n_C(x) = xx^*$.  In a composition algebra, one always has the identity $\tr_{C}(a_1 (a_2 a_3)) = \tr_{C}((a_1 a_2) a_3)$, for $a_i \in C$, even though the multiplication may not be associative.

Composition $F$-algebras always have dimensions $1,2,4,$ or $8$, and are, respectively, $F$, a quadratic \'{e}tale extension of $F$, a quaternion algebra, or an octonion algebra.  The composition algebras $C$ for which the multiplication is associative are precisely the ones of dimensions $1,2$ or $4$.  For more on composition algebras see the books \cite{springerVeldkamp} or \cite{kmrt}.

By a composition ring $C$, we mean a unital $R$-algebra $C$ for which $C_F :=C\otimes_{R}F$ is a composition $F$-algebra, such that $C$ is closed under the involution $*$, and for which the norm and trace form $n_C$, $\tr_C$ on $C_F$ are $R$-valued on $C$.  We will also call such rings $C$ composition $R$-algebras.

\subsubsection{Binary cubic forms and cubic rings}\label{ssc:bcfs} We now give various facts and reminders that concern the relationship between binary cubic forms and cubic rings. See, for instance, \cite{ggs} for more details.

Suppose $f(x,y)$ is a binary cubic form over $R$, i.e., $f(x,y) = ax^3 + bx^2y+ cxy^2 + dy^3$ with $a,b,c,d$ in the base ring $R$.  Associated to $f$ is a based cubic $R$-algebra $T$.  The $R$-algebra $T$ determined by $f$ is a free $R$-module of rank $3$, with basis $1, \omega,\theta$, that satisfies the multiplication table
\begin{itemize}
\item $\omega \theta = -ad$;
\item $\omega^2 = -ac + a \theta - b \omega$;
\item $\theta^2 = -bd + c \theta - d \omega$. \end{itemize}
Set $L = T \otimes_{R} F$, a cubic ring over $F$.  For readers wanting to understand this multiplication table from the point of view of the Freudenthal construction discussed below, we note that the above algebraic relations in $T$ are determined by the fact that $(a,-\omega, \theta, d)$ is rank one in $W_L$, and $\tr(-\omega) = b$, $\tr(\theta) = c$.  This is essentially the lifting law in subsection \ref{subsec:LLbcfs}.

Set $\omega_0 = \omega + \frac{b}{3}$ and $\theta_0 = \theta - \frac{c}{3}$, so that $\tr_{L/F}(\omega_0) = \tr_{L/F}(\theta_0) = 0$.  We record the multiplication table for $(1, \omega_0,\theta_0)$ in $L$.  We have
\begin{itemize}
\item $\omega_0 \theta_0 = \frac{b}{3}\theta_0 - \frac{c}{3}\omega_0 + \left(\frac{bc}{9}-ad\right)$;
\item $\omega_0^2 = a \theta_0 - \frac{b}{3}\omega_0 + 2\left(\left(\frac{b}{3}\right)^2- a \frac{c}{3}\right)$;
\item $\theta_0^2 = \frac{c}{3}\theta_0 - d\omega_0 + 2\left(\left(\frac{c}{3}\right)^2-d\frac{b}{3}\right).$ \end{itemize}

The $F$-algebra $L$ possesses a unique quadratic polynomial map $\#: L \rightarrow L$, that satisfies $y^\# = n_{L/F}(y)y^{-1}$ for $y$ in $L$ invertible.  This fact was mentioned in section \ref{subsec:statement1} and will be discussed in much more generality in section \ref{subsec:CNS}.  For later use, we record that $\omega^\# = a\theta$ and $\theta^\#= -d\omega$.  Associated to the quadratic polynomial map $\#$ is a symmetric bilinear map $\times: L \otimes L \rightarrow L$ defined by $x \times y = (x+y)^\#-x^\#-y^\#$.  Again, this map will be put in much more context in subsection \ref{subsec:CNS}.  For now, we also record that $\omega \times \theta = ad - bc + b\theta - c\omega$.  To compute $\omega \times \theta$, one uses the identity $x^\# \times x = \tr(x^2)x-x^3$.  Furthermore,
\begin{align*} \omega_0^\# &= (\omega+b/3)^\# = a\theta - \frac{b}{3}\omega - \frac{2}{9}b^2 \\ &= a \theta_0 - \frac{b}{3}\omega_0 + a\left(\frac{c}{3}\right) - \left(\frac{b}{3}\right)^2, \end{align*}
\begin{align*} \theta_0^\# &= (\theta - c/3)^\# = \frac{c}{3}\theta - d\omega - \frac{2}{9}c^2 \\ &= \frac{c}{3}\theta_0 - d\omega_0 + d\left(\frac{b}{3}\right) - \left(\frac{c}{3}\right)^2, \end{align*}
and
\begin{align*} \omega_0 \times \theta_0 &= (\omega+b/3)\times (\theta -c/3) = \frac{2}{3}b \theta - \frac{2}{3}c \omega + ad - \frac{5}{9}bc \\ &= \frac{2}{3} b\theta_0 - \frac{2}{3}c \omega_0 + ad - \left(\frac{b}{3}\right)\left(\frac{c}{3}\right).\end{align*}

Binary cubic forms possess a quartic invariant, the discriminant.  If $f(x,y) = ax^3 + bx^2y + cxy^2 + dy^3$, then the discriminant of $f$ is
\[Q(f) = -27 a^2d^2 + 18adbc + b^2c^2-4ac^3-4db^3.\]
We remark that this discriminant can be easily understood in the context of the Freudenthal construction, see section \ref{subsec:Freud}.  Namely, the binary cubic $f(x,y) = ax^3 + bx^2y + cxy^2 + dy^3$ corresponds to the element $(a,\frac{b}{3},\frac{c}{3},d)$ in the Freudenthal construction $W_F$, for the trivial cubic norm structure on $F$.  (This is the structure from Example \ref{ex:deg}, case (3).) We set $q_f = q((a,b/3,c/3,d))$.  Then the discriminant $Q(f) = -27q_f$.  Indeed,
\begin{align*} -27q_f &= -\frac{1}{3}q((3a,b,c,3d)) \\ &= -\frac{1}{3}\left\{(9ad - 3bc)^2 + 12ac^3 + 12db^3 - 12b^2c^2\right\} \\ &= -3(3ad-bc)^2- 4ac^3-4db^3+4b^2c^2 \\ &= -27 a^2d^2 + 18adbc + b^2c^2-4ac^3-4db^3.\end{align*}
One has the equality
\[\mathrm{disc}(1,\omega,\theta) := \det \left(\begin{array}{ccc} \tr(1) &\tr(\omega)&\tr(\theta) \\ \tr(\omega) &\tr(\omega^2) &\tr(\omega \theta) \\ \tr(\theta) &\tr(\theta \omega) &\tr(\theta^2) \end{array}\right) = \det \left(\begin{array}{ccc} 3&-b&c \\ -b&b^2-3ac&-3ad \\ c&-3ad &c^2-2bd \end{array}\right) = Q(f).\]

The left action of $\GL_2$ on binary cubic forms is $(g \cdot f)(x,y) = \det(g)^{-1}f((x,y)g)$.  With this action, one has $q(g \cdot f) = \det(g)^2q(f)$, and $\left(\begin{array}{c} \omega_0\\ \theta_0\end{array}\right) \mapsto g \left(\begin{array}{c} \omega_0 \\ \theta_0\end{array}\right)$.  This latter identity, $\left(\begin{array}{c} \omega_0\\ \theta_0\end{array}\right) \mapsto g \left(\begin{array}{c} \omega_0 \\ \theta_0\end{array}\right)$, has the following meaning:  Set $f' = (g \cdot f)(x,y) = a'x^3 + b'x^2y+c'xy^2 + d'y^3$, denote $(T',(1,\omega',\theta'))$ the based cubic ring associated to $f'$, and set $\omega_0' = \omega' + \frac{b'}{3}$, $\theta_0' = \theta'-\frac{c'}{3}$.  Then one can identify $T$ with $T'$ so that $\left(\begin{array}{c} \omega_0'\\ \theta_0'\end{array}\right) = g \left(\begin{array}{c} \omega_0 \\ \theta_0\end{array}\right)$ inside $L$.

\subsection{Cubic norm structures}\label{subsec:CNS} In this subsection we discuss cubic norm structures. For completeness, and to fix notation, we begin with the definition of a cubic norm structure.  However, readers not familiar with cubic norm structures might wish to proceed first to the examples, and only then come back to the definition if desired.

\begin{definition}(See for example \cite{peterssonRacine1}.) Let $J$ be a finite dimensional $F$ vector space.  A \emph{cubic norm structure} on $J$ consists of the data of an element $1 \in J$, a cubic polynomial map $N: J \rightarrow F$, a quadratic polynomial map $\#: J \rightarrow J$, and a non-degenerate symmetric pairing $(\quad, \quad): J \otimes J \rightarrow F$, that are subject to the following requirements.
\begin{itemize}
\item $(x^\#)^\# = N(x)x$;
\item $N(1) = 1$, $1^\# = 1$, $1 \times x = (1,x) - x$ for all $x \in J$;
\item One has $(x,y) = (1,1,x)(1,1,y) - (1,x,y)$ and 
\[N(x+y) = N(x) + (x^\#,y) + (x,y^\#) + N(y)\]
for all $x,y \in J$.\end{itemize}
Here $x \times y := (x+y)^\# - x^\# - y^\#$, and $(x,y,z)$ is the polarization of the norm form $N$, i.e., 
\[(x,y,z) := N(x+y+z) - N(x+y)-N(x+z) - N(y+z) +N(x) +N(y) +N(y)\]
is the unique symmetric trilinear form on $J$ satisfying $(x,x,x) = 6N(x)$. The notation $\tr(x) :=(1,x)$ and $\tr(x,y) :=(x,y)$ is also used.  The map $N$ is called the norm on $J$, while the map $\#$ is called the adjoint.

It follows from these requirements that $(x \times y, z) = (x,y,z) = (x,y\times z)$.  For $x,y$ in $J$, one sets
\[U_xy := - x^\# \times y + (x,y)x.\]
One has $N(U_{x}y) = N(x)^2N(y)$. \end{definition}
One can also replace $F$ by a subring $R$, and $J$ by a finite, free $R$-module.  We will sometimes be in this slightly more general situation.

\begin{remark} Suppose $J$ is an $F$ vector space, and one has a quadratic polynomial map $\#: J \rightarrow J$, a cubic polynomial map $n: J \rightarrow F$, a non-degenerate symmetric pairing $J \otimes_F J \rightarrow F$, and an element $1 \in J$.  As above, define $x\times y = (x+y)^\# - x^\# - y^\#$.  To check that this data defines a cubic norm structure on $J$, it suffices to check that
\begin{itemize}
\item $1 \times x = (1,x) - x$;
\item $(x^\#)^\# = n(x)x$;
\item $(x,x^\#) = 3n(x)$;
\item the trilinear form $(x, y\times z)$ is symmetric in $x,y,z$. \end{itemize}
\end{remark}

The key examples of cubic norm structures are as follows.
\begin{example}\label{ex:Acsa} Suppose $J = A$ is a central simple $F$-algebra of degree $3$.  One defines $1 \in J$ to be the identity in $A$, and $N(x)$ to be the reduced norm of $x \in A$.  The pairing $(x,y)$ is $(x,y) = \tr(xy)$, where the trace on the right is the reduced trace of $A$.  For $x \in A$ invertible, $x^\# = N(x)x^{-1}$, and in fact this map extends to a quadratic polynomial map on all of $A$.  For $x,y \in A$, one has the identity $U_{x}y = xyx$. \end{example}

\begin{example}\label{ex:H3C} Suppose $C$ is a composition algebra.  Set $J = H_3(C)$, the $3 \times 3$ Hermitian matrices with elements in $C$.  Elements of $J$ are of the form
\[x = \left(\begin{array}{ccc} c_1 & a_3 & a_2^* \\ a_3^* & c_2 & a_1 \\ a_2 & a_1^* & c_3\end{array}\right)\]
with $c_i \in F$ and $a_i \in C$.  The element $1$ for $J$ is just the $3 \times 3$ identity matrix $1_3$.  The norm on $J$ is
\[N(x) = c_1 c_2 c_3 - c_1n_{C}(a_1) - c_2n_C(a_2) - c_3n_C(a_3) + \tr_{C}(a_1 a_2 a_3)\]
and the adjoint on $J$ is
\[x^\# = \left(\begin{array}{ccc} c_2c_3 - n_C(a_1) &a_2^*a_1^*- c_3 a_3 & a_3 a_1 - c_2 a_2^*\\ a_1 a_2 - c_3 a_3^* & c_1 c_3 - n(a_2) & a_3^*a_2^*-c_1a_1\\ a_1^*a_3^*-c_2a_2 & a_2a_3 - c_1 a_1^* & c_1 c_2 - n(a_3)\end{array}\right).\]
One has $\tr(x) = (1,x) = c_1 + c_2 +c_3$ and $(x,x') = \tr(xx'+x'x)/2 = \sum_{i}{c_i c_i'} + \sum_{j}{(a_j,a_j')}$ where $(a_j,a_j') = n_C(a_j + a_j') - n_C(a_j) - n_C(a_j') = \tr_C(a_j^* a_j')$.  If $C$ is associative, then for $x, y \in J = H_3(C)$ one has the identity $U_{x}y = xyx$.
\end{example}

\begin{example}\label{JBK} Suppose $K = F$ or a quadratic \'{e}tale extension of $F$, and $B$ is a central simple $K$ algebra, with involution of the second kind $*$.  That is, we assume $*: B\rightarrow B$ is an order-reversing involution that induces the unique involution on $K$ over $F$.  (So, if $K = F$, this involution on $K$ is the identity.) Set $J$ to be $B^{* = 1}$, the $F$-subspace of $B$ fixed by the involution $*$.  From example \ref{ex:Acsa}, $B$ is a cubic norm structure over $K$.  Restricting the norm, adjoint, and pairing to $J$ makes $J$ a cubic norm structure over $F$.  If $C = K$ is a commutative associative composition algebra, $B = M_3(C)$, with involution $*$ being conjugate transpose, then $J$ is the cubic norm structure $H_3(C)$ from example \ref{ex:H3C}. \end{example}

There are also degenerate versions of some of the above examples.
\begin{example}\label{ex:deg} Suppose $J = A$ is one of the following associative $F$-algebras:
\begin{enumerate} 
\item $A = E$, a cubic \'{e}tale $F$-algebra;
\item $A = F \times C$, for $C$ an associative composition algebra;
\item $A = F$. \end{enumerate}
(When $C = K$ is a quadratic \'{e}tale $F$-algebra, there is overlap between cases (1) and (2).).  In each case, let $1$ for $J$ be the identity of the algebra $A$.  In case (1), let the trace pairing and norm on $E$ be the usual trace pairing and norm, and $x^\# = N(x)x^{-1}$ for $x$ invertible.  (Again, this extends as a polynomial map to all of $E$.)  In case (2), set $N((\alpha,x)) = \alpha n_{C}(x)$ for $\alpha \in F$ and $x \in C$, $(\alpha,x)^\# = (n_C(x),\alpha x^*)$, and $((\alpha,x),(\beta,y)) = \alpha \beta + (x,y)$.  In case (3), define $N(x) = x^3, x^\# = x^2$, and $(x,y) = 3xy$, where here $x, y \in F$. \end{example}

We introduce one non-standard piece of terminology.
\begin{definition} Suppose $J$ is a cubic norm structure.  We say $J$ is an \emph{associative cubic norm structure} if $J = A$ is as in Example \ref{ex:Acsa} or Example \ref{ex:deg}.  In this case, one has the identities $xx^\# = x^\# x = N(x)$, $(xy)^\# = y^\#x^\#$, $N(xy) = N(x)N(y)$, and $U_{x}y = xyx$. \end{definition}
All of the above constructions and definitions have generalizations to the case when $F$ is replaced by a subring $R$, and one assumes $J$ is finite and free over $R$.

There is also a weaker notion a \emph{cubic norm pair}, which we now define.  (This notion will only be used in section \ref{bII}.)

\begin{definition} Suppose $J$, $J^\vee$ are finite dimensional $F$ vector spaces in perfect duality by a pairing $(\,,\,): J \otimes J^\vee \rightarrow F$.  The structure of a cubic norm pair on $J, J^\vee$ is the data of cubic polynomial maps $N_J : J \rightarrow F$, $N_{J^\vee}: J^\vee \rightarrow F$, and quadratic polynomial maps $\#_J: J \rightarrow J^\vee$ and $\#_{J^\vee}: J^\vee \rightarrow J$ that are subject to the following requirements:
\begin{enumerate}
\item One has
\[N_J(x + y) = N_J(x) + (y,x^{\#_J}) + (x,y^{\#_J}) + N_J(y)\]
for all $x, y \in J$ and similarly for $N_{J^\vee}$. 
\item One has $(x^{\#_J})^{\#_{J^\vee}} = N_J(x) x$ for all $x \in J$ and similarly $(y^{\#_{J^\vee}})^{\#_{J}} = N_{J^\vee}(y)y$ for all $y \in J^\vee$. \end{enumerate}
One sets 
\[x \times_{J} y := (x+y)^{\#_{J}} - x^{\#_J} - y^{\#_J}\]
if $x, y \in J$ and similarly for $J^\vee$.  One has $(x, y \times z) = (z,x\times y) = (x,y,z)$ for $x, y, z \in J$ if $(x,y,z)$ denotes the polarization of the norm form, and again similarly for $J^\vee$.
\end{definition}
Obviously, every cubic norm structure gives rise to a cubic norm pair with $J^\vee = J$.  

We now define the rank of elements of $J$.
\begin{definition} Suppose $J$ is a cubic norm structure.  All elements of $J$ have rank at most $3$.  An element $x$ has rank at most $2$ if $N_J(x) = 0$.  An element $x$ has rank at most $1$ if $x^\# = 0$.  Finally, $0$ is the unique element of rank $0$. \end{definition}

For more on cubic norm structures, see, for example, \cite{kmrt} or \cite{mccrimmon}.

\subsubsection{Identities for cubic norm structures}\label{special_Id} We give several identities valid in cubic norm structures.  These identities will be used throughout the paper.  First, one has the relations
\begin{align*} x \times (x^\# \times y) &= n(x)y + (x,y)x^\#, \\ x^\# \times (x\times y) &= n(x)y + (x^\#,y)x, \\ (x\times y)^\# + x^\# \times y^\# &= (x,y^\#)x + (x^\#,y)y. \end{align*}

Recall that the cubic norm structure $J$ is said to be \emph{special} if there exists an associative algebra $W$, and an inclusion $J \subseteq W$ so that $U_{x}y = xyx$.  Here the multiplication on the right hand side of this equality is that of $W$.  If $J$ is special, then one has $xx^\# = x^\#x = n(x)$.  Linearizing this identity gives $x(x\times y) + yx^\# = (x^\#,y)$ and similarly $(x\times y)x + x^\# y = (x^\#,y)$, one consequence of which is the identity $y^\#x(x\times y) = (x^\#,y)y^\# - n(y)x^\#$.  One also has the identity
\begin{equation}\label{eqn:Uyz} x \times (y \times z) = (x,y)z + (x,z)y - (yxz+zxy)\end{equation}
for special $J$.

\subsubsection{The group $M_J$} We now define a reductive group $M_J$ that acts on $J$ if $J$ is a cubic norm structure.  Namely, define $M_J$ to be the group of $(g, \lambda) \in \GL(J) \times \GL_1$ so that $N_J(g x) = \lambda N_J(x)$ for all $x \in J$.  The group $M_J$ preserves the rank of elements of $J$, and its action makes $J$ into a prehomogeneous vector space.  The open orbit consists of the elements of rank $3$.

More generally, suppose $J, J^\vee$ form a cubic norm pair over $F$.  There is an analogue of the group $M_J$ for the pair $(J,J^\vee)$.  Define $\widetilde{M}(J, J^\vee)$ to be the group of $(\alpha,t,t^\vee, \delta) \in \GL_1 \times \GL(J) \times \GL(J^\vee) \times \GL_1$ that satisfy the following identities, for all $b \in J, c \in J^\vee$:
\begin{itemize}
\item $(t(b), t^\vee(c)) = \alpha \delta (b,c)$;
\item $N_{J^\vee}(t^\vee(c)) = \alpha \delta^2 N_{J^\vee}(c)$;
\item $N_{J}(t(b)) = \alpha^2 \delta N_{J}(b)$;
\item $t(b)^{\#_J} = \alpha t^\vee(b^{\#_J})$ and $t^\vee(c)^{\#_{J^\vee}} = \delta t(c^{\#_{J^\vee}})$. \end{itemize}
Now, define the group $M(J,J^\vee)$ to be the subgroup of $\widetilde{M}(J)$ where $\alpha \delta = 1$.  If $J$ is a cubic norm structure, then $M_J = M(J,J)$, which makes sense because $J, J$ is a cubic norm pair. 

\subsection{The Freudenthal construction}\label{subsec:Freud} We now discuss the so-called Freudenthal construction associated to a cubic norm structure $J$.  That is, to such a $J$, we define the space the space $W_J$ together with its natural quartic form and symplectic pairing.  We aslo discuss the group $H(W_J)$, that acts on $W_J$ preserving these structures.

Suppose $J$ is a cubic norm structure.  We set 
\[W_J = F \oplus J \oplus J^\vee \oplus F = F \oplus J \oplus J \oplus F.\]
We write typical elements of $W_J$ as $(a,b,c,d)$.  The space $W_J$ carries a symplectic pairing
\[\langle (a,b,c,d), (a',b',c',d') \rangle := ad' - (b,c') + (c,b')-da'.\]
The space $W_J$ also carries a natural quartic form (due to Freudenthal), given by
\[q((a,b,c,d)) = (ad - (b,c))^2 + 4an(c) + 4dn(b) - 4(b^\#,c^\#).\]

Polarizing the quartic form $q$, there is a symmetric $4$-linear form $(w,x,y,z)$ on $W_J$ normalized by the condition $(v,v,v,v) = 2q(v)$ for $v$ in $W_J$.  Since the symplectic pairing is non-degenerate, there is a symmetric trilinear form $t: W_J \times W_J \times W_J \rightarrow W_J$ normalized by the identity 
\[\langle w, t(x,y,z) \rangle = (w,x,y,z).\]
Finally, for $v \in W_J$, set $v^\flat = t(v,v,v)$.  If $v = (a,b,c,d)$, then $v^\flat = (a^\flat,b^\flat,c^\flat,d^\flat)$, where
\begin{itemize}
\item $a^\flat = -a^2d +a(b,c)-2n(b)$;
\item $b^\flat = - 2 c \times b^\# + 2ac^\# - (ad-(b,c))b$;
\item $c^\flat = 2b \times c^\# - 2db^\#+(ad-(b,c))c$;
\item $d^\flat = ad^2 - d(b,c)+2n(c)$; \end{itemize}

One has the following fact.
\begin{lemma}\label{lem:vvflat} For $v \in W_J$, one has $\langle v, v^\flat \rangle = 2q(v)$ and $(v^\flat)^\flat = - q(v)^2 v$. \end{lemma}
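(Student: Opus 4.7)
The first identity is essentially immediate from the construction of $v^\flat$: by definition $v^\flat = t(v,v,v)$, and the trilinear map $t$ is characterized by the adjunction $\langle w, t(x,y,z)\rangle = (w,x,y,z)$. Setting $w = x = y = z = v$ yields $\langle v, v^\flat\rangle = (v,v,v,v) = 2q(v)$, where the last equality is the normalization of the polarization of $q$. I would dispatch this first identity in a single line.

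For the second identity $(v^\flat)^\flat = -q(v)^2 v$, both sides are polynomial maps $W_J \to W_J$ of degree $9$ in $v$. My plan is to use the prehomogeneity strategy advertised in the introduction: pick a group $G$ acting on $W_J$ (for instance the group $H(W_J)$ to be introduced shortly, or any sufficiently large subgroup) under which $W_J$ becomes prehomogeneous with open orbit $\{v : q(v)\neq 0\}$, and under which the symplectic form scales by a similitude character $\nu(g)$. Polarizing, the $4$-linear form $(v_1,\dots,v_4)$ then scales by $\nu(g)^2$. Combining this with the non-degeneracy of the symplectic form and the adjunction defining $t$, one finds $(gv)^\flat = \nu(g)\, g\cdot v^\flat$, whence $((gv)^\flat)^\flat = \nu(g)^4\, g\cdot (v^\flat)^\flat$, which matches the scaling of $-q(gv)^2\, gv = -\nu(g)^4 q(v)^2\, gv$. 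The target identity is therefore $G$-equivariant.

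It then suffices, after extending scalars to an algebraic closure, to verify the identity on any single element of the open orbit. I would take $v_0 = (1,0,0,1)$: from the explicit formulas in the excerpt one reads off $q(v_0) = 1$ and $v_0^\flat = (-1,0,0,1)$, and applying $\flat$ once more gives $(v_0^\flat)^\flat = (-1,0,0,-1) = -v_0 = -q(v_0)^2 v_0$, as desired.

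The hard part will be securing the equivariance input: one needs a group acting on $W_J$ with open orbit $\{q\neq 0\}$ and a well-behaved symplectic similitude character, which presupposes some of the theory of $H(W_J)$ that the paper has not yet developed at this point. If that machinery is not yet available, a fallback is a purely algebraic, coordinate-by-coordinate expansion of $(v^\flat)^\flat$ in terms of $(a,b,c,d)$, repeatedly invoking the identities for $\#$ and $\times$ recorded in Subsection~\ref{special_Id}; this is considerably longer but is uniform across all cubic norm structures and avoids any group-theoretic input.
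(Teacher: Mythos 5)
Your proof is correct, and it is worth noting that the paper itself gives no proof of this lemma at all: it is stated as a known fact from the theory of Freudenthal triple systems, and the second identity is exactly the special case $\alpha=0$, $\beta=1$ of the later identity (\ref{eq:vflat}), which the paper attributes to \cite[Lemma 4.5]{springer}. Your argument is precisely the "equivariance plus one well-chosen point" technique the paper advertises in its note on proofs and deploys for Theorem \ref{b1J} and Lemma \ref{cor:admis} (there the check is done at the family $(a,0,0,d)$ using the $\nu=1$ subgroup; you instead use the full similitude group and the single point $(1,0,0,1)$, which is equivalent granting transitivity on rank-$4$ elements). Your one-line proof of $\langle v,v^\flat\rangle = 2q(v)$ and the computation $v_0^\flat = (-1,0,0,1)$, $(v_0^\flat)^\flat = (-1,0,0,-1) = -q(v_0)^2 v_0$ both check out, as does the scaling comparison $\nu(g)^4$ on both sides via $(gv)^\flat = \nu(g)\,g(v^\flat)$. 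Your worry about circularity is not really an issue: the group $H(W_J)$, the equivariance of $\flat$, and the assertion that the rank-$4$ elements form a single open orbit over $\overline{F}$ all appear in the same subsection immediately after the lemma, so the only genuine external input is that transitivity statement, which the paper asserts without proof (it is standard, e.g.\ in Springer's work); if one insists on full self-containedness, your fallback of a direct expansion using the identities of subsection \ref{special_Id}, or simply citing \cite[Lemma 4.5]{springer} as the paper later does, closes that gap.
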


We now define the rank of elements of $W_J$.
\begin{definition}  All elements of $W_J$ have rank at most $4$.  An element $v$ of $W_J$ has rank at most $3$ if $q(v) = 0$.  An element $v$ of $W_J$ has rank at most $2$ if $(v,v,v,w) = 0$ for all $w \in W_J$.  Equivalently, $v$ has rank at most $2$ if $v^\flat = 0$.  An element $v$ has rank at most $1$ if $(v,v,v',w) = 0$ for all $w \in W_J$ and $v' \in W_J$ satisfying $\langle v,v' \rangle = 0$.  Equivalently, $v$ has rank at most $1$ if $t(v,v,w) \in Fv$ for all $w \in W_J$.  Finally, $0$ is the unique element of rank $0$. \end{definition}

\subsubsection{The group $H(W_J)$} Let $W_J$ be as above.  We now define a group $H(W_J)$ that acts on $W_J$.  We define the group $H(W_J)$ to be the subgroup of $g \in \GSp(W_J; \langle \,,\, \rangle)$ satisfying $q(gv) = \nu(g)^2q(v)$ for all $v\in W_J$.  Here $\nu(g) \in \GL_1$ is the similitude of $g$ in $\GSp(W_J)$.

We define some maps on $W_J$ that are in $H(W_J)$.
\begin{itemize}
\item For $X \in J$, define 
\[n_J(X)(a,b,c,d) = (a,b+aX,c + b \times X + aX^\#, d+(c,X)+(b,X^\#) + an(X))\]
and similarly for $Y \in J^\vee = J$ define 
\[\overline{n}_J(Y) = n_{J^\vee}(Y)(a,b,c,d)= (a + (b,Y) + (c,Y^\#) + dn(Y),b+ c \times Y + dY^\#, c+ dY, d).\]
\item For $\lambda \in \GL_1$, define
\[m(\lambda)(a,b,c,d) = (\lambda^2 a, \lambda b,c, \lambda^{-1}d).\]
\item More generally, for $m =(\alpha, t, t^\vee, \delta) \in \widetilde{M}(J,J)$, define
\[m(a,b,c,d) = (\alpha a, t(b), t^\vee(c),\delta d).\]
\item Define $w_J(a,b,c,d) = (d,-c,b,-a).$
\end{itemize}
The following proposition is well-known, the only nontrivial piece being for the maps $n_J(X), n_{J^\vee}(Y)$.  (See \cite[Lemma 2.3]{springer} for this nontrivial piece.)
\begin{proposition}\label{HWJmaps} The above maps are all elements of $H(W_J)$. \end{proposition}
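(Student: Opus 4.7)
The plan is to treat each of the four families of maps separately and, for each one, verify (i) that it lies in $\GSp(W_J,\langle\,,\,\rangle)$ with a prescribed similitude $\nu$, and (ii) that $q$ transforms as $q(gv)=\nu(g)^2 q(v)$. Parts (i) and (ii) for $w_J$, $m(\lambda)$, and $m=(\alpha,t,t^\vee,\delta)$ are short direct computations. For $w_J(a,b,c,d)=(d,-c,b,-a)$ one plugs in and uses $(-c)^\#=c^\#$, $n(-c)=-n(c)$ to see both forms are fixed, so $\nu(w_J)=1$. For $m(\lambda)(a,b,c,d)=(\lambda^2a,\lambda b,c,\lambda^{-1}d)$ expansion yields $\langle m(\lambda)v,m(\lambda)v'\rangle=\lambda\langle v,v'\rangle$ and $q(m(\lambda)v)=\lambda^2 q(v)$, so $\nu(m(\lambda))=\lambda$. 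For $m=(\alpha,t,t^\vee,\delta)$, the four defining identities of $\widetilde{M}(J,J)$ translate term-by-term into $\langle mv,mv'\rangle=\alpha\delta\langle v,v'\rangle$ and into $q(mv)=(\alpha\delta)^2q(v)$: the pairing identity handles the $ad'$ and $(b,c')$ pieces, the two norm identities handle the $an(c)$ and $dn(b)$ pieces, and the two adjoint identities together with the pairing identity handle the $(b^\#,c^\#)$ piece (using $(t^\vee(b^\#),t(c^\#))=\alpha\delta(b^\#,c^\#)$). So $\nu(m)=\alpha\delta$.

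The substantive case is $n_J(X)$ (the case of $n_{J^\vee}(Y)$ then follows by conjugation with $w_J$, since $w_J\,n_J(Y)\,w_J^{-1}=\pm n_{J^\vee}(Y)$ up to sign, or can be checked in parallel). To show $\nu(n_J(X))=1$ for the symplectic form, I expand
\[
\langle n_J(X)v,n_J(X)v'\rangle
\]
for $v=(a,b,c,d)$, $v'=(a',b',c',d')$, collect the extra terms beyond $\langle v,v'\rangle$, and use: the symmetry of the trilinear form $(x,y\times z)=(y,x\times z)=(z,x\times y)$, the evaluations $(X,X^\#)=3n(X)$ and $(X,b'\times X)=(b',X\times X)=2(b',X^\#)$, together with the analogous $(b\times X,X)=2(b,X^\#)$. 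The resulting eight groups of correction terms (in $(c',X)$, $(b',X^\#)$, $(c,X)$, $(b,X^\#)$, the pure cubic in $X$, and the two $(b\times X,b')$-type terms) all cancel in pairs.

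The main obstacle is showing $q(n_J(X)v)=q(v)$. This is a polynomial identity in $a,b,c,d,X$ of quartic type that is not obviously true from the symplectic calculation; it is precisely the content of Springer's Lemma 2.3 in \cite{springer}, which I will cite. Alternatively one can prove it directly by expanding the four summands $(ad-(b,c))^2$, $4an(c)$, $4dn(b)$, $-4(b^\#,c^\#)$ after replacement, and reducing the resulting expression using the standard cubic norm identities
\[
x\times(x^\#\times y)=n(x)y+(x,y)x^\#,\qquad (x\times y)^\#+x^\#\times y^\#=(x,y^\#)x+(x^\#,y)y,
\]
along with the polarization $N(x+y)=N(x)+(x^\#,y)+(x,y^\#)+N(y)$ applied to $c+aX^\#+b\times X$ and $b+aX$. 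Because these identities are multilinear in their arguments, the equality $q(n_J(X)v)=q(v)$ is a universal polynomial identity in $J$ that, by Zariski density in the standard cases $J=H_3(C)$ or $J$ of the degenerate types, reduces to a verification on the open orbit. Finally, since each of the five displayed maps lies in $\GSp(W_J)$ with $q(gv)=\nu(g)^2q(v)$, all five are in $H(W_J)$, completing the proof.
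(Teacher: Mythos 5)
Your proof is correct and follows essentially the same route as the paper: the maps $w_J$, $m(\lambda)$, and $m=(\alpha,t,t^\vee,\delta)$ are handled by short direct computations, and the only nontrivial point, the quartic-form invariance for $n_J(X)$ (and $n_{J^\vee}(Y)$ by conjugation with $w_J$), is settled by citing Springer's Lemma 2.3, exactly as in the text. The only caveat is that your sketched ``alternative'' Zariski-density argument for $q(n_J(X)v)=q(v)$ is not needed and would require care (one cannot invoke equivariance under $H(W_J)$ before knowing $n_J(X)\in H(W_J)$), but since the Springer citation carries the proof, this does not affect correctness.
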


The action of $H(W_J)$ preserves the ranks of elements of $W_J$, and makes it into a prehomogeneous vector space.  The elements of rank $4$ of $W_J$ form the open orbit.  All elements of rank $1$ of $W_J$ are in the same $H(W_J)$ orbit.

One has the following fact.
\begin{lemma} If $v=(a,b,c,d) \in W_J$ is rank one, then $b^\# = ac$, $c^\# = db$, and $(b,c) = 3ad$.  If either $a$ or $d$ is nonzero, then the first two conditions imply $v$ is rank one. \end{lemma}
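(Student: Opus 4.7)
The plan is to extract the three identities from the rank one condition $t(v,v,w)\in Fv$ by specializing $w$ to two test vectors, and then to prove the converse by exhibiting $v$ as the image of the visibly rank-one element $(a,0,0,0)$ under the map $n_J(b/a)\in H(W_J)$ from Proposition \ref{HWJmaps}.

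For the forward direction, I would polarize the closed formula for $v^{\flat}$ given just before Lemma \ref{lem:vvflat} to obtain the symmetric trilinear form $t(v,v,w)$ in coordinates. Substituting $w=(0,0,0,1)$ yields
\[3\,t\bigl(v,v,(0,0,0,1)\bigr) \;=\; \bigl(-a^{2},\ -ab,\ -2b^{\#}+ac,\ 2ad-(b,c)\bigr),\]
while $w=(1,0,0,0)$ yields
\[3\,t\bigl(v,v,(1,0,0,0)\bigr) \;=\; \bigl(-2ad+(b,c),\ 2c^{\#}-db,\ dc,\ d^{2}\bigr).\]
By the rank one hypothesis, each of these is a scalar multiple of $v=(a,b,c,d)$. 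When $a\neq 0$, comparing the $a$-component of the first expression forces the scalar to be $-a$, and matching the remaining three components reads off $b^{\#}=ac$ and $(b,c)=3ad$; symmetrically, when $d\neq 0$, the second expression with scalar $d$ gives $c^{\#}=db$ and $(b,c)=3ad$. The residual case $a=d=0$ is handled by additionally testing against $w=(0,b',0,0)$ and $w=(0,0,c',0)$, whose $a$- and $d$-components of $3t(v,v,w)$ are respectively $-2(b^{\#},b')$ and $2(c^{\#},c')$; these must vanish for all $b'$ and $c'$ by proportionality to $v$ (whose $a$- and $d$-components are zero), whence $b^{\#}=c^{\#}=0$ by non-degeneracy of the pairing, and $(b,c)=0$ follows from the $a$-component of $3t(v,v,(1,0,0,0))=((b,c),2c^{\#},0,0)$.

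For the converse, suppose $a\neq 0$; the case $d\neq 0$ is symmetric via the element $w_J\in H(W_J)$. Applying $\#$ to $b^{\#}=ac$ and using $(b^{\#})^{\#}=n(b)b$ gives $n(b)\,b = a^{2}c^{\#} = a^{2}db$, so either $b=0$ or $n(b)=a^{2}d$. If $b\neq 0$, the explicit formula for $n_J$ from Proposition \ref{HWJmaps} yields
\[n_J(b/a)\bigl(a,0,0,0\bigr) \;=\; \bigl(a,\ b,\ b^{\#}/a,\ n(b)/a^{2}\bigr) \;=\; (a,b,c,d) \;=\; v,\]
using $b^{\#}=ac$ and $n(b)=a^{2}d$; since $(a,0,0,0)$ is patently rank one and $H(W_J)$ preserves rank, so is $v$. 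In the degenerate subcase $b=0$, the identity $b^{\#}=ac=0$ forces $c=0$, and the third identity $(b,c)=3ad=0$ then forces $d=0$, so $v=(a,0,0,0)$ is visibly rank one.

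The main technical obstacle is the bookkeeping in the forward direction: extracting the vector-valued identities $b^{\#}=ac$ and $c^{\#}=db$ (as opposed to scalar-valued identities) requires carefully matching components of $3t(v,v,w)$ with the proportionality constant times $v$, and the degenerate stratum $a=d=0$ requires separate test vectors that probe the non-degenerate pairing to isolate $b^{\#}$ and $c^{\#}$. Once these identifications are in hand, the converse is a one-line application of the explicit formula for $n_J$.
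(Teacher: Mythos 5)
The paper itself does not prove this lemma (it defers to \cite{ganSavinMin}), so your direct computation is a genuinely different route. The forward direction is essentially right: polarizing the formula for $v^\flat$ does give $3t(v,v,(0,0,0,1)) = (-a^2,\,-ab,\,-2b^\#+ac,\,2ad-(b,c))$ and $3t(v,v,(1,0,0,0)) = ((b,c)-2ad,\,2c^\#-db,\,dc,\,d^2)$, and your case analysis at $a\neq 0$, at $d\neq 0$, and at $a=d=0$ is correct as far as it goes. But in the mixed case $a\neq 0$, $d=0$ (and its mirror) you never derive the remaining identity $c^\#=db$: the first test vector only yields $b^\#=ac$ and $(b,c)=3ad$, and you pinned the scalar for the second test vector using $d\neq 0$. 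This is easily patched: once $(b,c)=3ad$ is known, the $a$-component of the second expression is $(b,c)-2ad=ad$, so for $a\neq 0$ the scalar is $d$ and the $b$-component gives $2c^\#-db=db$; alternatively, $b^\#=ac$ and $(b,c)=3ad$ with $a\neq 0$ force $n(b)=a^2d$, whence $c^\#=(b^\#)^\#/a^2=n(b)b/a^2=db$.

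The converse is where there is a real gap, and it is not entirely your fault. In the subcase $b=0$ you invoke ``the third identity $(b,c)=3ad$,'' but the statement being proved assumes only the first two conditions $b^\#=ac$ and $c^\#=db$, so that identity is not available. Moreover the gap cannot be closed, because the second sentence of the lemma is false as literally printed: for $v=(1,0,0,1)$ one has $b=c=0$, so $b^\#=ac$ and $c^\#=db$ hold trivially and $a\neq 0$, yet $q(v)=1\neq 0$, so $v$ has rank $4$. What your argument actually establishes is the correct statement: for $a\neq 0$ (or $d\neq 0$), the conditions $b^\#=ac$ and $(b,c)=3ad$ (equivalently all three conditions) imply $v$ is rank one --- your main case $b\neq 0$ uses only the first two conditions, and the degenerate case $b=0$ genuinely needs the third to kill $d$. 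So either the converse should be read with the third condition among its hypotheses (as in the cited source), in which case your proof is complete after the small patch above, or $(1,0,0,1)$ stands as a counterexample to the statement as written.
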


All rank one elements of $W_J$ are in the $H(W_J)$-orbit of the element $(1,0,0,0)$.  More generally, arbitrary nonzero elements $v$ of $H(W_J)$ are in the $H(W_J)$-orbit of elements of the form $(1,0,c,d)$.  
\begin{lemma} Every nonzero element $v$ of $W_J$ can be moved to one of the form $(1,0,c,d)$ by applying operators $n_{J}(X)$, $\overline{n}_J(Y)$, and $m(\lambda)$. \end{lemma}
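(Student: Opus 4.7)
The plan is to proceed in three stages: first to ensure the $a$-coordinate of $v = (a,b,c,d)$ is nonzero, then to kill the $b$-coordinate, and finally to normalize the $a$-coordinate to $1$.

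For the first stage, suppose $a = 0$. The operator $\overline{n}_J(Y)$ replaces the first coordinate by $(b,Y) + (c,Y^\#) + dn(Y)$, and I seek $Y \in J$ making this nonzero. If $d \neq 0$, take $Y = t\cdot 1$: the expression becomes the polynomial $t\tr(b) + t^2 \tr(c) + t^3 d$ of degree three in $t$, hence nonzero for some $t \in F$ since $F$ is infinite. If $d = 0$ but $c \neq 0$, one uses that $\{Y^\# : Y \in J\}$ spans $J^\vee$: by polarization it has the same span as $\{X \times Y\}$, and the identities $1\times X = \tr(X) - X$ and $1\times 1 = 2\cdot 1$ exhibit both $1$ and each $X$ in that span. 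Hence some $Y_0$ has $(c,Y_0^\#) \neq 0$, and then $Y = tY_0$ gives a nonzero quadratic polynomial in $t$. If $c = d = 0$ but $b \neq 0$, nondegeneracy of the pairing supplies $Y$ with $(b,Y) \neq 0$. Stage two is then immediate: apply $n_J(-a^{-1}b)$, sending the $b$-coordinate to $b + a(-a^{-1}b) = 0$.

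For the third stage, $v = (a,0,c,d)$ with $a \neq 0$, and I wish to rescale $a$ to $1$. The main obstacle is that $m(\lambda)$ alone scales $a$ by the square $\lambda^2$, which is insufficient over fields such as $\Q$ where not every nonzero element is a square. The key observation is the identity
\[
w_J \;=\; n_J(-1)\,\overline{n}_J(1)\,n_J(-1),
\]
the $W_J$-analogue of the classical $\SL_2$ identity $\mm{1}{-1}{0}{1}\mm{1}{0}{1}{1}\mm{1}{-1}{0}{1} = \mm{0}{-1}{1}{0}$. Both $n_J(-1)$ and $\overline{n}_J(1)$ are $F$-linear on $W_J$, so the identity is verified by direct calculation on the spanning set $\{(1,0,0,0),\,(0,0,0,1)\} \cup \{(0,X,0,0),\,(0,0,X,0) : X \in J\}$. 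With $w_J$ thus in the subgroup generated by the allowed operators, so is the conjugate $w_J\, m(\lambda)\, w_J^{-1}$; a direct computation yields
\[
w_J\, m(\lambda)\, w_J^{-1}(a,b,c,d) \;=\; (\lambda^{-1}a,\; b,\; \lambda c,\; \lambda^2 d).
\]
Taking $\lambda = a$ transforms $(a,0,c,d)$ into $(1, 0, ac, a^2 d)$, which is of the desired form and completes the proof.
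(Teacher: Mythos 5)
Your argument is correct, and it is worth noting that the paper does not actually supply a proof of this lemma: it defers to Gan--Savin \cite{ganSavinMin} (Proposition 11.2, Corollary 11.3), so your write-up is a genuinely self-contained alternative. The case analysis in your first stage is fine (the spanning claim for $\{Y^\#\}$ follows, as you say, from $1\times 1 = 2\cdot 1$ and $1\times X = \tr(X)-X$ in characteristic $0$), and clearing $b$ with $n_J(-a^{-1}b)$ is immediate. The only delicate point is the normalization of $a$ to $1$, since $m(\lambda)$ scales $a$ only by squares, and your resolution is the right one: the identity $w_J = n_J(-1)\,\overline{n}_J(1)\,n_J(-1)$ does hold (I checked it on the spanning set $(1,0,0,0)$, $(0,X,0,0)$, $(0,0,X,0)$, $(0,0,0,1)$, using $1^\#=1$, $\tr(1)=3$, $1\times 1=2$), and the conjugation computation $w_J\,m(\lambda)\,w_J^{-1}(a,b,c,d) = (\lambda^{-1}a,\,b,\,\lambda c,\,\lambda^2 d)$ is also correct, noting $w_J^{-1} = -w_J$; since $n_J(X)^{-1} = n_J(-X)$ and $\overline{n}_J(Y)^{-1} = \overline{n}_J(-Y)$, the conjugate is indeed a word in the allowed operators, and it fixes the $b$-coordinate so the form $(1,0,ac,a^2d)$ is reached. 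This expression of $w_J$ as a product of the two unipotent operators is exactly the $W_J$-analogue of the standard $\SL_2$ fact and is the key idea that makes the statement provable with the restricted generating set in the lemma; the approach buys a short, elementary verification in place of the external citation the paper relies on.
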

A proof of the previous two lemmas may be found in \cite[Proposition 11.2, Corollary 11.3]{ganSavinMin}.

From the definitions, if $g \in H(W_J)$, one obtains $(gv)^\flat = \nu(g) g (v^\flat)$ and similarly $t(gv_1, gv_2,gv_3) = \nu(g) g t(v_1,v_2,v_3)$.

\subsection{Particulars for associative cubic norm structures}\label{sec:new} In this subsection we give more preliminaries that apply specifically to the case that $J = A$ is an associative cubic norm structure.  Throughout this subsection, $A$ is assumed to be an associative cubic norm structure.  We write $n$ or $n_A$ for the cubic norm map $A \rightarrow F$.

We first note the following.  Suppose $u,v\in A$ and $n(u)n(v) \in F^\times$.  Then if one sets $\alpha = n(u), \delta = n(v)$, $t(b) = vbu^\#$, $t^\vee(c) = ucv^\#$, the map $L(u,v) := m(\alpha,t,t^\vee,\delta)$ is in $\widetilde{M}(A,A)$.

\subsubsection{The quadratic map $R$} Define a map $R: W_A \rightarrow M_2(A)$ via
\[v = (a,b,c,d) \mapsto R(v) = \left(\begin{array}{cc} ad + 2cb - (c,b) & 2b^\# - 2ac \\ 2db - 2c^\# & -ad + (b,c) - 2bc \end{array}\right).\]
\begin{proposition}\label{prop:Rsquared} One has $R(v)^2 = q(v) 1_2$ for all $v \in W_A$. \end{proposition}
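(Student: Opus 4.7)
The strategy is a direct computation, writing
\[ R(v)=\begin{pmatrix}\alpha & \beta\\ \gamma & \delta\end{pmatrix},\qquad \alpha=ad+2cb-(c,b),\ \beta=2b^{\#}-2ac,\ \gamma=2db-2c^{\#},\ \delta=-ad+(b,c)-2bc, \]
and verifying the four entries of $R(v)^{2}$ using only the basic identities available in an associative cubic norm structure, namely
\[ xx^{\#}=x^{\#}x=n(x)\in F,\quad (xy)^{\#}=y^{\#}x^{\#},\quad (x,y)=\tr_{A}(xy), \]
together with the Cayley--Hamilton relation
\begin{equation}\label{eqn:CH}
z^{2}-(z,1)\,z=z^{\#}-(z^{\#},1)
\end{equation}
obtained by formally dividing $z^{3}-\tr(z)z^{2}+\tr(z^{\#})z-n(z)=0$ by $z$.

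For the off-diagonal entries, write $\alpha=s+2cb$ and $\delta=-s-2bc$ with $s=ad-(b,c)\in F$. Then $\alpha\beta+\beta\delta=2(cb\cdot\beta-\beta\cdot bc)$ since the scalar $s$ contributes $s\beta-\beta s=0$. Now compute directly:
\[ cb\cdot(2b^{\#}-2ac)=2c(bb^{\#})-2a(cb)c=2n(b)c-2a\,cbc=2(b^{\#}b)c-2a\,cbc=(2b^{\#}-2ac)\cdot bc, \]
using associativity and $bb^{\#}=b^{\#}b=n(b)\in F$. Hence $\alpha\beta+\beta\delta=0$, and the analogous manipulation with $cc^{\#}=c^{\#}c=n(c)\in F$ gives $\gamma\alpha+\delta\gamma=0$.

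For the top-left diagonal entry, expand
\[ \alpha^{2}=(ad-(b,c))^{2}+4(ad-(b,c))\,cb+4(cb)^{2}. \]
Apply \eqref{eqn:CH} with $z=cb$, and use $(cb)^{\#}=b^{\#}c^{\#}$ together with $(b,c)=\tr(cb)$ and $(b^{\#},c^{\#})=\tr(b^{\#}c^{\#})$ to rewrite $4(cb)^{2}=4(b,c)\,cb-4(b^{\#},c^{\#})+4b^{\#}c^{\#}$. Next expand
\[ \beta\gamma=(2b^{\#}-2ac)(2db-2c^{\#})=4dn(b)-4b^{\#}c^{\#}-4ad\,cb+4an(c), \]
using $b^{\#}b=n(b)$ and $cc^{\#}=n(c)$. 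Adding $\alpha^{2}$ and $\beta\gamma$, the terms $\pm 4b^{\#}c^{\#}$ cancel, the terms $\pm 4ad\,cb$ cancel, and the $cb$ contributions combine to zero, leaving exactly
\[ (ad-(b,c))^{2}+4an(c)+4dn(b)-4(b^{\#},c^{\#})=q(v). \]
The bottom-right entry $\delta^{2}+\gamma\beta$ is handled identically, applying \eqref{eqn:CH} to $z=bc$ and using $(bc)^{\#}=c^{\#}b^{\#}$.

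The main conceptual step is recognizing that the apparent matrix identity reduces, via \eqref{eqn:CH} and $(xy)^{\#}=y^{\#}x^{\#}$, to a clean cancellation. No appeal to prehomogeneity is needed here, although one could alternatively verify the equivariance of $v\mapsto R(v)^{2}-q(v)1_{2}$ under the operators $m(\lambda)$, $n_{A}(X)$, $\overline{n}_{A}(Y)$ of Proposition~\ref{HWJmaps} and then specialize to $v=(1,0,c,d)$, where the computation becomes trivial.
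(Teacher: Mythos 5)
Your proof is correct and follows essentially the same route as the paper: a direct entry-by-entry computation in which the off-diagonal entries vanish via the cancellation $cb(b^\#-ac)=(b^\#-ac)bc$ (and its analogue using $c^\#c=cc^\#=n(c)$), and the diagonal entries reduce to $q(v)$ by applying the degree-three Cayley--Hamilton identity $x^2-\tr(x)x+\tr(x^\#)-x^\#=0$ to $cb$ and $bc$, together with $(xy)^\#=y^\#x^\#$ and $xx^\#=x^\#x=n(x)$. The equivariance-plus-specialization alternative you mention at the end is not used by the paper for this proposition, but your main argument matches its proof.
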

\begin{proof} Computing the $(1,2)$ entry of $R(v)^2$, one obtains 
\begin{align*} R(v)^2_{12} &= (ad + 2cb - (b,c))(2b^\# - 2ac) + (2b^\# - 2ac)(-ad + (b,c) - 2bc) \\  &= 2cb(2b^\# - 2ac) + (2b^\# - 2ac)(-2bc) \\ &= 0. \end{align*}
Similarly, the $(2,1)$ entry of $R(v)^2 = 0$.  We thus must compute the diagonal entries of $R(v)^2$.  The $(1,1)$ entry is
\begin{align*} R(v)^2_{11} &= (ad - (b,c) + 2cb)^2 + 4(b^\# -ac)(db - c^\#) \\ &= (ad - (b,c))^2 + 4E \end{align*}
where
\begin{align*} E &= (ad - (b,c))cb + cbcb + (b^\# - ac)(db - c^\#) \\ &= an(c) + dn(b) - \left(\tr(cb)cb - (cb)^2 + (cb)^\#\right) \\&= an(c) + dn(b) - \tr((cb)^\#) \end{align*}
since for all $x$ in $A$ we have 
\[x^2 - \tr(x)x + \tr(x^\#) - x^\# = 0.\]
Thus $R(v)^2_{11} = q(v)$.  Similarly, $R(v)^2_{22} = q(v)$.  This completes the proof.\end{proof}

We will sometimes have desire to use the more symmetrical-looking matrix $S(v):=\frac{1}{2}R(v)\mm{}{-1}{1}{}$.  If $v = (a,b,c,d)$, then
\begin{equation}\label{eqn:Sdef} S(v):=\frac{1}{2}R(v) \left(\begin{array}{cc} &-1\\1& \end{array}\right) = \left(\begin{array}{cc} b^\# - ac & ad-cb - \tr(ad-cb)/2\\ ad-bc-\tr(ad-bc)/2 & c^\# -db \end{array}\right).\end{equation}

From Proposition \ref{prop:Rsquared} one obtains the identity $S(v)J_2S(v) = -\frac{q(v)}{4} J_2$, where $J_2 = \mm{}{1}{-1}{}$.

\begin{example} Suppose $A = F \times F \times F$ and $v = (a,b,c,d)$, with $b = (b_1,b_2,b_3)$, $c = (c_1,c_2,c_3)$.  The element $v$ is a $2 \times 2 \times 2$ cube, as considered in \cite{bhargavaI}.  Then $S(v) \in M_2(A) = M_2(F) \times M_2(F) \times M_2(F)$, and one has $S(v) = (q_1,q_2,q_3)$ with
\begin{align*} q_1 &= \left(\begin{array}{cc} b_2b_3 - ac_1 & \frac{-b_1c_1+b_2 c_2 + b_3 c_3  - ad}{2} \\ \frac{-b_1c_1+b_2 c_2 + b_3 c_3  - ad}{2} & c_2c_3 - db_1 \end{array}\right)\\ q_2 &=\left(\begin{array}{cc} b_3b_1 - ac_2 & \frac{-b_2c_2+b_3 c_3 + b_1 c_1  - ad}{2} \\ \frac{-b_2c_2+b_3 c_3 + b_1 c_1  - ad}{2} & c_3c_1 - db_2 \end{array}\right),\\ q_3 &= \left(\begin{array}{cc} b_1b_2 - ac_3 & \frac{-b_3c_3+b_1 c_1 + b_2 c_2  - ad}{2} \\ \frac{-b_3c_3+b_1 c_1 + b_2 c_2  - ad}{2} & c_1c_2 - db_3 \end{array}\right).\end{align*}
The $q_i$ are $2 \times 2$ symmetric matrices, or equivalently, binary quadratic forms.  These are (essentially) the $3$ quadratic forms associated to $v$ from \cite{bhargavaI}.  Since one always has $mJ_2 m^{t} = \det(m) J_2$ for $m\in M_2(F)$, the identity $S(v)JS(v) = -\frac{q(v)}{4}J$ is the statement that the $3$ quadratic forms $q_i$ have the same discriminant, which is equal to the discriminant of $v$. \end{example}

We record how $R(v)$ behaves under the action of certain elements of $H(W_A)$.
\begin{proposition}\label{Requiv} Suppose $v \in W_A$, $X \in A$, and $m,n\in A$ with $n_A(m)n_A(m) \in R^\times$.  Then $R(n(X) v) = \mm{1}{}{X}{1}R(v)\mm{1}{}{-X}{1}$, $R(w_A(v)) = \mm{}{1}{-1}{}R(v)\mm{}{-1}{1}{}$, and 
\begin{align*} R(L(m,n)v) &= n_A(m)n_{A}(n)\left(\begin{array}{cc}m& \\ &n\end{array}\right) R(v)\left(\begin{array}{cc}m^{-1}& \\ &n^{-1}\end{array}\right) \\ &= \left(\begin{array}{cc}m& \\ &n\end{array}\right) R(v)\left(\begin{array}{cc}n_A(n)m^{\#}& \\ &n_{A}(m)n^{\#}\end{array}\right).\end{align*}\end{proposition}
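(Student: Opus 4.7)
The plan is to verify each of the three identities by direct entrywise computation from the formula for $R(v)$ and the explicit action of each operator on $v=(a,b,c,d) \in W_A$. The $w_A$ identity is essentially a relabeling: since $w_A v = (d,-c,b,-a)$, substitution into the formula for $R$ gives $R(w_A v)_{11} = -ad+(b,c)-2bc$, $R(w_A v)_{22} = ad + 2cb - (c,b)$, $R(w_A v)_{12} = 2c^\# - 2db$, and $R(w_A v)_{21} = 2ac - 2b^\#$, which (using the symmetry $(b,c) = (c,b)$) is exactly the entries of $\mm{}{1}{-1}{} R(v) \mm{}{-1}{1}{}$.

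For the $L(m,n)$ identity, expand $L(m,n)v = (n_A(m)a,\,nbm^\#,\,mcn^\#,\,n_A(n)d)$ and compute entrywise, invoking the associative cubic-norm identities $mm^\# = m^\#m = n_A(m) \in F$, $(xy)^\# = y^\#x^\#$, and $(x,y) = \tr(xy) = \tr(yx)$. For instance, in the $(1,1)$ slot $a'd' - (c',b') = n_A(m)n_A(n)(ad - (c,b))$ and $c'b' = n_A(n)\,m(cb)m^\#$, producing $R(L(m,n)v)_{11} = m\,R(v)_{11}\,n_A(n)m^\#$; in the $(1,2)$ slot $(nbm^\#)^\# = n_A(m)\,mb^\#n^\#$ and $a'c' = n_A(m)\,m(ac)n^\#$, producing $R(L(m,n)v)_{12} = 2n_A(m)\,m(b^\#-ac)n^\# = m\,R(v)_{12}\,n_A(m)n^\#$. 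The $(2,1)$ and $(2,2)$ entries are exactly analogous, yielding the second displayed form of the identity. This agrees with the first form on the Zariski-open locus where $m,n$ are invertible via $n_A(m)m^{-1} = m^\#$, hence agrees everywhere by polynomiality in $(m,n)$.

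The main obstacle is the $n_J(X)$ identity. Setting $v' = n_J(X)v = (a,\,b+aX,\,c + b\times X + aX^\#,\,d + (c,X) + (b,X^\#) + an_A(X))$, the four entries of the target $\mm{1}{}{X}{1} R(v) \mm{1}{}{-X}{1}$ are $R(v)_{11} - R(v)_{12}X$, $R(v)_{12}$, $X R(v)_{11} + R(v)_{21} - X R(v)_{12} X - R(v)_{22} X$, and $X R(v)_{12} + R(v)_{22}$. The $(1,2)$ match collapses to the identity $(b+aX)^\# - a(c + b\times X + aX^\#) = b^\# - ac$, which is just the linearization of $\#$ on $b + aX$; the $(2,2)$ match is an analogous direct expansion. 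The $(1,1)$ and especially the $(2,1)$ matches are the heart of the calculation: expanding $c'b'$, $(c',b')$, $(c')^\#$, and $d'b'$ and applying the identities from subsection \ref{special_Id} (in particular $X\times(X^\#\times y) = n_A(X)y + (X,y)X^\#$, $(y\times X)^\# + y^\#\times X^\# = (y,X^\#)y + (y^\#,X)X$, and the special identity $y^\# X(X\times y) = (X^\#,y)y^\# - n_A(y)X^\#$), together with the associativity of $A$ to rearrange products such as $X(cb) = (Xc)b$, forces all cross terms to cancel and the two sides to collapse to a common expression. Conceptually, the conjugation on $M_2(A)$ by $\mm{1}{}{X}{1}$ is exactly the unipotent shear associated to $n_J(X)$ on $W_A$, so once the bookkeeping is carried out the identity is forced; the real work is in executing that bookkeeping cleanly.
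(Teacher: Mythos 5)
Your proposal is correct and takes the same route as the paper: the paper's proof of Proposition \ref{Requiv} consists only of the remark that these are direct computations, and your entrywise verifications (the relabeling for $w_A$, the $(xy)^\#=y^\#x^\#$ and $mm^\#=n_A(m)$ manipulations for $L(m,n)$, and the use of the identities of subsection \ref{special_Id} to cancel the cross terms in the $n(X)$ case) are exactly those computations carried out. I checked the hardest entries, in particular that $XyX=(y,X)X-y\times X^\#$ (a consequence of equation (\ref{eqn:Uyz})) closes the $(2,1)$ entry, so there is no gap.
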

\begin{proof} These are all direct computations. \end{proof}

The vanishing of $R(v)$, or equivalently $S(v)$, characterizes elements of $W_A$ of rank at most $1$.
\begin{lemma}\label{Rvan} For $v \in W_A$, one has $S(v) = 0$ if and only if $v$ has rank at most $1$. \end{lemma}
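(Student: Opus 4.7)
The plan is to verify both directions of the equivalence by reducing to a single normal-form representative, exploiting the prehomogeneity of $W_A$ under $H(W_A)$ together with the equivariance of $R$ recorded in Proposition \ref{Requiv}. Note first that $S$ is obtained from $R$ by right-multiplication by an invertible constant matrix (see (\ref{eqn:Sdef})), so $S(v) = 0$ if and only if $R(v) = 0$; it is therefore cleaner to work throughout with $R$.

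The main preparatory step is to observe that the vanishing locus $\{R(v) = 0\} \subseteq W_A$ is stable under the full list of $H(W_A)$-generators appearing just before Proposition \ref{HWJmaps}. Proposition \ref{Requiv} expresses $R(gv)$ as $M(g)\, R(v)\, N(g)$ with $M(g), N(g) \in \GL_2(A_F)$ when $g$ is of the form $n_A(X)$, $w_A$, or $L(m,n)$, and a direct computation handles $m(\lambda)$. For the remaining generator $\overline{n}_A(Y)$, I check the identity $\overline{n}_A(Y) = w_A \circ n_A(-Y) \circ w_A^{-1}$ using $w_A^2 = -\mathrm{id}$, so equivariance for $n_A$ and $w_A$ propagates to $\overline{n}_A(Y)$. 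Since the $H(W_A)$-action also preserves the rank of elements of $W_A$, both sides of the equivalence in the lemma are invariant under the relevant operators.

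For the $(\Leftarrow)$ direction, the case $v = 0$ is trivial, and every rank-one element lies in the $H(W_A)$-orbit of $(1,0,0,0)$ by the fact recorded just before Proposition \ref{HWJmaps}; since $R((1,0,0,0)) = 0$ is immediate from the definition of $R$, the stability observation above yields $R(v) = 0$, hence $S(v) = 0$. For the $(\Rightarrow)$ direction, suppose $v \neq 0$ with $R(v) = 0$. By the normal-form lemma recalled after Proposition \ref{HWJmaps}, a suitable composition of operators among $n_A(X)$, $\overline{n}_A(Y)$, and $m(\lambda)$ carries $v$ to some $v_0 = (1, 0, c, d) \in W_A$, and by stability $R(v_0) = 0$. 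Substituting $a = 1$, $b = 0$ directly into the definition of $R$ gives
\[
R(v_0) = \begin{pmatrix} d & -2c \\ -2c^\# & -d \end{pmatrix},
\]
which forces $c = 0$ and $d = 0$. Thus $v_0 = (1, 0, 0, 0)$ is of rank one, and since $H(W_A)$ preserves rank, so is $v$.

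The main obstacle is really just bookkeeping: establishing $\{R = 0\}$-stability for every $H(W_A)$-operator used in the normal-form reduction, beyond the three generators directly covered in Proposition \ref{Requiv}. Once that is settled, both implications collapse to a single computation at a convenient orbit representative, which is exactly why this lemma admits such a compact proof.
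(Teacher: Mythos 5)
Your proof is correct and is essentially the paper's own argument: reduce a nonzero $v$ to the normal form $(1,0,c,d)$ via the operators listed around Proposition \ref{HWJmaps}, use the equivariance of $R$ (Proposition \ref{Requiv}, with $\overline{n}$ and $m(\lambda)$ handled exactly as you do), and read off from the explicit matrix that vanishing forces $c=d=0$, i.e.\ the normal form is $(1,0,0,0)$. The only cosmetic point is that your $(\Leftarrow)$ direction invokes the full $H(W_A)$-orbit of $(1,0,0,0)$ while $\{R=0\}$-stability was verified only for the listed generators; this is repaired at no cost by running that direction through the same normal-form reduction, since a rank-one element of the form $(1,0,c,d)$ already forces $c=d=0$.
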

\begin{proof} Clearly $S(0) = 0$, and one computes $S((1,0,c,d)) = \mm{-c}{-d/2}{-d/2}{c^\#}$.  The lemma then follows by the equivariance result of Proposition \ref{Requiv}. The lemma also follows right away from \cite[Proposition 11.2]{ganSavinMin}.\end{proof}

\subsubsection{Rank one elements} We now say a bit more about rank one elements of $W_A$.  Suppose $\ell =(s,t) \in A^2$ is a row vector.  We define $\ell^{!} \in W_A$ to be the element
\[\ell^{!} = (s,t)^{!} := (n(s), s^\#t,t^\# s, n(t)).\]
Similarly, if $\eta \in A^2$ is a column vector, we define
\[\eta^{!} = \left(\begin{array}{c}u\\v\end{array}\right)^{!} = (n(u),vu^\#,uv^\#,n(v)).\]
Note that the order of multiplication in the $``b"$ and $``c"$-components has been switched.  We have the following lemma.
\begin{lemma}\label{lem:nRC} For $\ell \in A^2$ a row vector, and $\eta \in A^2$ a column vector, $\ell^{!}$ and $\eta^{!}$ are rank at most one, and one has $\langle \ell^{!},\eta^{!}\rangle = n_{A}(\ell \mm{}{1}{-1}{} \eta)$. \end{lemma}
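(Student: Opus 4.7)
The plan is to separate the two claims. For the rank assertion, the cleanest route is to verify the criterion $S(v) = 0$ of Lemma \ref{Rvan}, using only the identities $(xy)^\# = y^\# x^\#$, $(x^\#)^\# = n(x)x$, and $xx^\# = x^\# x = n(x)$ that hold in an associative cubic norm structure. For the pairing identity, the plan is to expand both sides and match them term-by-term, with the only nontrivial input being cyclicity of the reduced trace on $A$.

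For the rank claim, write $\ell^! = (a,b,c,d) = (n(s), s^\#t, t^\#s, n(t))$. The above identities immediately yield $b^\# = t^\#(s^\#)^\# = n(s)\,t^\#s = ac$, $c^\# = s^\#(t^\#)^\# = n(t)\,s^\#t = db$, and $cb = t^\# s s^\# t = n(s)\,t^\# t = n(s)n(t) = ad = bc$. Substituting into formula (\ref{eqn:Sdef}) shows that each of the four entries of $S(\ell^!)$ vanishes, so $\ell^!$ has rank at most one by Lemma \ref{Rvan}. The computation for $\eta^!$ is essentially identical, with the roles of left and right multiplication swapped.

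For the pairing, set $\alpha = sv - tu \in A$ (so that $\ell \mm{}{1}{-1}{} \eta = \alpha$) and apply the linearization $n(x+y) = n(x) + (x^\#, y) + (x, y^\#) + n(y)$ with $x = sv$, $y = -tu$. Using multiplicativity of $n_A$, the formula $(xy)^\# = y^\# x^\#$, and $n(-z) = -n(z)$, this gives
\[ n_A(\alpha) = n(s)n(v) - \tr(v^\# s^\# \cdot tu) + \tr(sv \cdot u^\# t^\#) - n(t)n(u). \]
On the other hand, unwinding $\langle \ell^!, \eta^! \rangle = ad' - (b,c') + (c,b') - da'$ directly with $\ell^! = (n(s), s^\#t, t^\#s, n(t))$ and $\eta^! = (n(u), vu^\#, uv^\#, n(v))$ and using $(x,y) = \tr(xy)$ yields
\[ \langle \ell^!, \eta^! \rangle = n(s)n(v) - \tr(s^\# t \cdot u v^\#) + \tr(t^\# s \cdot v u^\#) - n(t)n(u). \]
The two expressions agree after a single cyclic rotation of each of the two traces.

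The main obstacle is simply bookkeeping: the sign $n(-z) = -n(z)$, the minus signs in the symplectic pairing and in formula (\ref{eqn:Sdef}) for $S(v)$, and the switched order of factors between the $b$- and $c$-slots of $\ell^!$ versus $\eta^!$ all have to be tracked carefully. No identity beyond those already listed for associative CNS and the cyclicity of the trace is needed.
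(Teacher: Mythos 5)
Your proposal is correct and takes essentially the same route as the paper: the rank assertion is exactly the $S(\ell^{!})=0$ criterion of Lemma \ref{Rvan} (the paper cites the lemma without writing out the short ACNS computation you supply), and the pairing identity is obtained, as in the paper, by expanding $\langle \ell^{!},\eta^{!}\rangle$ and recognizing it as the linearization $n(sv-tu)=n(sv)+((sv)^{\#},-tu)+(sv,(-tu)^{\#})+n(-tu)$. Your appeal to cyclicity of the trace form is precisely the paper's implicit identities $(s^{\#}t,uv^{\#})=((sv)^{\#},tu)$ and $(t^{\#}s,vu^{\#})=(sv,(tu)^{\#})$, so the two arguments are the same computation read in opposite directions.
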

\begin{proof} That $\ell^{!}$ and $\eta^{!}$ are rank at most one follows from Lemma \ref{Rvan}.  Suppose $\ell = (s,t)$ and $\eta = \left(\begin{array}{c}u\\v\end{array}\right)$.  Then
\begin{align*} \langle \ell^{!},\eta^{!} \rangle &= \langle (n(s), s^\#t,t^\#s,n(t)), (n(u),vu^\#,uv^\#,n(v)) \rangle \\ &= n(s)n(v) - (s^\#t,uv^\#) + (t^\#s,vu^\#) - n(t)n(u) \\ &= n(sv)-((sv)^\#,tu)+(sv,(tu)^\#)-n(tu) \\&= n(sv-tu)\end{align*}
and the lemma follows.\end{proof}

\subsubsection{The $\GL_2(A)$-action on $W_A$} In this paragraph we define a right and left $\GL_2(A)$ action on $W_A$, that preserves the symplectic and quartic forms, up to similitude.  We will give the details for the left action; the right action is completely analogous.

We begin by constructing a space $V_A$, with a left $\GL_2(A)$ action, that comes equipped with a map $V_A \rightarrow W_A$.  We will then show that this map is an isomorphism.

We consider $W_2 \otimes A$ to be the $2 \times 1$ column vectors with coefficients in $A$, an $A$ bi-module.
\begin{definition} We first define $V_{A_F}$.  Denote by $S_3$ the symmetric group on three letters. The vector space $V_{A_F}$ is defined to be the $S_3$ invariants of the quotient of $(W_2 \otimes A_F)^{\otimes 3}$ by the subspace spanned by $xa \otimes xa \otimes xa - n(a) x \otimes x \otimes x$ for $x \in W_2 \otimes A_F$ and $a \in A_F$.  That is, set
\[I_{A_F} := \left(\langle xa \otimes xa \otimes xa - n(a) x \otimes x \otimes x: x \in W_2 \otimes A_F, a \in A_F \rangle \right),\]
the subspace spanned by $xa \otimes xa \otimes xa - n(a) x \otimes x \otimes x$ for $x \in W_2 \otimes A_F$ and $a \in A_F$, and 
\[V_{A_F} := \left(\left((W_2 \otimes A_F) \otimes (W_2 \otimes A_F) \otimes (W_2 \otimes A_F)\right)\slash I_{A_F}\right)^{S_3}.\]
We define $V_{A}$ to be the image of $\left(A^2 \otimes A^2 \otimes A^2\right)^{S_3}$ in $V_{A_F}$.\end{definition}

Note that since $\GL_2(A_F)$ preserves the submodule $I_{A_F}$, and since the $\GL_2(A_F)$ and $S_3$ actions commute on $(W_2 \otimes A_F)^{\otimes 3}/I_{A_F}$, $\GL_2(A_F)$ acts on the left of $V_{A_F}$.  The group $\GL_2(A)$ preserves the module $V_A$ inside $V_{A_F}$.

A map $V_A \rightarrow W_A$ is defined as follows.  We map $x \otimes x \otimes x$ to $x^{!}$.  Linearizing this, suppose $x_i = \left(\begin{array}{c}u_i \\ v_i \end{array}\right)$ in $W_2 \otimes A$.  Recall the symmetric trilinear form on $A$ that satisfies the identities $(x,x,x) = 6n(x)$, $(x,y,z) = \tr(x \times y,z)$.  Then we map
\[\sum_{\sigma \in S_3}{\sigma \left(x_1 \otimes x_2 \otimes x_3\right)} \mapsto (\alpha(x_1,x_2,x_3),\beta(x_1,x_2,x_3),\gamma(x_1,x_2,x_3),\delta(x_1,x_2,x_3))\]
where
\begin{itemize}
\item $\alpha(x_1,x_2,x_3) = (u_1,u_2,u_3)$
\item $\beta(x_1,x_2,x_3) = v_1(u_2 \times u_3)+v_2(u_3 \times u_1) + v_3(u_1 \times u_2)$
\item $\gamma(x_1,x_2,x_3) = u_1(v_2 \times v_3) + u_2(v_3 \times v_1) + u_3(v_1 \times v_2)$
\item $\delta(x_1,x_2,x_3) = (v_1,v_2,v_3)$. \end{itemize}
Note that this does define a map on $V_A$, since $\alpha(x_1a,x_2 a, x_3a) = n(a) \alpha(x_1,x_2,x_3)$, $\beta(x_1 a,x_2 a, x_3 a) = n(a)\beta(x_1,x_2,x_3)$ etcetera.

\begin{lemma}\label{relnsInVA} Suppose $x,y, z$ are in $W_2 \otimes A$ and $a_1, a_2, a_3, a$ are in $A$.  Then in $(W_2 \otimes A_F)^{\otimes 3}/I_{A_F}$, one has the following equalities:
\begin{enumerate}
\item $\sum_{\sigma \in S_3}{\sigma \left(xa_1 \otimes xa_2 \otimes xa_3\right)} = (a_1,a_2,a_3) x \otimes x \otimes x$;
\item $xa \otimes ya \otimes z = x \otimes y \otimes za^\#$;
\item $xa_1 \otimes ya_2 \otimes z + xa_2 \otimes ya_1 \otimes z = x \otimes y \otimes z(a_1 \times a_2)$. \end{enumerate}
\end{lemma}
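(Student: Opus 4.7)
Identity (1) follows from a standard polarization of the defining relation. Fix $x \in W_2 \otimes A_F$; the expression $xa \otimes xa \otimes xa - n(a) x^{\otimes 3}$ is a polynomial of degree three in $a \in A_F$ all of whose values lie in $I_{A_F}$, and since $I_{A_F}$ is a linear subspace its polarizations do as well. Replacing $a$ by $a_1 + a_2 + a_3$ and extracting the part trilinear in $(a_1, a_2, a_3)$, the left-hand side yields $\sum_{\sigma \in S_3} xa_{\sigma(1)} \otimes xa_{\sigma(2)} \otimes xa_{\sigma(3)}$, while the right-hand side yields $(a_1, a_2, a_3) x^{\otimes 3}$ by the very definition of the trilinear form as the full polarization of $n$. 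This gives (1).

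Identities (2) and (3) are not $S_3$-symmetric as written, whereas $I_{A_F}$ is contained in the $S_3$-symmetric subspace; they are therefore to be interpreted in $V_{A_F}$, i.e., after implicit $S_3$-symmetrization of both sides over the three tensor positions. To prove (3) in this form, I would compute the image of each side under the projection $V_{A_F} \to W_A$ defined in the text and check equality in each of the four components $\alpha, \beta, \gamma, \delta$. The key algebraic input is
\[(u a_1) \times (u' a_2) + (u a_2) \times (u' a_1) = (a_1 \times a_2)(u \times u'),\]
valid in any associative cubic norm structure, which is obtained by twice polarizing the multiplicativity-of-adjoint formula $(ya)^\# = a^\# y^\#$: first polarizing in $y$ gives $(u a) \times (u' a) = a^\# (u \times u')$, then polarizing in $a$ gives $(y a_1) \times (y a_2) = (a_1 \times a_2) y^\#$, and polarizing the latter once more in $y$ combines these to produce the displayed identity. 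Companion identities, such as $(va)((u' a) \times u'') = v(u' \times (u'' a^\#))$, derived similarly by expressing $\times$ in terms of $\#$ via $(u'a) \times u'' = (u'a + u'')^\# - a^\# u'^\# - u''^\#$ and simplifying using $a a^\# = n(a)$, then handle the $\beta$ and $\gamma$ components. Identity (2) is the specialization $a_1 = a_2 = a$ of (3), using $a \times a = 2 a^\#$.

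The main obstacle will be the careful bookkeeping required in the (possibly noncommutative) ring $A$: each of the four components $\alpha, \beta, \gamma, \delta$ must be matched separately, and the manipulations rely on repeated use of the polarizations of $(ya)^\# = a^\# y^\#$ together with cyclicity of the trace (to move factors into position). An alternative approach derives (3) purely by polarization of (1) in the $x$-variable (replacing $x$ by $x + y + z$ and extracting trilinear parts) and then specializing---for instance, setting $a_3 = 1$ and using $(a_1, a_2, 1) = \tr(a_1 \times a_2)$, then combining with (1) applied to $(1,1,c)$ for suitable $c \in A_F$ to isolate the terms of the correct form---but this requires more delicate bookkeeping to extract identity (3) from the various $S_3$-symmetric combinations that arise.
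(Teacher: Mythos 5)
Your argument for item (1) is the paper's own: polarize the generators of $I_{A_F}$ in the variable $a$, and that part is fine. The problem is your route to (2) and (3). You propose to verify (3) by pushing both sides through the projection $V_{A_F}\rightarrow W_{A_F}$ and matching the components $\alpha,\beta,\gamma,\delta$. That only shows the two sides have the same \emph{image} in $W_{A_F}$; to conclude that they are equal in $V_{A_F}$ (let alone in $(W_2\otimes A_F)^{\otimes 3}/I_{A_F}$, where the lemma is stated) you need that projection to be injective. But injectivity is precisely the content of Proposition \ref{prop:VAWA}, and its proof is a dimension count, $\dim_F V_{A_F}\leq 2+2\dim_F A_F$, which rests on Lemma \ref{relnsInVA}(2),(3) to produce the spanning set. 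So your argument invokes a later result whose proof depends on the very lemma you are proving: this is a genuine circularity, not a bookkeeping issue. The auxiliary identities you cite, such as $(ua_1)\times(u'a_2)+(ua_2)\times(u'a_1)=(a_1\times a_2)(u\times u')$ obtained by polarizing $(ya)^\#=a^\#y^\#$, are correct, but they only control the image in $W_A$, not the class modulo $I_{A_F}$.

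The paper's proof works entirely inside the quotient and is much shorter: for (2) one observes that $a\mapsto xa\otimes ya\otimes z-x\otimes y\otimes za^\#$ is a polynomial map into a finite-dimensional space, so by Zariski density it suffices to check vanishing when $n(a)\neq 0$; there one writes $n(a)\left(x\otimes y\otimes za^\#\right)=xa\otimes ya\otimes (za^\#)a=n(a)\left(xa\otimes ya\otimes z\right)$, using the defining relation together with $a^\#a=n(a)$. Item (3) is then the linearization of (2) in $a$. Note the direction: you propose to deduce (2) from (3) via $a\times a=2a^\#$, which is legitimate only once (3) has an independent proof, whereas the paper deduces (3) from (2). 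Your observation that the non-symmetric identities should be read after $S_3$-symmetrization (since the stated generators of $I_{A_F}$, and their linearizations in $x$, are symmetric tensors) is a fair caveat about how the relation is actually used, but it is orthogonal to the gap: even in the symmetrized reading, equality must be established in $V_{A_F}$ by an argument internal to the quotient, as above, not through $W_A$.
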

\begin{proof} The first item follows from linearizing the identity $xa \otimes xa \otimes xa = n(a) x \otimes x \otimes x$, and the third item follows from the second by linearization.  Thus, we prove the second statement.

To do this, fix $x,y,z$ in $W_2 \otimes A$, and consider the map $A \rightarrow (W_2 \otimes A_F)^{\otimes 3}/I_{A_F}$ given by
\[a \mapsto xa \otimes ya \otimes z - x \otimes y \otimes za^\#.\]
Since this is a polynomial map from $A$ to a finite dimensional $F$ vector space, to check that it is identically $0$, it suffices to check that it is $0$ on the Zariski dense set of $a$ with $n(a) \neq 0$.  But now in $(W_2 \otimes A_F)^{\otimes 3}/I_{A_F}$, one has
\[n(a) \left(x \otimes y \otimes za^\#\right) = xa \otimes ya \otimes za^\# a = n(a) \left(xa \otimes ya \otimes z\right),\]
and the lemma follows. \end{proof}

Fix the standard basis $e = \left(\begin{array}{c}1 \\ 0 \end{array}\right)$, $f = \left(\begin{array}{c}0 \\ 1 \end{array}\right)$ of $W_2$.
\begin{proposition}\label{prop:VAWA} The map $V_{A_F} \rightarrow W_{A_F}$ is a linear isomorphism, which induces an isomorphism of $R$-modules $V_A \rightarrow W_A$. \end{proposition}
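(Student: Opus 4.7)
The plan is to prove the proposition by (i) exhibiting explicit preimages in $V_A$ for a spanning set of $W_A$, (ii) using the relations of Lemma \ref{relnsInVA} to reduce every symmetric tensor in $V_{A_F}$ to this span, and (iii) deducing the integral statement by restriction.

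First, I would compute directly from the formulas for $\alpha,\beta,\gamma,\delta$ the images of four families of $S_3$-invariant tensors: $e^{\otimes 3}\mapsto(1,0,0,0)$, $f^{\otimes 3}\mapsto(0,0,0,1)$, and the cyclic sums
\[ fa\otimes e\otimes e + e\otimes fa\otimes e + e\otimes e\otimes fa \;\mapsto\; (0,a,0,0), \qquad ec\otimes f\otimes f + f\otimes ec\otimes f + f\otimes f\otimes ec \;\mapsto\; (0,0,c,0) \]
for $a,c\in A$. The two cyclic sums are genuinely $S_3$-invariant since the two copies of $e$ (resp.\ $f$) cannot be distinguished by the transposition swapping them. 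As $a,c$ range over an $F$-basis of $A_F$, these images form an $F$-basis of $W_{A_F}$, so the map $V_{A_F}\to W_{A_F}$ is surjective and the listed preimages are linearly independent.

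Next, I would show that these preimages also span $V_{A_F}$. By multilinearity, $V_{A_F}$ is spanned by symmetrizations $T(x_1,x_2,x_3):=\sum_{\sigma\in S_3}\sigma(x_1\otimes x_2\otimes x_3)$ where each $x_i$ is either $eu_i$ or $fv_i$. There are four cases. The cases $(eee)$ and $(fff)$ collapse by the first identity of Lemma \ref{relnsInVA} to $(u_1,u_2,u_3)\,e^{\otimes 3}$ and $(v_1,v_2,v_3)\,f^{\otimes 3}$ respectively. For the mixed case $(eef)$, I would group the six terms of $T(eu_1,eu_2,fv)$ into three pairs according to the position of $fv$, and apply the third identity of Lemma \ref{relnsInVA} (and its cyclic variants, obtained by letting $S_3$ act on the relation) to each pair; this yields
\[ T(eu_1,eu_2,fv) \;=\; \sum_{\mathrm{cyc}} fv(u_1\times u_2)\otimes e\otimes e, \]
which is in the span from the previous paragraph. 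The case $(eff)$ is handled symmetrically. Hence the listed preimages span $V_{A_F}$, and combined with their images forming a basis of $W_{A_F}$ this proves $V_{A_F}\to W_{A_F}$ is an isomorphism.

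Finally, the integral statement is a formal consequence: by construction $V_A\hookrightarrow V_{A_F}$, so injectivity of $V_A\to W_A$ is inherited from the field case, while the preimages above all lie in $V_A$ when $a,c\in A$ and hit $R$-module generators of $W_A=R\oplus A\oplus A\oplus R$, yielding surjectivity. I expect the main obstacle to be the mixed case $(eef)$ of the spanning argument: applying the polarized form of the defining relation in each of the three possible positions of $fv$ requires carefully tracking cyclic shifts of the identity in Lemma \ref{relnsInVA}, but once this is done the rest of the proof is bookkeeping.
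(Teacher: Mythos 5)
Your proposal is correct and follows essentially the same route as the paper: the same four families of symmetrized tensors are used both to establish surjectivity and, via the relations of Lemma \ref{relnsInVA}, to span $V_{A_F}$, after which the isomorphism follows from the resulting dimension bound and the integral statement by restriction. The only difference is that you spell out the case analysis $(eee)$, $(eef)$, $(eff)$, $(fff)$ for the spanning step, which the paper leaves implicit, and your reduction of the mixed case to the cyclic sum $\sum_{\mathrm{cyc}} fv(u_1\times u_2)\otimes e\otimes e$ is valid since $I_{A_F}$ is $S_3$-stable, so the permuted versions of the linearized relation do hold.
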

\begin{proof} We check that the map $V_A \rightarrow W_A$ is an isomorphism by seeing that it is surjective, and checking the bound $\dim_F V_{A_F} \leq 2 + 2\dim_F A_F$.

To see the surjectivity, note that for $a \in R$ and $b \in A$, $a e \otimes e \otimes e \mapsto (a,0,0,0)$, and
\[bf \otimes e \otimes e + e \otimes bf \otimes e + e \otimes e \otimes bf \mapsto (0,b,0,0)\]
and similarly the elements $(0,0,c,0)$, $(0,0,0,d)$ are in the image of $V_A \rightarrow W_A$. 

It follows from Lemma \ref{relnsInVA} that the elements
\begin{itemize}
\item $a e\otimes e \otimes e, a \in F$;
\item $bf \otimes e \otimes e + e \otimes bf \otimes e + e \otimes e \otimes bf$, $b \in A_F$;
\item $ce \otimes f \otimes f + f \otimes ce \otimes f + f \otimes f \otimes ce$, $c \in A_F$;
\item $df \otimes f \otimes f$, $d \in F$ \end{itemize}
span $V_{A_F}$.  The dimension bound, and thus the proposition, follows.\end{proof}

As a consequence of Proposition \ref{prop:VAWA}, we obtain a left $\GL_2(A)$ action on $W_{A}$ by transport of structure.  Note that by the definition of this action, we have $(g \eta)^{!} = g \cdot \eta^{!}$, if $g \in \GL_2(A)$ and $\eta \in A^2$ is a column vector.

We next need to check that the induced action of $\GL_2(A)$ on $W_A$ preserves the symplectic and quartic form.  First, a lemma.
\begin{lemma}\label{Paction} The element $\mm{1}{}{X}{1}$ of $\GL_2(A)$ acts on $W_A$ as $n(X)$, $\mm{m}{}{}{n}$ acts on $W_A$ as $L(m,n)$, and $\mm{}{1}{-1}{}$ acts on $W_A$ as $w_A$. \end{lemma}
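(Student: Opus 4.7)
The plan is to unwind the definition of the left $\GL_2(A)$-action on $W_A$, which by Proposition \ref{prop:VAWA} is transported from the natural left action on $V_A$, where a matrix acts by left matrix multiplication on each of the three factors $W_2\otimes A$. The proof of Proposition \ref{prop:VAWA} shows that $V_A$ is spanned by the elements
\[
a\,e^{\otimes 3},\quad T_b := bf\otimes e\otimes e+e\otimes bf\otimes e+e\otimes e\otimes bf,
\]
\[
U_c := ce\otimes f\otimes f+f\otimes ce\otimes f+f\otimes f\otimes ce,\quad d\,f^{\otimes 3}
\]
(with $a,d\in R$, $b,c\in A$), which map respectively to $(a,0,0,0)$, $(0,b,0,0)$, $(0,0,c,0)$, $(0,0,0,d)$ in $W_A$. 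The lemma reduces to computing the image of each generator under each of the three matrices and matching it with the corresponding operator $n_J(X)$, $L(m,n)$, or $w_A$.

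For $g=\mm{m}{}{}{n}$ one has $ge=em$ and $gf=fn$, so by the defining relation $xa\otimes xa\otimes xa=n(a)\,x^{\otimes 3}$, the generator $a\,e^{\otimes 3}$ maps to $an(m)\,e^{\otimes 3}\mapsto(an(m),0,0,0)=L(m,n)(a,0,0,0)$, and $d\,f^{\otimes 3}\mapsto(0,0,0,dn(n))$ similarly. The image of $T_b$ is the analogous three-term sum with $bf$ replaced by $f(nb)$ and $e$ replaced by $em$; applying the formulas for $\alpha,\beta,\gamma,\delta$ with $x_1=(0,nb)^t$, $x_2=x_3=(m,0)^t$ yields $\beta=nb(m\times m)=2nbm^\#$, and the $\tfrac{1}{2}$ factor from symmetrizing with two equal entries produces $(0,nbm^\#,0,0)=L(m,n)(0,b,0,0)$. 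The $U_c$ case is parallel.

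For $g=\mm{}{1}{-1}{}$, $ge=-f$ and $gf=e$, so $a\,e^{\otimes 3}\mapsto -a\,f^{\otimes 3}\mapsto(0,0,0,-a)$ and $d\,f^{\otimes 3}\mapsto(d,0,0,0)$, while $T_b\mapsto U_b\mapsto(0,0,b,0)$ and $U_c\mapsto-T_c\mapsto(0,-c,0,0)$, each agreeing with $w_A(a,b,c,d)=(d,-c,b,-a)$.

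The main case is $g=\mm{1}{}{X}{1}$ with $ge=e+fX$ and $gf=f$. On $a\,e^{\otimes 3}$, expand $(e+fX)^{\otimes 3}$ into four symmetrized pieces of types $e^{\otimes 3}$, $fX\otimes e\otimes e+\cdots$, $fX\otimes fX\otimes e+\cdots$, and $(fX)^{\otimes 3}$; applying the formulas for $\alpha,\beta,\gamma,\delta$ (using $1\times 1=2$, $X\times X=2X^\#$, $(X,X,X)=6n(X)$, and the halving factors from symmetrizations with repeated entries) produces the pieces $(a,0,0,0)$, $(0,aX,0,0)$, $(0,0,aX^\#,0)$, $(0,0,0,an(X))$, whose sum is $n_J(X)(a,0,0,0)$. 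The generator $d\,f^{\otimes 3}$ is fixed. For $T_b$, expand $g\cdot T_b$ and regroup by $S_3$-orbit into orbits of $(bf,e,e)$, $(bf,e,fX)$, and $(bf,fX,fX)$; these contribute $(0,b,0,0)$, $(0,0,b\times X,0)$, and $(0,0,0,(b,X^\#))$ respectively (the last via $(b,X,X)=(b,X\times X)=2(b,X^\#)$), summing to $n_J(X)(0,b,0,0)$. For $U_c$, expanding $g\cdot ce=ce+f(Xc)$ yields $U_c$ plus a correction $f(Xc)\otimes f\otimes f+f\otimes f(Xc)\otimes f+f\otimes f\otimes f(Xc)$, which by Lemma \ref{relnsInVA}(1) with $x=f$, $a_1=Xc$, $a_2=a_3=1$ equals $\tfrac{1}{2}(Xc,1,1)\,f^{\otimes 3}=\tr(Xc)\,f^{\otimes 3}\mapsto(0,0,0,(c,X))$, giving the full image $(0,0,c,(c,X))=n_J(X)(0,0,c,0)$.

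The main obstacle is purely bookkeeping: every grouped sum of simple tensors must be recognized as an $S_3$-symmetrization $\sum_\sigma\sigma(x_1\otimes x_2\otimes x_3)$, and the halving factors arising whenever two of the three entries coincide must be tracked correctly. No identity in $A$ beyond the polarization relations already used in Lemma \ref{relnsInVA} is needed.
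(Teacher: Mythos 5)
Your proposal is correct and follows essentially the same route as the paper: the paper's proof is exactly the generator-by-generator ``direct computation ... from Lemma \ref{relnsInVA} and the definitions,'' of which it displays only the $\mm{m}{}{}{n}$-on-$(0,b,0,0)$ case, and your write-up simply carries out all the remaining cases with the correct symmetrization/halving bookkeeping (your final identification $\tr(Xc)=(c,X)$ is valid since in an ACNS the pairing is the trace form of the algebra product).
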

\begin{proof} These are all direct computations that follow from Lemma \ref{relnsInVA} and the definitions.  For example, $\mm{m}{}{}{n}$ acts on $(0,b,0,0)$ as
\[\sum_{cyc}{ e \otimes e \otimes bf} \mapsto \sum_{cyc}{me \otimes me \otimes nbf} = \sum_{cyc}{ e\otimes e \otimes nbm^\# f} = (0,nbm^\#,0,0).\]
\end{proof}

Denote by $P \subseteq \GL_2(A_F)$ the subgroup consisting of elements of the form $\mm{*}{0}{*}{*}$.
\begin{lemma}\label{Pgens} The group $\GL_2(A_F)$ is generated by $P$ and $J_2 = \mm{}{1}{-1}{}$. \end{lemma}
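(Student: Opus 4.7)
The plan is to perform row reduction on an arbitrary $g = \mm{a}{b}{c}{d} \in \GL_2(A_F)$ and reduce it to an element of $P$ by left-multiplication by elements of $P$ and $J_2$.

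First I would verify the identity
\[
\mm{1}{X}{0}{1} \;=\; J_2\,\mm{1}{0}{-X}{1}\,J_2^{-1}
\]
for $X \in A_F$, so that all upper unipotent elements also lie in $\langle P, J_2\rangle$. Consequently $\langle P, J_2\rangle$ realizes the full suite of (left) row operations on $g$: left-scaling either row by a unit of $A_F$, adding a left $A_F$-multiple of one row to the other, and swapping the rows (up to sign).

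The key step is to arrange that the $(1,1)$-entry is a unit of $A_F$. Since $g$ is invertible, its columns form a right $A_F$-basis of $A_F^2$, so there exist $u,v \in A_F$ with $ua + vc = 1$; that is, $(a,c)$ is left-unimodular. The algebra $A_F$ is a finite-dimensional semisimple $F$-algebra, hence semilocal, and for such rings the left stable range is $1$: there exists $t \in A_F$ with $a + tc \in A_F^{\times}$. Alternatively, using the decomposition $A_F \cong \prod_i M_{n_i}(D_i)$ over its simple factors, the existence of $t$ can be checked in each $M_{n_i}(D_i)$ separately and then assembled. This is the main (and really only nontrivial) obstacle in the proof.

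Once $a + tc$ is a unit, left-multiplication of $g$ by $\mm{1}{t}{0}{1} \in \langle P, J_2\rangle$ produces a matrix $\mm{a+tc}{b+td}{c}{d}$ with invertible $(1,1)$-entry. Applying $\mm{1}{0}{-c(a+tc)^{-1}}{1}\in P$ then clears the $(2,1)$-entry, giving an upper-triangular matrix $\mm{a'}{b'}{0}{d'}$ which, being invertible, must have both $a'$ and $d'$ units. A final left-multiplication by $\mm{1}{-b'(d')^{-1}}{0}{1}$ clears the $(1,2)$-entry, producing a diagonal matrix that lies in $P$. Thus $g \in \langle P, J_2 \rangle$, completing the proof.
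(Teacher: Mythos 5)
Your argument is correct, but it resolves the one genuinely nontrivial point by a different route than the paper. Both proofs are Gaussian elimination, and the only real content is producing, by a unipotent operation, an entry of $g$ that is invertible in $A_F$. You obtain this from general ring theory: the first row of $g^{-1}$ gives $ua+vc=1$, so $(a,c)$ is left unimodular, and since $A_F$ is in each of the five ACNS cases a finite-dimensional semisimple $F$-algebra (hence semilocal, hence of stable range one by Bass, or by a direct check over the simple factors $M_{n_i}(D_i)$), there is $t$ with $a+tc\in A_F^\times$. This does use, implicitly, that $\GL_2(A_F)$ --- defined in the paper as the preimage of $F^\times$ under $\det_6$ --- coincides with the unit group of $M_2(A_F)$, so that $g^{-1}$ exists; that is true here, and the paper's own proof also works with $g^{-1}$, so it is not an extra burden, though it is worth a sentence. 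The paper instead stays entirely inside the cubic-norm-structure formalism: after right-multiplying by $J_2$ to arrange $\mathrm{rank}\, d \geq \mathrm{rank}\, c$, it expands $1=n(cq+ds)$ via the polarized norm, rules out the case that $c$ and $d$ both have rank at most one, deduces that $c^\# d$ and $d^\# c$ are not both zero, and then finds $x=\lambda y$ with $n(c+dx)\neq 0$ directly; it then clears an entry and conjugates $P^{op}$ to $P$ by $J_2$. So your proof trades the paper's self-contained, uniform-in-$A$ norm computation for imported structure theory (semisimplicity and stable range one), which makes it shorter and arguably more conceptual. The surrounding steps in your write-up are fine: upper unipotents are indeed $J_2$-conjugates of elements of $P$, and in the final upper-triangular matrix the entry $d'$ is a unit because $a'$ already is and the matrix is invertible.
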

\begin{proof} We give a direct proof.  Suppose $g = \mm{a}{b}{c}{d}$ is in $\GL_2(A_F)$.  We first check that we may assume $c$ is invertible.  Multiplying $g$ on the right by $J_2$, we may assume $\mathrm{rank} d \geq \mathrm{rank} c$.  Now, consider $g^{-1} = \mm{p}{q}{r}{s}$.  We get that $cq + ds =1$, and thus
\[1 = n(cq+ds) = n(c)n(q) + (c^\#d,sq^\#) + (d^\#c,qs^\#) + n(d)n(s).\]
Now, if $d$ is rank $3$, we are done, and if $c = 0$, then $d$ is rank three.  If both $c$ and $d$ are rank at most $1$, then $n(cq+ds) = 0$, so we cannot have this case. Thus we may assume $d$ is rank two, and $c$ is rank two or rank one.  We conclude that not both of $c^\#d$ and $d^\# c$ are $0$.  It follows that we can find $y \in A_F$ and $\lambda \in \GL_1(F)$ so that $\lambda(c^\#d,y)+ \lambda^2 (d^\#c,y^\#) \neq 0$.  Indeed, if this is $0$ for all $\lambda$ and fixed $y$, then $(c^\#d,y) = 0$ and $(d^\#c,y^\#) = 0$, and if these are $0$ for all $y$, then $c^\#d = 0$ and $d^\#c = 0$.  Hence, there exists $x = \lambda y$ for which $n(c+dx) \neq 0$, and thus $\mm{a}{b}{c}{d} \mm{1}{}{x}{1} = \mm{*}{*}{c+dx}{*}$ has an invertible entry in the bottom row.

Thus, we've shown that we can multiply $g$ by elements of $P$ and $J_2$ so that the bottom right entry of $g$ is invertible.  Now, multiply $g = \mm{a}{b}{c}{d}$ on the right by $\mm{1}{}{x}{1}$, where $x = -d^{-1}c$.  One obtains an element in $P^{op} = \mm{*}{*}{0}{*}$.  Conjugation by $J_2$ moves $P^{op}$ to $P$, thus completing the lemma. \end{proof}

For $m \in M_2(A)$, denote by $\det(m)$ the degree $6$ reduced norm on $M_2(A)$.
\begin{corollary} The action of $\GL_2(A)$ on $W_A$ induced by the isomorphism $V_A \rightarrow W_A$ and the natural action of $\GL_2(A)$ on $V_A$ preserves the symplectic and quartic form on $W_A$. That is, $\langle g \cdot v, g \cdot w\rangle = \det(g) \langle v, w \rangle$ and $q(g \cdot v) = \det(g)^2 q(v)$ for all $v, w \in W_{A}$.\end{corollary}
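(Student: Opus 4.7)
The plan is to reduce to generators via Lemma \ref{Pgens} and then appeal to what we already know about the action of $H(W_A)$ on $W_A$. Since the identities $\langle g\cdot v,g\cdot w\rangle = \det_6(g)\langle v,w\rangle$ and $q(g\cdot v) = \det_6(g)^2 q(v)$ are polynomial in the entries of $g$ and in $v,w$, it suffices to verify them on the $F$-points, i.e.\ for $g \in \GL_2(A_F)$. By Lemma \ref{Pgens}, $\GL_2(A_F)$ is generated by $P$ and $J_2$, and $P$ itself is generated (set-theoretically) by the diagonal torus and the lower unipotent subgroup. So it is enough to verify the two similitude identities for the three types of generators $\mm{1}{}{X}{1}$, $\mm{m}{}{}{n}$, and $J_2$, and to check that the similitude factors produced by each generator equal $\det_6$ and $\det_6^2$ respectively (so that, since both sides define group homomorphisms into $\GL_1(F)$, they agree on all of $\GL_2(A_F)$).

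By Lemma \ref{Paction}, on $W_A$ the element $\mm{1}{}{X}{1}$ acts as $n_J(X)$, the element $\mm{m}{}{}{n}$ acts as $L(m,n)$, and $J_2$ acts as $w_A$. By Proposition \ref{HWJmaps}, all three operators lie in $H(W_A)$, so by definition each one preserves $\langle\,,\,\rangle$ up to a similitude $\nu$, and then automatically preserves $q$ up to $\nu^2$. It therefore remains only to identify these similitudes.

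For $n_J(X)$, the definition shows it is unipotent (it fixes the component $a$, modifies $b$ by $aX$, etc.), so a direct inspection of the formula for $n_J(X)$ gives $\nu = 1$; and indeed $\det_6\mm{1}{}{X}{1}=1$. For $L(m,n) = m(\alpha,t,t^\vee,\delta)$ with $\alpha = n_A(m)$ and $\delta = n_A(n)$, the definition of $\widetilde{M}(J,J)$ gives $(t(b),t^\vee(c)) = \alpha\delta(b,c)$, so
\[
\langle L(m,n)v,\,L(m,n)v'\rangle = \alpha\delta\,\langle v,v'\rangle = n_A(m)n_A(n)\,\langle v,v'\rangle,
\]
which matches $\det_6\mm{m}{}{}{n} = n_A(m)n_A(n)$. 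Finally, $w_A(a,b,c,d) = (d,-c,b,-a)$ is easily seen to satisfy $\nu(w_A)=1$, and $\det_6(J_2)=1$. Hence on all three families of generators the similitude of the symplectic form equals the reduced norm $\det_6$, and the similitude of the quartic form equals $\det_6^{\,2}$.

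The mild obstacle is bookkeeping: one needs the correct convention for the reduced norm $\det_6$ on $M_2(A)$ (degree $2$ in the $2\times 2$ slot times degree $3$ from $n_A$, hence multiplicative on the diagonal and equal to $1$ on $J_2$), and one must remember that both $g \mapsto \det_6(g)$ and $g \mapsto \nu(\text{action of } g)$ are characters on $\GL_2(A_F)$, so agreement on a generating set implies agreement everywhere. The right-action version is identical by the completely analogous construction of the right $\GL_2(A)$-action on $V_A$.
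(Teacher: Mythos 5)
Your argument is correct and is essentially the paper's own proof: the paper likewise deduces the corollary immediately from Lemma \ref{Paction} (identifying the action of the lower unipotent, diagonal, and $J_2$ generators with $n(X)$, $L(m,n)$, $w_A \in H(W_A)$) together with the generation statement of Lemma \ref{Pgens}. Your only addition is to spell out the implicit bookkeeping—that the similitude of each generator equals $\det_6$ and that both $\det_6$ and the similitude character are multiplicative—which is exactly what the paper leaves to the reader.
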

\begin{proof} This follows immediately from Lemmas \ref{Paction} and \ref{Pgens}. \end{proof}

Note that since $V_A$ is defined polynomially, it is a module for $M_2(A)$.   We will require the following fact below.
\begin{corollary} Suppose $m \in M_2(A)$, and $x, y \in V_A$.  Then $\langle mx, my \rangle = \det(m) \langle x,y \rangle$ and $q(mx) = \det(m)^2q(x)$. \end{corollary}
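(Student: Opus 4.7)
The plan is to deduce this from the previous corollary by a Zariski density argument. The key observation will be that the action of $M_2(A)$ on $V_A$ is polynomial in the entries of $m$, since $M_2(A)$ acts on $(W_2 \otimes A)^{\otimes 3}$ by the ordinary diagonal tensor cube action, which is cubic in the matrix entries, and this action descends to $V_A$ because the subspace $I_{A_F}$ and the $S_3$-action are both preserved.

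First I would verify explicitly that $M_2(A)$ (not just $\GL_2(A)$) acts on $V_A$: the tensor cube action clearly sends the relation $xa \otimes xa \otimes xa - n(a)x\otimes x \otimes x$ to $(mx)a \otimes (mx)a \otimes (mx)a - n(a)(mx) \otimes (mx) \otimes (mx)$, which is again of the same form, so $I_{A_F}$ is preserved. This yields a polynomial morphism of $F$-schemes $M_2(A_F) \to \End_F(V_{A_F})$.

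Next I would exhibit both identities as polynomial equalities in $m \in M_2(A_F)$ with $x, y \in V_{A_F}$ fixed. The left-hand side $\langle mx, my \rangle$ is a polynomial of degree $6$ in $m$, matching the degree of $\det(m)$; similarly $q(mx)$ is of degree $12$, matching $\det(m)^2$. Thus
\[
P_1(m) := \langle mx, my \rangle - \det(m)\langle x,y \rangle, \qquad P_2(m) := q(mx) - \det(m)^2 q(x)
\]
are polynomial functions on $M_2(A_F)$.

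The main (and essentially only) step is then Zariski density: by the preceding corollary, $P_1$ and $P_2$ vanish on $\GL_2(A_F)$, which is the nonvanishing locus of the degree-$6$ polynomial $\det_6$ and hence is Zariski open and dense in $M_2(A_F)$. Therefore $P_1$ and $P_2$ vanish identically on $M_2(A_F)$. Restricting to $m \in M_2(A) \subseteq M_2(A_F)$ and $x, y \in V_A \subseteq V_{A_F}$ gives the claimed identities. I do not anticipate a genuine obstacle here; the only thing to be careful about is confirming that the $M_2(A)$-action is well-defined on $V_A$ (not just $V_{A_F}$), which follows because the tensor cube action preserves the integral lattice $(A^2)^{\otimes 3}$ and commutes with the $S_3$-action used to define $V_A$.
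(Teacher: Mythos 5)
Your argument is correct and is essentially the paper's own proof: the paper likewise notes that $V_A$ is a module for $M_2(A)$ because it is defined polynomially, observes that the two identities hold on $\GL_2(A)$ by the preceding corollary, and concludes by Zariski density of $\GL_2(A)$ in $M_2(A)$. The extra details you supply (degree counts, checking $I_{A_F}$ is preserved) are harmless elaborations of the same route.
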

\begin{proof} The difference $\langle mx, my \rangle - \det(m)\langle x,y\rangle$ is $0$ for all $m \in \GL_2(A) \subseteq M_2(A)$.  Since $\GL_2(A)$ is Zariski dense in $M_2(A)$, the identity follows.  The second identity is similar. \end{proof}

Recall the element $R(v) \in M_2(A)$ defined for $v \in W_A$.
\begin{lemma}\label{lem:Requiv} For $g \in \GL_2(A)$, we have $R(g \cdot v) = \det(g) gR(v)g^{-1}$. Here $g \cdot v$ is the left action of $\GL_2(A)$ on $W_{A}$.\end{lemma}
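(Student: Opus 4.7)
The plan is to use the structural results already established: Lemma \ref{Pgens} tells us that $\GL_2(A_F)$ is generated by the parabolic $P$ and the element $J_2 = \mm{}{1}{-1}{}$, and $P$ is itself generated by the unipotent lower-triangular elements $\mm{1}{}{X}{1}$ and the diagonal elements $\mm{m}{}{}{n}$ (with $n_A(m)n_A(n)\in F^\times$). So it suffices to verify the claimed identity $R(g\cdot v) = \det(g)\,gR(v)g^{-1}$ on these three families of generators, and then extend to all of $\GL_2(A_F)$ by multiplicativity. Once the identity holds on $\GL_2(A_F)$, its restriction to $\GL_2(A)$ is immediate (or, equivalently, one concludes by Zariski density since the identity is polynomial on $M_2(A_F)$).

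For the three generator types, Lemma \ref{Paction} translates the left $\GL_2(A)$-action into the explicit operators on $W_A$ from Proposition \ref{HWJmaps}: $\mm{1}{}{X}{1}$ acts as $n_J(X)$, $\mm{m}{}{}{n}$ acts as $L(m,n)$, and $J_2$ acts as $w_A$. The desired equivariance then matches term by term with the three identities already recorded in Proposition \ref{Requiv}. Explicitly:
\begin{itemize}
\item For $g=\mm{1}{}{X}{1}$, we have $\det(g)=1$, so the desired identity is $R(n_J(X)v)=\mm{1}{}{X}{1}R(v)\mm{1}{}{-X}{1}$, which is the first line of Proposition \ref{Requiv}.
\item For $g=\mm{m}{}{}{n}$, we have $\det(g)=n_A(m)n_A(n)$, and the inverse is $\mm{m^{-1}}{}{}{n^{-1}}$, so the desired identity is precisely $R(L(m,n)v)=n_A(m)n_A(n)\mm{m}{}{}{n}R(v)\mm{m^{-1}}{}{}{n^{-1}}$, which is the third line.
\item For $g=J_2$, we have $\det(g)=1$ and $g^{-1}=\mm{}{-1}{1}{}$, so the desired identity is $R(w_A(v))=\mm{}{1}{-1}{}R(v)\mm{}{-1}{1}{}$, which is the second line.
\end{itemize}

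Finally, multiplicativity is formal: if the identity holds for $g_1$ and $g_2$, then
\[ R((g_1g_2)\cdot v) = R(g_1\cdot(g_2\cdot v)) = \det(g_1)\,g_1 R(g_2\cdot v)g_1^{-1} = \det(g_1)\det(g_2)\,(g_1g_2)R(v)(g_1g_2)^{-1}, \]
and $\det$ is a homomorphism on $\GL_2(A_F)$. Combined with the verification on the generators and Lemma \ref{Pgens}, this proves the identity on all of $\GL_2(A_F)$, hence on $\GL_2(A)$. There is no genuine obstacle here; the work has all been pushed into Proposition \ref{Requiv} (a direct calculation) and Lemma \ref{Pgens} (a generation statement in $\GL_2(A)$). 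The only point requiring slight attention is that the verification for the diagonal generator uses $n_A(m)n_A(n)\in R^\times$, but this is automatic for genuine elements of $\GL_2(A)$ having a block-diagonal form.
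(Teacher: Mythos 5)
Your proof is correct and follows essentially the same route as the paper, whose proof of this lemma is a one-line appeal to Lemma \ref{Paction} together with Proposition \ref{Requiv} (with the generation statement of Lemma \ref{Pgens} and multiplicativity left implicit). You have simply spelled out the generator-by-generator matching and the formal multiplicativity step in full detail.
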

\begin{proof} This follows immediately from Lemma \ref{Paction} and Proposition \ref{Requiv}.\end{proof}

All of the above constructions and proofs and can be made analogously for the right action of $\GL_2(A)$ on row vectors $A^2$, which then induces a right action of $\GL_2(A)$ on $W_A$.  We state explicitly the ``right" version of the previous lemma. Set $R_{r}(v) = J_2R(v)J_2^{-1}$, where $J_2 = \mm{}{1}{-1}{}$.  We have the following.
\begin{lemma}\label{lem:rightAct} The element $\mm{1}{X}{}{1}$ acts on the right of $W_A$ as the map $n(X)$; the element $J_2=\mm{}{1}{-1}{}$ acts on the right of $W_{A}$ as $-w_{A}$, i.e., $(a,b,c,d)\cdot J_2 = (-d,c,-b,a)$; and the element $\mm{m}{}{}{n}$ acts on the right of $W_A$ as the map $R(m,n)$, with 
\[(a,b,c,d)\cdot R(m,n) = (n_{A}(m)a, m^\#bn,n^\#cm, n_{A}(n)d).\]
For $g \in \GL_2(A)$, we have $R_{r}(v\cdot g) = \det(g) g^{-1}R_{r}(v)g$. Here $v\cdot g$ is the right action of $\GL_2(A)$ on $W_{A}$.\end{lemma}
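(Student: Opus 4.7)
The strategy is to run the entire argument from subsection \ref{sec:new} in parallel for row vectors, then verify the two displayed conclusions on a generating set of $\GL_2(A)$. First, I would define $V^{r}_{A_F}$ as the $S_3$-invariants of $(A_F^2)^{\otimes 3}$ modulo the subspace spanned by $\ell a \otimes \ell a \otimes \ell a - n_A(a)\,\ell \otimes \ell \otimes \ell$, where now $\ell \in A_F^2$ is a row vector and $A$ acts on $A_F^2$ on the right. Setting $V^{r}_A$ to be the image of $(A^2 \otimes A^2 \otimes A^2)^{S_3}$, the natural right action of $\GL_2(A)$ commutes with the $S_3$-action and preserves the defining relations, so it descends to $V^{r}_A$. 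Sending $\ell \otimes \ell \otimes \ell \mapsto \ell^{!} = (n(s),s^{\#}t,t^{\#}s,n(t))$ (for $\ell=(s,t)$) linearizes, exactly as in the construction above Proposition \ref{prop:VAWA}, to a well-defined map $V^{r}_A \to W_A$. The analogues of Lemma \ref{relnsInVA} and the dimension-counting argument in Proposition \ref{prop:VAWA} show this is a $\GL_2(A)$-equivariant isomorphism, giving the right action of $\GL_2(A)$ on $W_A$ by transport of structure.

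Next, the three generator formulas are verified by direct computation on the rank-one elements $\ell^{!}$, which span $V^{r}_A \cong W_A$. For $g = \mm{1}{X}{}{1}$ we have $(s,t)\cdot g = (s, sX+t)$, so one computes $(s,sX+t)^{!}$ and matches it component-by-component against $n_J(X)$ applied to $(n(s),s^{\#}t,t^{\#}s,n(t))$. The only nontrivial matching involves the identity
\[((sX)\times t)\,s = (s^{\#}t)\times X,\]
which follows (for $s$ invertible, hence in general by Zariski density) from the isotopy relation $(sb_1)\times(sb_2) = (b_1\times b_2)s^{\#}$ in an associative cubic norm structure --- itself a consequence of the fact, recalled at the start of subsection \ref{sec:new}, that $L(1,s) \in \widetilde{M}(A,A)$. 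The formula for $J_2$ on the right is immediate from $\ell \cdot J_2 = (-t,s)$ and the definition of $!$. The formula for $\mm{m}{}{}{n}$ on the right follows from $(s,t)\cdot \mm{m}{}{}{n} = (sm,tn)$ together with $(sm)^{\#} = m^{\#} s^{\#}$ and the cyclicity of the trace.

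Finally, for the equivariance $R_r(v\cdot g) = \det(g)\, g^{-1} R_r(v)\, g$, I would use Lemma \ref{Pgens} and verify the identity on the generators $P$ and $J_2$. Since $R_r(v) = J_2 R(v) J_2^{-1}$, each case reduces to a short matrix computation: for $\mm{1}{X}{}{1}$ and $\mm{m}{}{}{n}$ one applies the formulas just established (together with Proposition \ref{prop:Rsquared}-type bookkeeping on $R$), and for $J_2$ one notes that the action is $-w_A$ so that $R(-w_A(v)) = R(w_A(v))$, combined with the formula $R(w_A(v)) = J_2^{-1} R(v) J_2$ from Proposition \ref{Requiv}. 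Multiplicativity in $g$ then propagates the identity to all of $\GL_2(A)$, since for a right action $(g_1 g_2)^{-1} R_r(v)(g_1 g_2) = g_2^{-1}(g_1^{-1} R_r(v) g_1) g_2$ matches $R_r((v\cdot g_1)\cdot g_2)$.

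The one place I expect real care (rather than routine computation) is the verification of the generator formula for $\mm{1}{X}{}{1}$, specifically the identity $((sX)\times t)s = (s^{\#}t)\times X$ that appears in the ``$c$-component'' comparison. Once this is in hand the rest is bookkeeping and parallels the proofs of Lemmas \ref{Paction} and \ref{lem:Requiv} essentially verbatim.
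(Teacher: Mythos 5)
Your proposal is correct and takes essentially the same route the paper intends: the paper gives no separate proof of this lemma, asserting only that the row-vector analogues of the constructions and of Lemmas \ref{relnsInVA}, \ref{Paction}, \ref{Pgens} and \ref{lem:Requiv} go through, which is exactly what you carry out. Your check on the spanning set of elements $\ell^{!}$ (rather than on the coordinate elements used in Lemma \ref{Paction}) is only a cosmetic difference, and your key identity $((sX)\times t)s=(s^{\#}t)\times X$ does hold by the invertible-$s$ plus Zariski-density argument you indicate.
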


\begin{definition} Suppose $R$ is a semisimple ring, by which we mean $R$ is a subring of a finite product of fields $k = \prod_{i}{k_i}$.  Assume $A = A_R$ is an associative cubic norm structure over $R$.  Then, we define $V_{A_k} = \prod_{i}{V_{A_{k_i}}}$together with its associated $\GL_2(A_k) = \prod_{i}{\GL_2(A_{k_i})}$-action, and we have a $k$-linear isomorphism of $V_{A_k}$ with $W_{A_k} = \prod_{i}{W_{A_{k_i}}}$.  We define $V_{A_R}$ to be the image of $\left(W_2(A_R)^{\otimes 3}/I_A\right)^{S_3}$ in $V_{A_k}$, and then we similarly have an $R$-linear isomorphism of $V_{A_R}$ with $W_{A_R}$.\end{definition}

\subsubsection{An invariant of rank one elements of $W_{A_F}$} All the rank one \emph{lines} in $W_{A_F}$ are in the same $\GL_2(A_F)$ orbit, but that there is slight extra invariant of the rank one \emph{elements} of $W_{A_F}$, that distinguishes different $\GL_2(A_F)$-orbits.  This invariant measures how far a rank one element is from one of the form $\ell^{!}$ or $\eta^{!}$.  We will now define this invariant.

Suppose $k$ is a field, $A = A_{k}$ is an associative cubic norm structure over $k$, and $v \in W_A$ is rank one. Then there is a row vector $\ell \in A^2$ so that $\langle \ell^{!}, v \rangle \neq 0$, and similarly a column vector $\eta \in A^2$ so that $\langle v, \eta^{!} \rangle \neq 0$.  The following lemma says that these elements of $k^\times$ give well-defined classes in $k^{\times}/n(A^\times)$, and in fact this class is the same if one uses row vectors or column vectors.
\begin{lemma}\label{lem:lambdaInv} Suppose $k$ is a field, $A = A_{k}$ is an associative cubic norm structure over $k$, and $v \in W_{A}$ is rank one.  Then, there an element $\lambda \in k^{\times}$ so that
\begin{equation}\label{lambdaInv}\{\langle \ell^{!}, v \rangle: \ell \in A^2 \text{ row vector }\} \cap k^\times = \lambda n(A^\times) = \{\langle v, \eta^{!} \rangle: \eta \in A^2 \text{ column vector }\} \cap k^\times.\end{equation} \end{lemma}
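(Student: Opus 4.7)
The plan is to verify the lemma on a normal form and then propagate via $\GL_2(A)$-equivariance.

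First, for the normal form $v_0 = (\mu, 0, 0, 0)$ with $\mu \in k^\times$, I would unwind the symplectic pairing to compute: for $\ell = (s, t)$, $\langle \ell^!, v_0\rangle = -\mu n(t)$; for $\eta = \binom{u}{w}$, $\langle v_0, \eta^!\rangle = \mu n(w)$. Since $-1 = n_A(-1) \in n(A^\times)$, both sets in (\ref{lambdaInv}) intersected with $k^\times$ equal $\mu n(A^\times)$, giving the lemma with $\lambda = \mu$.

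Second, every rank one $v \in W_A$ lies in the left $\GL_2(A)$-orbit of some such $v_0$. By Lemma \ref{Paction} and the relation $J_2 \mm{1}{}{X}{1} J_2^{-1} = \mm{1}{-X}{}{1}$, the operators $n_J(X), \overline{n}_J(-Y), L(m, n), w_J$ all arise from $\GL_2(A)$. The normal-form result in \cite[Proposition 11.2, Corollary 11.3]{ganSavinMin} moves any nonzero element to $(1, 0, c, d)$, and the rank-one relations $b^\# = ac$, $c^\# = db$, $(b, c) = 3ad$ force $c = d = 0$.

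Third, I would verify that the conclusion is $\GL_2(A)$-invariant. For the column-vector set, the identity $(g\eta)^! = g\cdot\eta^!$ together with $\langle gx, gy\rangle = \det_6(g)\langle x, y\rangle$ gives that the set transforms by $\det_6(g)$; a short case-by-case check ($\det_6(\mm{m}{}{}{n}) = n(m)n(n)$, $\det_6(\mm{1}{}{X}{1}) = 1 = \det_6(J_2)$) shows $\det_6(\GL_2(A)) \subseteq n(A^\times)$, so the coset is preserved. For the row-vector set, the left action on $v$ is not directly linked to $\ell^!$, so I verify equivariance on each generator applied to $v_0$. For $g = \mm{1}{}{X}{1}$, one gets $g\cdot v_0 = (\mu, \mu X, \mu X^\#, \mu n(X))$, and the ACNS identity $n(sX - t) = n(s)n(X) - (s^\# t, X^\#) + (t^\# s, X) - n(t)$ (a direct consequence of $(sX)^\# = X^\# s^\#$, $n(sX)=n(s)n(X)$, and associativity of the trace form) gives $\langle \ell^!, g v_0\rangle = \mu n(sX - t)$, which sweeps $\mu n(A^\times)$ as $\ell=(s,t)$ varies; for $g = \mm{m}{}{}{n}$ acting as $L(m,n)$, $\mu$ scales by $n(m) \in n(A^\times)$; for $g = J_2$, $v_0$ maps to $(0, 0, 0, -\mu)$ and a symmetric computation closes the loop.

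The main obstacle is the row-vector equivariance under the left $\GL_2(A)$-action, since this action on $v$ is not tautologically paired with the right-vector construct $\ell^!$. The resolution is direct computation on each generator applied to the normal form $v_0$, where standard cubic-norm identities simplify the pairings to the form $\mu n(\cdot)$, after which varying the free parameter in $A$ produces precisely the coset $\mu n(A^\times)$.
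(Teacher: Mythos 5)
Your normal-form computation at $v_0=(\mu,0,0,0)$ and your treatment of the column-vector set are sound: from $(g\eta)^{!}=g\cdot\eta^{!}$ and $\langle g\cdot x,g\cdot y\rangle=\det_6(g)\langle x,y\rangle$ one gets, for every $g\in\GL_2(A)$ and \emph{every} $v$, that the column set of $g\cdot v$ is $\det_6(g)$ times that of $v$, and $\det_6(\GL_2(A))\subseteq n(A^\times)$, so that half propagates along the whole orbit. The gap is in the row-vector half. You correctly flag that the left action is not tautologically compatible with $\ell\mapsto\ell^{!}$, but your remedy --- computing $\{\langle\ell^{!},g\cdot v_0\rangle\}$ for each single generator $g$ applied to the base point --- only verifies the statement at points one generator away from $v_0$. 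A general rank one $v$ is $g_r\cdots g_1\cdot v_0$ for an arbitrary word in the generators, and to close the induction you need a transformation law at an arbitrary point of the orbit: for each generator $g$ and every $w$, the row set of $g\cdot w$ should be $c_g$ times the row set of $w$ with $c_g\in n(A^\times)$. Your computations at $v_0$ do not give this, so as written the argument does not reach a general rank one $v$. Such a law is in fact true --- for instance, using $(ab)^{\#}=b^{\#}a^{\#}$ and the associativity of the trace pairing one can check $\mm{1}{}{X}{1}\cdot\ell^{!}=\bigl(\ell\mm{1}{X}{}{1}\bigr)^{!}$, with analogues for the diagonal and Weyl generators --- but some such identity (or another device) must be stated and proved; it is exactly the non-tautological point you identified, and checking it only at $v_0$ does not discharge it.

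For comparison, the paper sidesteps this entirely: since all rank one lines lie in a single $\GL_2(A)$-orbit, one writes $v=\lambda_1\eta_1^{!}=\lambda_2\ell_2^{!}$ with $\eta_1$ a column vector and $\ell_2$ a row vector (necessarily primitive, so units are attained); Lemma \ref{lem:nRC}, namely $\langle\ell^{!},\eta^{!}\rangle=n_A\bigl(\ell\mm{}{1}{-1}{}\eta\bigr)$, then shows in one line that the row set is the single coset $\lambda_1 n(A^\times)$ and the column set is the single coset $\lambda_2 n(A^\times)$, and the equality of the two classes is read off from the $a$- or $d$-component of $v$ when one of these is nonzero, the general case being reduced to this by the group action. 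If you wish to keep your normal-form strategy, the cleanest repair is to prove the row-shriek compatibility identities above for each generator; alternatively, invoke Lemma \ref{lem:nRC} and argue as the paper does.
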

As a consequence of the lemma, we can make the following definition.
\begin{definition}\label{def:lambda} Suppose $k$ is a field, $A$ is an associative cubic norm structure over $k$, and $v$ in $W_A$ is rank one.  We define $\lambda(v) \in k^\times/n(A^\times)$ to be the class of $\lambda$ determined by Lemma \ref{lem:lambdaInv}.  Similarly, if $k = \prod_{i}{k_i}$ is a finite product of fields, $A$ is an associative cubic norm structure over $k$, and $v = \prod_{i}{v_i} \in W_A$ is rank one in every component, then we define $\lambda(v) = \prod_i{\lambda(v_i)}$ in $\prod_{i}{k_i^\times/n(A_{k_i}^\times)} = k^\times/n(A^\times)$.\end{definition}

\begin{proof}[Proof of Lemma \ref{lem:lambdaInv}] Suppose $r \in A$.  Since $(r\ell)^{!} = n(r) \ell^{!}$ and $(\eta r)^{!} = n(r) \eta^{!}$, the sets on the left and right side of (\ref{lambdaInv}) are $n(A^\times)$-cosets.  

Since all rank one lines are in the same $\GL_2(A)$-orbit, there is $\lambda_1, \lambda_2 \in k^\times$, and $\eta_1, \ell_2$ a column vector and a row vector in $A^2$, so that $v = \lambda_1 \eta_1^{!} = \lambda_2 \ell_2^{!}$.  Now, suppose $v = \lambda_1 \eta_1^{!}$.  Then for any $\ell \in A^2$ a row vector, $\langle \ell^{!},v \rangle = \lambda_1 n(\ell J_2 \eta_1)$, where $J_2 = \mm{}{1}{-1}{}$.  Hence it follows that the set on the left of (\ref{lambdaInv}) forms a single $n(A^\times)$-coset, and this coset is represented by $\lambda_1$.  Similarly for the set on the right of (\ref{lambdaInv}); it forms a single $n(A^\times)$-coset represented by $\lambda_2$.

It remains to prove that $\lambda_1$ and $\lambda_2$ represent the same class in $k^\times/n(A^\times)$.   This is clear if either the $``a"$ or $``d"$-components of $v$ are nonzero, and in general it follows from this and the $\GL_2(A)$ action.\end{proof}

In the following lemma, and below, a primitive vector of $W_2(A_F)$ is a vector which may be completed to an $A_F$-basis of $W_2(A_F)$.
\begin{lemma} Suppose $\eta \in W_2(A_F)$, and $\eta^{!} \neq 0$ in $W_{A_F}$.  Then $\eta$ is primitive. \end{lemma}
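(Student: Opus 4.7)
The plan is to prove the contrapositive: if $\eta = \left(\begin{smallmatrix}u\\v\end{smallmatrix}\right) \in W_2(A_F)$ is not primitive, then $\eta^{!} = 0$. First I reduce to the case where $A_F$ is split. By definition, $\eta$ is primitive if and only if there exists $\eta' \in W_2(A_F)$ with $\det_6(\eta, \eta') \in F^\times$, where $\det_6$ denotes the degree-$6$ reduced norm on $M_2(A_F)$. The map $\eta' \mapsto \det_6(\eta, \eta')$ is a polynomial in the coordinates of $\eta'$ with coefficients in $F$, so since $F$ is infinite it vanishes identically on $W_2(A_F)$ if and only if it is the zero polynomial, if and only if it vanishes on $W_2(A_{F'})$ for every extension $F'/F$. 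Hence primitivity is both preserved and reflected by arbitrary field extensions, as is the polynomial condition $\eta^{!} = 0$. Extending scalars to $\overline F$, I reduce to the case where $A_F$ is one of the split ACNS types $F$, $F \times F$, $F^3$, $F \times M_2(F)$, or $M_3(F)$ with its specific norm and adjoint from Subsection~\ref{subsec:CNS}.

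The crucial case is $A_F = M_n(F)$ with $n \in \{2, 3\}$. Here $W_2(A_F)$ identifies with the space of $2n \times n$-matrices over $F$, and $\eta = \left(\begin{smallmatrix}u\\v\end{smallmatrix}\right)$ is primitive if and only if it has rank $n$, equivalently if and only if no nonzero $w \in F^n$ satisfies $uw = vw = 0$. Assume $\eta$ is non-primitive and pick such a $w$. Then $\det u = \det v = 0$, so $n(u) = n(v) = 0$. From $u \cdot u^{\#} = \det(u) I_n = 0$, each column of $u^{\#} = \operatorname{adj} u$ lies in $\ker u$; when $\operatorname{rank}(u) \le n - 2$ we already have $u^{\#} = 0$, while when $\operatorname{rank}(u) = n - 1$ the kernel $\ker u = F w_0$ is one-dimensional, every column of $u^{\#}$ is a scalar multiple of $w_0$, and the hypothesis $\ker u \cap \ker v \neq 0$ forces $w_0 \in \ker v$. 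In both subcases $v u^{\#} = 0$, and symmetrically $u v^{\#} = 0$. Hence $\eta^{!} = 0$.

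The remaining split cases reduce to the $M_n$-argument and the trivial case $A_F = F$ by analyzing each factor in the product decomposition of $A_F$. For example, when $A_F = F \times M_2(F)$ with $n((x,m)) = x \det m$ and $(x,m)^{\#} = (\det m,\, x \operatorname{adj} m)$, non-primitivity of $\eta = \left(\begin{smallmatrix}(u_1, u_2)\\(v_1, v_2)\end{smallmatrix}\right)$ amounts to either $u_1 = v_1 = 0$ or $\left(\begin{smallmatrix}u_2\\v_2\end{smallmatrix}\right)$ being non-primitive in $W_2(M_2(F))$. In the first scenario every component of $\eta^{!}$ carries a factor of $u_1$ or $v_1$; in the second the $M_2$-argument gives $\det u_2 = \det v_2 = 0$ and $v_2 u_2^* = u_2 v_2^* = 0$. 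Either way, substitution into the explicit formulas for $\eta^{!}$ yields $\eta^{!} = 0$. The cases $A_F = F \times F$ (norm $xy^2$) and $A_F = F^3$ (norm $xyz$) are handled identically by direct componentwise computation.

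The only real subtlety is the invariance of primitivity under base change; after that the argument is elementary linear algebra applied to each split type.
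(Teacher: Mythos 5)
Your proof is correct, but it takes a genuinely different route from the paper's. The paper argues directly and in two lines: since $\eta^{!}\neq 0$ is rank at most one, the equivariance $(g\eta)^{!}=g\cdot\eta^{!}$ together with the transitivity of $\GL_2(A_F)$ on rank-one elements up to scalar lets one normalize $\eta^{!}=\gamma(1,0,0,0)$ with $\gamma\in F^\times$; then $n(u)=\gamma\in F^\times$, so $u\in A_F^\times$ and $\eta$ is visibly primitive. You instead prove the contrapositive by a spreading-out argument (non-primitivity of $\eta$ over $F$ is a polynomial vanishing condition in $\eta'$, hence persists over $\overline{F}$, while $\eta^{!}=0$ descends trivially), reduce to the five split types of $A_{\overline F}$, and settle the essential case $M_n(F)$ by elementary adjugate/kernel linear algebra: a common kernel vector of $u,v$ forces $n(u)=n(v)=0$ and, splitting on $\mathrm{rank}\,u\in\{\le n-2,\,n-1\}$, also $vu^{\#}=uv^{\#}=0$; the remaining split types follow componentwise. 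I checked the details (including the identification of primitivity in $W_2(M_n(F))$ with the $2n\times n$ matrix having rank $n$, and the factor-by-factor analysis for $F\times M_2(F)$, $F\times F$, $F^3$) and they are sound; the one point worth a word is that ``primitive iff $\det_6(\eta\,|\,\eta')\neq 0$ for some $\eta'$'' uses that $A_F$, hence $M_2(A_F)$, is semisimple over the field, so invertibility is detected by the reduced norm. The trade-off: the paper's argument is short and uniform in $A$, but leans on the orbit structure and equivariance machinery developed for $W_A$; yours is longer and case-by-case, but self-contained, needing only the explicit split formulas for $\#$ and Zariski density, and it records the (equivalent) sharper-looking fact that a non-primitive $\eta$ satisfies $\eta^{!}=0$ identically.
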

\begin{proof} Write $\eta = \left(\begin{array}{c} u \\ v\end{array}\right)$.  Since $\eta^{!} \neq 0$, by acting by $\GL_2(A_F)$, we may assume $\eta^{!} = \gamma (1,0,0,0)$ for some $\gamma \in F^\times$.  But then $n(u) \in F^\times$, so $\eta$ is primitive. \end{proof}

\section{The first lifting law for $W_J$}\label{bI}
Suppose $A$ is an associative cubic norm structure, over the field $F$ or over a subring $R$ of $F$.  Then the group $\GL_2(A)$ acts on $W_{A}$, as shown in the previous section.  Denote by $\SL_2(A)$ the subgroup of $\GL_2(A)$ that is kernel of the degree $6$ norm map.  Equivalently, $\SL_2(A)$ is the subgroup of $\GL_2(A)$ that acts on $W_{A}$ with similitude $1$.  The purpose of this section is to parametrize the orbits of $\SL_2(A)$ on $W_A$, over the field $F$ and over a subring $R$ of $F$.  We begin by proving two lifting laws, one for general cubic norm structures $J$ and another more precise lifting law for associative cubic norm structures $A$.  We then use the second of these lifting laws to parametrize the orbits of $\SL_2(A)$ on $W_A$. 

The lifting laws are proved in subsection \ref{subsec:LLB1}.  In subsection \ref{subsec:B1F}, we parametrize the orbits of $\SL_2(A)$ on $W_A$ over a field $F$, and in subsection \ref{subsec:integralB1} we parametrize the orbits over a subring $R$ of $F$. The main result of this section is Corollary \ref{cor:equivG1}, which implies Theorem \ref{twistThm1}.
 
\subsection{The lifting law}\label{subsec:LLB1} In this subsection, we state the two lifting laws.  The first lifting law applies to the general case of $W_J$ with $J$ a cubic norm structure, while the second lifting law applies to the case $W_A$ with $A$ an associative cubic norm structure, but gives more information.

Here is the first lifting law.  It is likely known to experts in Freudenthal triple systems.
\begin{theorem}\label{b1J} Let $J$ be a cubic norm structure over the field $F$.  Suppose $v \in W_J$ has $q(v) \neq 0$, and suppose $F \subseteq k$ is an extension of fields, and $\omega \in k$ satisfies $\omega^2 = q(v)$ in $F$.  The elements $\omega v \pm v^\flat$ in $W_J \otimes_F k = W_{J_k}$ are rank one.  In fact, one has the equality
\begin{equation}\label{eq:b1J}3t(\omega v+v^\flat, \omega v + v^\flat, x) = \langle x, \omega v+v^\flat \rangle (\omega v + v^\flat)\end{equation}
for all $x \in W_{J_k}$ and similarly for $\omega v - v^\flat$.
\end{theorem}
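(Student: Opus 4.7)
My strategy is to reduce the rank-one assertion to the single polynomial identity (\ref{eq:b1J}) and then prove that identity by prehomogeneity. The key observation is that (\ref{eq:b1J}) already encodes the rank-one characterization $t(w,w,x)\in kw$ from the preliminaries, with the precise scalar, so proving (\ref{eq:b1J}) for $w = \omega v + v^\flat$ immediately yields that $w$ is rank one; the case $\omega v - v^\flat$ is obtained by replacing $\omega$ with $-\omega$. Expanding both sides of (\ref{eq:b1J}) and separating the coefficients of $\omega^0$ and $\omega^1$ (using $\omega^2 = q(v)$), the claim becomes equivalent to the two $F$-polynomial identities
\begin{align*}
3q(v)\,t(v,v,x) + 3\,t(v^\flat, v^\flat, x) &= q(v)\,\langle x, v\rangle v + \langle x, v^\flat\rangle v^\flat,\\
6\,t(v, v^\flat, x) &= \langle x, v\rangle v^\flat + \langle x, v^\flat\rangle v
\end{align*}
on $W_J \otimes W_J$, which I will call (I) and (II) respectively.

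Next I would verify that both (I) and (II) are equivariant under $H(W_J)$ acting diagonally on $(v, x)$ by $(v,x)\mapsto(gv,gx)$. Using the identities $t(gv_1, gv_2, gv_3) = \nu(g)\, g \cdot t(v_1, v_2, v_3)$, $(gv)^\flat = \nu(g)\, g(v^\flat)$, $q(gv) = \nu(g)^2 q(v)$, and $\langle gx, gv \rangle = \nu(g)\langle x, v\rangle$ collected in the preliminaries, a direct substitution shows that both sides of each identity transform by the common scalar $\nu(g)^3$ times $g$. Since $\{v \in W_J : q(v) \neq 0\}$ is a single $H(W_J)(\bar F)$-orbit and is Zariski dense in $W_J$, it suffices to verify (I) and (II) at one convenient $v_0$ with $q(v_0) \neq 0$.

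I would take $v_0 = (1, 0, 0, d)$ with $d \neq 0$. The explicit formulas for $v^\flat$ give $v_0^\flat = (-d, 0, 0, d^2)$ and $q(v_0) = d^2$, so $\omega = \pm d$ and
\[
\omega v_0 + v_0^\flat = \begin{cases} (0, 0, 0, 2d^2), & \omega = d,\\ (-2d, 0, 0, 0), & \omega = -d.\end{cases}
\]
Both are $H(W_J)$-translates of the rank-one element $w_0 = (1, 0, 0, 0)$. To verify (\ref{eq:b1J}) at $w_0$ I would polarize the formula for $v^\flat$: computing the coefficient of $\epsilon$ in $(w_0 + \epsilon x)^\flat$ with $x = (a', b', c', d')$ yields $3t(w_0, w_0, x) = (-d', 0, 0, 0) = \langle x, w_0\rangle w_0$, and this extends to all rank-one elements by $H(W_J)$-equivariance. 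Comparing the two sign choices of $\omega$ at $v = v_0$ separates (I) from (II), so both hold at $v_0$, and Zariski density finishes the proof. The main obstacle is the equivariance verification, which amounts to routine bookkeeping of powers of $\nu(g)$.
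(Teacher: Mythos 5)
Your proposal is correct and follows essentially the same route as the paper: both sides of (\ref{eq:b1J}) are checked to transform identically under $H(W_{J_k})$, and the identity is then verified at a single non-degenerate element of the form $(a,0,0,d)$, with your splitting into the $\omega^0$- and $\omega^1$-components (I) and (II) being only a cosmetic reorganization of that argument. One tiny bookkeeping point: under $(v,x)\mapsto (gv,gx)$ identity (II) scales by $\nu(g)^2 g$ rather than $\nu(g)^3 g$ (only (I) picks up $\nu(g)^3$), but since each identity's two sides scale by the same factor this does not affect the proof.
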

\begin{proof}  Note that the fact that $\omega v + v^\flat$ is rank one follows immediately from (\ref{eq:b1J}), which gives more information.  So, we must check (\ref{eq:b1J}).

As mentioned, this result is likely known to experts in Freudenthal triple systems.  For instance, it follows by a one-line computation from the definition of a Freudenthal triple system and \cite[Lemma 4.4]{springer}.  One can also give a simple proof using prehomogeneity and equivariance for the action of group $H(W_{J_k})$.  Indeed, it is immediately seen that both sides of (\ref{eq:b1J}) change in the same way under the action of $H(W_{J_k})$, thus it suffices to check the equality (\ref{eq:b1J}) at a single non-degenerate $v$ in $W_J$.  For instance, if one takes $v = (a,0,0,d)$, both sides of (\ref{eq:b1J}) are easy to compute, and they are seen to be equal. \end{proof}

For $v, \omega$ as in Theorem \ref{b1J}, set $X(v) = (\omega v + v^\flat)/2$ and $\overline{X(v)} = (-\omega v + v^\flat)/2$. Note that $\langle X(v),\overline{X(v)}\rangle = \omega q(v)$.  

We give a complement to the lifting law, which explains the way in which the rank one lift $X(v)$ of $v$ is unique.
\begin{lemma}\label{uniqueLifts} Suppose $k$ is a field, $E$ is an \'{e}tale extension of $k$, $\omega \in E$ satisfies $\omega^2 \in k$ and $1, \omega$ are linearly independent over $k$.  Additionally, suppose $J$ is a cubic norm structure over $k$, $v_1, v_2 \in W_J$ with $q(v_1) \neq 0$, and $X =\omega v_1 + v_2  \in W_J\otimes_k E$ is rank one.  Then there is $t \in k^\times$ so that $(t^{-1}\omega)^2 = q(v_1)$ and $v_2 = tv_1^\flat$.  Consequently, $t^{-1}X = (t^{-1}\omega)v_1 + v_1^\flat.$\end{lemma}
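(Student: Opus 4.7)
The plan is to extract two consequences of $X = \omega v_1 + v_2$ being rank one in $W_{J_E}$: first, the rank-at-most-two condition $X^\flat = 0$; and second, the sharper identity
\[3 t(X, X, w) = \langle w, X \rangle X \quad \text{for all } w \in W_{J_E}.\]
Theorem~\ref{b1J} establishes this identity for $X$ of the form $\omega v + v^\flat$ with $q(v) \neq 0$. Since rank one lines form a single $H(W_J)$-orbit (noted in the preliminaries) and both sides of the identity transform identically under $H$-equivariance (using $t(gu_1, gu_2, gu_3) = \nu(g) g \, t(u_1, u_2, u_3)$ and $\langle gw, gX \rangle = \nu(g) \langle w, X \rangle$), the identity propagates to every nonzero rank one $X$.

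Write $c = \omega^2 \in k^\times$, nonzero since $\omega \notin k$. Expanding $X^\flat = 0$ in powers of $\omega$ and using $v_1, v_2 \in W_J$ together with the linear independence of $1, \omega$ over $k$, the coefficient of $\omega$ yields
\[c v_1^\flat + 3 t(v_1, v_2, v_2) = 0.\]
Similarly, expanding the sharper identity with $w \in W_J$ and reading off the $\omega$-coefficient gives $6 t(v_1, v_2, w) = \langle w, v_1 \rangle v_2 + \langle w, v_2 \rangle v_1$; setting $w = v_2$ and using $\langle v_2, v_2 \rangle = 0$ produces $6 t(v_1, v_2, v_2) = -\langle v_1, v_2 \rangle v_2$. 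Combining the two displays yields $c v_1^\flat = \tfrac{1}{2} \langle v_1, v_2 \rangle v_2$. The scalar $\langle v_1, v_2 \rangle$ must be nonzero, for otherwise $c v_1^\flat = 0$ would force $v_1^\flat = 0$ and thus $q(v_1) = 0$, contradicting hypothesis. Therefore
\[v_2 = t v_1^\flat, \qquad t := \frac{2 c}{\langle v_1, v_2 \rangle} \in k^\times.\]

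To pin down $t$, invoke Lemma~\ref{lem:vvflat}, which gives $\langle v_1, v_1^\flat \rangle = 2 q(v_1)$; substituting $v_2 = t v_1^\flat$ yields $\langle v_1, v_2 \rangle = 2 t q(v_1)$. Plugging this back into the definition of $t$ gives $t \cdot 2 t q(v_1) = 2 c$, so $t^2 q(v_1) = c = \omega^2$, whence $(t^{-1} \omega)^2 = q(v_1)$. Dividing $X = \omega v_1 + t v_1^\flat$ by $t$ then produces $t^{-1} X = (t^{-1} \omega) v_1 + v_1^\flat$, as claimed. The only delicate input is the sharper rank one identity for arbitrary rank one $X$ over $E$, and the propagation of this identity from Theorem~\ref{b1J} by $H$-equivariance is the main point to justify with care.
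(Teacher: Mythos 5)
Your proof is correct, but it follows a genuinely different route from the paper's. The paper argues by normalization: using $H(W_J)$-equivariance and passage to an algebraic closure it reduces to $v_1=(a,0,0,d)$ with $a,d$ invertible, writes $v_2=(a',b',c',d')$, and solves the explicit rank-one coordinate conditions ($(b')^\#=(\omega a+a')c'$, etc.) to force $b'=c'=0$ and then extract $t$. You instead work coordinate-free: you never normalize $v_1$, but use the strengthened rank-one identity $3t(X,X,w)=\langle w,X\rangle X$ (propagated from Theorem \ref{b1J} to all rank one elements via the single rank-one orbit and the equivariance $t(gx,gy,gz)=\nu(g)\,g\,t(x,y,z)$), together with $X^\flat=0$, separate the $1$- and $\omega$-components using the linear independence hypothesis, and pin down $t$ with $\langle v_1,v_1^\flat\rangle=2q(v_1)$. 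I checked the algebra: the $\omega$-coefficient of $X^\flat=0$ gives $\omega^2 v_1^\flat+3t(v_1,v_2,v_2)=0$, the $\omega$-coefficient of the sharp identity at $w=v_2$ gives $6t(v_1,v_2,v_2)=-\langle v_1,v_2\rangle v_2$, and these combine exactly as you say to yield $v_2=tv_1^\flat$ and $t^2q(v_1)=\omega^2$. Your approach buys a computation-light argument that never leaves the given data, at the cost of needing the universal form of the rank-one identity; the paper's buys elementary coordinate checks at the cost of the reduction to a standard $v_1$. Two small points to tighten: (i) the propagation argument needs a base point of the form $\omega' v+v^\flat$ actually lying in $W_{J_E}$ (e.g.\ $v=(1,0,0,1)$, $\omega'=1$), or else a direct verification of the identity at $(1,0,0,0)$, which is a one-line computation; and (ii) when $E$ is \'{e}tale but not a field the single-orbit statement should be applied componentwise over the field factors (zero components satisfy the identity trivially) — the paper's own proof makes the analogous implicit reduction when it passes to the algebraic closure. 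Also, $\omega^2\neq 0$ follows from \'{e}tale-ness (no nilpotents) rather than merely from $\omega\notin k$.
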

\begin{proof} By equivariance and passing to the algebraic closure, we may assume $v_1 = (a,0,0,d)$ with $a,d \in k^\times$.  Write $v_2 = (a',b',c',d')$, so $X = (\omega a +a',b',c',\omega d + d')$.  Then since $X$ is rank one, $(b')^\# = (\omega a + a')c'$, $(c')^\# = (\omega d +d')b'$, and $3(\omega a + a')(\omega d + d') = (b',c')$.

From the first equation, we deduce $c' = 0$, and from the second that $b' = 0$.  We then get from the third equation that $\omega(ad'+a'd) + (a'd'+ \omega^2 (ad)) = 0$.  Since $a,d$ are in $k^\times$, there exists $t \in k$ so that $a' = -t a^2 d$, $d' = t ad^2$, and we then get that $\omega^2 = t^2(ad)^2$. Note that if $t$ is in some extension field of $k$, and $v_2 = t v_1^\flat$, then necessarily $t \in k$.  Since $(a,0,0,d)^\flat = (-a^2d, 0,0,ad^2)$ and $q((a,0,0,d))=(ad)^2$, we obtain $\omega^2 = t^2q(v_1)$ and $v_2 = tv_1^\flat$.  Because $1,\omega$ are assumed linearly independent, $t \neq 0$, and thus $t \in k^\times$.  The lemma follows. \end{proof}

Suppose $F$ is a field, $J$ is a cubic norm structure over $F$, $E$ is an \'{e}tale quadratic extension of $F$, and $X \in W_{J}\otimes_F E = W_{J_E}$ is rank $1$.  Say $X$ is \emph{admissible} if $X = (\omega v+v^\flat)/2$ for some $\omega \in E^\times \setminus F^\times$ and $v \in W_J$ with $\omega^2 = q(v)$.  Below we will need to understand when $\lambda X$ is admissible, for $\lambda \in E^\times$.
\begin{lemma}\label{cor:admis} Let the notation be as above, and assume $X = (\omega v + v^\flat)/2$ is admissible.  If $\mu \in E^\times$, $\mu = \alpha + \beta\omega$ with $\alpha, \beta \in F$, then 
\[\mu n_{E/F}(\mu)X = (n_{E/F}(\mu)\omega)(\alpha v+ \beta v^\flat)/2 + n_{E/F}(\mu)(\alpha v^\flat + q(v)\beta v)/2\] is admissible.  In particular, one has the identities
\begin{equation}\label{eq:qv}q(\alpha v+\beta v^\flat) = (\alpha^2 - q(v)\beta^2)^2q(v)\end{equation}
and
\begin{equation}\label{eq:vflat}(\alpha v + \beta v^\flat)^\flat = (\alpha^2 - q(v)\beta^2)(\alpha v^\flat + q(v)\beta v).\end{equation}
\end{lemma}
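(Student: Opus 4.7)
The plan is to establish the two polynomial identities $(\ref{eq:qv})$ and $(\ref{eq:vflat})$ first, and then use them to rewrite $\mu n_{E/F}(\mu)X$ in the admissible form $(\omega' v'+(v')^\flat)/2$.

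I would begin with a direct expansion. Using $\omega^2=q(v)$ and $n_{E/F}(\mu)=\alpha^2-\beta^2 q(v)$, one checks that
\[
(\alpha+\beta\omega)(\omega v+v^\flat) \;=\; \omega(\alpha v+\beta v^\flat) \;+\; (\alpha v^\flat + q(v)\beta v),
\]
and multiplying through by $n_{E/F}(\mu)/2$ gives the displayed formula for $\mu n_{E/F}(\mu)X$. Setting $v' := \alpha v+\beta v^\flat$ and $\omega' := n_{E/F}(\mu)\omega$, matching the second summand of this formula against $(v')^\flat/2$ is exactly identity $(\ref{eq:vflat})$, and matching $(\omega')^2$ against $q(v')$ is identity $(\ref{eq:qv})$. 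Since $\mu\in E^\times$, we have $n_{E/F}(\mu)\in F^\times$ and hence $\omega' = n_{E/F}(\mu)\omega \in E\setminus F$, so all the conditions of admissibility are met once the two identities are in hand.

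The substance of the proof is thus the verification of $(\ref{eq:qv})$ and $(\ref{eq:vflat})$. For these I would use the prehomogeneity strategy employed in the proof of Theorem \ref{b1J}. Both sides of each identity are polynomial in $v$ (with $\alpha,\beta$ as parameters) and transform compatibly under the action of $H(W_J)$: passing from $v$ to $gv$ multiplies both sides by $\nu(g)^2$ once $\beta$ is replaced by $\nu(g)\beta$, by virtue of $q(gv)=\nu(g)^2q(v)$ and $(gv)^\flat=\nu(g)\,g(v^\flat)$. Since the non-degenerate locus of $W_J$ forms a single $H(W_J)(\bar k)$-orbit and is Zariski-dense, it suffices to check both identities at one convenient non-degenerate $v_0$. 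Taking $v_0=(a,0,0,d)$ with $ad\neq 0$, one has $v_0^\flat=(-a^2d,0,0,ad^2)$, $q(v_0)=(ad)^2$, and $\alpha v_0+\beta v_0^\flat = (a(\alpha-\beta ad),0,0,d(\alpha+\beta ad))$ is supported on the same two coordinates as $v_0$. The factor $\alpha^2-q(v_0)\beta^2$ factors as $(\alpha-\beta ad)(\alpha+\beta ad)$, and a short computation of $q$ and of ${}^\flat$ on elements supported in the first and last slots yields both identities at $v_0$.

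The main (and essentially only) obstacle is the reduction step described in the previous paragraph; the pointwise check at $v_0$ is elementary. Once $(\ref{eq:qv})$ and $(\ref{eq:vflat})$ are known, the admissibility of $\mu n_{E/F}(\mu)X$ follows from the matching described above with no further work.
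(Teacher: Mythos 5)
Your proposal is correct and follows essentially the paper's own route: you reduce admissibility, via the decomposition $v'=\alpha v+\beta v^\flat$, $\omega'=n_{E/F}(\mu)\omega$, to the identities (\ref{eq:qv}) and (\ref{eq:vflat}), and then prove those by equivariance under $H(W_J)$ together with the check at $v=(a,0,0,d)$, which is exactly what the paper does (it additionally cites \cite{springer} for (\ref{eq:vflat}) and restricts to $\nu(g)=1$, which avoids rescaling $\beta$). The only cosmetic slip is that for (\ref{eq:vflat}) both sides transform under $g$ by $\nu(g)\,g(\cdot)$ rather than by the scalar $\nu(g)^2$; since the two sides still transform identically, your reduction is unaffected.
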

\begin{proof} The admissibility of $\mu n_{E/F}(\mu)X$ follows from the identities (\ref{eq:qv}) and (\ref{eq:vflat}), and these identities are familiar from the theory of Freudenthal triple systems.  For example, (\ref{eq:vflat}) is \cite[Lemma 4.5]{springer}.  The reader can check these identities themselves by observing that both sides of (\ref{eq:qv}) and (\ref{eq:vflat}) are equivariant under the action of $g \in H(W_J)$ with $\nu(g) =1$, and then checking them for $v = (a,0,0,d)$.  (This is essentially how (\ref{eq:vflat}) is proved in \cite{springer}.)  For $v = (a,0,0,d)$, 
\[\alpha v + \beta v^\flat = ((\alpha - \beta ad)a, 0,0,(\alpha + \beta ad)d),\]
and (\ref{eq:qv}), (\ref{eq:vflat}) follow.\end{proof}

We now give the more precise lifting law that applies when $A$ is an associative cubic norm structure.
\begin{theorem}\label{b1A} Let $A$ be an ACNS over $F$. Suppose $v \in W_A$ has $q(v) \neq 0$, $E = F[x]/(x^2-q(v))$, and denote by $\omega$ the image of $x$ in $E$.  Set $X(v) = (\omega v+v^\flat)/2$ in $W_{A} \otimes_F E$, and $\overline{X(v)} = (-\omega v + v^\flat)/2$, and recall the element $R(v) \in M_2(A)$ that satisfies $R(v)^2 = q(v)$.  Then for all column vectors $\eta \in A^2_{E}$, one has
\[\left(\left(\frac{\omega + R(v)}{2}\right)\eta\right)^{!} = \langle X(v), \eta^{!} \rangle \overline{X(v)}\]
in $W_{A} \otimes E$. Similarly, if $\ell \in A_E^2$ is a row vector, and $R_{r}(v) = J_2 R(v)J_2^{-1}$ (see Lemma \ref{lem:rightAct} and the discussion preceding it), then
\[\left(\ell\left(\frac{\omega + R_{r}(v)}{2}\right)\right)^{!} = \langle \ell^{!}, \overline{X(v)}\rangle X(v).\]
\end{theorem}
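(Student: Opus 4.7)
The plan is to prove both identities by prehomogeneity, mirroring the strategy of Theorem \ref{b1J} but with the smaller group $\GL_2(A)$ in place of $H(W_J)$: both sides are polynomial in $v$ and in $\eta$ (resp.\ $\ell$), and once I establish $\GL_2(A)$-equivariance with matching scalar factor, it will suffice to check the identity at a single generic $v_0 = (a,0,0,d)$ with $ad \in F^\times$, for all $\eta$ (resp.\ $\ell$). I focus on the column-vector statement; the row version is parallel.

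First I would establish the $\GL_2(A)$-equivariance. For $g \in \GL_2(A)$, Lemma \ref{lem:Requiv} gives $R(g \cdot v) = \det(g)\, g R(v) g^{-1}$, and since $q(g \cdot v) = \det(g)^2 q(v)$ the natural square root transforms as $\omega \mapsto \det(g)\omega$; combining these yields
\[\frac{\omega' + R(g \cdot v)}{2}(g\eta) \;=\; \det(g)\, g \cdot \left(\tfrac{\omega + R(v)}{2}\eta\right)\]
in $A^2_E$. Using $(g\xi)^{!} = g \cdot \xi^{!}$ (from the construction of the left action on $V_A \cong W_A$ in Section \ref{sec:new}) and the cubic scalar homogeneity $(\lambda \xi)^{!} = \lambda^3 \xi^{!}$ for $\lambda \in F$, the LHS transforms as $\det(g)^3\, g \cdot \mathrm{LHS}(v,\eta)$. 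On the other side, $(gv)^\flat = \det(g)\, g \cdot v^\flat$ since the Freudenthal flat is $H(W_J)$-equivariant of weight $\nu = \det$, so $X(gv) = \det(g)\, g \cdot X(v)$ and $\overline{X(gv)} = \det(g)\, g \cdot \overline{X(v)}$; combined with the similitude $\langle gw, gw'\rangle = \det(g)\langle w, w'\rangle$, the RHS also transforms as $\det(g)^3\, g \cdot \mathrm{RHS}(v,\eta)$. The stabilizer of $v_0 = (a,0,0,d)$ in $\GL_2(A_{\overline F})$ (essentially the diagonal torus with $n_A(\lambda) = n_A(\mu) = 1$) has codimension $\dim W_A$, so the orbit of $v_0$ is Zariski dense in $W_A$; an equivariant polynomial identity holding on this orbit and for arbitrary $\eta$ therefore extends to all of $W_A \times A_E^2$.

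The pointwise verification at $v_0$ is a direct calculation. One finds $R(v_0) = \operatorname{diag}(ad, -ad)$ and $v_0^\flat = (-a^2 d, 0, 0, ad^2)$, so $X(v_0) = (a(\omega-ad)/2,\, 0,\, 0,\, d(\omega+ad)/2)$ and $\overline{X(v_0)} = (-a(\omega+ad)/2,\, 0,\, 0,\, -d(\omega-ad)/2)$. For $\eta = \bigl(\begin{smallmatrix} u \\ v' \end{smallmatrix}\bigr)$, the vector $\tfrac{\omega+R(v_0)}{2}\eta = \bigl(\begin{smallmatrix} (\omega+ad)u/2 \\ (\omega-ad)v'/2 \end{smallmatrix}\bigr)$ has ``$!$'' whose middle entries vanish by the key relation $(\omega+ad)(\omega-ad) = \omega^2 - (ad)^2 = 0$, while the outer entries collapse to $(ad)^2(\omega \pm ad)\,n(\cdot)/2$ via $(\omega \pm ad)^3 = 4(ad)^2(\omega \pm ad)$. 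On the other side, $\langle X(v_0), \eta^{!}\rangle = a(\omega-ad)n(v')/2 - d(\omega+ad)n(u)/2$, and multiplying this by $\overline{X(v_0)}$, while repeatedly invoking $(\omega\pm ad)^2 = \pm 2ad(\omega \pm ad)$, recovers exactly the LHS. The row-vector identity follows by the entirely parallel computation using Lemma \ref{lem:rightAct}; at $v_0$ one has $R_r(v_0) = J_2 R(v_0) J_2^{-1} = -R(v_0)$, which is precisely the swap that interchanges $X(v_0)$ and $\overline{X(v_0)}$ in the resulting formula.

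The main obstacle is pure bookkeeping: one must consistently track the chosen square root $\omega$ of $q(v)$ across the equivariance (so that $\omega \to \det(g)\omega$ as $v \to g v$) and match the scalar factor $\det(g)^3$ on both sides. Once the equivariance is in place, the pointwise verification at $v_0$ collapses almost immediately via the single identity $\omega^2 = (ad)^2$.
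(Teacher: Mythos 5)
Your proposal is correct and follows essentially the same route as the paper: the paper's proof likewise invokes prehomogeneity, checks equivariance (using $R(gv)=gR(v)g^{-1}$ and $gX(v)=X(gv)$ for $g\in\SL_2(A)$, whereas you track the extra $\det(g)^3$ similitude factors for all of $\GL_2(A)$ — a cosmetic difference), and then verifies the identity directly at $v_0=(a,0,0,d)$, where $R(v_0)=\mm{ad}{}{}{-ad}$. Your pointwise computation at $v_0$, including the collapse via $\omega^2=(ad)^2$ and the swap $R_r(v_0)=-R(v_0)$ for the row-vector case, checks out.
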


\begin{remark} In Theorem \ref{b1J}, which takes place in the context of arbitrary cubic norm structures, the element $X(v)$ was proved to be rank one. Theorem \ref{b1A} proves a stronger statement in the special case when the cubic norm structure is an ACNS.  Namely, Theorem \ref{b1A} gives an explicit decomposition of $X(v)$ as a ``pure tensor'', i.e., as an explicit scalar times the shriek $(!)$ of an element of $A_{E}^2$.\end{remark}

\begin{proof}[Proof of Theorem \ref{b1A}] The equality in question is polynomial in nature, and thus by prehomogeneity it suffices to check equivariance of both sides and to check it for particular $v$'s.  The equivariance is immediate using $R(gv) = gR(v)g^{-1}, g X(v) = X(gv)$ for $g \in SL_2(A)$.  A short computation verifies the identity when $v = v_0 = (a,0,0,d)$, in which case $R(v) = \mm{ad}{}{}{-ad}$. \end{proof}

\subsection{Orbits over a field}\label{subsec:B1F} In this subsection, we assume $F$ is a field, and $A$ is an associative cubic norm structure over $F$.  We parametrize the orbits of $\SL_2(A)$ on the elements of $W_A$ with rank $4$.

More precisely, suppose $v \in W_A$ has $q(v) \neq 0$.  Define $E = E_{q(v)} = F[x]/(x^2-q(v))$, denote $\omega$ the image of $x$ in $E$, and denote by $y \mapsto \overline{y}$ the nontrivial involution on $E$, so that $\overline{\omega} = -\omega$.  Now, let $X(v) = (\omega v+ v^\flat)/2$ in $W_A \otimes_F E = W_{A_E}$.  Associate to $v$ the invariants $q(v)$ and $\lambda(X(v)) \in E^\times/n(A_E^\times)$.  This latter invariant $\lambda$ was defined in Definition \ref{def:lambda}. The purpose of this subsection is to prove the following theorem.  Many of the instances of this theorem are closely related to results of Wright-Yukie \cite{wrightYukie} and Kable-Yukie \cite{kableYukie}, which were proven by means of Galois cohomology.

\begin{theorem}\label{B1thmField} For $v \in W_A$ with $q(v) \neq 0$, the invariant $\lambda(v):=\lambda(X(v))$ satisfies $n_{E/F}(\lambda) \in F^\times \cap n_{A_E}(A_E^\times)$.  The association $v \mapsto (E_{q(v)},\omega, \lambda(X(v)))$ defines a bijection between $\SL_2(A)$ orbits on the rank $4$ elements of $W_A$ and triples $(E,\omega,\lambda)$ of this kind, up to isomorphism.  An isomorphism of a triple $(E,\omega, \lambda)$ with $(E',\omega',\lambda')$ is an $F$-algebra isomorphism $\phi: E \rightarrow E'$ satisfying $\phi(\omega) = \omega'$ and $\phi(\lambda) = \lambda'$. \end{theorem}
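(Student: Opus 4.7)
The plan is to verify that $v \mapsto (E_{q(v)}, \omega, \lambda(X(v)))$ is well-defined on $\SL_2(A)$-orbits, to check the image constraint on $n_{E/F}(\lambda)$, and then to prove bijectivity by combining an explicit construction via Lemma~\ref{cor:admis} with a Galois-descent argument for injectivity.

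Well-definedness is immediate: for $g \in \SL_2(A)$ the similitude is $\nu(g)=1$, so $q(gv)=q(v)$ and $(gv)^\flat = g v^\flat$, whence $X(gv) = g X(v)$. By Lemma~\ref{lem:lambdaInv}, the $\lambda$-invariant is constant on $\GL_2(A_E)$-orbits of rank-one elements of $W_{A_E}$, and in particular is $\SL_2(A)$-invariant. For the image constraint I would apply both the row and column versions of Theorem~\ref{b1A}: choose $\ell \in A_E^2$ a row and $\eta \in A_E^2$ a column so that $\ell' := \ell(\omega+R_r(v))/2$ and $\eta' := (\omega+R(v))\eta/2$ are nonzero (possible since $(\omega+R_r(v))(\omega-R_r(v))=0$ with both factors nonzero), and set $\nu := \langle \ell^{!},\overline{X(v)}\rangle$, $\mu := \langle X(v),\eta^{!}\rangle$. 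The lifting law presents $X(v) = \nu^{-1}(\ell')^{!}$ and $\overline{X(v)} = \mu^{-1}(\eta')^{!}$ as exact identities; Galois-conjugating the first decomposition and comparing with the second forces $\mu \equiv \overline{\nu}$ modulo $n_{A_E}(A_E^\times)$. A direct computation gives $\langle X(v),\overline{X(v)}\rangle = \omega q(v)$ via Lemma~\ref{lem:vvflat}, and combining with Lemma~\ref{lem:nRC} yields $\nu\mu \cdot \omega q(v) = n_{A_E}(\ell' J_2 \eta')$, hence $n_{E/F}(\nu)\cdot \omega q(v) \in n_{A_E}(A_E^\times)$. The crucial observation $\omega q(v)=\omega^3 = n_{A_E}(\omega\cdot 1_A)$ then gives $n_{E/F}(\lambda) = n_{E/F}(\nu)^{-1} \in F^\times \cap n_{A_E}(A_E^\times)$.

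For bijectivity, if $v_1, v_2 \in W_A$ share the same invariants, the rank-one elements $X(v_i) \in W_{A_E}$ have the same $\lambda$-invariant, so some $\tilde g \in \GL_2(A_E)$ satisfies $\tilde g X(v_1) = X(v_2)$; adjusting by the stabilizer of $X(v_1)$ one can arrange $\tilde g \in \SL_2(A_E)$. The cocycle $\tilde g^{-1}\overline{\tilde g}$ lies in $\mathrm{Stab}_{\SL_2(A_E)}(X(v_1))$, and a Hilbert~90-type argument trivializes it to yield $g \in \SL_2(A)$ with $gX(v_1) = X(v_2)$; since $g$ commutes with $\omega$ and with $\flat$, we obtain $gv_1 = v_2$. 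For surjectivity, given $(E,\omega,\lambda)$ with $n_{E/F}(\lambda) = n_{A_E}(s_0)$ for some $s_0 \in A_E^\times$, I would begin with $X_0 := \lambda \eta^{!}$ for a primitive column $\eta \in A_E^2$ and rescale by an element of $n_{A_E}(A_E^\times)$---the norm hypothesis together with Lemma~\ref{cor:admis} providing the required scalar---to reach an admissible form $X = (\omega v + v^\flat)/2$ with $v \in W_A$ and $q(v)=\omega^2$, giving the required $v$.

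The main obstacle will be the Galois-descent step in injectivity: one needs to show that the relevant $H^1$ for the stabilizer of a rank-one admissible element vanishes, which is essentially the content of the Galois-cohomological proofs of Wright--Yukie~\cite{wrightYukie} and Kable--Yukie~\cite{kableYukie}. A secondary technical point in surjectivity is to verify that the rescaling produced by Lemma~\ref{cor:admis} together with the norm hypothesis actually lands within the prescribed equivalence class of $\lambda$, rather than some shifted coset.
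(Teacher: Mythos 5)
Your first two steps are essentially the paper's: well-definedness is the same computation ($q(gv)=q(v)$, $X(gv)=gX(v)$ for $g\in\SL_2(A)$), and your norm-constraint argument is the paper's pairing of the two halves of Theorem \ref{b1A} against each other, just reorganized through the factor $\langle X(v),\overline{X(v)}\rangle=\omega q(v)=\omega^3$. Two small imprecisions there: you need $\nu=\langle \ell^{!},\overline{X(v)}\rangle$ and $\mu=\langle X(v),\eta^{!}\rangle$ to lie in $E^\times$, not merely that $\ell'$ and $\eta'$ are nonzero (this is supplied by Lemma \ref{lem:lambdaInv} applied to $X(v)$ and $\overline{X(v)}$, using $\langle X(v),\overline{X(v)}\rangle\in E^\times$), and the literal identity $n_{E/F}(\lambda)=n_{E/F}(\nu)^{-1}$ should only be an equality modulo $n_{A_E}(A_E^\times)$, which is all you need. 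The genuine gaps are in the two substantive steps.

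For injectivity, your plan hinges on trivializing the cocycle $\widetilde g^{-1}\overline{\widetilde g}$ in the stabilizer of the rank-one element $X(v_1)$, i.e.\ on a vanishing of the relevant $H^1$, which you explicitly defer to Wright--Yukie and Kable--Yukie; but that vanishing is exactly the content the proof is supposed to supply, and citing the Galois-cohomological parametrizations would make the argument circular with respect to the point of this theorem. The paper avoids cohomology altogether by an explicit descent: by Proposition \ref{prop:B1prim} one may choose the primitive vector so that $X(v)=\lambda v_1^{!}$ with $R(v)v_1=-\omega v_1$; then writing $\widetilde g=g_1-\omega g_\omega$ with $g_1,g_\omega\in M_2(A)$ and setting $g=g_1+g_\omega R(v)$ produces an element of $M_2(A)$ over $F$ with $g\cdot X(v)=X(v')$, hence $gv=v'$ and $gv^\flat=(v')^\flat$, and $\det(g)=1$ then falls out of $\langle v',(v')^\flat\rangle=2q(v')=2q(v)$. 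Without either this trick or a worked-out $H^1$-vanishing, injectivity is not proved. Surjectivity as you sketch it is also off: Lemma \ref{cor:admis} only transports \emph{admissible} elements to admissible elements, so you cannot start from $X_0=\lambda\eta^{!}$ for an arbitrary primitive column $\eta$ (such an element is in general not admissible; e.g.\ $(\lambda,0,0,0)$ is not of the form $(\omega v+v^\flat)/2$ with $q(v)=\omega^2\neq 0$) and rescale into admissibility. One needs an explicit admissible base point with trivial invariant, such as $v=(0,1,0,d)$ with $X(v)=(1,d^{1/2},d,d^{3/2})$ and $\lambda(X(v))=1$, and then multiply by $\lambda n_{E/F}(\lambda)$, using the hypothesis $n_{E/F}(\lambda)\in n_{A_E}(A_E^\times)$ to see the class of the invariant is $[\lambda]$.
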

\begin{proof} Note that we have 
\[\langle X(v), \overline{X(v)}\rangle = \frac{1}{4}\langle \omega v + v^\flat, -\omega v + v^\flat \rangle = \frac{2\omega}{4}\langle v, v^\flat\rangle = \omega q(v) \in E^\times.\]
It follows that $X(v)$ is nonzero in every component of $W_{A_E}$, and thus the invariant $\lambda(v)$ is defined.

That $n_{E/F}(\lambda(v))$ is a norm from $n(A_E)$ follows quickly from the lifting law.  Indeed, set $U = \frac{\omega + R(v)}{2}$ in $M_2(A_E)$, and suppose $\eta \in A_E^2$ is a column vector with $\langle X(v),\eta^{!} \rangle = \lambda(v) \in E^\times$.  Then since $\lambda(\overline{X(v)}) = \overline{\lambda(X(v))}$, there is a row vector $\ell \in A_E^2$ with $\langle \ell^{!}, \overline{X(v)}\rangle = \overline{\lambda(v)}$.  Hence
\[n(\ell J_2 U \eta) = \langle \ell^{!}, (U\eta)^{!}\rangle = \langle X(v),\eta^{!}\rangle \langle \ell^{!}, \overline{X(v)}\rangle = \lambda(v) \overline{\lambda(v)},\]
proving that $n_{E/F}(\lambda(v))$ is in $F^\times \cap n_{A_E}(A_E^\times)$. Here we have used Lemma \ref{lem:nRC} for the first equality and the lifting law Theorem \ref{b1A} for the second.

Suppose now $v' = g \cdot v$ with $g \in \SL_2(A)$.  Then $q(v) = q(v')$, and $(v')^\flat = (g \cdot v)^\flat = \nu(g) \left(g \cdot v^\flat\right) = g \cdot v^\flat$.  Hence $g\cdot X(v) = \omega g\cdot v + g \cdot v^\flat = X(v')$.  It follows that $v$ and $v'$ have isomorphic invariants.

Before we complete the proof of the theorem, we give some necessary intermediate results. \end{proof}
\begin{proposition}\label{prop:B1prim} Suppose $v \in W_A$ is non-degenerate, i.e., $q(v) \neq 0$.  Then, there exists $\lambda \in E^\times$ and a primitive vector $v_0$ in $W_2(A_E)$ for which $X(v) = \lambda v_0^{!}$.  Furthermore, if one sets
\[ v_1 = \left(\frac{\omega - R(v)}{2\omega}\right)v_0,\]
then $R(v)v_1 = -\omega v_1$ and $\lambda v_1^{!} = X(v)$. \end{proposition}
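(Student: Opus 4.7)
The approach is to exploit the refined lifting law of Theorem \ref{b1A} in its ``conjugate'' form, obtained by swapping $\omega$ with its conjugate $-\omega$:
\begin{equation}\label{eq:conjLiftPlan}
\left(\left(\tfrac{-\omega+R(v)}{2}\right)\eta\right)^{!} = \langle \overline{X(v)},\eta^{!}\rangle\, X(v)
\end{equation}
for all column vectors $\eta \in A_E^2$. Since $R(v)^2 = q(v) = \omega^2$ in $M_2(A_E)$, the operators $P_{\pm} := (\omega \pm R(v))/(2\omega)$ are orthogonal idempotents summing to $1$, and the map applied to $\eta$ on the left of (\ref{eq:conjLiftPlan}) is precisely $-\omega P_{-}$. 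Observe also that $\langle X(v),\overline{X(v)}\rangle = \tfrac{\omega}{2}\langle v,v^\flat\rangle = \omega q(v) \in E^\times$, so both $X(v)$ and $\overline{X(v)}$ are nonzero in every component of $W_{A_E}$ (here $E$ is a finite product of fields, being \'{e}tale of rank $2$ over $F$).

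For the existence of $v_0$ and $\lambda$, apply Lemma \ref{lem:lambdaInv} in each component of $E$ to the rank-one element $\overline{X(v)}$ to produce $\eta \in A_E^2$ with $\mu := \langle \overline{X(v)},\eta^{!}\rangle \in E^\times$, and set $v_0 := P_{-}\eta$. Using the identity $(c\zeta)^{!} = c^3 \zeta^{!}$ for $c \in E$, valid since $E$ is central in $A_E$, equation (\ref{eq:conjLiftPlan}) rewrites as $-\omega^3 v_0^{!} = (-\omega v_0)^{!} = \mu X(v)$; hence $\lambda := -\omega^3/\mu \in E^\times$ satisfies $X(v) = \lambda v_0^{!}$. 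Since $X(v) \neq 0$, also $v_0^{!} \neq 0$, and so $v_0$ is primitive by the primitivity lemma at the end of subsection \ref{sec:new}.

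For the second assertion, note that $R(v)P_{-} = -\omega P_{-}$, immediate from $R(v)^2 = \omega^2$, so $R(v)v_1 = R(v)P_{-}v_0 = -\omega v_1$ for any $v_0$. To see that $\lambda v_1^{!} = X(v)$, apply (\ref{eq:conjLiftPlan}) with $\eta = v_0$: the left-hand side becomes $(-\omega v_1)^{!} = -\omega^3 v_1^{!}$, and the right-hand side equals $\langle \overline{X(v)},\lambda^{-1}X(v)\rangle X(v) = -\lambda^{-1}\omega^3 X(v)$ using $\langle \overline{X(v)},X(v)\rangle = -\omega q(v) = -\omega^3$. Comparing yields $\lambda v_1^{!} = X(v)$. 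The only nontrivial step is the production of $\eta$ in the previous paragraph, which reduces directly to Lemma \ref{lem:lambdaInv}; once the spectral projectors $P_{\pm}$ of $R(v)/\omega$ are identified, the rest is formal manipulation using the cubic scaling $(c\zeta)^{!} = c^3\zeta^{!}$.
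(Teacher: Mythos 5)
Your proposal is correct and is essentially the paper's own argument: the ``furthermore'' part is verified by exactly the same computation (the $\omega\mapsto-\omega$ conjugate form of Theorem \ref{b1A} applied to $v_0$, together with $\langle X(v),\overline{X(v)}\rangle=\omega q(v)=\omega^3$), and the existence part rests on the same inputs, namely Lemma \ref{lem:lambdaInv} and the primitivity lemma at the end of subsection \ref{sec:new}. The only cosmetic difference is that you manufacture $v_0$ explicitly as $\left(\tfrac{\omega-R(v)}{2\omega}\right)\eta$ for a test vector $\eta$ via the lifting law, whereas the paper obtains $X(v)=\lambda v_0^{!}$ directly from the rank-one transitivity underlying the definition of the invariant $\lambda(v)$.
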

\begin{proof} We have already established the first part of the proposition.  Now, let $v_1$ be as in the statement.  Then $R(v) v_1 = -\omega v_1$, and by the lifting law,
\begin{align*} \lambda v_1^{!} &= -\frac{\lambda}{\omega^3}\left(\left(\frac{-\omega + R(v)}{2}\right)v_0\right)^{!} \\ &=-\frac{\lambda}{\omega^3} \langle \overline{X(v)},v_0^{!}\rangle X(v) \\ &= \frac{1}{\omega^3} \langle X(v), \overline{X(v)} \rangle X(v) \\ &= X(v). \end{align*}
This completes the proof. \end{proof}
We can now establish the fact that if $v, v' \in W_A$ have isomorphic invariants, then they are in the same $\SL_2(A)$ orbit, as stated in the following lemma.
\begin{lemma} Suppose $v, v' \in W_A^{rk = 4}$ have isomorphic invariants, so that in particular $q(v) = q(v') \neq 0$.  Then, there is $g \in \SL_2(A)$ with $g \cdot v = v'$. \end{lemma}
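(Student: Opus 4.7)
The plan is to use the lifting law (Theorem \ref{b1A} and Proposition \ref{prop:B1prim}) to descend the problem to a rank-one element of $W_{A_E}$, construct a suitable $h \in \GL_2(A_E)$, and invoke Galois descent plus a rank/determinant check to conclude $h \in \SL_2(A)$. Set $D = q(v) = q(v')$, $E = F[\omega]$ with $\omega^2 = D$, and write $\sigma$ for the Galois involution of $E/F$. Using the given isomorphism of invariants we may assume $v$ and $v'$ give the same $\omega$ and the same class $\lambda(v) = \lambda(v') \in E^{\times}/n_{A_E}(A_E^{\times})$. By Proposition \ref{prop:B1prim}, fix primitive $v_1, v_1' \in A_E^2$ satisfying $R(v) v_1 = -\omega v_1$, $R(v') v_1' = -\omega v_1'$, together with scalars $\lambda, \lambda' \in E^{\times}$ for which $X(v) = \lambda v_1^{!}$ and $X(v') = \lambda' (v_1')^{!}$; after replacing $v_1'$ by $v_1' a$ for any $a \in A_E^{\times}$ with $n_{A_E}(a) = \lambda'/\lambda$, we may arrange $\lambda = \lambda'$.

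Next, observe that $R(v) \in M_2(A)$ satisfies $R(v)^2 = \omega^2$ with $\omega$ central in $A_E$, so the orthogonal idempotents $e_\pm := (\pm R(v) + \omega)/(2\omega)$ in $M_2(A_E)$ yield a decomposition $A_E^2 = V_v^{\omega} \oplus V_v^{-\omega}$ of right $A_E$-modules, on which $\sigma$ swaps the two summands. Since $A_E$ is semisimple in every ACNS case, a primitive element of $V_v^{-\omega}$ automatically generates it as a free right $A_E$-module of rank one; hence $\{v_1, \sigma(v_1)\}$ is a right $A_E$-basis of $A_E^2$, and similarly for $\{v_1', \sigma(v_1')\}$. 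Let $h \in \GL_2(A_E)$ be the unique element with $h v_1 = v_1'$ and $h \sigma(v_1) = \sigma(v_1')$; applying $\sigma$ to both defining equations shows $\sigma(h) = h$, so $h \in \GL_2(A)$.

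Finally, the identity $h \cdot \eta^{!} = (h \eta)^{!}$ yields $h \cdot X(v) = \lambda (h v_1)^{!} = \lambda (v_1')^{!} = X(v')$ and, by $\sigma$-conjugation, $h \cdot \overline{X(v)} = \overline{X(v')}$. Since $\omega v = X(v) - \overline{X(v)}$, this implies $h v = v'$. The relation $q(hv) = \det(h)^2 q(v)$ forces $\det(h) = \pm 1$, while $(hv)^{\flat} = \det(h)\, h\, v^{\flat}$ gives $X(hv) = h X(v)$ when $\det(h) = 1$ and $X(hv) = -h \overline{X(v)}$ when $\det(h) = -1$. Since we have already established $X(hv) = X(v') = h X(v)$, the latter alternative would force $X(v) = -\overline{X(v)}$, hence $v^{\flat} = 0$, contradicting $q(v) \neq 0$. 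Thus $\det(h) = 1$ and $h \in \SL_2(A)$, as required.

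The main technical obstacle is the verification that $\{v_1, \sigma(v_1)\}$ forms an $A_E$-basis of $A_E^2$: this rests on the structural claim that a primitive element of the $-\omega$-eigenspace $V_v^{-\omega}$ generates it freely over $A_E$, which is established by a short case-by-case analysis using the semisimplicity of $A_E$ in the ACNS classification. Once this structural point is secured, the rest of the argument is a mechanical consequence of the lifting law together with the Galois descent encoded in the definition of $h$.
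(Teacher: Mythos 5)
Your argument is essentially correct, but it takes a genuinely different route from the paper at the crucial descent step, and the one structural claim you lean on is the place where your write-up is thinnest. The paper never constructs an invertible matrix over $A_E$ matching both eigenvectors: it takes \emph{any} $\widetilde{g} \in \GL_2(A_E)$ with $\widetilde{g}v_0 = v_0'$ (primitivity alone suffices for this), writes $\widetilde{g} = g_1 - \omega g_\omega$ with $g_1, g_\omega \in M_2(A)$, and sets $g = g_1 + g_\omega R(v) \in M_2(A)$; since $R(v)v_1 = -\omega v_1$, this $g$ acts on $v_1$ exactly as $\widetilde{g}$ does, giving $g \cdot X(v) = X(v')$ with no module-theoretic input and no a priori invertibility of $g$ -- the identity $2q(v') = \langle g\cdot v, g\cdot v^\flat\rangle = \det_6(g)\,2q(v)$ then yields $\det_6(g)=1$ and invertibility for free. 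Your Galois-descent construction instead needs $h$ to satisfy the two conditions $hv_1 = v_1'$ and $h\sigma(v_1)=\sigma(v_1')$ simultaneously, which is why you are forced into the claim that $\{v_1,\sigma(v_1)\}$ is an $A_E$-basis of $A_E^2$. Once that claim is granted, the rest of your argument (the normalization $\lambda = \lambda'$, $\sigma(h)=h$, $h\cdot X(v)=X(v')$ together with its $\sigma$-conjugate giving $hv=v'$, and the elimination of $\det_6(h)=-1$ via $X(hv)=hX(v)$ versus $-h\overline{X(v)}$ and Lemma \ref{lem:vvflat}) is correct, if slightly more roundabout than the paper's one-line determinant computation.

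The caveat: your justification of the basis claim -- ``$\sigma$ swaps the two summands'' plus ``semisimplicity,'' with the actual verification deferred to an unperformed case-by-case analysis -- is not sufficient as stated. When $q(v)$ is a square in $F^\times$, so $E \cong F\times F$, the involution $\sigma$ permutes the simple factors of $A_E = A\times A$ rather than comparing $V^{+\omega}_j$ with $V^{-\omega}_j$ inside the same factor, so the swap gives no control on whether the two eigenspaces of $R(v)$ are balanced in a factor such as $M_3(F)$; a priori one eigenspace could fail to be free of rank one there, and then no primitive vector could generate it. The claim is nevertheless true, and can be closed cleanly without case analysis: both $v_1$ and $\sigma(v_1)$ are primitive (the latter because $\sigma$ carries bases to bases, or because $\sigma(v_1)^{!} = \sigma(v_1^{!}) \neq 0$), so each generates a free rank-one direct summand of $A_E^2$ lying inside $V^{-\omega}$, respectively $V^{+\omega}$; since $V^{+\omega}\oplus V^{-\omega} = A_E^2$ and $A_E$ is semisimple, counting composition factors forces $V^{-\omega} = v_1A_E$ and $V^{+\omega} = \sigma(v_1)A_E$, which is exactly the basis statement. (Alternatively, one can check that each simple component of $R(v)$ has reduced trace zero, hence is never a central scalar, and deduce the balance of eigenspaces that way.) With that lemma supplied, your proof stands; but note that the paper's substitution trick $g = g_1 + g_\omega R(v)$ avoids the issue entirely, which is what makes its argument shorter.
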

\begin{proof} Since $v, v'$ have isomorphic invariants, we may write $\lambda v_1^{!} = \lambda v_0^{!} = X(v)$ and $\lambda (v_1')^{!} = \lambda (v_0')^{!}= X(v')$, where $v_0, v_0' \in W_2(A_E)$ are primitive, $\lambda \in E^\times$, and $v_1, v_1'$ are as in the Proposition \ref{prop:B1prim}.

Since $v_0, v_0'$ are primitive, there exists $\widetilde{g} \in \GL_2(A_E)$ satisfying $\widetilde{g} v_0 = v_0'$.  Write $\widetilde{g} = g_1 - \omega g_\omega$, with $g_{*} \in M_2(A)$, and set $g = g_1 + g_\omega R(v)$.  Then $g \in M_2(A)$.  We have 
\begin{align*} g \cdot X(v) &= \lambda g \cdot v_0^{!} = \lambda g \cdot v_1^{!} \\ &= \lambda \left((g_1 + g_\omega R(v))v_1\right)^{!} \\ &= \lambda \left((g_1 - g_\omega \omega)v_1\right)^{!} \\ &= \lambda \widetilde{g} \cdot v_1^{!} = \widetilde{g} \cdot X(v) \\ &= X(v'). \end{align*}
Hence $g \cdot v = v'$ and $g \cdot v^\flat = (v')^\flat$.  Thus, since $q(v) = q(v')$,
\[2q(v) = 2q(v') = \langle v', (v')^\flat \rangle = \langle g \cdot v, g \cdot v^\flat \rangle = \det(g) \langle v, v^\flat \rangle = 2\det(g) q(v).\]
This proves $\det(g) = 1$, and in particular $g \in M_2(A)$ is actually invertible and in $\SL_2(A)$.  This completes the proof. \end{proof}

Finally, to complete the proof of Theorem \ref{B1thmField}, we must check that every $(E,\omega,\lambda)$ with $n_{E/F}(\lambda) \in F^\times \cap n(A_E^\times)$ arises as an actual invariant of some $v \in W_{A}$ of rank $4$.  To see that these invariants do all arise, first set $v = (0,1,0,d)$, with $d\in F^\times$.  Write $d^{1/2}$ for the image of $x$ in $F[x]/(x^2-d)$; one has $\omega = 2d^{1/2}$.  Then $X(v) = (1,d^{1/2},d,d^{3/2})$, and thus $\lambda(X(v)) = 1$.  Hence, all invariants $(E,\omega,1)$ appear.

Now, suppose $\lambda \in E^\times$ satisfies $n_{E/F}(\lambda) \in F^\times \cap n(A_E^\times)$.  Then from Corollary \ref{cor:admis}, we know $\lambda n(\lambda) X(v) = X(v')$ for some $v' \in W_A$ with $q(v') \neq 0$.  But $\lambda(X(v')) = \lambda n(\lambda)$, which represents the same class as $\lambda$ since $n(\lambda) \in n(A_E^\times)$.  Hence all classes $(E,\omega,\lambda)$ arise, as desired.  This completes the proof of Theorem \ref{B1thmField}.

\subsection{Balanced modules and integral orbits}\label{subsec:integralB1} In this subsection we parametrize the orbits of essentially $\GL_2(A)$ on $W_{A}$, when $A$ is an ACNS over the integral domain $R$.  More precisely, we prove Theorem \ref{twistThm1}, and thus Theorem \ref{introThm1} of the introduction.

In this subsection, we assume $R$ is an integral domain, with fraction field $F$ of characteristic $0$, and $R$ satisfies $\frac{x^2 + x}{2}$ is in $R$ for all $x \in R$.  The characterization of quadratic rings over $R$ is as discussed in paragraph \ref{sssec:QR}: these quadratic rings are of the form $S_{D} = R[y]/(y^2 - Dy + \frac{D^2-D}{4})$, where $D$ is congruent to a square modulo $4R$; we denote $\tau_D$ or $\tau$ the image of $y$ in $S_D$.  If $S$ is a quadratic ring over $R$, and $(1, \tau)$ is a basis of $S$ as an $R$-module, we say that $(1,\tau)$ is a \emph{good basis} of $S$ if $\tau$ satisfies a quadratic equation $\tau^2 - D\tau + \frac{D^2-D}{4}$ for some $D \in R$ (necessarily a square modulo $4R$).

The letter $A$ denotes an ACNS over $R$.  There is a cubic polynomial action of $\GL_2(A)$ on $W_A$, which we take to be the action on the right.  Set $G_1 = \GL_1(R) \times \GL_2(A)$.  Let $(\lambda,g)\in \GL_1(R) \times \GL_2(A)$ act on $v \in W_{A}$ by $v \mapsto \lambda^{-1} (v \cdot g)$.  We define $G \subseteq \GL_2(A)$ to be the pairs $(\lambda,g)$ where $\lambda^3 = \det_6(g)$.

\subsubsection{Preliminaries} We now give the various preliminaries that we will need, much of which is a repetition of facts and notations from above.

Suppose $v \in W_{A}$ has $q(v) = D \neq 0$.  We set $E = F[x]/(x^2-q(v))$ and let $\omega$ denote the image of $x$ in $E$.  Of course, $S_{D}$ is a subring of $E$.  We define $X(v) \in W_{A} \otimes_{R} E$ as
\[X(v,\omega) = X(v) = \frac{\omega v + v^\flat}{2}  = \frac{q(v) + \omega}{2}v + \frac{v^\flat -q(v)v}{2} = \tau_{D} v + \frac{v^\flat - q(v)v}{2}.\]
By using the formula for $v^\flat$ from Lemma \ref{lem:vvflat}, one checks easily that the assumption that $x^2 + x \in 2R$ for all $x \in R$ implies $\frac{v^\flat - q(v)v}{2}$ is in $W_A$.  Hence, $X(v) \in W_{A} \otimes S_{D}$.  Similarly, one defines $X(v,-\omega) = \overline{X(v)} = \frac{-\omega v + v^\flat}{2}$, which is also in $W_A \otimes S_{D}$.  One has the equality $\langle X(v,\omega), X(v,-\omega)\rangle = \omega^3$.

Set $S(v) \in \frac{1}{2} M_2(A)$ as in subsection \ref{sec:new}, and $R_r(v) = 2 J S(v)$, where $J = \mm{}{1}{-1}{}$.  One has $R_r(v)^2 = q(v) = D$.  We also record the equivariance property of $R_r(v)$: $R_r(v \cdot g) = \det(g) g^{-1}R(v)g$ for $g \in \GL_2(A)$.  

We need two other pieces of notation: if $\ell = (s,t) \in A^2$ is a row vector, set $\ell^{!} = (n(s), s^\#t, t^\#s, n(t))$ in $W_A$.  Furthermore, set
\[\epsilon = \epsilon(v) = \frac{\omega + R_{r}(v)}{2\omega} = \frac{1}{2} + \frac{1}{2\omega} R_r(v).\]
Finally, note that
\[ \omega \epsilon = \frac{\omega + R_r(v)}{2} = \frac{\omega + q(v)}{2} + \frac{R_r(v)-q(v)}{2} = \tau_{D} + \frac{R_r(v)-q(v)}{2} = \tau_{D} -\frac{D}{2} + JS(v).\]
Using again that $x^2 + x \in 2R$ for all $x \in R$, one checks that $\frac{R_r(v)-q(v)}{2} = -\frac{D}{2} + JS(v)$ is in $M_2(A)$.

We these notations behind us, we can restate the lifting law, Theorem \ref{b1A}: For all row vectors $\ell \in A_{F}^2$,
\[ \omega^3 (\ell \epsilon(v))^{!} = \langle \ell^{!}, X(v,-\omega) \rangle X(v)\]
in $W_A \otimes E$.  If $\ell \in A^2$, then both sides are in $W_A \otimes S_{D}$.

Set $\Omega = \Omega(v) = \frac{q(v) + R_r(v)}{2}$.  Then $\Omega(v) \in M_2(A)$, and satisfies $\Omega^2 - D\Omega + \frac{D^2 -D}{4} = 0$.  Thus $\Omega$ defines an action of $S_D$ on the column vectors $A^2$.

\subsubsection{Balanced modules} We consider a class of $S_D \otimes A = S \otimes A$ modules $I \subseteq A_E$ as follows. 
\begin{definition} If $I \subseteq E \otimes A$, we say that $I$ is an $S\otimes A$-\textbf{fractional ideal} if
\begin{itemize}
\item $I$ is closed under left multiplication by $S$ and right multiplication by $A$;
\item $I$ is free of rank $2$ as an $A$-module;
\item $I_F = E \otimes A$ is free of rank one as an $A_{E}$-module. \end{itemize}
If $(\tau,1)$ is a good basis of $S$, and $I = b_1A \oplus b_2A$ with $b_1, b_2 \in E\otimes A$, define the \textbf{norm} of $I$ $N(I;\tau, (b_1,b_1)) = \det_6(g)$ where $g \in M_2(A_F)$ is the unique matrix with $(b_1,b_2) = (\tau\otimes 1,1\otimes 1)g$. Explicitly, if $b_1 = \tau m_{11} + m_{21}$ and $b_2 =\tau m_{12}+ m_{22}$, with $m_{ij} \in A_{F}$, define $N(I;\tau,(b_1,b_2)) = \det_6 \mm{m_{11}}{m_{12}}{m_{21}}{m_{22}}$.  This definition does depend on the good basis $(\tau,1)$ of $S$ and on the $A$-basis $(b_1,b_2)$ of $I$.

If $\beta \in E^\times$, and $I,\tau, (b_1,b_2)$ are as above, we say the data $(I,(b_1,b_2),\tau,\beta)$ is \textbf{balanced} if $\beta^{-1}(x,y)^{!} \in W_{A} \otimes S$ for all $x, y \in I$ and $N(I;\tau,(b_1,b_2)) = N_{E/F}(\beta)$. \end{definition}

Suppose $(I,(b_1,b_2),\beta,\tau)$ is data as above.  Set
\[X(I,b,\beta) := \beta^{-1} b^{!} \in W_{A_E} = W_A \otimes_{R} E.\]
It is clear that $X(I,b,\beta) \in W_{A} \otimes S$ if and only if $\beta^{-1}(x,y)^{!} \in W_{A}\otimes S$ for all $x,y \in I$.  We next define an equivalence relation on data $(I,b,\beta)$.
\begin{definition} The triple $(I,b,\beta)$ is said to be \emph{equivalent} to the triple $(I',b',\beta')$ if there exists $x \in A_E^\times$ so that $I' = xI$, $b' = xb$, and $\beta' = n_{A_E}(x) \beta$. \end{definition}

It is clear that if $(I,b,\beta)$ is equivalent to $(I',b',\beta')$, then $X(I,b,\beta) = X(I',b',\beta')$.  The converse is also true, as the following lemma proves.
\begin{lemma}\label{equivModA} If $X(I,b,\beta) = X(I',b',\beta')$, then there exists $x \in A_E^\times$ so that $b' = xb$ and $\beta' = n_{A_E}(x)\beta$. \end{lemma}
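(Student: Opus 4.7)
The plan is to rephrase the hypothesis as $(b')^{!}=\mu\,b^{!}$ in $W_{A_E}$, where $\mu:=\beta'/\beta\in E^\times$, and then to construct the desired scalar $x$ by reducing the row $b$ to the canonical form $(1,0)$ by means of an element of $\GL_2(A_E)$ acting on the right. Once this reduction is in place, the shriek map at $(1,0)$ can be inverted essentially by inspection, yielding $x$ directly, and the equation $n_{A_E}(x)=\mu$ will be automatic.

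The first step I would carry out is the construction of an explicit $g\in\GL_2(A_E)$ with $bg=(1,0)$. Writing $b_i=c_i+\omega d_i$ with $c_i,d_i\in A_F$, the assumption that $(b_1,b_2)$ is an $A$-basis of $I$, combined with $I\otimes_R F = A_E = A_F\oplus\omega A_F$, translates into the statement that the coordinate matrix $M:=\mm{c_1}{c_2}{d_1}{d_2}$ lies in $\GL_2(A_F)\subseteq\GL_2(A_E)$. A direct calculation then shows that $g:=M^{-1}\mm{1}{-\omega}{0}{1}\in\GL_2(A_E)$ satisfies $bg=(1,0)$. By the right-action equivariance of shriek (Lemma~\ref{lem:rightAct}), I then have $b^{!}\cdot g=(1,0,0,0)$ and $(b')^{!}\cdot g=\mu\,(1,0,0,0)$.

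With the reduction in hand, I would set $b'g=:(s,t)\in A_E^2$ and unpack $(s,t)^{!}=(\mu,0,0,0)$ via formula (\ref{shriekFormula}) into the system $n_{A_E}(s)=\mu$, $s^{\#}t=0$, $t^{\#}s=0$, $n_{A_E}(t)=0$. Since $A$ is associative, one has $s\cdot s^{\#}=n_{A_E}(s)=\mu\in E^\times$, so $s$ and hence $s^{\#}$ are units of $A_E$; then $s^{\#}t=0$ forces $t=0$. Consequently $b'g=(s,0)=s\cdot(bg)=(sb)g$, and cancelling the invertible $g$ yields $b'=sb$. Taking $x:=s$ gives $b'=xb$ (whence $I'=xI$) and $\beta'=\mu\beta=n_{A_E}(x)\beta$.

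The step I expect to require the most care is the explicit construction of $g$: the point is to exploit the decomposition $A_E=A_F\oplus\omega A_F$, together with the fact that an $A$-basis of $I$ remains a basis after tensoring with $F$, so as to avoid any general appeal to row-completion or stable range theorems over the non-commutative semisimple ring $A_E$. Everything after this reduction is a purely formal consequence of the right-equivariance of shriek and the identity $s\cdot s^{\#}=n_{A_E}(s)$ characteristic of associative cubic norm structures.
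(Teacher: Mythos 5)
Your proof is correct and takes essentially the same route as the paper's: both arguments reduce $b$ to $(1,0)$ by an element of $\GL_2(A_E)$ (the paper via primitivity of $b=(\tau,1)g$, you via the explicit $(1,\omega)$-coordinate matrix $M$ and the shear $\left(\begin{smallmatrix}1&-\omega\\0&1\end{smallmatrix}\right)$), and then use right-equivariance of $!$ to read off from $(s,t)^{!}=(\mu,0,0,0)$ that $s\in A_E^\times$ and $t=0$, hence $b'=sb$ and $\beta'=n_{A_E}(s)\beta$. There is no gap.
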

\begin{proof} Since $b = (\tau,1)g$, $b$ is primitive, and thus there is $h \in \GL_2(A_E)$ so that $b_0 = bh = (1,0)$.  Set $b_1 = b'h = (s,t)$.  Then $\beta^{-1} b_0^{!} = (\beta')^{-1} b_1^{!}$ yields that $\beta' = n(s)\beta$, so $s \in A_E^\times$, and then that $t = 0$.  Hence $\beta' = n(s) \beta$ and $b_1 = s b_0$.  The lemma follows. \end{proof}

The following equivalent criterion for a module to be balanced is useful.
\begin{lemma}\label{equivBal} Suppose $I$ is a fractional $S \otimes A$-ideal, $b = (b_1,b_2)$ is an $A$ basis of $I$, and $\beta \in E^\times.$  Suppose moreover that $(1,\tau)$ is a good basis of $S$, and set $\omega = \tau - \overline{\tau}$.  Recall $X = X(I,b,\beta) = \beta^{-1}b^{!}$.  Then $\langle X, \overline{X}\rangle = \omega^3 \frac{\det_6(g)}{N_{E/F}(\beta)}$.  It follows that if $X \in W_A \otimes S$, the triple $(I,b,\beta)$ is balanced if and only if $\langle X, \overline{X}\rangle = \omega^3$. \end{lemma}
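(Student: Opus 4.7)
The plan is to compute $\langle X, \overline{X}\rangle$ directly by exploiting the equivariance of the shriek map under the right $\GL_2(A)$-action on $W_A$, and then the definition of balanced will give the ``if and only if'' statement for free.

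First I would factor out $\beta$: since the symplectic pairing extends $E$-bilinearly to $W_A\otimes E$, we have $\langle X, \overline{X}\rangle = (\beta\overline{\beta})^{-1}\langle b^{!}, \overline{b}^{!}\rangle = N_{E/F}(\beta)^{-1}\langle b^{!}, \overline{b}^{!}\rangle$, where $\overline{b} = (\overline{\tau},1)g$ is the Galois conjugate of $b$ over $F$. Next, the map $\ell \mapsto \ell^{!}$ on row vectors in $A_E^2$ is equivariant for the right action of $\GL_2(A_E)$ on $W_{A_E}$ (this is the row-vector analogue of the identity $(g\eta)^{!} = g\cdot\eta^{!}$ recorded after Proposition \ref{prop:VAWA}); hence $b^{!} = ((\tau,1)g)^{!} = (\tau,1)^{!}\cdot g$ and similarly $\overline{b}^{!} = (\overline{\tau},1)^{!}\cdot g$. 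Since the right $\GL_2(A)$-action is a similitude of the symplectic pairing with factor $\det_6(g)$, we conclude
\[\langle b^{!},\overline{b}^{!}\rangle = \det_6(g)\,\langle (\tau,1)^{!}, (\overline{\tau},1)^{!}\rangle.\]

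The remaining step is the computation of $\langle (\tau,1)^{!}, (\overline{\tau},1)^{!}\rangle$. Here $\tau$ is viewed as the scalar $\tau\cdot 1_A \in A_E$, for which $n_{A_E}(\tau\cdot 1_A) = \tau^3$ and $(\tau\cdot 1_A)^{\#} = \tau^2\cdot 1_A$, so $(\tau,1)^{!} = (\tau^3,\tau^2\cdot 1_A,\tau\cdot 1_A,1)$, and similarly for $\overline{\tau}$. Using $(1_A,1_A) = \tr_A(1_A) = 3$ (valid for every degree three ACNS), the defining formula for the symplectic pairing yields
\[\langle (\tau,1)^{!}, (\overline{\tau},1)^{!}\rangle = \tau^3 - 3\tau^2\overline{\tau} + 3\tau\overline{\tau}^2 - \overline{\tau}^3 = (\tau - \overline{\tau})^3 = \omega^3.\]
Assembling these pieces gives $\langle X, \overline{X}\rangle = \omega^3\,\det_6(g)/N_{E/F}(\beta)$, which is the first claim.

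For the second claim, we already noted in the discussion before the lemma that $X\in W_A\otimes S$ is equivalent to $\beta^{-1}(x,y)^{!}\in W_A\otimes S$ for all $x,y\in I$; hence under the assumption $X\in W_A\otimes S$, balancedness reduces to the single condition $\det_6(g) = N(I;\tau,b) = N_{E/F}(\beta)$. By the formula just derived, this is equivalent to $\langle X,\overline{X}\rangle = \omega^3$. The main obstacle is genuinely a bookkeeping one -- being careful that $\tau$ acts as the scalar $\tau\cdot 1_A$ so that the pairing $(\tau^2\cdot 1_A,\overline{\tau}\cdot 1_A) = 3\tau^2\overline{\tau}$ falls out with the right factor of $3$; otherwise the rest is immediate from equivariance and the definitions.
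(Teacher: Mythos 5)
Your proposal is correct and follows essentially the same route as the paper's proof: factor out $N_{E/F}(\beta)$, use the equivariance $((\tau,1)g)^{!} = (\tau,1)^{!}\cdot g$ together with the similitude factor $\det_6(g)$ for the symplectic pairing, and compute $\langle (\tau,1)^{!},(\overline{\tau},1)^{!}\rangle = (\tau-\overline{\tau})^3$. The paper simply compresses these steps into one displayed line, while you spell out the final pairing computation (including the factor $(1_A,1_A)=3$) explicitly.
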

\begin{proof} We have
\[\langle X, \overline{X}\rangle = \frac{1}{N_{E/F}(\beta)} \langle (\tau,1)^{!} \cdot g, (\overline{\tau},1)^{!} \cdot g \rangle = \frac{\det_6(g)}{N_{E/F}(\beta)}(\tau-\overline{\tau})^3 = \omega^3 \frac{\det_6(g)}{N_{E/F}(\beta)}.\]
The lemma follows. \end{proof}

The elements of $W_A^{open}$ parametrize pairs $(S,M)$, where $S$ is a quadratic ring over $R$ and $M$ is an equivalence class of based balanced modules $(I,b,\beta)$.  First, a technical lemma.
\begin{lemma}\label{goodL} Suppose $v \in W_A$ is rank $4$ and recall the element $\epsilon = \epsilon(v)$ of $M_2(A_{E})$.  Then, there is $\eta \in A_F^2$ (column vectors) and $\ell \in A_F^2$ (row vectors) so that $\ell \epsilon \eta$, in $A_E$, is actually in $A_E^\times$. \end{lemma}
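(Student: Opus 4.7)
The plan is to show the condition $\ell\epsilon\eta\in A_E^\times$ has $F$-rational solutions, by base changing to an algebraic closure $k$ of $F$ and then invoking the infinitude of $F$.

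First I would pass to $k$. Since $q(v)\in F^\times$, the \'etale quadratic $E$ splits as $E\otimes_F k\simeq k\times k$, giving a corresponding decomposition $M_2(A_E)\otimes_F k\simeq M_2(A_k)\times M_2(A_k)$. Under this splitting, $\epsilon$ becomes a pair of orthogonal idempotents $\epsilon_\pm=\tfrac12(I\pm R_r(v)/\sqrt{q(v)})$ in $M_2(A_k)$ summing to $I$; the condition $\ell\epsilon\eta\in A_E^\times$ becomes the conjunction that $\ell\epsilon_+\eta$ and $\ell\epsilon_-\eta$ are both units in $A_k$.

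Next I would verify that each $\epsilon_\pm$ is a non-zero proper idempotent in every simple factor $A_i$ of $A_k$. The vanishing $\epsilon_\pm^{(i)}=0$ would force $R_r(v)^{(i)}$ to equal the central scalar matrix $\mp\sqrt{q(v)}\cdot I$ in $M_2(A_i)$. But a short computation with the formula for $R(v)$, using $\tr(bc)=\tr(cb)$ and the identity $R_r(v)=JR(v)J^{-1}$, shows that if $R_r(v)^{(i)}$ is scalar at all, the scalar value is the projection to $A_i$ of the commutator $[c,b]\in A$. Since commutators have zero reduced trace in any simple matrix algebra over $k$, whereas a non-zero central element $\lambda\cdot 1_{A_i}$ has non-zero reduced trace, this forces the scalar value to vanish, whence $\sqrt{q(v)}=0$, contradicting rank four. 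Thus $\epsilon_\pm^{(i)}$ is a non-zero proper idempotent, so $\epsilon_\pm A_k^2$ are complementary proper non-zero direct summands of the free rank-two right $A_k$-module $A_k^2$; by semisimplicity these summands are each free of rank one. Choose primitive generators $\zeta_\pm\in A_k^2$ and set $\eta:=\zeta_++\zeta_-$, so that $\epsilon_\pm\eta=\zeta_\pm$ and $(\zeta_+,\zeta_-)\in\GL_2(A_k)$; the left-$A_k$-linear pairing $\ell\mapsto(\ell\zeta_+,\ell\zeta_-)$ is then an isomorphism $A_k^2\to A_k\oplus A_k$, so choosing $\ell\in A_k^2$ with $\ell\zeta_+=\ell\zeta_-=1$ yields $\ell\epsilon\eta=1\in A_E^\times$.

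Finally, the set $U=\{(\ell,\eta)\in A_F^2\times A_F^2:\ell\epsilon\eta\in A_E^\times\}$ is $F$-defined, cut out by the non-vanishing of the $F$-polynomial $(\ell,\eta)\mapsto n_{A_E}(\ell\epsilon\eta)\in E$. The preceding step shows $U(k)\neq\emptyset$, so this polynomial is not identically zero; infinitude of $F$ in characteristic zero then gives $U(F)\neq\emptyset$, producing the required $\ell,\eta\in A_F^2$. The principal technical obstacle is the simple-factor analysis ensuring $\epsilon_\pm$ is proper in each $A_i$: this hinges on identifying the scalar part of $R_r(v)$ with the commutator $[c,b]$ and exploiting the trace-zero property of commutators against the non-zero reduced trace of central scalars in simple matrix algebras.
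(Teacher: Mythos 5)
Your reduction to the algebraic closure and the final descent via Zariski density over the infinite field $F$ are fine, and your computation that $\epsilon$ splits into the complementary idempotents $\epsilon_\pm=\tfrac12\bigl(1\pm R_r(v)/\sqrt{q(v)}\bigr)$ is correct. The gap is in the middle step: from ``$\epsilon_\pm$ is a non-zero proper idempotent in every simple factor'' you conclude ``by semisimplicity these summands are each free of rank one,'' and that inference is false. Take $A=M_3(k)$, so $M_2(A_k)=M_6(k)$: a rank-one idempotent in $M_6(k)$ is non-zero and proper, but the corresponding summand of $A_k^2$ is the simple module $k^3$, not a free $A_k$-module, and then no $\ell,\eta$ can make $\ell\epsilon_+\eta$ a unit (if $\epsilon_+$ has $k$-rank $r$, then $\ell\epsilon_+\eta$ is a $3\times 3$ matrix of rank at most $\min(3,r)$, so invertibility forces $r\ge 3$, and likewise for $\epsilon_-$, forcing $r=3$ exactly). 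So what your argument must establish is not merely non-vanishing of $\epsilon_\pm$ in each factor but that $\epsilon_\pm$ has exactly half rank there (equivalently, that the $\pm\sqrt{q(v)}$-eigenspaces of $R_r(v)$ have equal dimension, i.e.\ the trace of $R_r(v)$ vanishes in each factor). Your commutator-trace argument only rules out the two extreme ranks ($0$ and full), not the intermediate ones, so as written the proof does not go through; and since the conclusion of the lemma itself forces the half-rank property, this is the essential content and cannot be skipped.

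The statement you need is true and can be obtained in a couple of ways: over $k$ one can use the equivariance $R_r(v\cdot g)=\det(g)\,g^{-1}R_r(v)g$ together with prehomogeneity to reduce to $v=(a,0,0,d)$, where $R_r(v)=\mm{-ad}{}{}{ad}$ and the idempotents visibly cut out free rank-one summands; alternatively one can verify factor-by-factor that $R_r(v)$ has trace zero. The paper's own proof sidesteps all of this: it stays over $E$, uses the identity $n_{A_E}(\ell J_2\eta)=\langle \ell^{!},\eta^{!}\rangle$ and the lifting law (Theorem \ref{b1A}) to write $\omega^3 n_{A_E}(\ell\epsilon\eta)=\langle \ell^{!},X(v,-\omega)\rangle\langle X(v,\omega),(J_2^{-1}\eta)^{!}\rangle$, exploits $\langle X(v,\omega),X(v,-\omega)\rangle=\omega q(v)\in E^\times$ to find suitable vectors with $E$-coefficients, and then uses the trace trick $\ell=\tr_{E/F}(\ell_1\epsilon)$ (which satisfies $\ell\epsilon=\ell_1\epsilon$) to get $F$-rational $\ell$ and $\eta$ directly, without any appeal to density.
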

\begin{proof}  For all row vectors $x$ and column vectors $y$ in $A_E^2$, one has $n_{A_E}(x J_2 y) = \langle x^{!},y^{!} \rangle.$  Thus
\[ \omega^3 n_{A_E}(\ell \epsilon \eta) = \langle \omega^3 (\ell \epsilon)^{!}, (J_2^{-1}\eta)^{!} \rangle = \langle \ell^{!}, X(v,-\omega)\rangle \langle X(v,\omega), (J_2^{-1}\eta)^{!} \rangle.\]
This latter term can be made to lie in $E^\times$ for appropriate choices of $\ell, \eta$, since $\langle X(v,\omega), X(v,-\omega) \rangle \in E^\times$. To see this, first note that since $\langle X(v,\omega), X(v,-\omega) \rangle \in E^\times$ there is $\ell_1 \in A_{E}^2$ with $\langle \ell_1^{!}, X(v,\omega)\rangle \in E^\times$.  Now, set $\ell = \tr_{E/F}(\ell_1 \epsilon)$, taking the trace $\tr_{E/F}(\cdot)$ individually on each of the two entries of $\ell_{1}\epsilon$.  Then one computes $\ell \epsilon = \ell_1 \epsilon$.  Hence
\[\langle \ell^{!}, X(v,-\omega) \rangle X(v) = \omega^{3}(\ell \epsilon)^{!} = \omega^3 (\ell_1 \epsilon)^{!} = \langle \ell_1^{!}, X(v,-\omega) \rangle X(v),\]
from which it follows that $\langle \ell^{!}, X(v,-\omega) \rangle \in E^\times$.  Similarly one finds an $\eta \in A_{F}^2$ with $\langle X(v,\omega), (J^{-1}\eta)^{!} \rangle \in E^\times$.  The lemma follows. \end{proof}

Given a row vector $\ell \in A_F^2$, one defines a map $\Phi^{\ell}: A^2 \rightarrow A_E$ from the column vectors $A^2$ to $A_E$ via $\Phi^\ell(\eta) = \ell \epsilon \eta$.  Note that $\Phi^{\ell}$ is an $S\otimes A$ module map, with $S$ acting on the left of the column vectors $A^2$ via $\Omega(v)$ and $A$ acting on the right of them by multiplication $A\otimes A \rightarrow A$.  The previous lemma implies that $\ell$ can be chosen so that $\Phi^{\ell}(A_F^2)$ contains a unit of $A_E$.  By dimension count, it follows that for this $\ell$, $\Phi^{\ell}$ is an $A_E$-module isomorphism $A_F^2 \rightarrow A_E$.

Choose any such $\ell$ as above, and set $I(\ell) = \Phi^{\ell}(A^2)$, $b(\ell) = \ell \epsilon = (b_1,b_2)$, so that $b_1 = \Phi^{\ell}((1,0)^t)$ and $b_2 = \Phi^{\ell}((0,1)^t)$.  Set $\beta(\ell) = \omega^{-3} \langle \ell^{!}, X(v,-\omega)\rangle$.  One has $\beta(\ell) \in E^\times$ (from the proof of Lemma \ref{goodL}, for example).  We claim $(I(\ell),b(\ell),\beta(\ell))$ is balanced.  Indeed, the lifting law gives $\beta(\ell)^{-1} (b(\ell))^{!} = X(v)$ is in $W_A \otimes S$, and thus that $(I(\ell),b(\ell),\beta(\ell))$ is balanced follows from Lemma \ref{equivBal}.

From the above remarks, and an application of the lifting law and Lemma \ref{equivModA}, we see that $v \mapsto (I(\ell),b(\ell),\beta(\ell))$ yields an equivalence class of based balanced modules.  There is also a map in the other direction 
\[(I,b,\beta) \mapsto X(I,b,\beta) = \tau v + v' \mapsto v \in W_A.\]
\begin{lemma}\label{admisLem} If $(I,b,\beta)$ as above is balanced, and $X(I,b,\beta) = \tau v + v'$, then $q(v) = \omega^2$ and in particular $v$ is rank $4$. \end{lemma}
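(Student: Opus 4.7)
The strategy is to derive the identity $v^\flat = Dv + 2v'$ in $W_A$ and then pair with $v$ to conclude $q(v) = D$. The key technical input is the rank-one structure of $X = \beta^{-1}b^!$ in $W_{A_E}$ (which follows from Lemma \ref{lem:nRC}) together with the balanced condition.

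First, the balanced hypothesis combined with Lemma \ref{equivBal} gives $\langle X, \bar{X}\rangle = \omega^3$, where $\bar{X}$ denotes the Galois conjugate. A direct expansion using $X = \tau v + v'$ and $\bar{X} = \bar{\tau} v + v'$ yields $\langle X, \bar{X}\rangle = \omega\langle v, v'\rangle$, and so $\langle v, v'\rangle = D$; in particular $v \neq 0$.

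Next, I will appeal to a universal identity for rank-one elements: for any rank-one $Y \in W_{J_k}$ (over any extension $k$ of $F$, and any cubic norm structure $J$) and any $w$, one has $t(Y, Y, w) = -\tfrac{1}{3}\langle Y, w\rangle Y$. The general shape $t(Y, Y, w) = \alpha(w)Y$ is just the rank-one hypothesis; that $\alpha$ must be proportional to $\langle Y, \cdot\rangle$ follows from the symmetry of the associated $4$-linear form; and the universal constant $-\tfrac{1}{3}$ is determined by a short direct calculation on the canonical rank-one element $(1,0,0,0)$ and propagated to all rank-one elements by $H(W_J)$-equivariance. Applying this identity to both $X$ and $\bar{X}$ (the latter is also rank one by Galois) with the other element as the third argument, and then expanding $(X - \bar{X})^\flat$ trilinearly, collapses the result to $\omega^3(X + \bar{X}) = \omega^3(Dv + 2v')$. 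On the other hand, $X - \bar{X} = \omega v$ gives $(X - \bar{X})^\flat = \omega^3 v^\flat$ directly. Equating the two expressions yields the key identity $v^\flat = Dv + 2v'$.

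To finish, pair $v^\flat = Dv + 2v'$ symplectically with $v$: using $\langle v, v^\flat\rangle = 2q(v)$ (Lemma \ref{lem:vvflat}), $\langle v, v\rangle = 0$, and $\langle v, v'\rangle = D$, one immediately reads off $2q(v) = 2D$, hence $q(v) = D = \omega^2$; since $D \neq 0$, the element $v$ has rank $4$. The main technical obstacle is establishing the constant $-\tfrac{1}{3}$ in the rank-one identity, but this reduces to the one-shot computation of $t((1,0,0,0),(1,0,0,0), w)$, which is insensitive to the underlying cubic norm structure.
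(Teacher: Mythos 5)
Your argument is correct, and it reaches the conclusion by a genuinely different route than the paper. The paper's proof first asserts that $v$ has rank $4$ (because $\omega v = X - \overline{X}$ is a difference of two rank-one elements whose pairing is a unit), then invokes the uniqueness-of-lifts statement, Lemma \ref{uniqueLifts}, to get $v_2 = t v^\flat$ and $\omega^2 = t^2 q(v)$ for the element $v_2 = Dv+2v'$, and finally pins down $t=1$ by the pairing computation $\omega^3 = \tfrac{\omega}{2}\langle v, tv^\flat\rangle$. You instead upgrade the identity of Theorem \ref{b1J} to an identity valid for every rank-one element; this is legitimate, since the proportionality of $t(Y,Y,\cdot)$ to $\langle \cdot, Y\rangle Y$ is forced by the symmetry of the four-linear form, and the constant $1/3$ follows from one check at $(1,0,0,0)$ together with the fact (recorded in the paper) that all rank-one elements lie in a single $H(W_J)$-orbit. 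The trilinear expansion of $(X-\overline{X})^\flat$ then gives $\omega^3 v^\flat = \omega^3(Dv+2v')$ in one stroke, and pairing with $v$ yields $q(v)=D$. Your route bypasses Lemma \ref{uniqueLifts} and the paper's rather terse preliminary rank-$4$ step, and as a bonus it proves the admissibility statement $X = (\omega v + v^\flat)/2$ explicitly, which is precisely what the paper's proof also extracts and uses afterwards; the paper's route, by contrast, reuses lemmas it has already established rather than a new identity. One small point of care: $E$ may be $F\times F$ rather than a field, so the rank-one identity should be applied componentwise — this is harmless, since $X$ and $\overline{X}$ are shrieks, hence of rank at most one in each component, and the identity holds trivially for the zero element.
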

\begin{proof} Since $(I,b,\beta)$ is a balanced triple, one has $\langle X(I,b,\beta), \overline{X(I,b,\beta)} \rangle = \omega^3$.  This implies that $X(I,b,\beta) = \frac{\omega v + v_2}{2}$ is \emph{admissible}, meaning that $q(v) = \omega^2$ and $v_2 = v^\flat$.  

To see this, note that by construction, $X = X(I,b,\beta) = (\omega v + v_2)/2$ is rank one.  We have $\langle X, \overline{X}\rangle = \frac{\omega}{2}\langle v, v_2\rangle = \omega^3$.  Since $\omega v = X(v) - \overline{X(v)}$ is the difference of two rank one elements with pairing landing in $E^\times$, $v$ is rank $4$.  Thus we may apply Lemma \ref{uniqueLifts} to obtain $\omega^2 = t^2 q(v)$ and $v_2 = tv^{\flat}$ for some $t \in F^\times$.  We obtain
\[\omega^3 = \langle X(v), \overline{X(v)}\rangle = \frac{\omega}{2}\langle v, v_2\rangle = \frac{\omega}{2}\langle v, tv^\flat\rangle = t\omega q(v) = \omega^3/t.\]
Hence $t = 1$ and thus $X$ is admissible, as claimed.
\end{proof}

From the lifting law, the composition of the two maps is the identity, if one starts with $v$ in $W_A$.  We will check that the composition of the two maps is also the identity if you start with a based balanced module $(I,b,\beta)$.
\begin{theorem} The maps $v \mapsto (I,(b_1,b_2),\tau,\beta)$ and $(I,(b_1,b_2),\tau,\beta) \mapsto v$ define inverse bijections between $v \in W_{A}$ with $q(v) = D \neq 0$ and data $(I,(b_1,b_2),\tau,\beta)$ up to equivalence, with $\tau^2 - D\tau + \frac{D^2-D}{4} = 0$. This bijection is equivariant for the action of $G_1$, with $(\lambda,g) \in \GL_1(R) \times \GL_2(A)$ acting on the data via
\[((b_1,b_2),\omega,\beta;\ell) \mapsto ((b_1,b_2)g, \det_6(g) \lambda^{-2} \omega, \lambda^{3}\det_6(g)^{-1}\beta; \ell g).\]
Here $((b_1,b_2),\tau,\beta;\ell)$ means that the data is computed using the row vector $\ell \in A_{F}^2$.
\end{theorem}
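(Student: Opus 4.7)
The plan is as follows. The reverse map $(I,b,\tau,\beta)\mapsto v$ is already built: one decomposes $X(I,b,\beta) = \tau v + v'$ in $W_A\otimes_R S = \tau W_A \oplus W_A$, and Lemma \ref{admisLem} guarantees that the extracted $v$ has $q(v) = D$ and is rank $4$. The forward map sends $v \mapsto (I(\ell),b(\ell),\beta(\ell))$ for a choice of row vector $\ell$ supplied by Lemma \ref{goodL}; the first task is to check that different admissible choices $\ell_1,\ell_2$ yield equivalent data. If $\Phi^{\ell_i}:A_F^2 \to A_E$ are both $A_E$-module isomorphisms, then $\Phi^{\ell_2} = x\Phi^{\ell_1}$ for a unique $x\in A_E^\times$, forcing $b(\ell_2) = x\,b(\ell_1)$ and $I(\ell_2) = xI(\ell_1)$; combining this with the lifting-law identity $\beta(\ell_i)^{-1}b(\ell_i)^{!} = X(v)$ then yields $\beta(\ell_2) = n_{A_E}(x)\beta(\ell_1)$, the required equivalence.

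Next I would verify the bijection property. Going forward-then-backward, the lifting law (Theorem \ref{b1A}) gives $\beta(\ell)^{-1}b(\ell)^{!} = X(v) = \tau v + (v^\flat-q(v)v)/2$, so the $\tau$-coefficient is indeed $v$. Going backward-then-forward, one starts from a balanced triple $(I,b,\beta)$, extracts the corresponding $v$, and runs the forward map for some admissible $\ell$. By construction $X(I(\ell),b(\ell),\beta(\ell)) = X(v) = X(I,b,\beta)$, and Lemma \ref{equivModA} then furnishes $x\in A_E^\times$ with $b = x\,b(\ell)$ and $\beta = n_{A_E}(x)\beta(\ell)$, giving the desired equivalence of based balanced modules.

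The remaining work is the $G_1$-equivariance, which I would handle by bookkeeping of homogeneities. Set $v'' = \lambda^{-1}v\cdot g$. Since $q$ is quartic in $v$ and the right action of $g$ has quartic similitude $\det_6(g)^2$, one gets $q(v'') = \lambda^{-4}\det_6(g)^2 q(v)$, so $\omega'' = \det_6(g)\lambda^{-2}\omega$. The symplectic similitude $\det_6(g)$ together with $(v\cdot g)^\flat = \det_6(g)\,v^\flat\cdot g$ give $X(v'') = \det_6(g)\lambda^{-3}\,X(v)\cdot g$ and likewise $\overline{X(v'')} = \det_6(g)\lambda^{-3}\,\overline{X(v)}\cdot g$. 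Using the identities $R_r(v\cdot g) = \det_6(g)\,g^{-1}R_r(v)g$ (Lemma \ref{lem:rightAct}) and $R_r(\lambda^{-1}v) = \lambda^{-2}R_r(v)$, a direct calculation yields $\epsilon(v'') = g^{-1}\epsilon(v)g$, whence
\[
b(v'',\ell g) = (\ell g)\epsilon(v'') = \ell\epsilon(v)g = b(v,\ell)\cdot g.
\]
For $\beta$, using that the symplectic form has similitude $\det_6(g)$ under the right action,
\[
\beta(v'',\ell g) = (\omega'')^{-3}\langle(\ell g)^{!},\overline{X(v'')}\rangle = \det_6(g)^{-3}\lambda^{6}\cdot\det_6(g)^{2}\lambda^{-3}\cdot\omega^{-3}\langle\ell^{!},\overline{X(v)}\rangle = \lambda^3\det_6(g)^{-1}\beta(v,\ell),
\]
exactly as claimed.

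The main obstacle is the clean bookkeeping in the equivariance step: $q$ (quartic), $v^\flat$ (cubic), and $R_r$ (quadratic) each have different homogeneity degrees in $v$, and each also carries its own similitude under the right $\GL_2(A)$-action; once the three homogeneities are recorded together with the symplectic similitude, all exponents align and the stated formulas $\omega\mapsto\det_6(g)\lambda^{-2}\omega$, $b\mapsto bg$, and $\beta\mapsto\lambda^3\det_6(g)^{-1}\beta$ drop out by direct substitution.
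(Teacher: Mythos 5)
Your proposal is correct and follows essentially the same route as the paper: the backward map via Lemma \ref{admisLem}, the forward map via Lemma \ref{goodL} and the lifting law Theorem \ref{b1A}, mutual inverseness via Lemma \ref{equivModA}, and equivariance by tracking the similitudes of $q$, $v^\flat$, $R_r$, and the symplectic form. The only (harmless) difference is that in the backward-then-forward direction you extract both conclusions of Lemma \ref{equivModA} directly, whereas the paper additionally checks that the $S$-action map on $I$ coincides with $R_r(v)$ and exhibits the explicit choice $\ell = \tr_{E/F}(b)$ recovering $(I,b,\beta)$ on the nose.
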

\begin{proof} From Lemma \ref{admisLem}, if $X(I,b,\beta) = \tau v + v'$, then $X(I,b,\beta) = X(v,\omega)$ and $q(v) = D$. Now, take $\ell \in A_F^2$ so that $\beta(\ell) = \omega^{-3} \langle \ell^{!}, X(v,-\omega)\rangle$ is in $E^\times$.  Then $(I(\ell), b(\ell),\beta(\ell))$ is a balanced triple.  We have
\[\beta(\ell)^{-1} (\ell \epsilon(v))^{!} = X(v) = X(I,b,\beta) = \beta^{-1} b^{!}.\]
It follows from Lemma \ref{equivModA} then that $b = x \ell \epsilon(v)$ for some $x \in A_E^\times$.

We check that the action maps agree.  Let $R_{I} \in M_2(A)$ denote the action of $\omega$ on $I$.  Then, since $b_1, b_2$ is a basis for the free $A$-module $I$, $R_{I}$ is the unique matrix in $M_2(A)$ satisfying $\omega b = bR_{I}$.  But we have
\[bR_{I} = \omega b = \omega x \ell \epsilon(v) = x \ell \epsilon(v) R(v) = bR(v).\]
It follows that $R_{I} = R(v)$, so that the action maps $R_{I}$, $R(v)$ agree, so we write $\epsilon = \epsilon(v)$ for the common element of $M_2(A_E)$.

Now, set $b_0 = \tr_{E/F}(b) \in A_F^2$.  Then $b_0 \epsilon = b$, as is immediately checked.  Thus by the lifting law,
\[\beta X(v) = \beta X(I,b,\beta) = b^{!} = (b_0 \epsilon)^{!} = \beta(b_0) X(v).\]
It follows that $\beta(b_0) = \beta \in E^\times$.  Thus the triple $(I(\ell),b(\ell),\beta(\ell))$ is equivalent to the triple $(I(b_0),b(b_0),\beta(b_0))$, and this latter triple is equal to $(I,b,\beta)$.  Thus the composition $(I,b,\beta) \mapsto v \mapsto (I',b',\beta')$ is the identity as well.  

Now for the equivariance.  Set $h = (\lambda, g) \in G_1$. From $R_r( \lambda^{-1} (v \cdot g)) = \lambda^{-2} \det_6(g) g^{-1}R_{r}(v)g$, one computes $\epsilon(v \cdot h) = g^{-1}\epsilon(v) g$. From the definition of the action of $h$ on $\omega$, one gets $X(\omega,v) \mapsto \lambda^{-3}\det_6(g)(X(\omega,v) \cdot g)$.  The results on $(b_1,b_2)$ and $\beta$ follow.  This completes the proof of the theorem. \end{proof}

The following corollary is immediate, and implies Theorem \ref{twistThm1}.
\begin{corollary}\label{cor:equivG1}  The action of $G \subseteq G_1$ on the data $((b_1,b_2),\omega,\beta)$ is
\[((b_1,b_2),\omega,\beta;\ell) \mapsto ((b_1,b_2)g, \lambda \omega, \beta; \ell g).\]
\end{corollary}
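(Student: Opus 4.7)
The plan is to obtain Corollary \ref{cor:equivG1} by direct specialization of the preceding theorem's $G_1$-equivariance formula to the subgroup $G \subseteq G_1$. Recall that $G$ consists of pairs $(\lambda,g) \in \GL_1(R) \times \GL_2(A)$ satisfying the single relation $\lambda^3 = \det_6(g)$, and its action on $W_A$ is inherited from that of $G_1$. Since the map $v \mapsto [(I,(b_1,b_2),\tau,\beta)]$ has already been shown to be $G_1$-equivariant with $(\lambda,g)$ acting on the data via
\[((b_1,b_2),\omega,\beta;\ell) \mapsto ((b_1,b_2)g,\; \det_6(g)\lambda^{-2}\omega,\; \lambda^{3}\det_6(g)^{-1}\beta;\; \ell g),\]
the restriction of this formula to $G$ is automatically an equivariant action, so nothing remains except to simplify the scalar factors.

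The simplification is purely algebraic. First, substitute $\det_6(g) = \lambda^3$ into the coefficient of $\omega$ to obtain
\[\det_6(g)\lambda^{-2} = \lambda^{3}\lambda^{-2} = \lambda,\]
which yields the asserted rule $\omega \mapsto \lambda \omega$. Second, substitute into the coefficient of $\beta$ to obtain
\[\lambda^{3}\det_6(g)^{-1} = \lambda^{3}\lambda^{-3} = 1,\]
so $\beta$ is fixed. The transformations $(b_1,b_2) \mapsto (b_1,b_2)g$ and $\ell \mapsto \ell g$ are unaffected since they involved only $g$ and not $\lambda$.

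There is no genuine obstacle here: the substance of the corollary is already contained in the preceding theorem, and the only content of Corollary \ref{cor:equivG1} is to record the particularly clean form the action takes on the subgroup defined by $\lambda^3 = \det_6(g)$. In particular, this makes transparent that $\beta \in E^\times$ is an invariant of the $G$-orbit (rather than merely of an equivalence class up to the larger $G_1$-action), which is the feature needed to match the statement of Theorem \ref{twistThm1} and hence Theorem \ref{introThm1}.
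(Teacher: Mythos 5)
Your proposal is correct and is exactly the argument the paper intends: the paper calls the corollary ``immediate,'' and the content is precisely the substitution $\det_6(g) = \lambda^3$ into the $G_1$-equivariance formula of the preceding theorem, giving $\det_6(g)\lambda^{-2}\omega = \lambda\omega$ and $\lambda^3\det_6(g)^{-1}\beta = \beta$. Nothing further is needed.
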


\section{The lifting law for $J \oplus J$}\label{bII}
Suppose $C$ is an associative composition algebra over $F$.  The purpose of this section is to parametrize the orbits of $\SL_3(C)$ on $H_3(C)^2$,  and to solve the analogous problem over a subring $R$ of $F$.  Over a field, a closely-related parametrization problem was solved in \cite{taniguchi,kableYukie,wrightYukie}, albeit by very different methods.  The integral theory involves a notion of ``balanced" module, following \cite{bhargavaII}, and also uses some ideas from \cite{wood}.  We begin by proving a general lifting law, that applies to the prehomogeneous vector space $J\oplus J = V_2 \otimes J$ for an arbitrary cubic norm structure $J$.  Then when $J = H_3(C)$, we prove a refined lifting law.  We use the refined lifting law to solve the orbit parametrization problems.  

The lifting laws are proved in subsection \ref{subsec:LLB2}.  In subsection \ref{subsec:B2F}, we parametrize the orbits over a field $F$, and in subsection \ref{subsec:integralB2}, we parametrize the orbits over a subring $R$ of $F$.  The main result of this section is Corollary \ref{cor:equivG2}, which implies Theorem \ref{twistThm2}.

\subsection{The lifting law}\label{subsec:LLB2} In this subsection we give the lifting law for the prehomogeneous vector space $J^2$, where $J$ is a cubic norm structure over the ring $R$.  We also give a more explicit lifting law when $J = H_3(C)$ for an associative composition ring $C$ over $R$.

\subsubsection{The representation space} We begin with a description of the representation space $J^2 = V_2 \otimes J$.  We write a typical element of $J^2$ as $(A,B)$, and then the (left) $\GL_2$ action is
\[g \cdot \left(\begin{array}{c} A \\ B \end{array}\right) = \left(\begin{array}{cc}p&q\\r&s\end{array}\right)\left(\begin{array}{c} A\\ B \end{array}\right) = \left(\begin{array}{c} pA+qB\\ rA+sB \end{array}\right)\]
if $g = \mm{p}{q}{r}{s}$.

Recall the group $M_J$, which is the group of $(g,\lambda) \in \GL(J) \times \GL_1$ satisfying $n(gX) = \lambda n(X)$ for all $X \in J$.  Then $M_J$ also acts on $J^2$ via $g \cdot (A,B) = (gA, gB)$, and this action commutes with the action of $\GL_2$ to give a $\GL_2 \times M_J$ action on $J^2$, under which $J^2$ becomes a prehomogeneous vector space.

\subsubsection{The discriminant invariant} To the element $(A,B)$ of $J^2$, we assign the binary cubic form
\[f_{(A,B)}(x,y) = n_J\left((x,y)\left(\begin{array}{c} A\\B\end{array}\right)\right) = n_J(Ax+By) = n(A)x^3 + (A^\#,B)x^2y + (A,B^\#)xy^2 + n(B)y^3,\]
and the degree $12$ discriminant polynomial $Q((A,B)) := Q(f_{(A,B)})$; see subsubsection \ref{ssc:bcfs}.  Here $n_J: J \rightarrow R$ is the cubic norm on $J$.  If $g \in \GL_2$, then $f_{g\cdot (A,B)} = \det(g) (g \cdot f_{(A,B)})$ (recall that the action $g \cdot f$ is twisted by $\det(g)^{-1}$.)  Hence $Q(g \cdot (A,B)) = \det(g)^6 Q((A,B)).$ If $m = (g,\lambda)\in M_J$, then $f_{(m \cdot (A,B))}(x,y) = \lambda f_{(A,B)}(x,y)$ and hence $Q(m\cdot(A,B)) = \lambda^4 Q((A,B))$.

Associated to the binary cubic $f_{(A,B)}$ is a cubic $R$-ring $T$ with good basis $(1,\omega,\theta)$, and thus to every pair $(A,B)$ in $J^2$ is associated a cubic ring with good basis.

\subsubsection{The lifting law for $J$} We now give the lifting law for general cubic norm structures $J$.  Suppose $(A,B) \in J^2$ has associated based cubic ring $T, (1,\omega,\theta)$.  Consider $J_T = J \otimes_R T$, and put on $J_T$ the cubic norm structure over $T$, with norm and adjoint extended $T$-linearly from $J$ to $J_T$.  That is, $(U\otimes \lambda)^\# = U^\# \otimes \lambda^2$ and $n(U\otimes \lambda) = \lambda^3 n(U)$, for $U \in J$ and $\lambda \in T$. Set
\[X= X(A,B,\omega,\theta):=-A\theta + B\omega + A^\#\times B^\#,\]
an element of $J_T$.  The first part of the lifting law below will be that $X$ is rank one in $J_T$.

\begin{remark}\label{rmk:katoYukie} With $X = X(A,B,\omega,\theta)$ as above, we have
\[\tr_{T/R}(X) = 3 A^\# \times B^\# - (A,B^\#)A - (A^\#,B)B.\]
In Kato-Yukie \cite{katoYukie}, the authors parametrize the orbits of $\GL_2 \times M_J$ on $J\oplus J$, where $J = H_3(C)$ with $C$ an octonion algebra.  Associated to the non-degenerate pair $(A,B)$, Kato-Yukie assign an embedding $L :=T\otimes_{R}F \hookrightarrow J^{\dagger}$, where $J^\dagger$ is an isotope of $J$.  This isotope $J^\dagger$ of \cite{katoYukie} is precisely the isotope specified by the element $\tr_{T/R}(X)$ of $J$. \end{remark}

We will define an element $Y= Y(A,B,\omega,\theta)$ of $J_T^\vee$ associated to $A,B,\omega,\theta$, which will also turn out to be rank one.  To do this, first define an $R$-bilinear map $\times_T: J_T \otimes J_T \rightarrow J_T^\vee$ via 
\[(U_1 \otimes \lambda_1) \times_T (U_2 \otimes \lambda_2) = (U_1 \times U_2) \otimes (\lambda_1 \times \lambda_2).\]
We set $Y = Y(A,B,\omega,\theta) := \frac{1}{2} X(A,B,\omega,\theta) \times_T X(A,B,\omega,\theta)$, an element of $J_T^\vee$.

More precisely, if $V$ is any $R$-module, the map $\times: J \otimes_R J \rightarrow J^\vee$ extends to a map 
\begin{equation}\label{eqn:JVV}(J \otimes_R V) \otimes_R (J \otimes_R V) \rightarrow J^\vee \otimes_R (V \otimes_R V).\end{equation}
Then $\times: J_T \otimes J_T \rightarrow J_T^\vee$ is the composition of the map (\ref{eqn:JVV}) for $V = T$ with the multiplication map $T\otimes_R T \rightarrow T$, and $\times_T$ is the composition of the map (\ref{eqn:JVV}) with the map $\times: T \otimes_R T \rightarrow T$.  We similarly have maps $\#: J_T^\vee \rightarrow J_T$ and $\times_T: J_T^\vee \otimes J_T^\vee \rightarrow J_T$.

Here is the lifting law.
\begin{theorem}\label{B2Jlift} The elements $X(A,B,\omega,\theta) \in J_T$ and $Y(A,B,\omega,\theta) \in J_T^\vee$ are rank one.  Furthermore, the pairing $(X(A,B,\omega,\theta),Y(A,B,\omega,\theta))$, a priori in $T$, lies in $R$ and one has 
\[(X(A,B,\omega,\theta),Y(A,B,\omega,\theta)) = Q((A,B)).\]
\end{theorem}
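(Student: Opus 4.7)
The plan is to establish all four assertions---that $X \in J_T$ is rank one, that $Y \in J_T^\vee$ is rank one, that $(X,Y)$ lies in $R$, and that $(X,Y) = Q((A,B))$---as polynomial identities in $(A,B)$, driven by the multiplication table for $T$ in the good basis $(1,\omega,\theta)$ of subsubsection \ref{ssc:bcfs} combined with the cubic norm structure identities of subsection \ref{special_Id}. Under the substitutions $a = n(A)$, $b = (A^\#, B)$, $c = (A, B^\#)$, $d = n(B)$, the relations $\omega^2 = -ac + a\theta - b\omega$, $\theta^2 = -bd + c\theta - d\omega$, $\omega\theta = -ad$ become polynomial in $(A,B)$, and the cancellations will be driven by identities such as $x \times (x^\# \times y) = n(x)y + (x,y)x^\#$ and $(x\times y)^\# + x^\# \times y^\# = (x,y^\#)x + (x^\#,y)y$.

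For the rank-one property of $X$, I would write $X = (A^\# \times B^\#) \cdot 1 + B\,\omega - A\,\theta$ and expand $X^\#$ (where $\#$ denotes the adjoint in $J_T$, extending $J$'s adjoint $T$-linearly) by polarizing. Grouping the result by the basis $\{1,\omega,\theta\}$ produces three identities in $J$; each becomes a consequence of the special identities applied to $A \times (A^\# \times B^\#)$, $B \times (A^\# \times B^\#)$, and $(A^\# \times B^\#)^\#$. Next I would compute the coefficients $Y_0, Y_\omega, Y_\theta \in J^\vee$ of $Y$ explicitly, using $\omega^\# = a\theta$, $\theta^\# = -d\omega$ (all in the cubic norm structure on $T$), $1 \times \omega = -b - \omega$, $1 \times \theta = c - \theta$, and $\omega \times \theta = ad - bc + b\theta - c\omega$ from subsubsection \ref{ssc:bcfs}. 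The claim $Y^\# = 0$ (adjoint in $J_T^\vee$) is then proven by the same mechanism. A conceptual point to keep straight is the distinction between $\times : J_T\otimes J_T \to J_T^\vee$, which uses ordinary multiplication in $T$ on the coefficients, and the twisted product $\times_T$ defining $Y$, which uses $\times$ in $T$.

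For the pairing identity, I would expand $(X,Y) \in T$ in $\{1,\omega,\theta\}$ using the $T$-multiplication table, verify that the $\omega$- and $\theta$-coefficients vanish so that $(X,Y)$ is a scalar $\alpha \in R$, and then identify $\alpha$ with $Q((A,B))$ by prehomogeneity. Both $\alpha$ and $Q((A,B))$ are polynomial invariants of $(A,B)$ of the same bi-degree, covariant under $\GL_2$ with factor $\det(g)^6$ and under $M_J$ with factor $\lambda^4$; so they are proportional, and testing on the split example $J = F \oplus F \oplus F$ with $A = (1,1,1)$ and $B = (b_1,b_2,b_3)$ having distinct entries---where the embedding $T \hookrightarrow L = F^3$ makes $X$ rank one in each $L$-factor and both sides reduce to $\prod_{i<j}(b_i - b_j)^2$---pins the constant down to $1$. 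The main obstacle is bookkeeping: $Y^\# = 0$ generates a noticeably larger expression than $X^\# = 0$, and the interplay of $\times$ and $\times_T$ produces many potential monomials. A useful organizing principle is that only the basic $M_J$-covariants of $(A,B)$---built from $n$, $\#$, and $\times$ on $A$ and $B$---can appear in each coefficient, which sharply limits the terms that must cancel.
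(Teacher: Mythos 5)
Your proposal is correct in substance, and for the first claim (that $X^\#=0$, by expanding along $1,\omega,\theta$ and killing each coefficient with $W \times (W^\# \times Z) = n(W)Z + (W,Z)W^\#$ and $(u\times v)^\# + u^\# \times v^\# = (u,v^\#)u + (u^\#,v)v$) it is exactly the paper's argument. Where you diverge is in the other two claims, and the comparison is instructive. For $Y$ rank one, the paper does not expand $Y^\#$ at all: it proves the polarization identity $(x \times_T x)^\# = 4\, x^\# \times_T x^\#$ (Lemma \ref{comAdj}, which comes from $(\lambda_1\lambda_2)\times(\lambda_3\lambda_4) + (\lambda_1\lambda_4)\times(\lambda_3\lambda_2) = (\lambda_2\times\lambda_4)(\lambda_1\times\lambda_3)$ in $T$), so that $Y^\# = X^\# \times_T X^\# = 0$ follows instantly from $X^\#=0$; your route of computing the coefficients of $Y$ (which would reproduce the paper's formula (\ref{Yexp})) and then brute-forcing $Y^\#=0$ via $(A\times B)^\#$, $A^\#\times(A\times B)$, etc., does work — every cross term reduces to a $T$-combination of $A$, $B$, $A^\#\times B^\#$ — but it is a substantially larger computation that the lemma is designed to avoid. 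For the pairing, the paper simply does the "tedious but straightforward" expansion in the multiplication table, whereas you propose checking that the $\omega$- and $\theta$-components vanish and then identifying the scalar with $Q((A,B))$ by equivariance plus one split test point; this is a legitimate and arguably cleaner alternative, in the spirit of the paper's own "note on the proofs," but note two obligations it does not remove: you must still verify the $R$-integrality (the vanishing of the $\omega,\theta$-coefficients) by direct expansion, and you must establish the covariance of $X$ and $Y$ under $\GL_2\times M_J$ for a general cubic norm structure $J$, including how the good basis $(\omega_0,\theta_0)$ transforms — the paper records this only for $J=H_3(C)$ in Proposition \ref{equivariance}, so the general statement needs its own (easy but nontrivial) check before the two-relative-invariants-with-the-same-character argument can be invoked.
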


Before proving the lifting law, we record the following lemma, which will be used in the proof of Theorem \ref{B2Jlift}.
\begin{lemma}\label{comAdj} Define two $4$-linear maps $J_T \otimes J_T \otimes J_T \otimes J_T \rightarrow J_T$ via 
\[m_1(w,x,y,z) = (w \times x) \times_{T} (y \times z) + (w \times z) \times_{T} (y \times x)\]
and 
\[m_2(w,x,y,z) = (x \times_T z) \times (w \times_T y).\]
Then $m_1 = m_2$. Consequently, if $x \in J_T$, then $(x \times_T x)^\# = 4 x^\# \times_T x^\#$.\end{lemma}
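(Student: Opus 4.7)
The plan is to reduce the identity $m_1 = m_2$ to a computation on pure tensors, and then to handle the resulting four-linear polynomial identity by splitting it into a piece in $J$ and a piece in $T$. Since both $m_1$ and $m_2$ are $R$-multilinear in $w, x, y, z \in J_T = J \otimes_R T$, it suffices to verify the equality when each argument is a pure tensor. Writing $w = W \otimes \alpha$, $x = X \otimes \beta$, $y = Y \otimes \gamma$, $z = Z \otimes \delta$, and unfolding the definitions of $\times$ and $\times_T$, one obtains
\begin{align*}
m_1(w,x,y,z) &= \bigl[(W \times X) \times (Y \times Z)\bigr] \otimes \bigl[(\alpha\beta) \times (\gamma\delta)\bigr]\\
&\quad + \bigl[(W \times Z) \times (Y \times X)\bigr] \otimes \bigl[(\alpha\delta) \times (\gamma\beta)\bigr],\\
m_2(w,x,y,z) &= \bigl[(X \times Z) \times (W \times Y)\bigr] \otimes \bigl[(\alpha \times \gamma)(\beta \times \delta)\bigr],
\end{align*}
where juxtaposition in $T$ denotes multiplication while $\times$ in $T$ denotes the polarization of its cubic-norm adjoint.

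Next I would base change $R$ to an auxiliary ring $R'$ over which $T$ splits as $R' \times R' \times R'$, since both sides of the identity are polynomial in the data; in this case $J_T \cong J^3$ and both $m_1$ and $m_2$ decompose componentwise. On the $T$-side one has the direct identity $(\alpha\beta) \times (\gamma\delta) + (\alpha\delta) \times (\gamma\beta) = (\alpha \times \gamma)(\beta \times \delta)$, which is verified componentwise in $R^3$. On the $J$-side, the componentwise form of $m_1 = m_2$ reduces, in index $i$ (with $\{j,k\}$ the complement of $\{i\}$ in $\{1,2,3\}$), to a $4$-linear polynomial identity in the eight variables $\{a_j,a_k,b_j,b_k,c_j,c_k,d_j,d_k\}$ in $J$. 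I would establish this via repeated application of the polarized Freudenthal--Springer identity
\[(u \times y)\times(v \times z) + (u \times z)\times(v \times y) + (u \times v)\times(y \times z) = (u, y \times z)\, v + (v, y \times z)\, u + (u \times v, z)\, y + (u \times v, y)\, z,\]
which is obtained by polarizing in both $x$ and $y$ the identity $(x \times y)^\# + x^\# \times y^\# = (x,y^\#) x + (x^\#, y) y$ from subsubsection \ref{special_Id}; applying it to a suitable collection of permutations of the eight variables and summing cancels the linear ``correction'' terms to leave the desired quartic identity.

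The consequence $(x \times_T x)^\# = 4\, x^\# \times_T x^\#$ is then immediate from $m_1 = m_2$ by specializing $w = x = y = z$. Indeed, using $x \times x = 2 x^\#$ and the bilinearity of $\times_T$ on $J_T^\vee \otimes J_T^\vee$, one computes
\[m_1(x,x,x,x) = 2(x \times x) \times_T (x \times x) = 8\, x^\# \times_T x^\#,\]
while $m_2(x,x,x,x) = (x \times_T x) \times (x \times_T x) = 2\,(x \times_T x)^\#$, whence dividing by $2$ gives the stated equality.

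The main obstacle is the $J$-side identity: because the two bracketings $(W \times X) \times (Y \times Z)$ and $(W \times Z) \times (Y \times X)$ appearing in $m_1$ differ from the single bracketing $(X \times Z) \times (W \times Y)$ in $m_2$, one cannot directly tensor a $T$-identity with a $J$-identity to conclude. The argument must instead track the interaction of the three possible bracketings of four $J$-variables via the polarized Freudenthal--Springer identity, and organize the many resulting polynomial terms so that the linear corrections cancel; this bookkeeping is the principal technical content of the proof.
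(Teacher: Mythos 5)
Your pure-tensor expansion of $m_1$ and $m_2$, the $T$-side identity you isolate (it is exactly the identity the paper's proof invokes), and the specialization $w=x=y=z$ yielding the displayed consequence are all correct. The gap is precisely the step you defer to ``bookkeeping'': no arrangement of the polarized Freudenthal--Springer identity can make the $J$-side cancellation work, because the asserted equality $m_1=m_2$ of $4$-linear maps is false. Take $J=F\times F\times F$ with primitive idempotents $e_1,e_2,e_3$ (so $e_i^\#=0$ and $e_i\times e_j=e_k$), let $T$ be any cubic ring, and put $w=y=e_1\otimes 1$, $x=e_2\otimes 1$, $z=e_3\otimes 1$. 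Since $1\times 1=2$ in $T$, one gets $(w\times x)\times_T(y\times z)=(e_3\otimes 1)\times_T(e_2\otimes 1)=2\,e_1\otimes 1$ and likewise $(w\times z)\times_T(y\times x)=2\,e_1\otimes 1$, so $m_1(w,x,y,z)=4\,e_1\otimes 1$; but $w\times_T y=(e_1\times e_1)\otimes(1\times 1)=0$, so $m_2(w,x,y,z)=0$. The stated consequence also fails for general $x$: with $T$ split with idempotents $\epsilon_i$ and $x=(e_1+e_2)\otimes\epsilon_1+e_3\otimes\epsilon_2$ one computes $x^\#=e_3\otimes\epsilon_1$, hence $4\,x^\#\times_T x^\#=0$, while $x\times_T x=2(e_1+e_2)\otimes\epsilon_3$ and $(x\times_T x)^\#=4\,e_3\otimes\epsilon_3\neq 0$. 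Structurally, the polarized identity you quote couples all three pairings $(u\times y)\times(v\times z)$, $(u\times z)\times(v\times y)$, $(u\times v)\times(y\times z)$ together with genuinely nonvanishing linear terms such as $(u,y\times z)v$; summing over permutations can never cancel those linear terms to leave a ``two pairings against one'' identity of the shape $m_1=m_2$, which is why your plan stalls exactly where you flagged the difficulty.

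For comparison, the paper's own proof is the one-line reduction to the quoted $T$-identity; that reduction is legitimate only when the four arguments share a common $J$-component (e.g. $w=U\otimes\lambda_1,\dots,z=U\otimes\lambda_4$ for a single $U\in J$), since only then do the $J$-side brackets in $m_1$ and $m_2$ literally coincide, so it does not establish the lemma for independent arguments either. What the argument of Theorem \ref{B2Jlift} actually needs is much weaker and is true: if $X\in J_T$ is rank one, i.e. $X^\#=0$ for the $T$-adjoint, then $(X\times_T X)^\#=0$, so $Y=\tfrac12 X\times_T X$ is rank one. Your splitting strategy proves exactly this: since the assertion is polynomial in the data, one may extend scalars so that $T$ splits, write $X=\sum_i X_i\otimes\epsilon_i$ with every $X_i^\#=0$; then the $\epsilon_m$-component of $X\times_T X$ is $2\,X_a\times X_b$, and $(X_a\times X_b)^\#=(X_a,X_b^\#)X_a+(X_a^\#,X_b)X_b-X_a^\#\times X_b^\#=0$ by the identity recalled in subsubsection \ref{special_Id}. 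So rather than trying to force the general identity $m_1=m_2$, you should prove (and cite) only this rank-one statement, which is all that the application requires.
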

\begin{proof} The first part of the lemma follows immediately from the identity
\[(\lambda_1 \lambda_2) \times (\lambda_3\lambda_4) + (\lambda_1 \lambda_4)\times (\lambda_3\lambda_2) = (\lambda_2 \times \lambda_4)(\lambda_1 \times \lambda_3)\]
for $\lambda_1, \lambda_2, \lambda_3, \lambda_4 \in T$.

The second claim of the lemma is a direct consequence of the first, as $(x \times_{T} x)^\# = \frac{1}{2} m_2(x,x,x,x)$ while $4 x^\#\times_{T} x^\# = \frac{1}{2}m_1(x,x,x,x)$.\end{proof}

We compute $Y(A,B,\omega,\theta)$ more explicitly.
\begin{lemma} Suppose $f_{(A,B)}(x,y) = ax^3 + bx^2y+cxy^2+dy^3$, and that this binary cubic form corresponds to the based cubic ring $T$, $(1,\omega,\theta)$.  Then
\begin{equation}\label{Yexp}Y(A,B,\omega,\theta) = \left(c\theta - 3d\omega - c^2 + bd\right)A^\# + \left(-b\theta + c\omega + bc - 3ad\right) A\times B + \left(3a\theta - b\omega -b^2+ac\right)B^\#.\end{equation}
\end{lemma}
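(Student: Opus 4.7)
The approach will be to compute $Y = \tfrac{1}{2} X \times_T X$ directly, where $X = -A\theta + B\omega + A^\# \times B^\#$.  Writing $X = X_1 + X_2 + X_3$ with $X_1 = -A \otimes \theta$, $X_2 = B \otimes \omega$, and $X_3 = (A^\# \times B^\#) \otimes 1$, bilinearity and symmetry of $\times_T$ give
\[
Y = \sum_{i=1}^{3} \tfrac{1}{2}(X_i \times_T X_i) + \sum_{i < j} X_i \times_T X_j,
\]
so the work is to simplify these six tensors and then collect coefficients of $A^\#$, $B^\#$, and $A \times B$.

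For the diagonal contributions, the identity $x \times x = 2x^\#$ (valid both in $J$ and in $T$) reduces $\tfrac{1}{2}(U\otimes\lambda) \times_T (U\otimes\lambda)$ to $2\,U^\# \otimes \lambda^\#$.  I will then substitute $\theta^\# = -d\omega$ and $\omega^\# = a\theta$ from \S\ref{ssc:bcfs}, together with the auxiliary formula
\[
(A^\# \times B^\#)^\# = -ad(A \times B) + db\,A^\# + ac\,B^\#,
\]
which follows by applying $(x \times y)^\# + x^\# \times y^\# = (x, y^\#)x + (x^\#, y)y$ from \S\ref{special_Id} to $x = A^\#$, $y = B^\#$ and using $(A^\#)^\# = aA$, $(B^\#)^\# = dB$.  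This handles the three diagonal pieces.

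For the three off-diagonal terms, the ingredients I will need are: $\theta \times 1 = c - \theta$ and $\omega \times 1 = -b - \omega$ (from $1 \times \lambda = \tr(\lambda) - \lambda$ together with $\tr(\omega) = -b$, $\tr(\theta) = c$); the product $\omega \times \theta = ad - bc + b\theta - c\omega$ recorded in \S\ref{ssc:bcfs}; and
\[
A \times (A^\# \times B^\#) = aB^\# + cA^\#, \qquad B \times (A^\# \times B^\#) = dA^\# + bB^\#,
\]
both consequences of the identity $x \times (x^\# \times y) = n(x)\,y + (x, y)\,x^\#$ of \S\ref{special_Id} (for the second, first rewrite $A^\# \times B^\#$ as $B^\# \times A^\#$ by symmetry).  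Substituting these into $X_i \times_T X_j$ for $i < j$, adding all six contributions, and collecting by $A^\#$, $B^\#$, and $A \times B$ will yield the stated formula.

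The main obstacle is not any single conceptual step but the cumulative bookkeeping: six tensor products must be simplified without confusing the $\times$-product in $J$ with that in $T$, and the $A^\#$-coefficient in particular absorbs contributions of mixed sign from four of the six terms, so careful tracking is required.  Once these are organized, the identity \eqref{Yexp} emerges directly.
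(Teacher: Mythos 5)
Your proposal is correct and follows essentially the same route as the paper: expand $\tfrac{1}{2}X\times_T X$ by bilinearity and evaluate the six terms using $\theta^\#=-d\omega$, $\omega^\#=a\theta$, $\omega\times\theta=ad-bc+b\theta-c\omega$, $1\times\lambda=\tr(\lambda)-\lambda$, together with the identities $x\times(x^\#\times y)=n(x)y+(x,y)x^\#$ and $(u\times v)^\#+u^\#\times v^\#=(u,v^\#)u+(u^\#,v)v$, then collect coefficients of $A^\#$, $A\times B$, $B^\#$. The auxiliary formulas you derive (for $(A^\#\times B^\#)^\#$ and the cross-products with $A$ and $B$) are exactly the ones the paper substitutes, so only the routine bookkeeping remains.
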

\begin{proof} One has
\begin{align*} Y(A,B,\omega,\theta) &= \frac{1}{2} X(A,B,\omega,\theta) \times_{T} X(A,B,\omega,\theta) \\ &= \frac{1}{2} (-A \theta + B\omega + A^\# \times B^\#) \times_{T} (-A \theta + B\omega + A^\# \times B^\#) \\ &= 2 A^\# \theta^\# - (A \times B)(\omega \times \theta) - (A \times (A^\# \times B^\#))(\theta \times 1) + 2B^\# \omega^\# \\ &\quad + (B \times (A^\# \times B^\#))(\omega \times 1) + 2 (A^\# \times B^\#)^\# \\ &= 2 A^\#(-d\omega)- (A \times B)(ad - bc + b\theta - c\omega) - (aB^\# + cA^\#)(c-\theta) + 2B^\# (a\theta) \\ &\quad - (dA^\#+bB^\#)(b+\omega) + 2(-ad (A\times B) + bd A^\# + acB^\#).\end{align*}
Here we have used the identities $W \times (W^\# \times Z) = n(W)Z + (W,Z)W^\#$ and $(u\times v)^\# + u^\# \times v^\# = (u,v^\#)u + (u^\#,v)v$.  Hence we obtain the formula (\ref{Yexp}). \end{proof}

\begin{proof}[Proof of Theorem \ref{B2Jlift}] That $X$ is rank one follows by direct computation from the multiplication table of $T$.  Indeed, one has
\begin{align*} X^\# &= (A^\# \times B^\#)^\# + ad A \times B - bd A^\# - ac B^\# \\ &\quad + \left(c A^\# + a B^\# - A \times (A^\# \times B^\#)\right)\theta + \left(-dA^\# - bB^\# + B \times (A^\# \times B^\#)\right)\omega.\end{align*}
This then gives $0$ by the general identities $W \times (W^\# \times Z) = n(W)Z + (W,Z)W^\#$ and $(u\times v)^\# + u^\# \times v^\# = (u,v^\#)u + (u^\#,v)v$ used above.

That $Y$ is rank one then follows from Lemma \ref{comAdj}.  We must still evaluate $(X,Y)$. This can be proved by a somewhat tedious, but entirely straightforward computation using the multiplication table of $T$.\end{proof} 

The following two lemmas provide a complement to the lifting law.  They explain the sense in which the rank one ``lift'' $X$ of $(A,B)$ is unique.
\begin{lemma}\label{B2uniqueDelta} Suppose $T, (1,\omega,\theta)$ is a good-based cubic ring, $A, B \in J$, and $X' = -A\theta + B\omega + C$ is rank one in $J_T$.  Suppose $n(Ax+By)$ is not identically $0$ and $\delta \in J$ is such that $X' +\delta$ is rank one.  Then, $\delta =0$.\end{lemma}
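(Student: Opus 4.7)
The plan is to impose the rank-one conditions on both $X'$ and $X'+\delta$ and extract linear constraints on $\delta$. Rank-one-ness for both, combined with the polarization identity $(u+v)^{\#} = u^{\#} + u \times v + v^{\#}$, gives
\[
X' \times \delta + \delta^{\#} = 0
\]
in $J_T^{\vee}$, where $\delta \in J$ is regarded as $\delta \otimes 1 \in J_T$. Substituting $X' = -A\theta + B\omega + C$ and using $T$-bilinearity of the box product, I would expand this as
\[
-(A \times \delta)\otimes \theta + (B \times \delta)\otimes \omega + (C \times \delta + \delta^{\#})\otimes 1 = 0.
\]
Since $(1,\omega,\theta)$ is an $R$-basis of $T$, each coefficient must vanish individually, producing in particular $A \times \delta = 0$ and $B \times \delta = 0$.

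By linearity, $(xA + yB) \times \delta = 0$ for all $x, y$ in the fraction field $F$. The key step is then the cubic-norm identity $u^{\#} \times (u \times \delta) = n(u)\delta + (u^{\#}, \delta)\, u$ from subsubsection \ref{special_Id}: whenever $u = xA + yB$ satisfies $n(u) \neq 0$, this forces $\delta$ to lie in the line $F \cdot u$. The set of $(x, y)$ with $n(xA+yB) \neq 0$ is Zariski dense in $F^2$ by the hypothesis that $n(Ax+By)$ is not identically zero.

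To finish, I would argue by contradiction. If $\delta \neq 0$, the Zariski-density just described, coupled with linearity of $(x,y) \mapsto xA+yB$, forces the entire plane $FA + FB$ to lie in $F \cdot \delta$; hence $A = \alpha \delta$ and $B = \beta \delta$ for some $\alpha, \beta \in F$, not both zero. Substituting back into $A \times \delta = 0$ yields $2\alpha \delta^{\#} = 0$, and similarly $2\beta\delta^{\#}=0$, so $\delta^{\#} = 0$; then the identity $(\delta^{\#})^{\#} = n(\delta)\delta$ gives $n(\delta) = 0$, and thus $n(xA + yB) = (\alpha x + \beta y)^{3} n(\delta) \equiv 0$, contradicting the hypothesis. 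Hence $\delta = 0$. I expect no major obstacle; the chief care is simply in decomposing the equation $X' \times \delta + \delta^{\#} = 0$ correctly across the basis $(1,\omega,\theta)$ and in applying the cubic-norm identity, which is universally valid.
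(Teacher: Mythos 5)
Your proof is correct and follows essentially the paper's own argument: extract $A\times\delta = B\times\delta = 0$ from the linear independence of $(1,\omega,\theta)$, then apply the identity $u^{\#}\times(u\times\delta) = n(u)\delta + (u^{\#},\delta)u$ to $u = xA+yB$ with $n(u)\neq 0$. The paper closes slightly more directly by observing that $(u^{\#},\delta) = \tfrac12(u,u\times\delta) = 0$, so that $n(u)\delta = 0$ already over $F(x,y)$ and $\delta = 0$ immediately, which makes your final contradiction step (forcing $A,B\in F\delta$ and $n(Ax+By)\equiv 0$) unnecessary though perfectly valid.
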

\begin{proof} We have $0 = (X'+\delta)^\# = X' \times \delta + \delta^\#$.  Hence, since $1, \omega, \theta$ are $F$-linearly independent, $A \times \delta = 0 = B \times \delta$.  Hence $(Ax + By) \times \delta = 0$ in $J\otimes_{F}F(x,y)$, where $F(x,y)$ is the field of rational functions in $x,y$.  But $Ax+By$ is invertible in $J\otimes_{F}F(x,y)$, since $n(Ax+By) \neq 0$.  Thus the lemma follows from the fact that $z \times \delta = 0$ implies $\delta = 0$, when $z$ is invertible.

This latter fact is a consequence of the identity 
\[z^\# \times (z \times \delta) = n(z)\delta + (z^\#,\delta)z = n(z)\delta + \frac{1}{2}(z, z \times \delta)z.\]
\end{proof}

\begin{lemma}\label{B2lem:uniqueLift}  Suppose $T, (1,\omega,\theta)$ is a good-based cubic ring, $A, B \in J$, and $X' = -A\theta + B\omega + C$ is rank one in $J_T$.  Set $f(x,y) = ax^3 + bx^2y + cxy^2 + dy^3$ the binary cubic form associated to $\omega, \theta$, $f'(x,y) = n(Ax+By)$ the binary cubic associated to the pair $(A,B)$.  Assume $A,B, C$ are linearly independent in $J$, and $f(x,y) \neq 0$. Then there exists $t \in F$ so that $f'(x,y) = tf(x,y)$.\end{lemma}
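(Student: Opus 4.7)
The plan is to extract, from the rank-one condition $(X')^\# = 0$ in $J_T$, enough polynomial identities in $J$ to force $f'$ to be proportional to $f$. First I would expand $(X')^\# = 0$ using the multiplication table $\omega\theta = -ad$, $\omega^2 = -ac + a\theta - b\omega$, $\theta^2 = -bd + c\theta - d\omega$, and separate the coefficients of $1$, $\theta$, and $\omega$ (each of which must vanish in $J$). This produces three identities, of which the key ones are
\[A\times C = cA^\# + aB^\#\qquad\text{and}\qquad B\times C = dA^\# + bB^\#.\]

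Next, I would apply the general cubic-norm-structure identity $z^\#\times(z\times y) = n(z)y + (z^\#,y)z$ with $z = Ax+By \in J\otimes F[x,y]$ and $y = C$. Using the two identities above, $(Ax+By)\times C = (cx+dy)A^\# + (ax+by)B^\#$, while $(Ax+By)^\# = x^2A^\# + xy(A\times B) + y^2B^\#$. Expanding the cross-product and reducing via $A^\#\times A^\# = 2n(A)A$, $B^\#\times B^\# = 2n(B)B$, $(A\times B)\times A^\# = n(A)B + (A^\#,B)A$, and $(A\times B)\times B^\# = n(B)A + (A,B^\#)B$, the coefficient of $A^\#\times B^\#$ collapses to $x^2(ax+by) + y^2(cx+dy) = f(x,y)$, while every other contribution lies in $FA + FB$. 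The right-hand side $n(Ax+By)C + ((Ax+By)^\#,C)(Ax+By)$ produces $f'(x,y)C$ together with polynomial multiples of $A$ and $B$. Rearranging gives an identity of the form
\[P(x,y)\,A + Q(x,y)\,B + f(x,y)\,(A^\#\times B^\#) - f'(x,y)\,C = 0\]
in $J\otimes F[x,y]$.

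To conclude: if $A^\#\times B^\#$ were $F$-linearly independent of $A, B, C$, then those four elements would be linearly independent in $J$, forcing each polynomial coefficient in the displayed relation to vanish; in particular $f \equiv 0$, contradicting the hypothesis. Hence $A^\#\times B^\# = \alpha_0 A + \beta_0 B + \gamma_0 C$ for some scalars $\alpha_0,\beta_0,\gamma_0 \in F$. Substituting into the displayed identity and using the $F$-linear independence of $A, B, C$, the coefficient of $C$ yields $f'(x,y) = \gamma_0\, f(x,y)$, so $t = \gamma_0$ works.

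I expect the main obstacle to be the polynomial bookkeeping in the second step: one must carefully collect the coefficients of $A$, $B$, and $A^\#\times B^\#$ in the expansion, and verify that the $A^\#\times B^\#$-coefficient is exactly $f(x,y)$. This is the geometric fact linking the two cubic forms, and once it is in hand the remaining linear-algebra step is immediate.
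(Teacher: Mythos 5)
Your proposal is correct. Both you and the paper start from the same computational input: expanding $(X')^\# = 0$ against the basis $1,\omega,\theta$ to get $A\times C = cA^\# + aB^\#$ and $B \times C = dA^\# + bB^\#$, and then exploiting the identity $z^\# \times (z\times y) = n(z)y + (z^\#,y)z$. The difference is in how that identity is deployed. The paper first normalizes by the $\SL_2(F)$-action so that $ad \neq 0$, applies the identity separately at $z=A$ and $z=B$, eliminates $A^\#\times B^\#$ between the two resulting relations to extract $n(A)d = n(B)a$, and then pins down the remaining coefficients by pairing the $\theta$- and $\omega$-components of $(X')^\#$ against $A$ and $B$, using $ad\neq 0$ at the last step. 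You instead apply the identity once at the generic element $z = Ax+By$ over $F[x,y]$, which packages all of those computations into the single relation $P\,A + Q\,B + f\,(A^\#\times B^\#) - f'\,C = 0$; your key computation that the $A^\#\times B^\#$-coefficient is exactly $x^2(ax+by)+y^2(cx+dy) = f(x,y)$ checks out (using $A^\#\times A^\# = 2n(A)A$ and $A^\#\times(A\times B) = n(A)B + (A^\#,B)A$), and the $C$-coefficient on the right is indeed $f'$. From there the endgame is pure linear algebra: if $A,B,C,A^\#\times B^\#$ were independent then $f\equiv 0$, contradicting the hypothesis, so $A^\#\times B^\# = \alpha_0 A + \beta_0 B + \gamma_0 C$ and independence of $A,B,C$ (over $F$, hence over $F[x,y]$) gives $f' = \gamma_0 f$. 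Your route buys a normalization-free argument — no $\SL_2(F)$ reduction, no case $ad\neq 0$, and the constant $t$ appears directly as the $C$-coordinate of $A^\#\times B^\#$ — at the cost of slightly heavier bookkeeping in the generic expansion, whereas the paper's coefficient-by-coefficient argument is more elementary in each step but needs the normalization to divide by $a$ and $d$.
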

In other words, if $-A\theta + B\omega + C$ is rank one in $J_T$, then the data $(T, (1,\omega,\theta))$ is essentially the based cubic ring associated to $(A,B)$.

\begin{proof}[Proof of Lemma \ref{B2lem:uniqueLift}] To check that $f' = tf$, it suffices to first act by $\SL_2(F)$ on $(\omega_0,\theta_0)$ and $(A,B)$ so that the coefficients of $x^3$ and $y^3$ in $f(x,y)$ are both not zero and $X'$ stays invariant.  (Every nonzero orbit of $\SL_2(F)$ on binary cubics contains an element with $ad \neq 0$.) So by using the action of $\SL_2(F)$, we may assume $ad \neq 0$.

Now, using the multiplication table for $(1,\omega,\theta)$ and taking $(X')^\#$, one finds 
\begin{align*} (X')^\# &= \left(cA^\# + aB^\# - A \times C\right)\theta + \left(-dA^\# - bB^\# + B \times C\right)\omega \\ &\quad + \left(C^\# - bd A^\# - ac B^\# + (ad) (A \times B)\right)1.\end{align*}
Since $1,\omega,\theta$ are linearly independent, each of the terms in parentheses is $0$.  Consequently,
\begin{align} \nonumber n(A)C + (A^\#,C)A &= A^\# \times (A \times C) \\ \nonumber &= A^\# \times (cA^\# + aB^\#) \\&= \label{eqn:2cn} 2cn(A)A + a A^\# \times B^\# \end{align}
and similarly
\begin{equation}\label{eqn:nBC} n(B)C + (B^\#,C)B = 2bn(B)B + d A^\# \times B^\#.\end{equation}
Multiplying (\ref{eqn:2cn}) by $d$, (\ref{eqn:nBC}) by $a$ and subtracting, one obtains
\begin{equation}\label{eq:ABC}(n(A)d - n(B)a)C + d((A^\#,C) - 2cn(A))A + a(2bn(B) - (B^\#,C))B = 0.\end{equation}
Since $A,B,C$ are assumed linearly independent in $J$, we deduce $n(A)d = n(B)a$.  Pairing the coefficient of $\theta$ in $(X')^\#$ with $A$, we get
\[2cn(A) + a(A,B^\#) = 2(A^\#,C)\]
and similarly pairing the coefficient of $\omega$ in $(X')^\#$ with $B$ we get
\[3bn(B) + d(A^\#,B) = 2(B^\#,C).\]
Plugging these into the coefficients of $A$ and $B$ in (\ref{eq:ABC}) we get
\[2d(a(A,B^\#)-cn(A)) = 0, \qquad a(bn(B) - d(A^\#,B)) = 0.\]
Since $ad \neq 0$, this completes the proof. \end{proof}

\subsubsection{The lifting law for $J = H_3(C)$} Suppose now that $C$ is a composition ring over $R$, and that $C$ is associative.  Then $J = H_3(C)$ is a cubic norm structure, and $\GL_3(C)$ acts on $J$ via $m \cdot h = mhm^*$, for $m \in \GL_3(C)$ and $m^*$ the conjugate transpose of $m$.  We now extend the lifting law Theorem \ref{B2Jlift} to a more precise lifting law.

Let $(A,B) \in J^2$ be fixed.  Define a map $S_r:T \rightarrow M_3(C)$ by $S_r(1) = 1$, $S_r(\omega) = -A^\#B$, $S_r(\theta) = B^\#A$ and extending $R$-linearly, and another $R$-linear map $S_{\ell}:T\rightarrow M_3(C)$ via $S_{\ell}(\lambda) = S_{r}(\lambda)^*$ for $\lambda \in T$.  The maps $S_r, S_{\ell}$ are $R$-algebra homomorphisms, and in fact $X = X(A,B,\omega,\theta)$ and $Y = Y(A,B,\omega,\theta)$ are eigenvectors for the action of $S_{?}(T)$.  This claim is verified in the following proposition.  The maps $S_{?}$ are closely related to constructions in \cite{wood}.
\begin{proposition}\label{prop:Srevec} The maps $S_r, S_{\ell}: T \rightarrow M_3(C)$ are $R$ algebra homomorphisms, and if $\lambda \in T$, then $S_{\ell}(\lambda)X = \lambda X = XS_{r}(\lambda)$. \end{proposition}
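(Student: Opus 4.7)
I would prove the proposition in three pieces: (i) that $S_r$ is an algebra hom, (ii) that $S_\ell$ is too, and (iii) the eigenvector identities for $X$. With $a = n(A)$, $d = n(B)$, $b = (A^\#,B)$, $c = (A,B^\#)$ the coefficients of the binary cubic $f_{(A,B)}$, the three multiplicative relations in $T$ translate into the identities
\begin{equation*}
S_r(\omega)S_r(\theta) = -ad,\quad S_r(\omega)^2 = -ac + aS_r(\theta) - bS_r(\omega),\quad S_r(\theta)^2 = -bd + cS_r(\theta) - dS_r(\omega)
\end{equation*}
in $M_3(C)$. The first is immediate from $A^\#A = a$ and $BB^\# = d$. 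For the second, I would rewrite $(A^\#B)^2 = A^\#(BA^\#)B$ and use the special-$J$ identities of subsection \ref{special_Id}, namely $(x\times y)x + x^\#y = (x^\#,y)$ applied at $(x,y)=(A,B)$ (plus its transpose) and at $(x,y)=(B,A)$, to obtain $BA^\# = b - A(A\times B)$ and $(A\times B)B = c - B^\#A$; substituting yields $(A^\#B)^2 = bA^\#B + aB^\#A - ac$, as required. The third relation follows by the evident symmetry $(A,\omega,a,b) \leftrightarrow (B,\theta,d,c)$.

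To deduce the statement for $S_\ell$, observe that $A, A^\#, B, B^\#$ are all Hermitian, so $S_\ell(\omega) = -BA^\#$ and $S_\ell(\theta) = AB^\#$ are the Hermitian conjugates of $S_r(\omega), S_r(\theta)$. The image $S_r(T)$ is commutative, being the homomorphic image of the commutative ring $T$, hence so is $S_\ell(T) = S_r(T)^*$; taking the conjugate transpose of the three identities above then gives the analogous three identities with $S_r$ replaced by $S_\ell$, so $S_\ell$ is an algebra hom as well.

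For the eigenvector property, by $R$-linearity it suffices to verify $\omega X = X S_r(\omega)$ and $\theta X = X S_r(\theta)$; the left-action identities $S_\ell(\lambda) X = \lambda X$ then follow by applying $*$ and using $X^* = X$, $\lambda^* = \lambda$. I would expand $\omega X$ using the multiplication table of $T$, and expand $X \cdot (-A^\#B)$ using the identities from the first paragraph together with $(A^\# \times B^\#)A^\# = ac - aAB^\#$ (the identity $(x\times y)x + x^\#y = (x^\#,y)$ at $(x,y) = (A^\#, B^\#)$, using $A^{\#\#} = aA$). The coefficients of $1$ and $\theta$ on both sides then match directly, and the coefficient of $\omega$ matches modulo a single identity in $J = H_3(C)$:
\begin{equation*}
A^\# \times B^\# = (A, B^\#)\, A - A B^\# A.
\end{equation*}
The calculation for $\theta X = X S_r(\theta)$ reduces to the same identity by an analogous expansion.

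The main obstacle is this last identity, which is not among those listed in subsection \ref{special_Id}. I would verify it via prehomogeneity: both sides are polynomial in $(A, B)$, and under the action $h \mapsto mhm^*$ of $\GL_3(C)$ on $H_3(C)$, a direct computation shows both sides transform by the common scalar $N_6(m)$ followed by conjugation $x \mapsto m x m^*$. Since over the algebraic closure $\GL_3(C)$ acts transitively on the invertible elements of $H_3(C)$, this reduces the identity to the case $A = 1$, where it becomes $1 \times B^\# = (1, B^\#) - B^\#$; this holds in any cubic norm structure by the defining axiom $1 \times x = (1,x) - x$.
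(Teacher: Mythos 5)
Your proof is correct, and its overall architecture is the same as the paper's: verify the homomorphism property on the multiplication table of $T$, check $\omega X = XS_r(\omega)$ and $\theta X = XS_r(\theta)$, and deduce the $S_\ell$ statements by applying $*$ and using $X^* = X$. The differences are local but worth noting. For $S_r(\omega)^2 = S_r(\omega^2)$, the paper first reduces by Zariski density to $n(A) \neq 0$ and then identifies both sides of $BA^\#B + (A,B^\#)A = AB^\#A + (A^\#,B)B$ with $(A\times B)^\#$; your route via $(A^\#B)^2 = A^\#(BA^\#)B$ together with the linearized identities $x(x\times y)+yx^\# = (x^\#,y)$ and $(x\times y)x + x^\#y = (x^\#,y)$ avoids the density step and is, if anything, more direct. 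For the eigenvector computation, the identity you isolate, $A^\# \times B^\# = (A,B^\#)A - AB^\#A$, is exactly one half of the paper's display (\ref{eqAB}), which the paper obtains in one line from the special-$J$ identity (\ref{eqn:Uyz}): take $x = B^\#$, $y = z = A$, so $B^\#\times(A\times A) = 2(A,B^\#)A - 2AB^\#A$. Your prehomogeneity argument for it is valid — the equivariance under $h \mapsto mhm^*$ with the common factor $N_6(m)$ checks out, and $\GL_3(C)$ is transitive on invertible elements of $H_3(C)$ over $\overline{F}$ — but it is heavier than necessary and, unlike the one-line derivation, is tied to $J = H_3(C)$ rather than to arbitrary special cubic norm structures. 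Finally, the paper's arrangement of the $\omega$-computation uses both equalities of (\ref{eqAB}), whereas your rearrangement via $BA^\# = b - A(A\times B)$ and $(A\times B)B = c - B^\#A$ genuinely needs only the one you state, so your accounting is internally consistent.
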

\begin{proof} First we check that $S_r$ is a ring homomorphism.  It is clear that $S_{r}(\omega)S_{r}(\theta) = -ad = S_{r}(\theta)S_{r}(\omega)$.  Next we check that $S_{r}(\omega)^2 = S_{r}(\omega^2)$.  This amounts to the identity
\begin{equation}\label{Somega2} A^\#BA^\#B = -n(A)(A,B^\#) + n(A)B^\#A + (A^\#,B)A^\#B.\end{equation}
Equation (\ref{Somega2}) is a polynomial identity, and thus by Zariski density it suffices to prove it when $n(A) \neq 0$.  When this is the case, (\ref{Somega2}) is equivalent to
\begin{equation}\label{Somega2:2} BA^\#B + (A,B^\#)A = AB^\#A + (A^\#,B)B\end{equation}
by multiplying (\ref{Somega2}) on the left by $A$, dividing by $n(A)$, and rearranging.  But now both sides of (\ref{Somega2:2}) are equal to $(A\times B)^\#$, since one has $(x\times y)^\# = (x^\#,y)y + U_{x}y^\#$, and $U_{x}z = xzx$. The proof that $S_{r}(\theta)^2 = S_r(\theta^2)$ is essentially identical.

Now, since $T$ is commutative and $S_{\ell} = S_{r}^*$, $S_{\ell}$ is also a ring map.  To prove that $\lambda X = XS_r(\lambda)$ for $\lambda \in T$, one must only check this for $\lambda = \omega$ and $\lambda = \theta$.  From (\ref{eqn:Uyz}), one obtains
\begin{equation}\label{eqAB} A^\# \times B^\# =\left((A,B^\#) - AB^\#\right)A = \left((A^\#,B) - BA^\#\right)B.\end{equation}
Hence,
\begin{align*} XS_{r}(\omega) &= \left(A\theta - B\omega - (A,B^\#)A + AB^\# A\right)(A^\#B) \\ &= aB\theta - BA^\#B\omega - acB + ad A. \end{align*}
Likewise one computes
\[\omega X = \omega (-A\theta + B\omega + bB - BA^\#B) = ad A -ac B + aB\theta - BA^\#B\omega,\]
so $\omega X = XS_r(\omega)$.  The proof that $\theta X = XS_r(\theta)$ is similar.

Since $X = X^*$, the equation $\lambda X = XS_r(\lambda)$ implies $\lambda X = S_{\ell}(\lambda)X$. \end{proof}

Before giving the extended lifting law for $J=H_3(C)$, we give one additional preparation.  \emph{From now on we assume that the pair $(A,B)$ is non-degenerate}, i.e. that $Q((A,B)) \neq 0$, or equivalently that $L = T\otimes_{R} F$ is a cubic \'{e}tale $F$-algebra.
\begin{definition}\label{epDef} Suppose given two copies $T_1, T_2$ of the $R$-algebra $T$, by which we mean given two $R$-algebras $T_1, T_2$ and $R$-algebra isomorphisms $\iota_{j}: T\rightarrow T_i$, for $j = 1,2$.  Let $\{v_{\alpha}\}$ be a basis of $T$, and $\{w_{\alpha}\}$ the dual basis in $L= T \otimes F$ for the trace form.  Define
\[\epsilon(T_1,T_2) := \sum_{\alpha}{\iota_1(v_\alpha) \otimes \iota_2(w_\alpha)} \in T_1 \otimes_R T_2 \otimes F.\]
Then $\epsilon(T_1,T_2)$ is independent of the choice of basis $\{v_{\alpha}\}$. When $T_1 = S_{r}(T), T_2 = T$, $\iota_1 = S_{r}$ and $\iota_2$ is the identity, we set $\epsilon := \epsilon(S_{r}(T_1),T)$, so $\epsilon \in M_3(C)\otimes L$.\end{definition}

With this definition, one has the following lemma.
\begin{lemma}\label{epsilonLemma} Suppose $T_1, T_2, \iota_1, \iota_2$ are as in Definition \ref{epDef}.  Then $\iota_1(x) \epsilon(T_1,T_2) = \epsilon(T_1,  T_2) \iota_2(x)$ for all $x \in T$. \end{lemma}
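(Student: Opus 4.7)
The plan is to reduce the identity to a statement purely about the canonical element $e_T := \sum_\alpha v_\alpha \otimes w_\alpha \in T \otimes_R T \otimes_R F \subseteq L \otimes_F L$, and then verify it using the duality property that defines $\{w_\alpha\}$. Since $\iota_1$ and $\iota_2$ are $R$-algebra homomorphisms, the desired identity $\iota_1(x)\,\epsilon(T_1,T_2) = \epsilon(T_1,T_2)\,\iota_2(x)$ is equivalent to
\[
\sum_\alpha (x v_\alpha) \otimes w_\alpha \;=\; \sum_\alpha v_\alpha \otimes (w_\alpha x)
\]
in $L \otimes_F L$, which makes no reference to the embeddings $\iota_j$ (so in particular the independence of $\epsilon(T_1,T_2)$ from the choice of basis, asserted in the definition, will follow by the same argument once this identity is established).

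Next I would expand both sides in the basis $\{v_\alpha \otimes w_\beta\}$ of $L \otimes_F L$ and compare coefficients using the defining relation $\tr_{L/F}(v_\alpha w_\beta) = \delta_{\alpha\beta}$. Writing $x v_\alpha = \sum_\beta c_{\alpha\beta} v_\beta$ and $w_\alpha x = \sum_\beta d_{\alpha\beta} w_\beta$, one reads off
\[
c_{\alpha\gamma} = \tr_{L/F}(x v_\alpha w_\gamma), \qquad d_{\alpha\gamma} = \tr_{L/F}(v_\gamma w_\alpha x).
\]
Because $L$ is commutative and the trace is cyclic, the two right-hand sides are equal after swapping the role of $\alpha$ and $\gamma$, so $d_{\alpha\gamma} = c_{\gamma\alpha}$. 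Plugging back in shows that both sides of the desired identity equal $\sum_{\alpha,\beta} c_{\alpha\beta}\, v_\beta \otimes w_\alpha$.

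Conceptually, this is the statement that $e_T$ is the preimage of $\mathrm{id}_L$ under the isomorphism $L \otimes_F L \xrightarrow{\sim} \mathrm{End}_F(L)$ induced by the (nondegenerate) trace pairing; the identity $x \cdot \mathrm{id}_L = \mathrm{id}_L \cdot x$ in $\mathrm{End}_F(L)$ pulls back to the required equality. I do not expect any genuine obstacle: the only thing to watch is that the computation must be done in $L \otimes_F L$ (where bases exist and duality holds) rather than in $T \otimes_R T$, and that one then observes both sides of the resulting identity actually lie in the image of $T_1 \otimes_R T_2 \otimes_R F$, so that the conclusion transports back via $\iota_1 \otimes \iota_2$ to give the statement in the original ambient module (e.g., $M_3(C) \otimes_R L$ in the case of interest).
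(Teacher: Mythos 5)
Your proof is correct, but it takes a different route from the paper's. The paper's argument first reduces to the case that $x$ is invertible in $L = T\otimes_R F$ (checking the identity on a basis of $L$), and then observes that $\{x^{-1}w_\alpha\}$ is the dual basis of $\{xv_\alpha\}$, so that $\iota_1(x)\epsilon(T_1,T_2) = \sum_\alpha \iota_1(xv_\alpha)\otimes\iota_2(x^{-1}w_\alpha)\,\iota_2(x) = \epsilon(T_1,T_2)\iota_2(x)$; crucially this invokes the basis-independence of $\epsilon(T_1,T_2)$, which Definition \ref{epDef} asserts but does not prove. Your computation instead verifies the identity $\sum_\alpha (xv_\alpha)\otimes w_\alpha = \sum_\alpha v_\alpha\otimes(w_\alpha x)$ directly in $L\otimes_F L$ by extracting coefficients with the trace pairing, using $c_{\alpha\beta}=\tr_{L/F}(xv_\alpha w_\beta)$, $d_{\alpha\beta}=\tr_{L/F}(v_\beta w_\alpha x)$ and commutativity of $L$ to get $d_{\alpha\beta}=c_{\beta\alpha}$; equivalently, you identify $\epsilon$ with $\mathrm{id}_L$ under $L\otimes_F L\cong\mathrm{End}_F(L)$. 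This buys you two things: you need neither the reduction to invertible $x$ nor the unproved basis-independence of $\epsilon$ (your argument in fact yields that independence as a byproduct, since any choice of basis produces the element corresponding to $\mathrm{id}_L$), at the cost of a slightly longer coefficient computation. One small caution: the parenthetical claim that basis-independence "will follow by the same argument once this identity is established" is a bit loose as phrased — it follows from the $\mathrm{End}_F(L)$ interpretation, not from the commutation identity itself — but your main argument never uses it, so nothing is at stake. The transport back to the ambient module via $\iota_1\otimes\iota_2$ at the end is handled correctly, since $T_1\otimes_R T_2\otimes_R F\cong L\otimes_F L$ and the $\iota_j$ are algebra maps.
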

\begin{proof} It suffices to check the identity for the elements of a basis of $T \otimes F$ over $F$, and thus we may assume $x$ is invertible in $T \otimes F$.  Now, let $v_{\alpha}$, $w_{\alpha}$ be as in Definition \ref{epDef}. Then we have
\begin{align*} \iota_1(x) \epsilon(T_1,T_2) &= \sum_{\alpha}{ \iota_1(x) \iota_1(v_{\alpha}) \otimes \iota_2(w_{\alpha})} \\ &= \sum_{\alpha}{ \iota_1(xv_{\alpha}) \otimes \iota_2(x^{-1}w_{\alpha})\iota_2(x)} \\ &= \epsilon(T_1, T_2) \iota_2(x) \end{align*}
since $x^{-1}w_{\alpha}$ is the basis dual to $x v_{\alpha}$, and $\epsilon(T_1,T_2)$ is independent of the choice of basis. \end{proof}

Finally, with these preparations, we can state the lifting law.  Write $V_3(C) = C^3$ to be row vectors with entries in our associative composition ring $C$.  In the lifting law Theorem \ref{B2Jlift}, $X(A,B,\omega,\theta)$ is shown to be rank one.  In Theorem \ref{B2liftC}, we now show explicitly the way in which $X(A,B,\omega,\theta)$ is a ``pure tensor''.
\begin{theorem}\label{B2liftC}Let $\epsilon$, $X = X(A,B,\omega,\theta)$, $Y = Y(A,B,\omega,\theta)$, and $Q((A,B))$ be as above.  Then for all $v \in V_3(C)$, one has 
\begin{equation}\label{B2liftEq} Q((A,B))(v\epsilon)^* (v\epsilon) = (v Y v^*) X.\end{equation} \end{theorem}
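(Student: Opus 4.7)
The identity in question is polynomial in $(A, B) \in J^2$ and $v \in V_3(C)$, with both sides landing in $J \otimes L$. My plan is to prove it by combining equivariance with a base-point verification, following the prehomogeneity strategy indicated in the introduction: by Zariski density, after passing to an algebraic closure of $F$, it suffices to verify the identity on a dense open subset.

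The first step is to establish equivariance. Consider the action of $g \in \SL_3(C)$ by $(A, B) \mapsto (gAg^*, gBg^*)$ and $v \mapsto vg^*$. Since $N_6(g) = 1$, the binary cubic $f_{(A,B)}$ and hence the data $(T, \omega, \theta)$ are unchanged. From the defining formulas $S_r(\omega) = -A^\# B$ and $S_r(\theta) = B^\# A$, one computes that $S_r(\mu) \mapsto (g^*)^{-1}S_r(\mu) g^*$ for all $\mu \in T$, and hence $\epsilon \mapsto (g^*)^{-1}\epsilon g^*$. This gives $v\epsilon \mapsto v\epsilon g^*$, and so the left side transforms as $(v\epsilon)^*(v\epsilon) \mapsto g(v\epsilon)^*(v\epsilon) g^*$. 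On the right, each summand of $X = -A\theta + B\omega + A^\# \times B^\#$ lies in $J$ and transforms as $X \mapsto gXg^*$, while $Y \in J_T^\vee$ transforms oppositely, $Y \mapsto (g^*)^{-1}Yg^{-1}$, making $vYv^*$ invariant. Both sides therefore scale by $h \mapsto ghg^*$. Extending this to $\GL_3(C) \times \GL_2$ by tracking the $N_6(g)$ and $\det$ factors gives enough freedom to reduce, over the algebraic closure, to the base point $A = 1_3$ and $B = \diag(b_1, b_2, b_3)$ with distinct $b_i$.

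At this base point the verification becomes explicit. One has $L = T \otimes F \cong F^3$ with idempotent basis $e_1, e_2, e_3$, and the ring map $S_r: T_F \to M_3(C_F)$ is the spectral decomposition sending $e_i \mapsto E_{ii}$, so $\epsilon = \sum_i E_{ii} \otimes e_i$. Setting $\Delta_i = \prod_{j \neq i}(b_i - b_j)$, one computes
\[
(v\epsilon)^*(v\epsilon) = \sum_i n_C(v_i) E_{ii} \otimes e_i, \qquad X = -\sum_i \Delta_i E_{ii} \otimes e_i, \qquad Y = \sum_i \Delta_j \Delta_k E_{ii} \otimes e_i,
\]
where $\{i, j, k\} = \{1, 2, 3\}$ for each term. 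Then $(vYv^*) X = -\Delta_1 \Delta_2 \Delta_3 \sum_i n_C(v_i) E_{ii} \otimes e_i$, and Theorem \ref{B2Jlift} together with a discriminant calculation gives $Q((A, B)) = -\Delta_1 \Delta_2 \Delta_3$ at this base point, so the two sides agree.

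The main obstacle is the equivariance bookkeeping: one must carefully distinguish elements of $J$ (transforming by $g \cdot g^*$) from elements of $J^\vee$ (transforming by $(g^*)^{-1} \cdot g^{-1}$) and keep the $N_6(g)$ factors straight when going beyond $\SL_3(C)$. Once these transformation laws are catalogued, the base-point calculation is a straightforward diagonalization.
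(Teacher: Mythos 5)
Your proposal is correct and follows essentially the same route as the paper: both sides are shown to transform identically under the structure group (the content of Proposition \ref{equivariance}, which you re-derive for $\SL_3(C)$ and extend by tracking $N_6$ and $\det$ factors), and the identity is then checked explicitly at a diagonal base point -- the paper uses the single pair $A=1$, $B=\diag(d,-d,0)$, while you verify it on the family $B=\diag(b_1,b_2,b_3)$, with matching computations of $\epsilon$, $X$, $Y$ and $Q((A,B))=-\Delta_1\Delta_2\Delta_3$. The only small imprecision is the opening claim that the identity is polynomial in $(A,B)$: since $\epsilon$ involves the dual basis it is only rational, so one should (as the paper does) multiply through by $Q(f)$ before invoking Zariski density, which does not affect your argument.
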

Before giving the proof, we give various preparations.  Observe that 
\[ - A\theta_0 + B\omega_0 = \left(\begin{array}{cc} \omega_0 & \theta_0\end{array}\right)\left(\begin{array}{cc} &1\\ -1& \end{array}\right) \left(\begin{array}{c} A \\ B \end{array}\right).\]
Hence under the action of $\GL_2$, $g \cdot (A,B) = (A',B')$, which induces $(\omega_0,\theta_0) \mapsto (\omega_0',\theta_0')$, the expression $- A\theta_0 + B\omega_0$ gets multiplied by $\det(g)^2$.  Similarly one has that the quantity $-AS_{r}(\theta_0) + BS_r(\omega_0)$, which is in $H_3(C_F)$, gets multiplied by $\det(g)^2$ under the action of $\GL_2$ on $A,B$. It is thus convenient to write $X$ and $Y$ above in terms of $- A\theta_0 + B\omega_0$ and $-AS_{r}(\theta_0) + BS_r(\omega_0)$, which we will now do.
\begin{lemma}\label{X0} One has the equality
\[X = -\frac{1}{2}\left( A S(\theta_0) - B S(\omega_0)\right) - A \theta_0 + B \omega_0.\]
\end{lemma}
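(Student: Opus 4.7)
The plan is to reduce the identity to the elementary relation \eqref{eqAB}. First I would substitute $\omega_0 = \omega + \frac{b}{3}$ and $\theta_0 = \theta - \frac{c}{3}$ into $-A\theta_0 + B\omega_0$, obtaining
\[
-A\theta_0 + B\omega_0 \;=\; -A\theta + B\omega + \tfrac{c}{3}A + \tfrac{b}{3}B.
\]
Comparing this with $X = -A\theta + B\omega + A^{\#}\times B^{\#}$, the lemma is equivalent to the ``coefficient-free'' identity
\[
-\tfrac{1}{2}\bigl(A\,S(\theta_0) - B\,S(\omega_0)\bigr) \;=\; A^{\#}\times B^{\#} - \tfrac{c}{3}A - \tfrac{b}{3}B,
\]
so the $T$-linear piece of $X$ has done its job and what remains is to recover $A^{\#}\times B^{\#}$ from $S(\omega_0)$ and $S(\theta_0)$.

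Next, since $S=S_r$ is an $R$-algebra homomorphism (Proposition~\ref{prop:Srevec}) with $S_r(\omega) = -A^{\#}B$ and $S_r(\theta) = B^{\#}A$, I would expand
\[
A\,S_r(\theta_0) - B\,S_r(\omega_0) \;=\; AB^{\#}A + BA^{\#}B - \tfrac{c}{3}A - \tfrac{b}{3}B.
\]
The key input is now identity \eqref{eqAB}, which rearranges into $AB^{\#}A = (A,B^{\#})A - A^{\#}\times B^{\#}$ and $BA^{\#}B = (A^{\#},B)B - A^{\#}\times B^{\#}$. Together with $(A^{\#},B) = b$ and $(A,B^{\#}) = c$ (the middle coefficients of $f_{(A,B)}$), this yields
\[
A\,S_r(\theta_0) - B\,S_r(\omega_0) \;=\; \tfrac{2c}{3}A + \tfrac{2b}{3}B - 2\bigl(A^{\#}\times B^{\#}\bigr),
\]
and dividing by $-2$ produces precisely the required expression.

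The only obstacle is the bookkeeping of the $\tfrac{b}{3}, \tfrac{c}{3}$ shifts relating $(\omega,\theta)$ to the traceless basis $(\omega_0,\theta_0)$; conceptually, the lemma simply says that $A^{\#}\times B^{\#}$, which is the ``scalar part'' of $X$ with respect to the decomposition $J_T = J\oplus J\omega_0\oplus J\theta_0$, can be recovered from the eigenvalue equation $X\,S_r(\lambda) = \lambda X$ of Proposition~\ref{prop:Srevec} via the single special-Jordan identity \eqref{eqAB}.
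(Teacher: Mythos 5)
Your proof is correct, and it is essentially the paper's argument: both reduce the lemma to identity \eqref{eqAB} (equivalently its averaged form \eqref{eqAsharpBsharp}), using $S_r(\omega)=-A^\#B$, $S_r(\theta)=B^\#A$, the shifts $\omega_0=\omega+\tfrac{b}{3}$, $\theta_0=\theta-\tfrac{c}{3}$, and $(A^\#,B)=b$, $(A,B^\#)=c$; the only difference is that you isolate the constant term $A^\#\times B^\#-\tfrac{c}{3}A-\tfrac{b}{3}B$ first rather than substituting directly into $X$, which is a cosmetic rearrangement of the same computation.
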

\begin{proof} From (\ref{eqAB}) one gets 
\begin{equation}\label{eqAsharpBsharp} A^\# \times B^\# = \frac{1}{2}(cA - AB^\#A) + \frac{1}{2}(bB - BA^\#B),\end{equation}
and thus
\begin{align*} X &= -A(\theta_0 + c/3) + B(\omega_0 -b/3) + \frac{1}{2}(cA - AB^\#A) + \frac{1}{2}(bB - BA^\#B)\\ &= -A\theta_0 + B\omega_0 -\frac{1}{2} AB^\#A + \frac{c}{6} A -\frac{1}{2} BA^\#B + \frac{b}{6}B \\ &= -\frac{1}{2}\left( A S(\theta_0) - B S(\omega_0)\right) - A \theta_0 + B \omega_0.\end{align*}
This completes the proof.
\end{proof}

Define
\[Y_0 = Y_0(A,B,\omega,\theta) = \left(3(A \theta_0 - B\omega_0)\right)^\# + \left(3(A \theta_0 - B\omega_0)\right) \times \left(3(A S(\theta_0) - BS(\omega_0))\right).\]
\begin{lemma} One has $Y_0 = - 3 Y$. \end{lemma}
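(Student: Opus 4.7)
The plan is to verify $Y_0 = -3Y$ by expanding both sides in the basis $\{1, \omega_0, \theta_0\}$ of $T \otimes_R F$ and matching the nine coefficients carried on $\{A^\#, A \times B, B^\#\}$. First I would simplify the factor $Q := 3(AS(\theta_0) - BS(\omega_0))$. Using $S_r(\theta_0) = B^\# A - c/3$ and $S_r(\omega_0) = -A^\# B + b/3$ (which give $AS(\theta_0) - BS(\omega_0) = AB^\# A + BA^\# B - (c/3)A - (b/3)B$), together with identity (\ref{eqAB}) rearranged as in the proof of Lemma \ref{X0} to give $AB^\# A + BA^\# B = -2(A^\# \times B^\#) + cA + bB$, one collapses $Q$ to the element $-6(A^\# \times B^\#) + 2cA + 2bB$ of $J$, which is viewed in $J_T$ via $Q \otimes 1$.

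Next I would compute $P^\#$ and $P \times Q$ for $P = 3A\theta_0 - 3B\omega_0 \in J_T$, with $\#$ and $\times$ referring to the natural cubic norm structure on $J_T = J \otimes_R T$ (so $(U \otimes t)^\# = U^\# \otimes t^2$ and $(U_1 \otimes t_1) \times (U_2 \otimes t_2) = (U_1 \times U_2) \otimes t_1 t_2$). Direct expansion yields
\[ P^\# = 9 A^\# \theta_0^2 - 9 (A \times B)\,\omega_0 \theta_0 + 9 B^\# \omega_0^2 \quad \text{and} \quad P \times Q = 3 \theta_0 (A \times Q) - 3 \omega_0 (B \times Q). \]
The products $\theta_0^2, \omega_0^2, \omega_0 \theta_0$ are then replaced by their expressions in the basis $\{1, \omega_0, \theta_0\}$ via the multiplication table for $(1, \omega_0, \theta_0)$ recorded in subsubsection \ref{ssc:bcfs}, while $A \times Q$ and $B \times Q$ are evaluated using $A \times A = 2A^\#$, $B \times B = 2B^\#$, and the cubic norm identity $x \times (x^\# \times y) = n(x) y + (x, y) x^\#$, which specializes to $A \times (A^\# \times B^\#) = a B^\# + c A^\#$ and $B \times (A^\# \times B^\#) = d A^\# + b B^\#$. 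Collecting yields $Y_0$ as an explicit $R$-linear combination of the nine basis vectors $A^\# \mu, (A \times B) \mu, B^\# \mu$ for $\mu \in \{1, \omega_0, \theta_0\}$.

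Finally, I would take $-3Y$ from formula (\ref{Yexp}) and substitute $\theta = \theta_0 + c/3$, $\omega = \omega_0 - b/3$ so as to express $-3Y$ in the same basis; matching the nine coefficients against those of $Y_0$ then yields the lemma. The argument is purely mechanical, and the main (indeed only) obstacle is the bookkeeping: the $\theta_0$- and $\omega_0$-coefficients of $A^\#$, $A \times B$, and $B^\#$ each arise as sums of contributions from $P^\#$ and $P \times Q$ carrying opposite signs that must combine as predicted, while the constant coefficients absorb pieces coming both from the multiplication table in $T$ and from the shift $(\omega, \theta) \leftrightarrow (\omega_0, \theta_0)$ applied to $-3Y$.
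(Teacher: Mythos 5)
Your proposal is correct and is essentially the paper's own argument: the paper likewise collapses $3(AS(\theta_0)-BS(\omega_0))$ to $-6A^\#\times B^\# + 2cA + 2bB$ via the rearranged identity (\ref{eqAB})/(\ref{eqAsharpBsharp}), expands $(3(A\theta_0-B\omega_0))^\#$ with the $(1,\omega_0,\theta_0)$ multiplication table, evaluates $A\times(\cdot)$ and $B\times(\cdot)$ with the same cubic-norm identities, and then matches coefficients against (\ref{Yexp}). The only (immaterial) difference is that you convert $-3Y$ into the $(\omega_0,\theta_0)$ basis, whereas the paper converts $Y_0$ back into the $(\omega,\theta)$ basis before comparing.
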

\begin{proof} We have
\begin{align*} \left(A(3\theta_0)-B(3\omega_0)\right)^\# &= A^\#\left(3c\theta_0-9d\omega_0 + 2c^2 -6db\right) - (A\times B)\left(3b \theta_0 -3c\omega_0 +bc-9ad\right) \\ &\quad + B^\#\left(9a\theta_0-3b\omega_0 + 2b^2 -6ac\right) \end{align*}

Additionally, we have $S(3\theta_0) = 3B^\#A -c$, $S(3\omega_0) = -3A^\#B + b$, $AS(3\theta_0) - BS(3\omega_0) = 3(AB^\#A + BA^\#B) - (cA+bB)$.  Applying (\ref{eqAsharpBsharp}) yields
\[AS(3\theta_0) - BS(3\omega_0) = - 6A^\# \times B^\# + 2cA +2bB.\]
Using this, one gets
\[ A \times (AS(3\theta_0) - BS(3\omega_0)) = -2cA^\# + 2b A\times B - 6aB^\#\]
and
\[ B \times (AS(3\theta_0) - BS(3\omega_0)) = -6dA^\# + 2c A\times B - 2bB^\#.\]
We obtain
\begin{align*} Y_0 &= A^\#\left(-3c\theta_0 + 9d\omega_0 +2c^2-6bd\right) - A\times B\left(-3b\theta_0+3c\omega_0 + bc-9ad\right) \\ &\quad + B^\#\left(-9a\theta_0+3b\omega_0+2b^2-6ac\right) \\ &= A^\#\left(-3c\theta + 9d\omega +3c^2-3bd\right) - A\times B\left(-3b\theta+3c\omega + 3bc-9ad\right) \\ &\quad + B^\#\left(-9a\theta+3b\omega+3b^2-3ac\right).\end{align*}
Comparing with (\ref{Yexp}), we get $Y_0 = -3 Y$, as desired.
\end{proof}
If $m \in \GL_3(C)$, and $\mu \in \GL_1$, we get a map on $H_3(C) = J$ via $h \mapsto \mu mhm^*$.  Let $[\mu,m]$ denote the corresponding element of $M_J$.  Denote by $N(m)$ the degree $6$ reduced norm on $H_3(C)$.  One has $N(m) = n(mm^*)$.
\begin{proposition}\label{equivariance} The element $[\mu,m] \in M_J$ changes the quantities $f(x,y)= n(Ax + By), \omega_0, \theta_0, X,$ $Y_0, \epsilon$ as follows:
\begin{enumerate}
\item $f(x,y) \mapsto \mu^3 N(m) f(x,y)$;
\item $\omega_0 \mapsto \mu^3 N(m) \omega_0$;
\item $\theta_0 \mapsto \mu^3 N(m) \theta_0$;
\item $X \mapsto \mu^4 N(m) m X m^*$;
\item $Y_0 \mapsto \mu^8 N(m)^3 (m^*)^{-1} Y_0 m^{-1}$;
\item $\epsilon \mapsto (m^*)^{-1} \epsilon m^*$. \end{enumerate}
Under the action of $g \in \GL_2$,  the quantities $f(x,y)= n(Ax + By), \omega_0, \theta_0, X, Y_0, \epsilon$ change as follows:
\begin{enumerate}
\item $f(x,y) \mapsto f((x,y)g) = \det(g) \left(\det(g)^{-1} f((x,y)g)\right)$;
\item $\left(\begin{array}{c} \omega_0 \\ \theta_0 \end{array}\right) \mapsto \det(g) g \left(\begin{array}{c} \omega_0 \\ \theta_0 \end{array}\right)$;
\item $X \mapsto \det(g)^2 X$;
\item $Y_0 \mapsto \det(g)^4 Y_0$;
\item $\epsilon \mapsto \epsilon$. \end{enumerate}
\end{proposition}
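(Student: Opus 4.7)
The plan is to verify each transformation law by direct computation, using two basic facts. First, for invertible $h \in J = H_3(C)$ one has $(\mu m h m^*)^\# = n(\mu m h m^*)(\mu m h m^*)^{-1} = \mu^2 N(m)\, m^{*,-1} h^\# m^{-1}$, and this polynomial identity extends to all $h$; polarizing it yields $(\mu m X m^*) \times (\mu m Y m^*) = \mu^2 N(m)\, m^{*,-1}(X \times Y) m^{-1}$ for all $X, Y \in J$. Second, the scaling rule from subsubsection \ref{ssc:bcfs}: rescaling the binary cubic $f$ by a constant $k$ rescales $\omega_0$ and $\theta_0$ by $k$ in the associated cubic ring. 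Together with the explicit definitions of $X$, $Y_0$, and $\epsilon$, these two inputs suffice.

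For the $M_J$-action by $[\mu,m]$, set $k := \mu^3 N(m)$. Item (1) is immediate from $n(\mu m h m^*) = k\, n(h)$ applied to $h = Ax+By$. Items (2) and (3) follow because $b$ and $c$ rescale by $k$ along with $\omega, \theta$, so $\omega_0 = \omega + b/3$ and $\theta_0 = \theta - c/3$ rescale by $k$ as well. For item (4), substitute into $X = -A\theta + B\omega + A^\#\times B^\#$: the first two terms scale by $\mu\cdot k = \mu^4 N(m)$ with conjugation $m(\cdot)m^*$, and rewriting the third term via \eqref{eqAsharpBsharp} as $\tfrac{1}{2}(cA - AB^\#A + bB - BA^\#B)$ shows it scales the same way. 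Item (5) for $Y_0$ runs identically: both $A\theta_0 - B\omega_0$ and $AS(\theta_0) - BS(\omega_0)$ transform by $\mu^4 N(m)\, m(\cdot)m^*$, since $S_r(\omega) = -A^\#B$ transforms by $\mu^3 N(m)\, m^{*,-1}(\cdot)m^*$ and multiplying by $A$ on the left supplies an extra factor of $\mu$ together with a left-hand $m$; then $\#$ and $\times$ of these produce the factor $(\mu^4 N(m))^2 N(m) = \mu^8 N(m)^3$ with conjugation $m^{*,-1}(\cdot)m^{-1}$. For item (6), take the basis $(1,\omega,\theta)$ of $T$ and its dual $(w_1,w_2,w_3)$ in $L = T\otimes F$; after the action, $(1, k\omega, k\theta)$ is a basis of the new ring $T'$ inside the unchanged $L$, with dual basis $(w_1, w_2/k, w_3/k)$, and since $S_r(\omega), S_r(\theta)$ transform by $k\,m^{*,-1}(\cdot)m^*$, the factors of $k$ cancel, yielding $\epsilon \mapsto m^{*,-1}\epsilon m^*$.

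For the $\GL_2$-action by $g = \mm{p}{q}{r}{s}$, items (1) and (2) follow from subsubsection \ref{ssc:bcfs} and the definition of the action. For item (3), I use Lemma \ref{X0}: a short matrix computation with $\binom{\omega_0'}{\theta_0'} = \det(g)\, g\binom{\omega_0}{\theta_0}$ and $\binom{A'}{B'} = g\binom{A}{B}$ gives $-A'\theta_0' + B'\omega_0' = \det(g)^2(-A\theta_0 + B\omega_0)$; an analogous computation, using \eqref{eqAsharpBsharp} to expand $AS(\theta_0) - BS(\omega_0)$ bilinearly in $(A,B)$ and $(\omega_0,\theta_0)$, gives the same scaling, so Lemma \ref{X0} assembles the two parts to $X \mapsto \det(g)^2 X$. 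Item (4) gives $\det(g)^4$ because $Y_0$ is quadratic in the same quantities. Item (5) on $\epsilon$ is immediate: the $\GL_2$-action does not touch the matrix factors $S_r(\cdot) \in M_3(C)$, and the rescalings of $\omega_0,\theta_0$ and their dual basis in $L$ cancel.

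The only step requiring real care is item (6) under $M_J$: one must track that $L = T\otimes F$ is unchanged as an $F$-algebra (the new ring $T'$ being a different $R$-sublattice of the same $L$), that the dual-basis rescaling interacts correctly with the conjugation by $m^*$, and that $S_r(1) = 1$ is fixed by that conjugation. All other items are mechanical applications of the cubic-norm identities already developed in the proof of Theorem \ref{B2liftC}.
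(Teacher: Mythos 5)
Your proposal is correct and is essentially the paper's own proof: the paper disposes of this proposition with the single line that "these are straightforward computations, especially by using the equivariance properties of the expression $A\theta_0 - B\omega_0$," and your write-up carries out exactly those computations, using the standard conjugation identities $(\mu mhm^*)^\# = \mu^2 N(m)(m^*)^{-1}h^\# m^{-1}$, the rescaling behavior of $(\omega_0,\theta_0)$, the identity (\ref{eqAsharpBsharp}), and Lemma \ref{X0}, all of which check out. The one place your justification is looser than it should be is $\epsilon \mapsto \epsilon$ under $\GL_2$: the individual matrices $S_r(\omega_0), S_r(\theta_0)$ \emph{do} change (e.g.\ $-(A')^\#B' \neq -A^\#B$ in general), and what one actually verifies (on the generators of $\GL_2$, using identities such as $(A\times B)B = (B^\#,A) - B^\#A$) is that the ring homomorphism $S_r : L \to M_3(C_F)$ is unchanged under the identification $(\omega_0',\theta_0')^t = \det(g)\, g\,(\omega_0,\theta_0)^t$, after which the basis-independence of $\epsilon$ (which covers an arbitrary change of $F$-basis of $L$, not merely a rescaling) yields $\epsilon' = \epsilon$.
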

\begin{proof} These are straightforward computations, especially by using the equivariance properties of the expression $A\theta_0 - B \omega_0$. \end{proof}

Finally, set 
\[V=V(1,\omega,\theta) = \tr_{L/F}\left(\left(\begin{array}{c}1\\ \omega \\ \theta \end{array}\right)\left(\begin{array}{ccc} 1 &\omega &\theta \end{array}\right)\right) = \left(\begin{array}{ccc} \tr(1) &\tr(\omega)&\tr(\theta) \\ \tr(\omega) &\tr(\omega^2) &\tr(\omega \theta) \\ \tr(\theta) &\tr(\theta \omega) &\tr(\theta^2) \end{array}\right).\]
Suppose $x_i, y_j$ are in $T$, $(1,\omega,\theta)m = (x_1,x_2,x_3)$ and $(1,\omega,\theta)n = (y_1,y_2,y_3)$, with $m, n \in M_3(R)$.  Then $(\tr_{T/R}(x_iy_j)) = \,^tm V n$.  It follows that $Q(f) \epsilon = \det(V)\epsilon$ is a polynomial in $1,\omega,\theta, A,$ and $B$.
\begin{proof}[Proof of Theorem \ref{B2liftC}] Multiplying both sides of (\ref{B2liftEq}) by $Q(f)$, this identity becomes a polynomial identity in $A, B, \omega, \theta$ and $v$.  Thus to prove the lifting law, it suffices to check that both sides of (\ref{B2liftEq}) behave the same way under the action of $\GL_2 \times M_J$, and that one has the equality for all $v$ in a particular non-degenerate case.  

For the equivariance, suppose $(g, [\mu,m]) \in \GL_2 \times M_J$ acts on $A,B$ as defined above.  Define an action of $\GL_3(C)$ on $V_3(C)$ by $v \mapsto vm^*$.  Under these actions, one finds using Proposition \ref{equivariance} that both of (\ref{B2liftEq}) change via
\[Z \mapsto \det(g)^{6} \mu^{12} N(m)^{4} mZm^*.\]

For the non-degenerate case, take $A = 1$ and $B = \mathrm{diag}(d,-d,0)$ for some nonzero $d \in F$.  Then $f(x,y) = x^3 - d^{2}xy^2 = x(x-dy)(x+dy)$, $Q(f) = 4d^6$, so the pair $(A,B)$ is non-degenerate.  We have $S_{r}(\omega) = \diag(-d,d,0)$, $S_{r}(\theta) = \diag(0,0,-d^2)$. We identity $L$ with $F \times F \times F$ via the map that takes $\omega \mapsto (-d,d,0)$ and $\theta \mapsto (0,0,-d^2)$.  This is an $F$-algebra isomorphism.

Since we can compute $\epsilon$ in any basis, take the basis $\epsilon_1 = (1,0,0)$, $\epsilon_2 = (0,1,0)$, $\epsilon_3 = (0,0,1)$ of $L$. This basis is self-dual, and we have $S_{r}(\epsilon_1) = \diag(1, 0,0)$, $S_{r}(\epsilon_2) = \diag(0,1,0)$, $S_{r}(\epsilon_3) = \diag(0,0,1)$.  We obtain
\[ \epsilon = \left(\begin{array}{ccc} (1,0,0) && \\ & (0,1,0)& \\ & & (0,0,1) \end{array}\right) =: E.\]
We now compute $X$.  One obtains
\begin{align*} X &= (0,0,d^2) 1 + \left(\begin{array}{ccc} (-d^2,d^2,0) & &\\ & (d^2,-d^2,0) & \\ & & 0 \end{array}\right) - d^2\left(\begin{array}{ccc} 1 & & \\ & 1 & \\ & & 0 \end{array}\right) \\ &= d^2(-2,-2,1)E. \end{align*}
Thus 
\[Y = \frac{1}{2} X \times_T X = d^4 (-2,-2,1)^\# \frac{1}{2} E \times_{T} E = d^4(-2,-2,4) E.\]

Suppose $v = (v_1, v_2, v_3)$, with $v_1, v_2, v_3$ in the composition algebra $C$.  For our choice of $(A,B)$, we see that both $Q(f) (v\epsilon)^* (v\epsilon)$ and $(v Y v^*) X$ are equal to $4 d^6 (n(v_1), n(v_2), n(v_3))E$. This completes the proof. \end{proof}

In the next subsection, we will consider the orbit problem for the action of $\GL_2(R) \times \SL_3(C)$ on $H_3(C)^2$ when $R = F$ is a field.  Before doing this, we finish this subsection with a preparatory result.

The following proposition says that $Y$ is also an eigenvector for the action of $S(T)$.
\begin{proposition}\label{Y_eig} Let $R, T, Y$ be as above, and drop (for this proposition) our running assumption that $Q((A,B)) \neq 0$, so that instead $T$ can be arbitrary.  One has $S_r(\lambda) Y = \lambda Y = YS_{\ell}(\lambda)$ for all $\lambda$ in $T$. \end{proposition}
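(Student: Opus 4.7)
The plan is to verify $S_r(\lambda)Y = \lambda Y$ by direct computation; the companion equality $\lambda Y = YS_\ell(\lambda)$ then follows by applying the involution $*$, using $Y = Y^*$ (since $Y \in H_3(C) \otimes T$), $S_r(\lambda)^* = S_\ell(\lambda)$ by definition, and that $\lambda \in T$ is $*$-fixed. By $R$-linearity in $\lambda$ it suffices to check the cases $\lambda = \omega$ and $\lambda = \theta$, and I handle $\lambda = \omega$ in detail, the case $\lambda = \theta$ being entirely analogous. Using the explicit expansion (\ref{Yexp}), write $Y = \alpha A^\# + \beta(A \times B) + \gamma B^\#$ with $\alpha, \beta, \gamma \in T$. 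Multiplying on the left by $\omega$ and reducing via $\omega^2 = -ac + a\theta - b\omega$ and $\omega\theta = -ad$ gives $\omega Y$ with explicit $T$-coefficients on $A^\#$, $A \times B$, and $B^\#$.

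On the other side, computing $S_r(\omega) Y = -A^\# B \cdot Y$ requires evaluating the three matrix products $A^\# B A^\#$, $A^\# B B^\#$, and $A^\# B (A \times B)$. The first two follow directly from the identities recorded in Section \ref{special_Id}: $A^\# B B^\# = dA^\#$, since $BB^\# = n(B) = d$; and $A^\# B A^\# = bA^\# - a(A \times B)$, by writing $A^\# B = b - (A \times B)A$ (using $(A \times B)A + A^\# B = (A^\#, B) = b$) and then applying $AA^\# = a$. The crucial and nontrivial identity is $A^\# B (A \times B) = cA^\# - aB^\#$. To establish it, I again write $A^\# B = b - (A\times B) A$, reducing the problem to computing $(A\times B) A (A\times B)$; then the special-CNS identity $x\times(y\times z) = (x,y)z + (x,z)y - (yxz + zxy)$ of (\ref{eqn:Uyz}), applied with $x = A$ and $y = z = A\times B$, expresses $(A\times B)A(A\times B)$ in terms of $(A, A\times B)$ and $A \times (A \times B)^\#$. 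Simplifying the latter using $(A\times B)^\# = (A, B^\#)A + (A^\#, B)B - A^\#\times B^\#$, $A \times (A^\# \times B^\#) = n(A)B^\# + (A, B^\#)A^\# = aB^\# + cA^\#$, and $(A, A\times B) = 2(A^\#, B) = 2b$ yields the claimed formula.

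With the three products in hand, the verification of $S_r(\omega) Y = \omega Y$ reduces to coefficient matching in the free $T$-module spanned by $A^\#$, $A\times B$, and $B^\#$. The coefficients of $A^\#$, $A\times B$, and $B^\#$ in $S_r(\omega) Y$ come out respectively to $-\alpha b - \beta c - \gamma d$, $a\alpha$, and $a\beta$, and each matches the corresponding coefficient of $\omega Y$ after reducing in $T$. The main obstacle is proving the identity $A^\# B (A\times B) = cA^\# - aB^\#$: it requires careful chaining of several identities from Section \ref{special_Id}, and without it the approach stalls. Everything else is direct bookkeeping.
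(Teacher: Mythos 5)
Your proposal is correct and follows essentially the same route as the paper: reduce $\lambda Y = YS_{\ell}(\lambda)$ to $S_r(\lambda)Y = \lambda Y$ via the involution, check only $\lambda = \omega, \theta$, compute the products $A^\#BA^\#$, $A^\#BB^\#$, $A^\#B(A\times B)$ (and their $\theta$-counterparts), and match coefficients of $A^\#, A\times B, B^\#$ using the multiplication table of $T$; your stated coefficients $-\alpha b - \beta c - \gamma d$, $a\alpha$, $a\beta$ agree with the paper's explicit expansion. The only remark worth making is that the identity you single out as the crucial obstacle, $A^\#B(A\times B) = cA^\# - aB^\#$, is just the identity $y^\#x(x\times y) = (x^\#,y)y^\# - n(y)x^\#$ recorded in subsection \ref{special_Id} with $x = B$, $y = A$, so your longer derivation via (\ref{eqn:Uyz}) is valid but unnecessary.
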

Note that right and left have been switched.
\begin{proof}[Proof of Proposition \ref{Y_eig}] Since $YS_{\ell}(\lambda) = (S_{r}(\lambda)Y)^*$, it suffices to check that $S_{r}(\theta)Y = \theta Y$ and $S_{r}(\omega)Y = \omega Y$.  

From the identities of subsection \ref{special_Id}, one has $B^\#A(A \times B) = bB^\# - dA^\#$ and $B^\#AB^\# =-d (A \times B) + cB^\#$.  Thus
\begin{align*} S_{r}(\theta)Y &= \left(c\theta - 3d\omega -c^2+bd\right)(aB^\#) + \left(-b\theta + c\omega +bc - 3ad\right)(bB^\#-dA^\#)\\ &\quad + \left(3a\theta - b\omega - b^2+ ac\right)(-d (A\times B)+ cB^\#) \\ &= \left(db\theta - dc\omega -dbc + 3ad^2\right) A^\# + \left(-3ad\theta + db\omega + db^2 -dac\right) A\times B \\ &\quad + \left((4ac - b^2)\theta - 3ad \omega - 2abd\right)B^\#.\end{align*}
Computing $\theta Y$ using $\theta^2 = - bd + c\theta - d\omega$ and $\omega \theta = -ad$, one obtains the same expression, and thus $\theta Y = S_r(\theta)Y$, as desired.

The proof of the equality $S_{r}(\omega)Y = \omega Y$ is similar; for completeness, we give the details.  From the identities of subsection \ref{special_Id}, one obtains $-A^\#BA^\# = -bA^\# + a(A\times B)$, $-A^\#B(A\times B) = -cA^\# + aB^\#$, $-A^\#B(B^\#) = -dA^\#$.  Thus
\begin{align*} S_r(\omega)Y &= \left(c\theta - 3d\omega -c^2+bd\right)(-bA^\# + a (A\times B)) + \left(-b\theta + c\omega +bc - 3ad\right)(-cA^\#+aB^\#)\\ &\quad + \left(3a\theta - b\omega - b^2+ ac\right)(-dA^\#) \\ &= \left(-3ad\theta + (4bd-c^2)\omega + 2ad c\right)A^\# + \left(ac\theta - 3ad\omega - ac^2 + abd\right) A\times B \\ &\quad + \left(-ab\theta + ac\omega + abc - 3a^2d\right)B^\#.\end{align*}
Using now that $\omega^2 = -ac + a\theta - b\omega$, one finds for $\omega Y$ the same expression, and thus $S_{r}(\omega)Y = \omega Y$.  This completes the proof. \end{proof}

\subsection{Orbits over a field}\label{subsec:B2F} In this subsection, $R=F$ is a field.  An element $(A,B) \in H_3(C)^2$ is said to be non-degenerate if $Q((A,B)) \neq 0$. In this subsection we use the lifting law to parametrize the orbits of $\SL_3(C)$ on the non-degenerate elements of $H_3(C)^2$.  

\subsubsection{The invariant $\mu(X)$} Before stating the parametrization theorem, we associate to a non-degenerate pair $(A,B)$ two invariants.  The first invariant is the cubic \'{e}tale $F$-algebra $L$, determined by $f_{(A,B)}(x,y) = n(Ax+By)$, together with its good basis $1,\omega,\theta$.  The second invariant involves the rank one element $X = X(A,B,\omega,\theta) = -A\theta + B\omega + A^\#\times B^\#$.  Note that if $v \in V_3(C_L)$, then $v^* v$ in $H_3(C_L)$ is rank one; the second invariant associated to $(A,B)$ measures how far $X$ is from a ``pure tensor" $v^*v$ in $H_3(C_L)$.

In the following lemma and also below, we say a vector $v$ in $V_3(A)$, for an associative composition algebra $A$, is \emph{primitive} if $v$ can be extended to a basis of $V_3(A)$.
\begin{lemma}\label{lem:rk1Field} Suppose $k$ is a field, $A$ is an associative composition algebra over $k$, and $Y$ in $H_3(A)$ is rank one.  Then, there exists a primitive column vector $v_0 \in V_3(A)$ and an element $\mu \in k^\times$ so that $Y = \mu v_0 v_0^*$.  This element $\mu$ is unique up to multiplication by an element of $n(A^\times)$. \end{lemma}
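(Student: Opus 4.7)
The plan is to use the $\GL_3(A)$ action on $H_3(A)$ by $Y \mapsto gYg^{*}$ to reduce to the case where $Y$ has an invertible $(1,1)$-entry, construct the decomposition there by a direct calculation, and then handle uniqueness by a computation against a single auxiliary vector.

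First I would establish the reduction. Suppose $Y \neq 0$. The map $v \mapsto v^{*} Y v$ is a $k$-valued quadratic form on $V_3(A)$ (since $(v^{*} Y v)^{*} = v^{*} Y^{*} v = v^{*} Y v$ lies in the $*$-fixed subring $k$ of $A$), and expanding in matrix coordinates,
\[ v^{*} Y v = \sum_{i} c_{i}\, n(v_{i}) + \sum_{i < j} \tr_{A}(v_{i}^{*} Y_{ij} v_{j}), \]
where $Y_{ii} = c_{i}$; non-degeneracy of the trace pairing on $A$ shows this form vanishes identically only if $Y = 0$. Hence $\{v \in V_3(A): v^{*} Y v \neq 0\}$ is a nonempty Zariski-open subvariety of $V_3(A)$, as is the set of primitive vectors. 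Since $V_3(A)$ is an irreducible affine variety and $k$ is infinite (being of characteristic zero), the two opens meet in a $k$-rational point, producing a primitive $v$ with $v^{*}Yv \in k^{\times}$. Because $\GL_3(A)$ acts transitively on primitive column vectors, there is $g \in \GL_3(A)$ with $g^{*}e_{1} = v$, and then $Y' := gYg^{*}$ is a rank one element of $H_3(A)$ whose $(1,1)$-entry equals $v^{*}Yv \in k^{\times}$.

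Writing
\[ Y' = \left(\begin{array}{ccc} c_1 & a_3 & a_2^{*} \\ a_3^{*} & c_2 & a_1 \\ a_2 & a_1^{*} & c_3 \end{array}\right) \]
with $c_1 \in k^{\times}$, the vanishing of $(Y')^{\#}$, computed via the formula in Example \ref{ex:H3C}, yields in particular the identities $c_1 c_2 = n(a_3)$, $c_1 c_3 = n(a_2)$, $a_3^{*} a_2^{*} = c_1 a_1$, and $a_2 a_3 = c_1 a_1^{*}$. Setting
\[ v_0' := (1, \; a_3^{*}/c_1, \; a_2/c_1)^{t} \in V_3(A), \qquad \mu := c_1 \in k^{\times}, \]
a direct computation of $\mu v_0'(v_0')^{*}$ using these identities (and associativity of $A$, to simplify products such as $a_3^{*}a_3 = n(a_3)$) recovers every entry of $Y'$. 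The vector $v_0'$ is primitive because its first coordinate is $1$. Putting $v_0 := g^{-1} v_0'$, which remains primitive, gives $Y = g^{-1} Y' (g^{-1})^{*} = \mu v_0 v_0^{*}$ as required.

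For uniqueness, suppose $Y = \mu v_0 v_0^{*} = \mu' v_0' (v_0')^{*}$ with $v_0, v_0'$ primitive and $\mu, \mu' \in k^{\times}$. Apply the first step to $Y$ to produce a primitive $w \in V_3(A)$ with $w^{*} Y w \in k^{\times}$; computing this invariant in both decompositions gives $\mu\, n(v_0^{*}w) = \mu'\, n((v_0')^{*}w) \in k^{\times}$, so both $v_0^{*}w$ and $(v_0')^{*}w$ lie in $A^{\times}$. Comparing
\[ Y w = v_0 \cdot (\mu v_0^{*} w) = v_0' \cdot (\mu' (v_0')^{*} w) \]
and using primitivity of $v_0$ (so that $v_0 b = 0$ forces $b = 0$) yields $v_0' = v_0 a$ for a unique $a \in A^{\times}$. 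Then $\mu' v_0' (v_0')^{*} = \mu' v_0 (a a^{*}) v_0^{*} = \mu' n(a)\, v_0 v_0^{*}$, and comparison with $\mu v_0 v_0^{*}$ gives $\mu = \mu' n(a)$, i.e., $\mu/\mu' \in n(A^{\times})$. The main subtlety is the first step, where one must produce a primitive $v$ with $v^{*}Yv \in k^{\times}$ as an actual $k$-rational point; this is exactly what forces the combination of the quadratic-form computation with irreducibility of $V_3(A)$ and the infinitude of $k$.
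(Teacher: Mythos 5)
Your proof is correct, and while the uniqueness half is essentially the paper's argument (both rest on the computation $w^*Yw = \mu\, n(v_0^*w)$ and the fact that a primitive $v_0$ lets this hit $k^\times$), the existence half takes a genuinely different route. The paper performs a Gaussian-elimination style reduction inside $\SL_3(A)$: unipotent matrices to make some diagonal entry of $Y$ nonzero, a permutation to move it to the $(1,1)$ slot, further unipotents to clear the first row and column, and then rank-one-ness forces the reduced element to be exactly $\mu e_{11}$, so $v_0$ is obtained by transporting $e_1$ back. You instead locate a primitive $v$ with $v^*Yv \in k^\times$ by a Zariski-density argument (nonvanishing of the quadratic form $v \mapsto v^*Yv$, which you correctly show is nonzero via non-degeneracy of the trace pairing, intersected with the primitive locus), conjugate so that $c_1 \in k^\times$, and then read off the decomposition in closed form from the vanishing of the adjoint, via $v_0' = (1, a_3^*/c_1, a_2/c_1)^t$ and $\mu = c_1$ — a computation I checked against the adjoint formula of Example \ref{ex:H3C}, and it does recover every entry of $Y'$. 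What each approach buys: the paper's elimination is more hands-on and needs no density argument, while yours yields an explicit formula for $(v_0,\mu)$ once a unit diagonal entry is in place. The only point you assert without justification is that the primitive locus in $V_3(A)$ is nonempty Zariski-open; this is true (primitivity is equivalent to unimodularity, an open rank condition on the $k$-linear map $\ell \mapsto \ell v$, and unimodular columns over the semisimple ring $A$ can be completed to a basis), but it deserves a sentence, since over a split $A$ it is not the same as $v \neq 0$. Your parenthetical appeal to primitivity of $v_0$ in the uniqueness step is not actually needed: $v_0' = v_0 b(b')^{-1}$ follows directly from invertibility of $b'$, and $\mu = \mu' n(a)$ then follows since $v_0v_0^* \neq 0$.
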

\begin{proof} First assume that a primitive $v_0$ and a $\mu \in k^\times$ has been found; we check the uniqueness of $\mu$.  To do this, note that $v^*Yv = \mu n(v^*v_0)$.  Since $v_0$ is primitive, by definition $v_0 = g \cdot (1,0,0)^*$ for some $g \in \GL_3(A)$, and thus there is $v$ so that $n(v^*v_0) = 1$.  Hence $\{v^*Yv: v \in V_3(A)\} \cap k^\times$ is nonempty.  The formula $v^*Yv = \mu n(v^*v_0)$ then shows that
\[\{v^*Yv: v \in V_3(A)\} \cap k^\times = \mu n(A^\times),\]
which implies the desired uniqueness property.

For the existence, we show that there exists $g \in \SL_3(A)$ so that
\[gYg^* = Z = \mu e_{11} = \mu \left(\begin{array}{c} 1 \\ 0 \\ 0\end{array}\right) \left(\begin{array}{ccc} 1 &0 &0\end{array}\right).\]
Hence setting $v_0 = g^{-1} (1,0,0)^*$ gives the lemma, once $g$ is found.

To find such a $g$, one can use uppertriangular unipotent matrices in $\SL_3(A)$ to move $Y$ to an element $Z$ with some $Z_{ii} \neq 0$, and thus in $k^\times$ since $k$ is a field.  Using the permutation matrices $S_3 \subseteq \GL_3(A)$, one can assume $Z_{11} = \mu \in k^\times$.  Then again using unipotent matrices, one can clear out the top row of $Z$, so that $Z_{12} = Z_{13} = Z_{21} = Z_{31} = 0$.  Since $Y$ and thus $Z$ is rank one, we then get that $Z = \mu e_{11}$, as desired.\end{proof}

We require one more lemma before defining the invariant $\mu((A,B)) = \mu(X(A,B,\omega,\theta))$.
\begin{lemma}\label{nX0} Assume $Q(f) \neq 0$, so that $L$ is \'{e}tale, and $X = X(A,B,\omega,\theta)$.  Suppose $\sigma: L \rightarrow k$ is a map of $F$-algebras to a field $k$.  Then $\sigma(X) \neq 0$.  Furthermore, $n(\tr_{L/F}(X))= Q(f)$, so in particular $\tr_{L/F}(X)$ in $J$ is invertible. \end{lemma}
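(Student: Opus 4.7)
The plan is to first establish the norm identity $n_{J}(\tr_{L/F}(X)) = Q(f)$ and then extract both non-vanishing conclusions from it.

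For the identity itself, I would use a standard prehomogeneity argument. Both sides are polynomial in $(A,B) \in J^2$: for the right-hand side this is the discriminant of the binary cubic $f = f_{(A,B)}$, and by Remark \ref{rmk:katoYukie} one can write the left-hand side explicitly as $\tr_{L/F}(X) = 3(A^\# \times B^\#) - (A,B^\#)A - (A^\#,B)B$. It is therefore enough to check that both sides transform identically under $\GL_2 \times M_J$ and then verify the identity at a single non-degenerate point. The equivariance follows from Proposition \ref{equivariance}: $X \mapsto \det(g)^{2} X$ under $g \in \GL_2$ and $X \mapsto \mu^{4} N(m) \cdot mXm^*$ under $[\mu,m] \in M_J$, so $\tr_{L/F}(X)$ transforms in the same way, and applying $n_J$ (which satisfies $n_J(mhm^*) = N(m) n_J(h)$ on $J = H_3(C)$) yields the factors $\det(g)^{6}$ and $\mu^{12}N(m)^{4}$ respectively, matching the transformation of $Q(f)$ recorded in Proposition \ref{equivariance}. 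For the test point, I would take $A = 1_3$ and $B = \diag(d,-d,0)$, as in the proof of Theorem \ref{B2liftC}. Here $f(x,y) = x^{3} - d^{2}xy^{2}$ and $Q(f) = 4d^{6}$. Splitting $L \otimes_{F} \bar{F} \cong \bar{F}^{3}$ via $\omega \mapsto (-d,d,0)$ and $\theta \mapsto (0,0,-d^{2})$, a direct calculation gives the three components of $X$ as $\diag(-2d^{2},0,0)$, $\diag(0,-2d^{2},0)$, $\diag(0,0,d^{2})$, so $\tr_{L/F}(X) = \diag(-2d^{2},-2d^{2},d^{2})$ and $n_J(\tr_{L/F}(X)) = 4d^{6} = Q(f)$. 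Because the $\GL_2 \times M_J$-orbit of any non-degenerate point is Zariski-dense in $J^2$, the polynomial identity extends to all of $J^2$.

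To deduce $\sigma(X) \neq 0$, I would base-change to $\bar{F}$. Since $L$ is \'{e}tale, $L \otimes_F \bar{F} \cong \bar{F}^3$, and any $F$-algebra map $\sigma \colon L \to k$ extends, after choosing an embedding $\bar{F} \hookrightarrow \bar{k}$, to one of the three coordinate projections. Writing $X = (X_1,X_2,X_3) \in J_{\bar F}^{3}$ accordingly, each $X_i$ satisfies $X_i^\# = 0$ (since the vanishing of $X^\#$ in $J_L$ is preserved under base change), hence has rank at most one. If some $X_i$ were zero, say $X_1 = 0$, then $\tr_{L/F}(X) = X_2 + X_3$ with $X_2^\# = X_3^\# = 0$ and $n_J(X_2) = n_J(X_3) = 0$. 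Expanding $(X_2 + X_3)^\# = X_2 \times X_3$ and using $(X_i, X_i \times X_j) = 2(X_i^\#, X_j) = 0$ gives
\[
n_J(X_2 + X_3) = \tfrac{1}{3}(X_2 + X_3, X_2 \times X_3) = 0,
\]
contradicting the identity $n_J(\tr_{L/F}(X)) = Q(f) \neq 0$ just established. Hence every $X_i \neq 0$, so $\sigma(X) \neq 0$.

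The invertibility of $\tr_{L/F}(X)$ in $J = H_3(C)$ is then immediate from $n_J(\tr_{L/F}(X)) = Q(f) \neq 0$, since in any cubic norm structure an element $x$ with $n_J(x) \in F^\times$ has inverse $x^\#/n_J(x)$. The only non-routine step I foresee is the test-point computation, which is careful bookkeeping but uses no new ideas; everything else reduces to the equivariance already packaged in Proposition \ref{equivariance}, Zariski density from prehomogeneity, and routine cubic norm structure identities.
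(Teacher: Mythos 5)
Your proof is correct, but it reaches the key identity $n(\tr_{L/F}(X)) = Q(f)$ by a different mechanism than the paper. The paper runs no equivariance/test-point argument for this lemma: it base-changes to a splitting $L \otimes_F \bar F \cong \bar F^3$, writes $X = \sum_i X_i \epsilon_i$, computes $Y = \tfrac12 X \times_L X$ in terms of the idempotents, and observes that for rank-one $X$ one has $(X,Y) = (X_1,X_2,X_3) = n(\tr_{L/F}(X))$; the identity $n(\tr_{L/F}(X)) = Q(f)$ then follows at once from the pairing formula $(X,Y) = Q((A,B))$ already established in Theorem \ref{B2Jlift}, and the non-vanishing of each $X_i$ (hence $\sigma(X) \neq 0$) falls out of the multilinearity of the trilinear form. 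You instead re-derive $n(\tr_{L/F}(X)) = Q(f)$ from scratch by the prehomogeneity technique (equivariance via Proposition \ref{equivariance}, the same test point $A = 1_3$, $B = \diag(d,-d,0)$ that appears in the proof of Theorem \ref{B2liftC}, and Zariski density), and you deduce non-vanishing from a rank-one norm expansion rather than from the trilinear form. Both routes use the rank-one statement of Theorem \ref{B2Jlift}; yours is independent of the ``tedious'' verification $(X,Y) = Q((A,B))$ there, so it gives an alternative confirmation of that normalization, at the cost of a separate equivariance check and of tacitly using (as the paper itself does in proving Theorem \ref{B2liftC}) that over $\bar F$ the non-degenerate pairs form a single dense orbit. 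What the paper's route buys in exchange is brevity given Theorem \ref{B2Jlift} and, more importantly, the reusable intermediate fact $(X, \tfrac12 X \times_L X) = n(\tr_{L/F}(X))$ for rank-one $X$, which is invoked again later (e.g., in Proposition \ref{nunC} and Lemma \ref{QimpliesAdmis}); your version proves the lemma but does not record that identity.
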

\begin{proof} We first make some general remarks.  Assume first that $X$ is an arbitrary element of $J\otimes_FL$, for some cubic norm structure $J$.  Extending scalars from $F$ to some algebraic closure $k$ of $L$, we can assume $L = k \times k \times k$ and $X = (X_1,X_2,X_3) \in J_k\times J_k \times J_k$.  In other words, $X =  X_1 \epsilon_1 + X_2 \epsilon_2 + X_3 \epsilon_3$, with $X_i \in J_k$ and $\epsilon_1 = (1,0,0)$, $\epsilon_2 = (0,1,0)$, $\epsilon_3 = (0,0,1)$ the primitive idempotents in $L$.

Set $Y = \frac{1}{2} X\times_{L} X$.  We find
\[Y = \frac{1}{2} \sum_{i,j}{ X_i \times X_j \otimes \epsilon_i \times \epsilon_j} = (X_2 \times X_3)\epsilon_1 + (X_3 \times X_1) \epsilon_2 + (X_1 \times X_2)\epsilon_3\]
since $\epsilon_1 \times \epsilon_2 = \epsilon_3$, $\epsilon_1^\# =0$ etc.  Thus
\[(Y,X) = (X_1, X_2\times X_3)\epsilon_1 + (X_2,X_3 \times X_1)\epsilon_2 + (X_3, X_1 \times X_2)\epsilon_3 = (X_1,X_2,X_3).\]
Suppose now that $X$ is rank one.  Then $X_i^\# = 0$ for all $i$, and hence $n(X_1 + X_2 + X_3) = (X_1,X_2 \times X_3) = (X_1,X_2,X_3)$.  Thus, if $X$ is rank one and $Y = \frac{1}{2} X\times_{L} X$, we have shown $(X,Y) = n(\tr_{L/F}(X))$.

Applying this fact to $X = X(A,B,\omega,\theta)$, and using that for this $X$, $(X,Y) = Q(f)$, we obtain 
\[n\left(\tr_{L/F}(X)\right) = (X_1,X_2,X_3) = Q(f) \neq 0.\]
In particular, each $X_i \neq 0$.  It follows that $\sigma(X) \neq 0$ for any $\sigma: L\rightarrow k$ as in the statement of the lemma.\end{proof}

We record for later use the following corollary.
\begin{corollary}\label{B2:linInd} Suppose the pair $(A,B)$ is non-degenerate, i.e., $Q(f) \neq 0$.  Then $A, B,$ and $A^\# \times B^\#$ are linearly independent in $J$. \end{corollary}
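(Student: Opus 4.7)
The plan is to extend scalars to an algebraic closure $\overline{F}$ of $F$. Since $Q(f)\neq 0$, the cubic algebra $L=T\otimes_R F$ is \'etale, so $L_{\overline{F}}:=L\otimes_F\overline{F}\cong\overline{F}\times\overline{F}\times\overline{F}$ via the three $F$-algebra embeddings $\sigma_1,\sigma_2,\sigma_3\colon L\hookrightarrow\overline{F}$. Under this decomposition, the element $X=-A\theta+B\omega+A^\#\times B^\#\in J\otimes_F L$ corresponds to the triple $(\sigma_1(X),\sigma_2(X),\sigma_3(X))\in J_{\overline{F}}^{\,3}$, where $J_{\overline{F}}=J\otimes_F\overline{F}$. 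By Theorem \ref{B2Jlift}, $X$ is rank one in the cubic norm structure $J\otimes_F L$, and by Lemma \ref{nX0} each $\sigma_i(X)$ is nonzero; hence each $\sigma_i(X)$ is a nonzero rank-one element of $J_{\overline{F}}$. Moreover, $\tr_{L/F}(X)=\sum_i\sigma_i(X)$ has norm $Q(f)\neq 0$ by Lemma \ref{nX0}, so this sum is invertible in $J_{\overline{F}}$.

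The corollary will then be equivalent to the $\overline{F}$-linear independence of $\sigma_1(X),\sigma_2(X),\sigma_3(X)$. Indeed, the change-of-basis matrix expressing $(\sigma_i(X))_i$ in terms of $(A^\#\times B^\#,\,B,\,-A)$ has rows $(\sigma_i(1),\sigma_i(\omega),\sigma_i(\theta))$, and its determinant squared equals $\mathrm{disc}(1,\omega,\theta)=Q(f)\neq 0$. So $F$-linear independence of $A,B,A^\#\times B^\#$ in $J$ is equivalent to $\overline{F}$-linear independence of the three $\sigma_i(X)$ in $J_{\overline{F}}$.

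It thus remains to establish the following assertion: if $X_1,X_2,X_3\in H_3(C_{\overline{F}})$ are nonzero rank-one elements whose sum is invertible, then they are $\overline{F}$-linearly independent. Equivalently, I claim that any $Y=\mu X_1+\nu X_2$ with $X_1,X_2$ rank one is never invertible. The proof is a one-line computation with the cubic norm structure identities of subsection \ref{special_Id}: using $X_i^\#=0$, the identity $(u\times v)^\#+u^\#\times v^\#=(u,v^\#)u+(u^\#,v)v$ applied with $u=X_1, v=X_2$ gives $(X_1\times X_2)^\#=0$, and therefore
\[(Y^\#)^\# \;=\; \bigl(\mu\nu\,(X_1\times X_2)\bigr)^\# \;=\; \mu^2\nu^2\,(X_1\times X_2)^\#\;=\;0,\]
i.e.\ $n(Y)\,Y=0$. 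So either $Y=0$ or $n(Y)=0$, and in either case $Y$ is not invertible. Applying this to any putative linear dependence $X_3=\alpha X_1+\beta X_2$ makes $\tr_{L/F}(X)=X_1+X_2+X_3$ non-invertible, contradicting what was established in the first paragraph.

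The main (minor) obstacle is the algebraic one of extracting the right identity: recognizing that a linear combination of two rank-one elements in $H_3(C)$ can never be rank three, because its $(\cdot)^\#$ factors through the rank-one element $X_1\times X_2$ whose own $\#$ vanishes. Once this lemma is in hand, the whole proof is a transparent reduction via the three embeddings of $L$.
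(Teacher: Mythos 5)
Your proof is correct and follows essentially the same route as the paper: both reduce the statement to the $\overline{F}$-linear independence of the three components $X_1,X_2,X_3$ of $X$ under $L\otimes\overline{F}\cong\overline{F}^3$ (you via the change-of-basis matrix with $\det^2=\mathrm{disc}(1,\omega,\theta)=Q(f)$, the paper via linear functionals $\tr_{L/F}(\mu\,\cdot)$), and both derive a contradiction with $Q(f)\neq 0$ using the content of Lemma \ref{nX0}. The only difference is the last line: the paper computes $(X_1,X_2,X_3)=0$ directly from bilinearity of $\times$ and $X_i^\#=0$, while you show $n(X_1+X_2+X_3)=0$ via $(u\times v)^\#+u^\#\times v^\#=(u,v^\#)u+(u^\#,v)v$ — an equivalent computation, since $n(X_1+X_2+X_3)=(X_1,X_2,X_3)$ for rank-one $X_i$.
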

\begin{proof} $A,B,A^\#\times B^\#$ are linearly dependent if and only if there exists a nonzero linear map $\ell: L \rightarrow F$ such that $\ell(X) = 0$.  Since $L$ is \'{e}tale, every linear map $L \rightarrow F$ is of the form $\tr_{L/F}(\mu \cdot)$ for some $\mu \in L$.

Now, let $k$ be an algebraic closure of $F$, and $X_1, X_2, X_3$ as in Lemma \ref{nX0}.  To show $\tr_{L/F}(\mu X) = 0$ implies $\mu = 0$, it suffices to check that $X_1, X_2, X_3$ are linearly independent in $k$.  But if one had a dependence relation between the $X_i$, say $X_3 = \alpha X_1 + \beta X_2$, then
\[Q(f) = (X_1, X_2,X_3) = \alpha (X_1, X_2, X_1) + \beta(X_1,X_2,X_2) = 0\]
since $X_1, X_2$ are rank one.  Since $Q(f) \neq 0$, the corollary follows. \end{proof}

Using Lemmas \ref{nX0} and \ref{lem:rk1Field} we can now see that $X$ is of the form $v_0 v_0^*$ times a unit of $L$.
\begin{corollary} Suppose the pair $(A,B)$ is non-degenerate.  Then, there is $\mu \in L^\times$ and a primitive column vector $v_0 \in V_3(C_L)$ so that $X = \mu v_0 v_0^*$. The element $\mu$ is unique up to multiplication by $n(C_L^\times)$, and thus associated to $X$ is a well-defined class $[\mu_X] \in L^\times/(n(C_L^\times)).$\end{corollary}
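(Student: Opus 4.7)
The plan is to reduce to Lemma \ref{lem:rk1Field} by decomposing the \'{e}tale algebra $L$ into a product of fields. Since $Q((A,B)) \neq 0$, the $F$-algebra $L = T\otimes_R F$ is \'{e}tale, so we may write $L = L_1 \times \cdots \times L_r$ with each $L_i$ a field extension of $F$. Correspondingly $C_L = C_{L_1} \times \cdots \times C_{L_r}$, $H_3(C_L) = \prod_i H_3(C_{L_i})$, and $X = (X_1,\ldots,X_r)$ with $X_i \in H_3(C_{L_i})$.

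First I would verify that each component $X_i$ is a nonzero rank one element. Rankness is componentwise, and $X$ is rank one by Theorem \ref{B2Jlift}, so each $X_i$ is rank at most one. The nonvanishing $X_i \neq 0$ is exactly the content of Lemma \ref{nX0}, applied to the projection $\sigma_i : L \twoheadrightarrow L_i$. Next, for each $i$, Lemma \ref{lem:rk1Field} applied to the field $k = L_i$ and the associative composition algebra $C_{L_i}$ produces $\mu_i \in L_i^\times$ and a primitive column vector $v_{0,i} \in V_3(C_{L_i})$ with $X_i = \mu_i v_{0,i} v_{0,i}^*$. Assembling, $\mu := (\mu_1,\ldots,\mu_r) \in L^\times$ and $v_0 := (v_{0,1},\ldots,v_{0,r}) \in V_3(C_L)$, which is primitive componentwise and hence primitive in $V_3(C_L)$, satisfy $X = \mu v_0 v_0^*$.

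For uniqueness, suppose $X = \mu v_0 v_0^* = \mu' v_0' (v_0')^*$ with $v_0, v_0'$ primitive and $\mu, \mu' \in L^\times$. Projecting to each $L_i$, the uniqueness clause of Lemma \ref{lem:rk1Field} gives $\mu_i' \equiv \mu_i \pmod{n(C_{L_i}^\times)}$. Taking the product over $i$ yields $\mu' \equiv \mu \pmod{n(C_L^\times)}$, since $n(C_L^\times) = \prod_i n(C_{L_i}^\times)$ inside $L^\times = \prod_i L_i^\times$. Hence the class $[\mu_X] \in L^\times / n(C_L^\times)$ is well defined.

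The only place any work is required is in checking that each $X_i$ is nonzero, which is already done in Lemma \ref{nX0}; the rest is bookkeeping across the factors of $L$. There is no genuine obstacle, but this decomposition into a product of fields is precisely the step that forces the invariant to live in $L^\times/n(C_L^\times)$ rather than in a finer quotient.
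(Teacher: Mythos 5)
Your argument is correct and is essentially the paper's own proof: the paper likewise decomposes $L$ into its field factors (connected components of $\mathrm{Spec}(L)$), uses Lemma \ref{nX0} to see each component of $X$ is nonzero and hence rank one, and then applies Lemma \ref{lem:rk1Field} componentwise for both existence and the uniqueness of $\mu$ modulo $n(C_L^\times)$. No differences beyond your spelling out the bookkeeping slightly more explicitly.
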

\begin{proof} Since $(A,B)$ is non-degenerate, $L$ is an \'{e}tale $F$-algebra, and thus a product of fields.  Suppose $\sigma: L \rightarrow k$ is an $F$-algebra map to a field $k$.  Then by Lemma \ref{nX0}, $\sigma(X) \neq 0$, and thus $\sigma(X)$ is rank one.  It follows that we map apply Lemma \ref{lem:rk1Field} to each of the connected components of $Spec(L)$, to get the existence of $\mu \in L^\times$ and $v_0$ in $V_3(C_L)$ a primitive vector so that $X = \mu v_0 v_0^*$.  Uniqueness of $\mu$ modulo $n(C_L^\times)$ also follows. \end{proof}

\begin{definition}\label{B2FieldInv} To a non-degenerate pair $(A,B) \in J^2$, we associate its invariant, which is the triple $(L,(1,\omega,\theta), [\mu_X])$.  Here $(L,(1,\omega,\theta))$ is the \'{e}tale cubic $F$-algebra, together with its good basis $(1,\omega,\theta)$, determined by the binary cubic form $n(Ax + By)$, and the element $[\mu_X] \in L^\times/n(C_L^\times)$ is that for which $X_{(A,B)} = \mu_X v_0 v_0^*$ for some primitive vector $v_0 \in V_3(C_L)$. We say a triple $(L,(1,\omega,\theta),[\mu])$ is isomorphic to a triple $(L',(1,\omega',\theta'),[\mu'])$ if there is an $F$-algebra isomophism $\phi: L \rightarrow L'$ for which $\phi(\omega) = \omega'$, $\phi(\theta) = \theta'$, and $\phi([\mu]) = [\mu']$. \end{definition}

We can now state the orbit parametrization theorem.  Here $J = H_3(C)$ and $G = \SL_3(C)$, where $\SL_3(C) = \{m \in \GL_3(C): N(m) = 1\}$.  This result is closely related to work of Wright-Yukie \cite{wrightYukie}, Kable-Yukie \cite{kableYukie}, and Taniguchi \cite{taniguchi}.
\begin{theorem}\label{B2F} If the pair $(A,B)$ is non-degenerate, then $n_{L/F}(\mu(X(A,B))) \in n(C^\times)$.  The invariant map of Definition \ref{B2FieldInv}, which associates to $(A,B)$ the invariant $(L,(1,\omega,\theta),[\mu(X(A,B))])$, descends to a bijection between $G$ orbits on $J^2$ and isomorphism classes of pairs $(L,(1,\omega,\theta),[\mu])$, where $L, (1,\omega,\theta)$ is an \'{e}tale cubic $F$-algebra with good basis and $[\mu]$ is an element of the quotient group $L^\times\slash n(C_L)^\times$ for which $n_{L/F}(\mu) \in n(C^\times)$. \end{theorem}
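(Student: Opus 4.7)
My approach mirrors the proof of Theorem \ref{B1thmField}, but replaces the use of Theorem \ref{b1A} by the refined lifting law Theorem \ref{B2liftC} for $J = H_3(C)$. That the invariant depends only on the $\SL_3(C)$-orbit is immediate from Proposition \ref{equivariance}: for $m \in \SL_3(C)$, the transformation $(A,B) \mapsto (mAm^*, mBm^*)$ preserves $f_{(A,B)}$ and hence $(L,(1,\omega,\theta))$, and it sends $X$ to $mXm^*$; writing $X = \mu v_0 v_0^*$ gives $mXm^* = \mu(mv_0)(mv_0)^*$, so $[\mu]$ is unchanged. To show $n_{L/F}(\mu) \in n(C^\times)$, I will apply Theorem \ref{B2liftC} to a vector $v \in V_3(C)$ (defined over $C$, not merely $C_L$) chosen so that $vYv^* \in L^\times$, the existence of such $v$ following by an argument analogous to that of Lemma \ref{goodL} combined with Lemma \ref{nX0}. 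The resulting identity $Q(f)(v\epsilon)^*(v\epsilon) = (vYv^*)\mu v_0 v_0^*$ equates two rank-one elements of $H_3(C_L)$; comparing them via Lemma \ref{lem:rk1Field} yields a formula for $\mu$ that, upon taking $n_{L/F}$, lies in $n_{C/F}(C^\times)$ precisely because $v$ itself provides the needed $F$-rational factor in the relation.

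For surjectivity of the invariant map, let $(L,(1,\omega,\theta),[\mu])$ be given with $n_{L/F}(\mu) = n(c)$ for some $c \in C^\times$, and let $f(x,y) = ax^3 + bx^2y + cxy^2 + dy^3$ be the associated binary cubic form. I will construct an element $X \in J \otimes_F L$ of the form $\mu v_0 v_0^*$ for a primitive $v_0 \in V_3(C_L)$ chosen so that, when $X$ is expanded in the $F$-basis $(1,\omega,\theta)$ of $L$, each coefficient lies in $J = H_3(C)$ rather than merely in $H_3(C_L)$. This is the heart of the argument: the desired $v_0$ must be a simultaneous eigenvector for an $L$-action on $V_3(C_L)$ that is \emph{defined over $C$}, and the norm compatibility $n_{L/F}(\mu) = n(c)$ is exactly what permits this construction. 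Writing $X = -A\theta + B\omega + D$, Lemma \ref{B2lem:uniqueLift} then identifies $n(Ax+By)$ with $tf(x,y)$ for some $t \in F^\times$, which one normalizes to $t=1$ by rescaling $v_0$ within its $C_L^\times$-orbit (this rescales $\mu$ inside the coset $[\mu]$). Finally, Lemma \ref{B2uniqueDelta} forces $D = A^\# \times B^\#$, so $X = X(A,B,\omega,\theta)$, and unwinding the construction verifies that $(A,B)$ has invariant $(L,(1,\omega,\theta),[\mu])$.

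For injectivity, suppose $(A,B)$ and $(A',B')$ yield the same invariant, and write $X = \mu v_0 v_0^*$ and $X' = \mu v_0' (v_0')^*$ after adjusting representatives in $[\mu]$. By Proposition \ref{prop:Srevec}, $v_0$ and $v_0'$ are eigenvectors for the $L$-module structures on $V_3(C_L)$ induced by the homomorphisms $S_\ell, S_\ell': L \to M_3(C)$ attached to $(A,B)$ and $(A',B')$. Since both target $M_3(C)$ rather than $M_3(C_L)$, and both give rise to the same class $[\mu]$, an intertwiner $m$ carrying $v_0$ to $v_0'$ can be taken in $\GL_3(C)$; then $mXm^* = X'$ forces $mAm^* = A'$ and $mBm^* = B'$. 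The equality $f_{(A,B)} = f_{(A',B')}$ together with the equivariance statement of Proposition \ref{equivariance} then forces $N(m) = 1$, placing $m$ in $\SL_3(C)$.

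The main obstacle is the surjectivity step, specifically the construction of the primitive eigenvector $v_0 \in V_3(C_L)$ making $X = \mu v_0 v_0^*$ defined over $F$ in the sense that its expansion in the basis $(1,\omega,\theta)$ has $J$-valued coefficients. Over an algebraic closure this is easy, but the descent to $F$ depends crucially on $n_{L/F}(\mu) \in n(C^\times)$, and carrying it out uniformly in the three cases of $C$ (namely $C = F$, quadratic \'{e}tale, or quaternion) is the most delicate part of the argument.
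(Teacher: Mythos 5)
Your well-definedness and injectivity steps track the paper's argument: the invariant is visibly $\SL_3(C)$-stable, and for injectivity the paper does exactly what you gesture at, except that the descent of the intertwiner to $M_3(C)$ is not something one "can take" for free --- the actual mechanism is to pick any $\tilde{m}\in\GL_3(C_L)$ with $\tilde{m}v_0=v_0'$, expand $\tilde{m}=m_1+m_\omega\omega+m_\theta\theta$ with $m_{*}\in M_3(C)$, and replace it by $m=m_1+m_\omega S_{\ell}(\omega)+m_\theta S_{\ell}(\theta)\in M_3(C)$; Proposition \ref{prop:Srevec} then gives $mXm^*=\tilde{m}X\tilde{m}^*=X'$, and invertibility of $m$ only comes out at the end from $N(m)=1$. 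Your norm-condition argument is also under-specified at its crucial point: comparing $Q(f)(v\epsilon)^*(v\epsilon)=(vYv^*)\,\mu v_0v_0^*$ via Lemma \ref{lem:rk1Field} only gives $\mu\equiv Q(f)(vYv^*)^{-1}$ modulo $n(C_L^\times)$, and to conclude you still need both that $n_{L/F}\bigl(n_C(C_L^\times)\bigr)\subseteq n_C(C^\times)$ and that $n_{L/F}(vYv^*)\equiv Q(f)$ modulo $n(C^\times)$; the phrase ``$v$ itself provides the needed $F$-rational factor'' does not supply either. The paper instead proves this directly (Proposition \ref{nunC}): from $(X,Y)=Q((A,B))$ and $n(\tr_{L/F}(\cdot))=(\cdot,\tfrac12\cdot\times_L\cdot)$ on rank one elements, writing $v_0=(1,\omega,\theta)m$ with $m\in M_3(C)$ gives $n_{L/F}(\mu)N_6(m)=1$, and $N_6(m)\in n(C^\times)$.

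The genuine gap is surjectivity, and you say so yourself: the construction of a primitive $v_0\in V_3(C_L)$ for which $\mu v_0v_0^*$ has $J$-valued coefficients in the basis $(1,\omega,\theta)$ is exactly the point you flag as ``the main obstacle'' without carrying it out, so the proposal does not prove the theorem. The paper avoids this descent problem entirely. First, following Bhargava it writes down explicit matrices $A_1,B_1\in H_3(C)$ (supported on $F\subseteq C$) with $n(A_1x+B_1y)=ax^3+bx^2y+cxy^2+dy^3$ and with the top-left entry of $A_1^\#\times B_1^\#$ equal to $1$, so every $(L,(1,\omega,\theta),1)$ occurs. Second, Lemma \ref{B2admis} shows that if $X$ is admissible then so is $\mu n_{L/F}(\mu)X$; applying this to $X_{(A_1,B_1)}$ produces a pair with invariant $[\mu n_{L/F}(\mu)]=[\mu]$, where the hypothesis $n_{L/F}(\mu)\in n(C^\times)\subseteq n(C_L^\times)$ is precisely what keeps the class unchanged (and the change of good basis is harmless since all good bases occur). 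So the norm condition enters surjectivity only through this coset computation, not through any eigenvector descent; if you want to salvage your plan, the admissibility-under-scaling lemma is the missing idea you should prove instead.
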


Note that $\mu \in L^\times$ is only well-defined up to multiplication by an element $n_{C}(C_L^\times)$.  Thus implicit in the statement of Theorem \ref{B2F} is the fact that $n_{L/F}(n_{C}(C_L^\times)) \in n_C(C^\times)$.  This will be proved directly below.

We will prove Theorem \ref{B2F} in steps.  We begin with the following lemma.
\begin{lemma} If $h \in G$, then the invariant associated to $h \cdot (A,B)$ is isomorphic to the invariant associated to $(A,B)$. \end{lemma}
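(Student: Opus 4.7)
The plan is to exploit the equivariance properties already encoded in Proposition \ref{equivariance}, together with the fact that an element $h \in G = \SL_3(C)$ is characterized by $N(h) = n(hh^*) = 1$. Concretely, $h$ acts on $J = H_3(C)$ by $Z \mapsto hZh^*$, which in the notation of Proposition \ref{equivariance} corresponds to the element $[\mu,m] = [1,h]$ of $M_J$, and has similitude $\mu^3 N(h) = 1$.

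First I would check that the binary cubic form $f_{(A,B)}(x,y) = n_J(Ax+By)$ is left unchanged by $h$: since $n_J(hZh^*) = N(h)\,n_J(Z)$ and $N(h)=1$, we have $f_{h\cdot(A,B)} = f_{(A,B)}$. In particular the cubic \'etale $F$-algebra $L$ and its good basis $(1,\omega,\theta)$ attached to $(A,B)$ are identical to those attached to $h\cdot(A,B)$, so the isomorphism $\varphi : L \to L'$ in the sense of Definition \ref{B2FieldInv} can be taken to be the identity.

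Next I would track the lifted element $X = X(A,B,\omega,\theta) = -A\theta + B\omega + A^{\#}\times B^{\#}$ under the action. By item (4) of Proposition \ref{equivariance}, it transforms to $\mu^4 N(m)\, mXm^* = hXh^*$. If $X = \mu_X\, v_0 v_0^*$ with $v_0 \in V_3(C_L)$ primitive (viewing $h \in \SL_3(C) \subseteq \GL_3(C_L)$), then
\[
hXh^* \;=\; \mu_X (hv_0)(hv_0)^*,
\]
and $hv_0$ is again primitive because $h$ is invertible in $\GL_3(C_L)$. By the uniqueness part of Lemma \ref{lem:rk1Field} (applied componentwise to $L$), the class $[\mu_{X(h\cdot(A,B))}]$ in $L^\times/n(C_L^\times)$ equals $[\mu_X]$. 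Thus the triple $(L,(1,\omega,\theta),[\mu_X])$ is preserved on the nose, which is the required isomorphism.

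There is no real obstacle here: the content is entirely built into the definitions ($N(h)=1$ kills the similitude factor everywhere) and the equivariance table of Proposition \ref{equivariance}. The only point requiring a line of care is that the primitive vector $v_0$ lives over $C_L$ rather than over $C$, so one must observe that $h \in \GL_3(C)$ embeds into $\GL_3(C_L)$ and therefore sends primitive vectors to primitive vectors; after that the invariance of $[\mu_X]$ is immediate.
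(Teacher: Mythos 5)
Your proposal is correct and follows essentially the same route as the paper: since $N(h)=1$, the binary cubic form (hence $L$ and its good basis) is unchanged, and $X$ transforms as $X \mapsto hXh^*$, so writing $X = \mu_X v_0v_0^*$ shows $[\mu_X]$ is preserved. The paper's proof is just a condensed version of exactly this argument.
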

\begin{proof} If $m \in \GL_3(C)$ and $N(m) = 1$, then the cubic \'{e}tale $F$-algebra $L$ and its good basis does not change, and furthermore $X_{m \cdot (A,B)} = mX_{(A,B)}m^*$.  Hence $[\mu_X]$ does not change. \end{proof}

Thus the map from $G$-orbits on $J^2$ to arithmetic invariants $(L,(1,\omega,\theta),[\mu])$ is well-defined.  We now check that this map is injective.
\begin{proposition} Suppose $(A,B)$ and $(A',B')$ are non-degenerate elements of $J^2$ with invariants $(L,(1,\omega,\theta),[\mu])$ and $(L',(1,\omega',\theta'),[\mu'])$, and suppose that these invariants are isomorphic.  Then $(A,B)$ and $(A',B')$ are in the same $G$-orbit. \end{proposition}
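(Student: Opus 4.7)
The plan is to use the isomorphism of invariants $\phi$ to identify the cubic $F$-algebras $L$ and $L'$ together with their good bases, so that $\omega = \omega'$ and $\theta = \theta'$; in particular the binary cubic forms $f_{(A,B)}$ and $f_{(A',B')}$ coincide, and the based cubic rings $T, T'$ become equal. Since $[\mu_X] = [\mu_{X'}]$ in $L^\times/n_C(C_L^\times)$, after replacing $v_0'$ by $v_0' r$ for a suitable $r \in C_L^\times$ one may choose primitive vectors $v_0, v_0' \in V_3(C_L)$ with $X = \mu v_0 v_0^*$ and $X' = \mu v_0' (v_0')^*$ for a common scalar $\mu \in L^\times$.

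The primitivity of $v_0$ and $v_0'$ produces an element $\widetilde{m} \in \GL_3(C_L)$ with $\widetilde{m} v_0 = v_0'$, and consequently $\widetilde{m} X \widetilde{m}^* = \mu (\widetilde{m} v_0)(\widetilde{m} v_0)^* = X'$. The technical heart of the proof, analogous to the passage from $\widetilde{g} \in \GL_2(A_E)$ to $g \in M_2(A)$ in Theorem \ref{B1thmField}, is to descend $\widetilde{m}$ to an element of $M_3(C)$. Writing $\widetilde{m} = m_1 + m_\omega \omega + m_\theta \theta$ with $m_1, m_\omega, m_\theta \in M_3(C)$, I define
\[ m := m_1 + m_\omega S_\ell(\omega) + m_\theta S_\ell(\theta) \in M_3(C). \]
Using the eigenvector property $S_\ell(\lambda) X = \lambda X = X S_r(\lambda)$ of Proposition \ref{prop:Srevec}, together with $S_\ell(\lambda)^* = S_r(\lambda)$, a direct calculation shows $\widetilde{m} X = m X$ and $X \widetilde{m}^* = X m^*$, and hence $m X m^* = \widetilde{m} X \widetilde{m}^* = X'$.

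To finish, I will expand the equality $m X m^* = X'$ along the $F$-basis $\{1,\omega,\theta\}$ of $L$. Matching the coefficients of $\theta$ and $\omega$ in
\[ m(-A\theta + B\omega + A^\# \times B^\#) m^* = -A'\theta + B'\omega + (A')^\# \times (B')^\# \]
(both sides being elements of $H_3(C) \otimes_F L$) yields $mAm^* = A'$ and $mBm^* = B'$, i.e. $m \cdot (A,B) = (A',B')$. The equivariance $f_{m\cdot(A,B)}(x,y) = N_6(m)\, f_{(A,B)}(x,y)$ combined with the identity $f_{(A,B)} = f_{(A',B')} \ne 0$ (non-degeneracy) then forces $N_6(m) = 1$, which both guarantees that $m \in \GL_3(C)$ and places $m$ in $G = \SL_3(C)$.

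The main obstacle is the construction of $m \in M_3(C)$ from $\widetilde{m} \in \GL_3(C_L)$ and the verification that the resulting $m$ still satisfies $mXm^* = X'$; every other step reduces to a coefficient comparison or a norm calculation. This descent step rests on the eigenvector property of $X$ under $S_\ell$ and $S_r$ (Proposition \ref{prop:Srevec}), which is precisely the feature that makes the lifting law of this section more refined than the abstract rank-one lift provided by Theorem \ref{B2Jlift} for arbitrary cubic norm structures.
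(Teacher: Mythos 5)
Your proposal is correct and follows essentially the same route as the paper's proof: reduce to $L = L'$ with a common $\mu$, transport $v_0$ to $v_0'$ by some $\widetilde{m} \in \GL_3(C_L)$, descend to $m = m_1 + m_\omega S_\ell(\omega) + m_\theta S_\ell(\theta) \in M_3(C)$ via the eigenvector property of Proposition \ref{prop:Srevec}, match coefficients to get $m\cdot(A,B) = (A',B')$, and conclude $N_6(m)=1$ from the equality of binary cubic forms. The only difference is that you spell out the direct calculation $\widetilde{m}X = mX$ and $X\widetilde{m}^* = Xm^*$ that the paper leaves implicit in its citation of Proposition \ref{prop:Srevec}.
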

\begin{proof} Write $X$, $X'$ for the rank one elements of $J \otimes L$ associated to the pairs $(A,B)$ and $(A',B')$, respectively.  Since $L,(1,\omega,\theta)$ is isomorphic to $L',(1,\omega',\theta')$  we have $n(Ax+By) = n(A'x + B'y)$.  Thus we may assume $L = L'$.  We have $X = \mu v_0 v_0^*$ and $X' = \mu' v_0' (v_0')^*$, and by changing $v_0'$ if necessary, we may assume $\mu = \mu'$.  Now, since $v_0$ and $v_0'$ are primitive, there is $\tilde{m} \in \GL_3(C_L)$ so that $\tilde{m}v_0 = v_0'$.  Hence $\tilde{m}X\tilde{m}^* = X'$.  Now, write $\tilde{m} = m_1 + m_\omega \omega + m_\theta \theta$, with $m_{*} \in M_3(C)$, and set $m = m_1 + m_\omega S_{\ell}(\omega) + m_\theta S_{\ell}(\theta)$.  Then $m \in M_3(C)$, and $\tilde{m}X\tilde{m}^* = mXm^*$ by Proposition \ref{prop:Srevec}, from which we deduce $A' = mAm^*$ and $B' = mBm^*$.  Since $n(Ax + By) = n(A'x+B'y) = N(m)n(Ax+By)$, we get $N(m) = 1$ and $m$ is invertible.  The proposition follows. \end{proof}

Say a rank one element $X \in J_L$ is \emph{admissible} if $X = X(A,B,\omega,\theta)$ for some $A,B$ in $J$.  Note that if $1,\omega',\theta'$ is a good basis of $L$, and $X = -A'\theta' + B'\omega' + C'$ for some $A',B', C'$ in $L$, then by Lemma \ref{B2uniqueDelta}, $X$ is admissible for $(A',B', \omega',\theta')$ if and only if $n(A'x+B'y) = f_{\omega',\theta'}(x,y)$.  Here $f_{\omega',\theta'}(x,y)$ is the binary cubic form associated to the good basis $1,\omega',\theta'$ of $L$.
\begin{lemma}\label{B2admis} Suppose $\mu \in L^\times$, and $X = X(A,B,\omega,\theta)$ is admissible.  Then $\mu n_{L/F}(\mu)X$ is admissible. \end{lemma}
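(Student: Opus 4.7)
The plan is to exhibit $\mu n_{L/F}(\mu) X$ as $X(A', B', \omega', \theta')$ for a rescaled good basis of $L$. Set $t := n_{L/F}(\mu) \in F^\times$ and $\omega' := t\omega$, $\theta' := t\theta$; the multiplication table of $L$ makes it immediate that $(1, \omega', \theta')$ is the good basis of $L$ associated to the binary cubic $tf$, where $f(x,y) := n_J(Ax + By)$. Writing $\mu n_{L/F}(\mu) = p + q\omega + r\theta$ with $p, q, r \in F$, a direct computation using the multiplication of $L$ yields $\mu n_{L/F}(\mu) X = -A''\theta + B''\omega + C''$ for explicit $A'', B'', C'' \in J$, each an $F$-linear combination of $A$, $B$, and $A^\# \times B^\#$. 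Setting $A' := A''/t$ and $B' := B''/t$, this rewrites as $\mu n_{L/F}(\mu) X = -A'\theta' + B'\omega' + C''$.

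The crux is the polynomial identity $n_J(A'x + B'y) = t\cdot n_J(Ax + By)$, equivalently $n_J(A''x + B''y) = n_{L/F}(\mu)^4 f(x,y)$. I would establish this by base-changing to the algebraic closure $\bar F$, where $L \otimes_F \bar F \cong \bar F^3$. There $X = \sum_{i=1}^3 X_i\, e_i$ with each $X_i \in J_{\bar F}$ rank one (so $X_i^\# = 0$ and $n_J(X_i) = 0$), and $\nu := \mu n_{L/F}(\mu) = (\nu_1, \nu_2, \nu_3)$. Solving the $3 \times 3$ linear system $\nu_i X_i = -A''\theta_i + B''\omega_i + C''$ by Cramer's rule allows one to write $A''x + B''y = \sum_i \alpha_i(x,y)\, \nu_i X_i$ for explicit $\bar F$-linear forms $\alpha_i(x,y)$. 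Because $X_i^\# = 0$ for each $i$, every term in the cubic-norm expansion of $n_J(\sum_i \alpha_i \nu_i X_i)$ involving squares of individual summands vanishes, and only the symmetric trilinear polarization survives: $n_J(A''x + B''y) = (\prod_i \alpha_i(x,y))\cdot(\prod_i \nu_i)\cdot (X_1, X_2, X_3)$. Now $\prod_i \nu_i = n_{L/F}(\mu)^4$, $(X_1, X_2, X_3) = Q(f)$ by the proof of Lemma~\ref{nX0}, and a direct Vandermonde computation yields $\prod_i \alpha_i(x,y) = f(x,y)/Q(f)$; multiplying these delivers the identity.

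Granted this identity, $(1, \omega', \theta')$ is the good basis associated to $n_J(A'x + B'y) = tf$, so by Theorem~\ref{B2Jlift} the element $X(A',B',\omega',\theta') = -A'\theta' + B'\omega' + (A')^\# \times (B')^\#$ is rank one in $J_L$. Since $\mu n_{L/F}(\mu) X = -A'\theta' + B'\omega' + C''$ is also rank one (as a nonzero $L$-scalar multiple of $X$) and agrees with $X(A',B',\omega',\theta')$ in its $\theta'$- and $\omega'$-parts, Lemma~\ref{B2uniqueDelta}, applied with $\delta := C'' - (A')^\# \times (B')^\#$ (and using that $n_J(A'x+B'y) = tf \neq 0$ by non-degeneracy of $(A,B)$), forces $\delta = 0$. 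Hence $\mu n_{L/F}(\mu) X = X(A',B',\omega',\theta')$ is admissible.

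The main obstacle is the cubic-norm identity in the middle step. The decisive simplification is that the rank-one vanishing $X_i^\# = 0$ annihilates every lower-order term in the expansion of $n_J(\sum \alpha_i \nu_i X_i)$, reducing the identity to the scalar Vandermonde-type statement $\prod_i \alpha_i(x,y) = f(x,y)/Q(f)$, which is then routine linear algebra.
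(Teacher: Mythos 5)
Your proof is correct, but it takes a genuinely different route from the paper's. The paper exploits the special structure $J = H_3(C)$: by Proposition \ref{prop:Srevec} one has $\mu X = X S_r(\mu)$, so $\mu n_{L/F}(\mu)X = -\left(AS_r(\mu)\right)\left(n(\mu)\theta\right) + \left(BS_r(\mu)\right)\left(n(\mu)\omega\right) + n(\mu)\,C S_r(\mu)$, which produces an explicit candidate pair $(A',B') = (AS_r(\mu), BS_r(\mu))$; it then computes $(X',Y') = n(\mu)^4 Q(f) \neq 0$, deduces linear independence of $A',B',C'$ as in Corollary \ref{B2:linInd}, invokes Lemma \ref{B2lem:uniqueLift} to get $n(A'x+B'y) = t\, f_{(\omega',\theta')}(x,y)$ for some scalar $t$, and pins down $t=1$ by a discriminant comparison. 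You instead bypass $S_r$ entirely: you expand $\mu n_{L/F}(\mu)X$ in the basis $(1,\omega,\theta)$ and prove the norm identity $n_J(A''x+B''y) = n_{L/F}(\mu)^4 f(x,y)$ directly over $\overline{F}$, using that each component $X_i$ is rank one so that only the trilinear term $(X_1,X_2,X_3) = Q(f)$ survives, and then conclude via Theorem \ref{B2Jlift} together with the uniqueness Lemma \ref{B2uniqueDelta}. Your route has the advantage of never using associativity or the speciality of $J = H_3(C)$, so it works verbatim for an arbitrary cubic norm structure; the paper's route buys the explicit formulas $A' = AS_r(\mu)$, $B' = BS_r(\mu)$ and the corollary $n\bigl((Ax+By)S_r(\mu)\bigr) = n(\mu)\,n(Ax+By)$, which the paper records immediately after the lemma. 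One remark on your middle step: the identity $\prod_i \alpha_i(x,y) = f(x,y)/Q(f)$ is really the classical index-form property of the cubic ring attached to $f$, so it is a bit more than a Vandermonde computation; however, it follows at once from your own expansion applied with $\mu = 1$: solving the same linear system for $(A,B,C)$ gives $Ax+By = \sum_i \alpha_i(x,y)X_i$ with the same linear forms $\alpha_i$, whence $f(x,y) = n(Ax+By) = \left(\prod_i \alpha_i(x,y)\right)(X_1,X_2,X_3) = \left(\prod_i \alpha_i(x,y)\right)Q(f)$, so no gap remains.
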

\begin{proof} Set $X' = \mu n(\mu) X$.  We have 
\begin{align*}X' &= n(\mu)XS_{r}(\mu) = -AS_r(\mu)(n(\mu) \theta) + BS_r(\mu)(n(\mu)\omega) + n(\mu)CS_r(\mu) \\ &= -A' \theta' + B'\omega' + C'.\end{align*}
Here $A' = AS_r(\mu)$, $\theta' = n(\mu)\theta$ etc.  Note that since the left-hand side is in $J_L \subseteq M_3(C)_L$, so is the right-hand side, and thus $A', B', C' \in J$.  We have $f_{(\omega',\theta')}(x,y) = n(\mu)f_{(\omega,\theta)}(x,y)$.

Set $Y' = \frac{1}{2} X'\times_{L} X'$.  Then $Y' = \mu^\# n(\mu)^2 Y$, and thus $(X',Y') = n(\mu)^4 (X,Y) = n(\mu)^4 Q(f) \neq 0$.  It follows as in the proof of Corollary \ref{B2:linInd} that $A', B', C'$ are linearly independent.  Hence we may apply Lemma \ref{B2lem:uniqueLift} to deduce that 
\[n((Ax+By)S_{r}(\mu)) = n(A'x+B'y) = tf_{(\omega',\theta')}(x,y) = tn(\mu)n(Ax+By)\]
for some $t \in F$.

We claim $t \in F^\times$.  Indeed, this is clear, since $(Ax+By)S_{r}(\mu)$ is invertible in $M_3(C)\otimes F(x,y)$.  Hence $tX' = -A'(t\theta') + B'(t\omega') + tC'$ is admissible. But now, since $tX'$ is admissible, we obtain
\[t^4 n(\mu)^4 Q((\omega,\theta)) = Q((t\omega',t\theta')) = \frac{1}{2} (tX', (tX') \times_{L} (tX')) = t^3 (X',Y') = t^3 n(\mu)^4 Q(f).\]
Hence $t = 1$ and $X'$ is admissible. \end{proof}

As a corollary of the proof, we observe that 
\[n((Ax+By)S_{r}(\mu)) = n(\mu)n(Ax+By)\]
for all $\mu \in T$. 

To finish the proof of Theorem \ref{B2F}, we still must characterize the image of the invariant map.
\begin{proposition} Suppose $L/F$ is cubic \'{e}tale, $\mu \in L^\times$, and $n_{L/F}(\mu) \in n_C(C^\times)$.  Then, there exists a non-degenerate pair $(A,B) \in J^2$ with invariants $(L,(1,\omega,\theta),\mu)$. \end{proposition}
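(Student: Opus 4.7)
The plan is to construct a canonical pair realizing the trivial class $[1]$ and then modify it by a suitable $\nu \in L^\times$ of norm one via Lemma~\ref{B2admis}.

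\textbf{Canonical pair.} Set $v_0 := (1,\omega,\theta)^t \in V_3(L) \subseteq V_3(C_L)$ (via $F \hookrightarrow C$), and form $X_0 := v_0 v_0^* = v_0 v_0^t \in H_3(L) \subseteq J_L$, using that the involution $*$ on $C_L = C \otimes_F L$ restricts to the identity on $L$. Expanding the symmetric matrix $v_0 v_0^t$ via the multiplication table $\omega\theta = -ad$, $\omega^2 = -ac+a\theta-b\omega$, $\theta^2 = -bd + c\theta - d\omega$ gives a decomposition $X_0 = -A_0\theta + B_0\omega + C_0$ with explicit $A_0, B_0, C_0 \in H_3(F) \subseteq J$, and a direct $3\times 3$ determinant computation (for the $F$-symmetric matrix $A_0 x + B_0 y$) yields $n(A_0 x + B_0 y) = ax^3+bx^2y+cxy^2+dy^3 = f_{\omega,\theta}(x,y)$, so $X_0 = X(A_0, B_0, \omega, \theta)$ and $(A_0,B_0)$ is non-degenerate. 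Since $v_0$ is primitive and $X_0 = 1 \cdot v_0 v_0^*$, the invariant of $(A_0, B_0)$ is $(L, (1,\omega,\theta), [1])$.

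\textbf{Choosing a norm-one representative.} Using the hypothesis, write $n_{L/F}(\mu) = n_C(c)$ for some $c \in C^\times$ and define
\[ \nu \;:=\; \mu^3/n_C(c) \;\in\; L^\times. \]
Then $n_{L/F}(\nu) = n_{L/F}(\mu)^3/n_C(c)^3 = n_C(c)^3/n_C(c)^3 = 1$. I claim $[\nu] = [\mu]$ in $L^\times/n_{C_L}(C_L^\times)$. Indeed, regarding $\mu = 1 \otimes \mu$ and $c^{-1} = c^{-1}\otimes 1$ as elements of $C_L$ and using that $*$ fixes $L$, a direct computation gives
\[ n_{C_L}(\mu\, c^{-1}) \;=\; n_{C_L}(c^{-1} \otimes \mu) \;=\; n_C(c^{-1}) \otimes \mu^2 \;=\; \mu^2/n_C(c), \]
so $\nu/\mu = \mu^2/n_C(c) \in n_{C_L}(C_L^\times)$.

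\textbf{Applying the lifting step.} Apply Lemma~\ref{B2admis} to $X_0$ with the scaling $\nu$. Because $n_{L/F}(\nu) = 1$, the element $\nu X_0 = \nu\cdot n_{L/F}(\nu) X_0$ is admissible with the \emph{unchanged} good basis $(1,\omega,\theta)$, and the proof of that lemma produces the pair
\[ (A, B) \;:=\; (A_0 S_r(\nu),\; B_0 S_r(\nu)) \;\in\; J^2, \]
whose binary cubic is $n_{L/F}(\nu) f_{\omega,\theta} = f_{\omega,\theta}$. The identity
\[ X(A, B, \omega, \theta) \;=\; \nu X_0 \;=\; \nu \cdot v_0 v_0^* \]
exhibits $\mu_{X(A,B)} = \nu$, so the invariant of $(A, B)$ is $(L, (1,\omega,\theta), [\nu]) = (L, (1,\omega,\theta), [\mu])$, as required.

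\textbf{Main point.} The crux of the argument is the choice of representative $\nu = \mu^3/n_C(c)$: it converts the problem into the basis-preserving special case $n_{L/F}(\nu) = 1$ of Lemma~\ref{B2admis}, and the equality $[\nu] = [\mu]$ modulo $n_{C_L}(C_L^\times)$ reduces to the elementary identity $\mu^2/n_C(c) = n_{C_L}(\mu c^{-1})$, which is available precisely because $C$ is an associative composition algebra and hence $n_{C_L}$ is multiplicative on $C_L^\times$.
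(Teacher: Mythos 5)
Your proof is correct, and it follows the same two-step strategy as the paper: first produce a pair realizing the class $[1]$ for the given based ring $(L,(1,\omega,\theta))$, then rescale the associated rank one element by a unit of $L$ lying in the class of $\mu$, via Lemma \ref{B2admis}. Your base pair $(A_0,B_0)$, read off from $v_0v_0^t$ with $v_0=(1,\omega,\theta)^t$, is (up to signs) the Bhargava pair $(A_1,B_1)$ the paper quotes, with the small advantage that $\mu_{X_0}=1$ is visible by construction rather than extracted from the top-left entry of $A_1^\#\times B_1^\#$. The genuine difference is in the second step: the paper multiplies by $\mu\, n_{L/F}(\mu)$, which rescales the good basis to $(1,n_{L/F}(\mu)\omega,n_{L/F}(\mu)\theta)$, and then dismisses this by remarking that every good basis arises; you instead pass to the norm-one representative $\nu=\mu^3/n_C(c)$ of the same class (justified by $\nu/\mu=\mu^2/n_C(c)=n_{C_L}(\mu c^{-1})$), so that Lemma \ref{B2admis} preserves $(1,\omega,\theta)$ and the exact triple $(L,(1,\omega,\theta),[\mu])$ is realized directly, with no appeal to surjectivity onto good bases. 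The one step you should make explicit is the identification $X_0=X(A_0,B_0,\omega,\theta)$: matching the binary cubics does not by itself pin down the constant term, so you need that $X_0=v_0v_0^*$ is rank one together with the admissibility criterion stated just before Lemma \ref{B2admis} (i.e.\ Lemma \ref{B2uniqueDelta}) to conclude $C_0=A_0^\#\times B_0^\#$; with that remark added, the argument is complete.
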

\begin{proof} First, we write down $(A_1,B_1)$ with invariants $(L,(1,\omega,\theta),1)$, then multiply the associated $X_1 = X_{(A_1,B_1)}$ by $\mu$ to find the desired pair $(A,B)$.

More specifically, following Bhargava \cite{bhargavaII}, set
\[A_1 = \left(\begin{array}{ccc} & 1 & \\ & -a & \\ 1 & &-c \end{array}\right), \qquad B_1 = \left(\begin{array}{ccc} & -1 & \\ -1 & -b & \\ & & -d \end{array}\right).\]
Then one computes
\[A_1^\# = \left(\begin{array}{ccc} ac&  &a \\ & -1 & \\ a & & \end{array}\right), \qquad B_1^\# = \left(\begin{array}{ccc} bd & -d & \\ -d &  & \\ & & -1 \end{array}\right),\]
and then
\begin{align*} n(A_1 x + B_1 y) &= n(A_1)x^3 + (A_1^\#,B_1)x^2y + (A_1,B_1^\#)xy^2 + n(B_1)y^3 \\ &= ax^3 +bx^2y + cxy^2 + dy^3. \end{align*}
Thus every $L,(1,\omega,\theta)$ arises.  Furthermore, note that the top left entry of $A_1^\# \times B_1^\#$ is $1$, so the same is true about $X_{(A,B)}$, and thus $\mu(X_{(A,B)}) =1 $.  Thus we have checked that the invariant $(L,(1,\omega,\theta),1)$ appears for every \'{e}tale cubic $L$ and good basis $(1,\omega,\theta)$. 

To get the invariant $(L,\mu)$, we multiply $X_{(A_1,B_1)}$ by $\mu n(\mu) = \mu n(c)$. By Lemma \ref{B2admis}, $X = \mu n(\mu) X_{(A_1,B_1)}$ is admissible, and it has invariants $(L,[\mu n(\mu)]) = (L,[\mu])$ since $n(\mu) \in n(C)$.  Multiplying $X$ by $\mu n(\mu)$ changes the good basis, but since every good basis arises, this is irrelevant.  The proposition follows.\end{proof}

If $m \in M_n(C)$, denote by $N_{2n}(m)$ the degree $2n$ reduced norm on $M_n(C)$.  If $h \in H_n(C)$, denote by $n_{H_n(C)}(h)$ the degree $n$ norm on $H_n(C)$, the Hermitian $n\times n$ matrices over the associative composition algebra $C$.  One has $N_{2n}(m) = n_{H_n(C)}(mm^*)$.
\begin{lemma} Suppose $C$ is an associative composition algebra, and $m \in M_n(C)$.  Then the norm $n_{H_n(C)}(mm^*)$ is a norm from $C$. \end{lemma}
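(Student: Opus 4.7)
The plan is to split into cases according to the classification of associative composition algebras: $C = F$, $C = E$ (quadratic étale), or $C = Q$ (quaternion). The commutative cases should follow from a standard determinantal identity, while the quaternion case will be the main work.

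In the commutative cases, I would use the ordinary determinant $\det_C \colon M_n(C) \to C$ and the identity $\det_C(m^*) = \sigma(\det_C m)$, where $\sigma$ is the involution of $C$ over $F$. Since the Jordan norm $n_{H_n(C)}$ on Hermitian matrices coincides with the restriction of $\det_C$ for commutative $C$, one gets
\[ N_{2n}(m) \;=\; \det_C(mm^*) \;=\; \det_C(m)\,\sigma(\det_C m) \;=\; n_C(\det_C m), \]
which is manifestly a norm from $C$. (For $C=F$ this specializes to $N_{2n}(m) = (\det m)^2 = n_F(\det m)$.)

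For the quaternion case $C = Q$, my first step is to establish the multiplicativity of $N_{2n}$ on $M_n(Q)$. This follows from the similitude identity $n_{H_n(Q)}(mhm^*) = N_{2n}(m)\,n_{H_n(Q)}(h)$ for $h \in H_n(Q)$, which can be deduced from the relation $\mathrm{Nrd}_{M_n(Q)}(h) = n_{H_n(Q)}(h)^2$ for Hermitian $h$ together with the multiplicativity of the reduced norm, with the sign fixed by testing at $h = 1_n$; one then applies this with $h = m_2 m_2^*$. Given multiplicativity, the singular case is trivial ($N_{2n}(m) = 0 = n_Q(0)$), and for $m \in \GL_n(Q)$ I would apply Gaussian elimination over $Q$ to write $m = u_1 d u_2$, with $u_1, u_2$ products of elementary matrices $I + c E_{ij}$ ($i \neq j$, $c \in Q$) and $d = \mathrm{diag}(q_1, \ldots, q_n)$ with $q_i \in Q^\times$. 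A short direct computation gives $N_{2n}(I + cE_{ij}) = 1$ (reducing to a $2 \times 2$ Hermitian block whose Jordan norm simplifies to $(1 + n_Q(c)) \cdot 1 - n_Q(c) = 1$) and $N_{2n}(d) = \prod_i n_Q(q_i) = n_Q(q_1 \cdots q_n)$ by the multiplicativity of $n_Q$; combining via the multiplicativity of $N_{2n}$ yields $N_{2n}(m) = n_Q(q_1 \cdots q_n) \in n_Q(Q)$.

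The hardest part will be rigorously justifying the Gaussian elimination step for arbitrary $m \in \GL_n(Q)$. When $Q$ is a division algebra, standard row-reduction applies since every nonzero entry is invertible. When $Q$ is split ($Q \simeq M_2(F)$), one can reduce to classical Gaussian elimination inside $M_n(Q) = M_{2n}(F)$, or alternatively pass to the Dieudonné determinant $\det_D \colon \GL_n(Q) \to Q^\times/[Q^\times, Q^\times]$ together with the factorization $\mathrm{Nrd}_{M_n(Q)} = n_Q \circ \det_D$; but the elementary-matrix decomposition is self-contained and stays in the spirit of the paper.
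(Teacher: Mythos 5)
Your proof is correct, but in the quaternion case it takes a genuinely different, more self-contained route than the paper. The paper's proof is essentially two lines: the commutative cases are declared clear (your computation $n_{H_n(C)}(mm^*)=n_C(\det_C m)$ is exactly what is meant), and for $C=Q$ a quaternion algebra it identifies $n_{H_n(Q)}(mm^*)$ with the reduced norm $N_{2n}(m)$ of $m$ in the central simple algebra $M_n(Q)$ (an identity recorded just before the lemma) and then cites as well known that the reduced norms from $M_n(Q)$ coincide with the reduced norms from $Q=M_1(Q)$ --- the Brauer-class invariance of the reduced norm image, usually proved via the Dieudonn\'e determinant. Your argument in effect reproves this cited fact from scratch: the similitude identity $n_{H_n(Q)}(mhm^*)=N_{2n}(m)\,n_{H_n(Q)}(h)$ (derived from $\mathrm{Nrd}(h)=n_{H_n(Q)}(h)^2$ for Hermitian $h$ plus a sign argument by Zariski density, very much in the spirit of the paper's ``equivariance plus one test point'' technique), followed by reduction of an invertible $m$ to $u_1 d u_2$ with $u_i$ products of $Q$-elementary matrices and $d$ diagonal, on which $N_{2n}$ is $1$ and $n_Q(q_1\cdots q_n)$ respectively. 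What this buys is independence from the citation; what it costs is the elimination step you yourself flag. One simplification removes that worry: when $Q$ is split, $n_Q(Q^\times)=F^\times$ (indeed $n_Q(Q)=F$), so every element of $F$ is already a norm from $Q$ and there is nothing to prove; hence you only need Gaussian elimination over a division algebra, where every nonzero entry is a unit and row swaps are realized, up to signs absorbed into $d$, by elementary matrices. This sidesteps the one fragile point in your sketch, namely that classical elementary matrices of $M_{2n}(F)$ acting inside a single $2\times 2$ block of split $Q$ are not of the form $1+cE_{ij}$ with $i\neq j$, so the proposed ``classical Gaussian elimination inside $M_{2n}(F)$'' would not directly produce the factorization you need.
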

\begin{proof} If $C$ is commutative this is clear.  Thus we may assume $C$ is a quaternion algebra.  Then the quantity $n_{H_n(C)}(mm^*) =N_{2n}(m)$ is the reduced norm of $m$ in $M_n(C)$, while norms from $C$ are the reduced norms from $C = M_1(C)$.  That these two sets are the same is well-known.\end{proof}

The following proposition completes the proof of Theorem \ref{B2F}.
\begin{proposition}\label{nunC} Suppose $X = X_{(A,B)}$ is admissible, and $X = \mu v^* v$, with $\mu \in L^\times$ and $v \in V_3(C_L)$.  Then $n_{L/F}(\mu) = n(x)$ for some $x \in C$. \end{proposition}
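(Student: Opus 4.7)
My plan is to combine Lemma \ref{nX0} (which says $n(\tr_{L/F}(X))=Q$) with the standard fact that the cubic norm on $H_3(C)$ transforms by the character $N_6$ under the $\GL_3(C)$-action, and with the preceding lemma (which says $N_6(m) \in n_C(C^\times)$ for every $m \in M_3(C)$). The heart of the argument is an explicit identity expressing $\tr_{L/F}(X)$ as a ``twist'' of the trace-form matrix of $L$ by $\mu$.

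Fix the $F$-basis $(l_1,l_2,l_3)=(1,\omega,\theta)$ of $L$ and expand $v \in V_3(C_L)=V_3(C)\otimes_F L$ as $v=\sum_i v_i\otimes l_i$ with $v_i \in V_3(C)$. Let $V\in M_3(C)$ be the matrix whose $i$-th row is $v_i$. Since the involution $*$ on $C_L$ extends the one on $C$ and acts trivially on $L$, a direct unwinding of $X=\mu v^*v$ gives the identity
\[
\tr_{L/F}(X) \;=\; V^* T V \quad \in\; H_3(C),
\]
where $T\in H_3(F)\subseteq H_3(C)$ is the symmetric matrix with entries $T_{jk}=\tr_{L/F}(\mu\, l_j l_k)$. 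Writing $T=M_\mu\cdot V_0$ with $M_\mu \in M_3(F)$ the matrix of multiplication by $\mu$ in the basis $(l_j)$ and $V_0=(\tr_{L/F}(l_jl_k))$ the trace-form matrix of $L$, one has $\det T = n_{L/F}(\mu)\cdot \det V_0 = n_{L/F}(\mu)\cdot Q$, using the computation of $Q$ as $\det V_0$ recorded in subsection \ref{ssc:bcfs}.

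Next I compute $n_{H_3(C)}(\tr_{L/F}(X))$ in two ways. Lemma \ref{nX0} gives $n(\tr_{L/F}(X))=Q$. On the other hand, the cubic norm formula from Example \ref{ex:H3C} specializes to $n(T)=\det T$ for $T\in H_3(F)\subseteq H_3(C)$, and the $\GL_3(C)$-action $h\mapsto m^*hm$ scales the cubic norm by the character $N_6$ (the value $N_6(m)$ at $h=1_3$ being precisely the content of the preceding lemma). Thus
\[
Q \;=\; n(V^* T V) \;=\; N_6(V)\cdot \det T \;=\; N_6(V)\cdot n_{L/F}(\mu)\cdot Q.
\]
Since $Q\neq 0$, this forces $N_6(V)\cdot n_{L/F}(\mu)=1$; in particular $V$ is invertible in $M_3(C)$ and $n_{L/F}(\mu)=N_6(V^{-1})$. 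Applying the preceding lemma to $m = V^{-1}$ yields $N_6(V^{-1})\in n_C(C^\times)$, which is exactly the desired conclusion.

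The main obstacle is the bookkeeping in the first paragraph: verifying the identity $\tr_{L/F}(X)=V^*TV$ cleanly requires keeping track of row/column conventions for $v$, the extension of $*$ from $C$ to $C_L$, and the centrality of $L$ in $C_L$. After that, everything reduces to formal identities between reduced norms and the cubic norm on $H_3(C)$, together with the two inputs cited above.
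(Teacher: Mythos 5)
Your proof is correct and follows essentially the same route as the paper's: expanding $v$ over the basis $(1,\omega,\theta)$ (your $V$ is the paper's $m$ with $v=(1,\omega,\theta)m$), invoking $n(\tr_{L/F}(X))=Q$ from Lemma \ref{nX0}, the $N_6$-equivariance of the cubic norm on $H_3(C)$, the identity $\mathrm{disc}(1,\omega,\theta)=Q$, and the preceding lemma that $N_6$ lands in $n_C(C^\times)$, arriving at the same relation $Q=n_{L/F}(\mu)N_6(V)Q$. The only (cosmetic) difference is that you absorb $\mu$ into the twisted trace-form matrix $T$ and use $\det T=n_{L/F}(\mu)Q$ together with $\tr_{L/F}(X)=V^*TV$, whereas the paper pulls out $n_{L/F}(\mu)$ via the pairing $(X,Y)=(\mu X',\mu^\# Y')=n_{L/F}(\mu)(X',Y')$ and then applies the same trace computation to $X'=v^*v$.
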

\begin{proof} We have $v = v_0 m$ for $v_0 = (1, \omega,\theta)$ and some $m \in M_3(C)$.  Set $X' = v^*v$ and $Y' = \frac{1}{2} X' \times_{L} X'$.  Then 
\[Q((A,B)) = (X,Y) = (\mu X', \mu^\# Y') = n_{L/F}(\mu)(X',Y') = n_{L/F}(\mu)n(\tr_{L/F}(X')).\]
These last two quantities are equal by the proof of Lemma \ref{nX0}.  Since $X' = v^*v = m^* v_0^*v_0 m$, $\tr_{L/F}(X') = m^* \tr_{L/F}(v_0^*v_0)m$ and 
\[n(\tr_{L/F}(X')) = N_6(m)n(\tr_{L/F}(v_0^*v_0)).\]
But $n(\tr_{L/F}(v_0^*v_0)) = \mathrm{disc}(1,\omega,\theta) = Q((A,B))$.  Hence we obtain
\[Q((A,B)) = (X,Y) = n_{L/F}(\mu)n(\tr_{L/F}(X')) = n_{L/F}(\mu)N_6(m)Q((A,B)).\]
Since $N_6(m) \in n(C)$, the proposition follows. \end{proof}

The following lemma will be used in the next subsection, and has essentially already been proved above.
\begin{lemma}\label{QimpliesAdmis} Suppose $X' = -A'\theta + B'\omega + C'$ is rank one in $J \otimes_{R} T$, and set $Y' = \frac{1}{2} X' \times_{T} X'$.  If $(X',Y') = \mathrm{disc}(1,\omega,\theta) \neq 0$, then $X'$ is admissible. \end{lemma}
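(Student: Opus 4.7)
The plan is to deduce from the hypotheses that $n(A'x+B'y) = f(x,y)$ and $C' = (A')^{\#}\times (B')^{\#}$, so that $X' = X(A',B',\omega,\theta)$ is admissible. I would proceed in three stages: first establish $A',B',C'$ linearly independent in $J$; next use Lemma \ref{B2lem:uniqueLift} to produce $t\in F$ with $n(A'x+B'y) = tf(x,y)$ and pin down $t=1$ via a polynomial identity; finally conclude using Lemma \ref{B2uniqueDelta}.

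For linear independence: the hypothesis $\mathrm{disc}(1,\omega,\theta)\neq 0$ forces $L:=T\otimes_R F$ to be \'etale, so one may base-change to the algebraic closure and write $X' = X_1'\epsilon_1+X_2'\epsilon_2+X_3'\epsilon_3$ with each $X_i'$ rank one. The general argument in the proof of Lemma \ref{nX0} shows $(X',Y') = n(\tr_{L/F}X') = (X_1',X_2',X_3')$; this is nonzero by hypothesis, so $X_1',X_2',X_3'$ must be linearly independent, as the symmetric trilinear form vanishes whenever two of its arguments coincide among rank-one elements. This transfers to linear independence of $A',B',C'$ via the argument of Corollary \ref{B2:linInd}. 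Lemma \ref{B2lem:uniqueLift} then yields $t\in F$ with $n(A'x+B'y) = tf(x,y)$, equivalently $n(A') = ta$, $(A'^\#,B') = tb$, $(A',B'^\#) = tc$, $n(B') = td$.

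The heart of the proof is the identity $n(\tr_{L/F}X') = t\cdot Q(f)$. Expanding the rank-one condition $(X')^{\#} = 0$ in $J_T$ via the multiplication table of $T$ in the basis $\{1,\omega,\theta\}$ yields the three relations
\[
A'\times C' = cA'^{\#}+aB'^{\#}, \qquad B'\times C' = dA'^{\#}+bB'^{\#}, \qquad (C')^{\#} = bd\,A'^{\#}+ac\,B'^{\#}-ad(A'\times B').
\]
Pairing these with $A'$ and $B'$ and substituting the normalization above extracts closed-form expressions, each proportional to $t$, for the scalars $(A'^\#,C')$, $(B'^\#,C')$, $(A',(C')^\#)$, $(B',(C')^\#)$, and $n(C')$. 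Plugging these into the three-variable expansion of $n(3C'-cA'-bB')$ and collecting terms then yields exactly $tQ(f)$. Combined with $n(\tr_{L/F}X') = (X',Y') = \mathrm{disc}(1,\omega,\theta) = Q(f)$ from the hypothesis and Lemma \ref{nX0}, this forces $t=1$.

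Once $t=1$, the pair $(A',B')$ is compatible with the good basis $(1,\omega,\theta)$, so by Theorem \ref{B2Jlift} the element $X'' := X(A',B',\omega,\theta) = -A'\theta+B'\omega+A'^{\#}\times B'^{\#}$ is rank one. Then $\delta := X'-X'' = C'-A'^{\#}\times B'^{\#}$ lies in $J \subseteq J_T$, and Lemma \ref{B2uniqueDelta} applied to $X''$ with $\delta$ (using $n(A'x+B'y) = f \ne 0$) forces $\delta = 0$, whence $X' = X(A',B',\omega,\theta)$ is admissible. The main obstacle is the polynomial identity $n(3C'-cA'-bB') = tQ(f)$: it is routine once the pairings above have been extracted, but requires careful bookkeeping.
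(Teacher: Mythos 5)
Your proof is correct, and its skeleton coincides with the paper's: linear independence of $A',B',C'$ from $(X',Y') = n(\tr_{L/F}(X')) \neq 0$ via the arguments of Lemma \ref{nX0} and Corollary \ref{B2:linInd}, then Lemma \ref{B2lem:uniqueLift} to produce $t$ with $n(A'x+B'y) = tf(x,y)$, and finally Lemma \ref{B2uniqueDelta} (equivalently, the admissibility criterion stated just before Lemma \ref{B2admis}) to conclude once $t=1$. Where you genuinely diverge is the crucial step $t=1$. The paper argues in two stages: it first rules out $t=0$ by writing $X' = -A'\theta_0 + B'\omega_0 + C_1$ with $C_1 = \tfrac{1}{3}\tr_{L/F}(X')$ and noting $n(-A'\theta_0+B'\omega_0) = -(A',C_1^\#)\theta_0 + (B',C_1^\#)\omega_0 + 2n(C_1) \neq 0$ since $n(C_1)\neq 0$; then, for $t \in F^\times$, it observes that $tX' = -A'(t\theta)+B'(t\omega)+tC'$ is admissible for the rescaled good basis $(1,t\omega,t\theta)$, whose associated cubic is $tf$, so Theorem \ref{B2Jlift} yields $t^4 Q(f) = \tfrac12(tX',(tX')\times_T(tX')) = t^3 Q(f)$ and hence $t=1$. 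You instead prove the single identity $n(\tr_{L/F}(X')) = tQ(f)$ directly from the three relations given by $(X')^\#=0$; your stated relations agree with the expansion in the proof of Lemma \ref{B2lem:uniqueLift}, and the extracted pairings (e.g.\ $(A'^{\#},C') = 2act$, $(B'^{\#},C') = 2tbd$, $(A'\times B',C') = t(bc+3ad)$, $n(C') = t(abcd-a^2d^2)$) do make the expansion of $n(3C'-cA'-bB')$ come out to $t\left(-27a^2d^2+18abcd+b^2c^2-4ac^3-4b^3d\right) = tQ(f)$, so the bookkeeping closes. Comparing with the hypothesis $(X',Y') = Q(f)\neq 0$ then forces $t=1$, and your route gets the exclusion of $t=0$ for free, whereas the paper needs it as a separate preliminary. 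The trade-off: your argument costs an explicit norm expansion but is self-contained at that point; the paper's avoids all bookkeeping by invoking Theorem \ref{B2Jlift} for the rescaled basis together with the scaling $Q((t\omega,t\theta)) = t^4 Q((\omega,\theta))$.
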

\begin{proof} Set $Q(\omega,\theta) = \mathrm{disc}(1,\omega,\theta)$. We have 
\[n(\tr_{L/F}(X')) = (X'_1,X'_2,X'_3) =(X',Y')= Q(\omega,\theta) \neq 0.\]
Since $X'$ is rank one, we deduce that the $X'_i$ are linearly independent, and hence $\tr_{L/F}(\mu X) = 0$ implies $\mu = 0$, from which we conclude that $A', B', C'$ are linearly independent.  

It then follows that there exists $t \in F$ so that $f'(x,y) = n(A'x+B'y) = tf_{(\omega,\theta)}(x,y)$.  We must rule out the case $t=0$.  Define $C_1 \in J$ so that $X' = -A'\theta_0 + B'\omega_0 + C_1$.  Then we have
\[n( -A'\theta_0 + B'\omega_0) = n(X'-C_1) = (X',C_1^\#) - n(C_1) = - (A',C_1^\#)\theta_0 + (B',C_1^\#)\omega_0 + 2n(C_1).\]
We have $\tr_{L/F}(X') = 3C_1$, and thus $n(C_1) \neq 0$.  Hence $n( -A'\theta_0 + B'\omega_0) \neq 0$, and so we cannot have $t = 0$.

Thus $t \in F^\times$, and $tX' = -A'(t\theta) + B'(t\omega) + t'C'$ is admissible.  Hence
\[t^4 Q((\omega,\theta)) = Q((t\omega,t\theta)) = \frac{1}{2}(tX', (tX') \times_{T} (tX')) = t^3 \frac{1}{2}(X', X' \times_{T} X') = t^3 Q((\omega,\theta)).\]
Hence $t =1$, and $X'$ is admissible.\end{proof}

Recall the element $\epsilon \in M_3(C) \otimes L$ from Definition \ref{epDef}.  If $v_0 \in C_F^3$ (row vectors), define $\Phi^{v_0}: C_F^3 \rightarrow C_L$ (this $C_F^3$ being column vectors) via the formula $\Phi^{v_0}(\ell) = v_0\epsilon \ell$.   This map $\Phi^{v_0}$ is a map of right $C_F$-modules. The following proposition will also be used below.

\begin{proposition}\label{B2freeRk1} There exists $v_0 \in C_F^3$ (row vectors), and $\ell_0 \in C_F^3$ (column vectors) so that $(v_0^*v_0,Y) \in L^\times$ and $(\ell_0 \ell_0^*,X) \in L^\times$.  For such a $v_0$, the map $\Phi^{v_0}: C_F^3 \rightarrow C_L$ is a $C_F$-module isomorphism. \end{proposition}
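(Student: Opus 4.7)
The plan. The proposition has two parts: the existence of $v_0$ and $\ell_0$, and the isomorphism property of $\Phi^{v_0}$, with the latter being the substantive content.

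I would first establish existence by a Zariski-density argument. The map $v_0 \mapsto (v_0^*v_0, Y)$ is an $F$-polynomial map $C_F^3 \to L$; writing $L = \prod_i L_i$ as a product of fields, it suffices to produce, for each $i$, some $v_0 \in C_F^3$ whose image in $L_i$ is nonzero, since the resulting Zariski-open subsets of $C_F^3$ are each nonempty and intersect nontrivially over the infinite field $F$. For each $i$, $Y_i \in H_3(C_{L_i})$ is nonzero (because $(X,Y) = Q((A,B)) \in F^\times$ forces each $X_i, Y_i$ to be nonzero); the rank-one Hermitians $v_0^*v_0$ with $v_0 \in C_F^3$ span $H_3(C)$ over $F$ (the pairing $H \mapsto v_0 H v_0^*$ is the defining Hermitian form of $H$, which vanishes only when $H=0$), hence span $H_3(C_{L_i})$ over $L_i$ after base change; and nondegeneracy of the pairing then produces some $v_0 \in C_F^3$ with $(v_0^*v_0, Y_i) \neq 0$. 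The argument for $\ell_0$ is identical.

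For the isomorphism assertion I would first rewrite the pairing condition and then apply the lifting law. By cyclicity of the reduced trace on $M_3(C_L)$,
\[
(v_0^*v_0, Y) = \mathrm{trd}(v_0^*v_0 \cdot Y) = \mathrm{trd}(v_0 Y v_0^*) = 2 v_0 Y v_0^*,
\]
the last equality using that $v_0 Y v_0^* \in C_L$ is $*$-fixed (since $Y = Y^*$) and hence lies in $L$. So the pairing condition is equivalent to $v_0 Y v_0^* \in L^\times$. Theorem \ref{B2liftC} then yields $(v_0 \epsilon)^*(v_0 \epsilon) = \lambda X$ with $\lambda := v_0 Y v_0^*/Q((A,B)) \in L^\times$. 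Writing $X = \mu v v^*$ with $v \in V_3(C_L)$ primitive and $\mu \in L^\times$ (Lemma \ref{lem:rk1Field} applied componentwise over $L$), the uniqueness part of the same lemma gives $(v_0 \epsilon)^* = v \gamma$ for some $\gamma \in C_L^\times$ with $n_{C_L}(\gamma) = \lambda \mu$. Hence $v_0 \epsilon = \gamma^* v^*$, and $\Phi^{v_0}(\ell) = \gamma^*(v^* \ell)$; since left multiplication by $\gamma^* \in C_L^\times$ is an automorphism of $C_L$, it remains to show the auxiliary map $\psi: C_F^3 \to C_L$, $\ell \mapsto v^* \ell$, is an $F$-linear isomorphism.

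As $\dim_F C_F^3 = 3 \dim_F C = \dim_F C_L$, $\psi$ is an isomorphism iff the reduced norm $N_6(M_v) \in F^\times$, where $M_v \in M_3(C_F)$ is the matrix encoding $\psi$ in any fixed $F$-basis of $L$. The matrix $M_v$ is a $G := (\GL_2 \times M_J)$-equivariant polynomial construction in $(A,B)$; the ambiguity $v \mapsto vc$ for $c \in C_L^\times$ multiplies $M_v$ by a unit and so does not affect the (non-)vanishing of $N_6(M_v)$. For the non-degenerate case $(A,B) = (1_3, \diag(d,-d,0))$ computed in the proof of Theorem \ref{B2liftC}, one has $\epsilon = \diag(\epsilon_1,\epsilon_2,\epsilon_3)$ with $\epsilon_i$ the primitive idempotents of $L \cong F^3$ and $v = (\epsilon_1,\epsilon_2,\epsilon_3)^{t}$, so that $\psi$ becomes the tautological identification $F^3 \cong L$, and in particular $N_6(M_v) \in F^\times$. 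Since the non-degenerate locus $V^{\mathrm{open}}(\bar F)$ is a single $G(\bar F)$-orbit and $N_6(M_v)$ transforms equivariantly under $G$ by a unit character, this forces $N_6(M_v) \neq 0$ at every non-degenerate $\bar F$-point, and in particular at every non-degenerate $F$-point. The main obstacle is the careful equivariance bookkeeping in this last step: verifying both that the ambiguity in the choice of $v$ does not interfere with the density argument and that the specific orbit representative truly propagates to the full open locus.
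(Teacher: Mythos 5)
Your existence argument for $v_0$ and $\ell_0$ (Zariski density over the infinite field $F$, plus the fact that the rank-one elements $v^*v$, $v\in C_F^3$, span $H_3(C_F)$, so nondegeneracy of the trace pairing and $Y_i\neq 0$ give a nonvanishing value in each factor of $L$) is a valid alternative to the paper's construction, which instead writes $X=\mu u_0u_0^*$, takes $\tilde v=u_0^*$, and descends $\tilde v$ from $C_L^3$ to $C_F^3$ using the eigenvector properties $S_r(\lambda)Y=\lambda Y$ and $\lambda X=XS_r(\lambda)$ of Propositions \ref{prop:Srevec} and \ref{Y_eig}. (Minor slip: $(v_0^*v_0,Y)=v_0Yv_0^*$ with no factor of $2$; the pairing of Example \ref{ex:H3C} is half the reduced trace of the product, not the full reduced trace. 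This is harmless.)

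The isomorphism step is where there is a real gap. First, the deduction that $(v_0\epsilon)^*(v_0\epsilon)=\lambda\mu\,uu^*$ with $u$ primitive forces $(v_0\epsilon)^*=u\gamma$ for a unit $\gamma\in C_L^\times$ is not the uniqueness statement of Lemma \ref{lem:rk1Field} (which only pins down $\mu$ modulo $n(C_L^\times)$); it is true, but requires its own short argument (move $u$ to $(1,0,0)^t$ by $\GL_3(C_L)$ and read off entries). Second, and more seriously, your key step treats $N_6(M_v)$ as ``a $G$-equivariant polynomial construction in $(A,B)$.'' It is not: $M_v$ depends on the non-canonical choice of $u$ (equivalently of $v_0$ and $\gamma$), which cannot be made regularly in $(A,B)$, so the ``prehomogeneity plus equivariance plus one well-chosen point'' template from the paper's note on proofs does not apply as stated. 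To repair it you must prove exactly the bookkeeping you defer: (i) any two primitive $u,u'$ with $X=\mu uu^*=\mu'u'(u')^*$, $\mu,\mu'\in L^\times$, differ by right multiplication by a unit of $C_L$, so that the truth of ``$\ell\mapsto u^*\ell$ is bijective'' is independent of all choices; and (ii) a valid choice at the representative $(1_3,\diag(d,-d,0))$ transports under $G(\overline{F})$ (via the transformation laws of Proposition \ref{equivariance} and transitivity on the nondegenerate locus) to a valid choice at every nondegenerate point, with $\psi$ changing only by composition with bijections. These are provable, but they are the actual content of your argument and are missing. The paper's proof avoids all of this, and it is the reason $\ell_0$ appears in the statement (your isomorphism argument never uses it): the lifting law gives $Q((A,B))\,n_{C_L}(\Phi^{v_0}(\ell))=(v_0Yv_0^*)(\ell^*X\ell)$, so $\Phi^{v_0}(\ell_0)$ is a unit of $C_L$; the image $\Phi^{v_0}(C_F^3)$ is closed under right multiplication by $C$ and under left multiplication by $L$ (since $\lambda\,v_0\epsilon\ell=v_0\epsilon\,S_r(\lambda)\ell$ by Lemma \ref{epsilonLemma} and commutativity of $T$), hence equals $C_L$; injectivity is then the dimension count $\dim_FC_F^3=\dim_FC_L$. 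I would recommend replacing your step (e) by this argument, or else supplying (i) and (ii) in full.
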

\begin{proof}  We already know $X = \mu u_0 u_0^*$ for some primitive column vector $u_0 \in C_L^3$ and $\mu \in L^\times$. Since $(X,Y) = Q((A,B)) \in F^\times$, taking $\tilde{v} = u_0^*$ gives $\tilde{v}Y\tilde{v}^* = (\tilde{v}^*\tilde{v},Y) \in L^\times$.  Write $\tilde{v} = v_1 + v_{\omega}\omega + v_{\theta}\theta$, with the $v_{*} \in C_F^3$, and set $v_0 = v_1 + v_\omega S_{r}(\omega) + v_{\theta}S_{r}(\theta)$.  Then $v_0Yv_0^* = \tilde{v}Y\tilde{v}^* \in L^\times$.  The existence of $\ell_0$ with $(\ell_0 \ell_0^*,X) \in L^\times$ is proved similarly.  This proves the first part of the proposition.

For the second part, suppose $v \in C_F^3$ (row vectors) and $\ell \in C_F^3$ (column vectors).  By the lifting law, we can calculate $n_{C}(\Phi^v(\ell))$.  Indeed, we have
\begin{equation}\label{eqn:QABnC}Q((A,B)) n_{C}(\Phi^v(\ell)) = Q((A,B)) \Phi^v(\ell)^* \Phi^v(\ell) = (vYv^*)(\ell^*X\ell).\end{equation}
Furthermore, note that via the properties of $\epsilon$, $\Phi^v(C_F^3) \subseteq C_L$ is closed under left multiplication by $L$ and right multiplication by $C$, and hence is a $C_L$-submodule of $C_L$.

By (\ref{eqn:QABnC}), $n_{C}(\Phi^{v_0}(\ell_0)) \in L^\times.$  Thus, for $v_0$ as above, the map $\Phi^{v_0}: C_F^3 \rightarrow C_L$ is surjective, since the image contains a unit of $C_L$ and is closed under left $C_L$-multiplication. Since $\dim_{F} C_F^3 = \dim_{F} C_L = 3 \dim_{F}C$, $\Phi^{v_0}$ is injective as well.  This completes the proof. \end{proof}

\subsection{Integral orbits}\label{subsec:integralB2}  Suppose that $R$ is an integral domain, with fraction field $F$ of characteristic $0$, and $C$ is an associative composition algebra over $R$.  Recall that this means that $C$ is a free $R$-module of finite rank, is an $R$-subring of an associative composition $F$-algebra, stable under the involution $*$, and such that $x+x^*$, $xx^*$ are in $R$ for all $x \in C$.  In this subsection we consider the orbits of a certain group $G$ on $H_3(C)^2$.

More precisely, set $G_1 = \GL_1(R) \times \GL_2(R) \times \GL_3(C)$, which acts on the right of $V := V_2(R) \otimes H_3(C)$.  Here $V_2(R) = R^2$ is the row vectors with coefficients in $R$ and the action of $\GL_2$ action is the usual right action of matrices on row vectors.  The group $\GL_3(C)$ acts on the right of $H_3(C)$ by $h \mapsto m^*hm$ for $h \in H_3(C)$ and $m \in \GL_3(C)$. Finally, $\lambda \in \GL_1(R)$ acts on $V$ by scaling by $\lambda$. We denote by $G \subseteq G_1$ the group of triples $(\lambda,g,m) \in \GL_1(R) \times GL_2(R) \times \GL_3(C)$ with $\lambda = \det(g)^{-1}$ and $\det(g)^2 = N_6(m)$, where $N_6(m) = n(mm^*)$ is the degree $6$ reduced norm on $M_3(C)$.  Then $G$ also acts on $V$ by restriction of the action of $G_1$.

We begin with a few definitions.  Suppose $T$ is a cubic ring over $R$ and $L = T \otimes_{R} F$.  We have $1, \omega, \theta$ a good basis of $T$.  Let $C_{L} = C \otimes_{R} L$.  We consider $T \otimes C$ submodules $I \subseteq L \otimes C$.
\begin{definition} If $I \subseteq L \otimes C$, we say $I$ is a fractional ideal if
\begin{itemize}
\item $I$ is closed under left multiplication by $T$ and right multiplication by $C$;
\item $I$ is free of rank $3$ as a $C$-module;
\item $I \otimes_{R}F$ is free of rank one as an $L \otimes C$-module. \end{itemize} \end{definition}

\begin{definition} Suppose $I$ is a $T \otimes C$ fractional ideal, and $b_1, b_2, b_3$ is an ordered basis of $I$ i.e., that $I = b_1C + b_2C + b_3 C$ inside $L \otimes C$.  Then $(b_1,b_2,b_3) = (1\otimes 1,\omega \otimes 1,\theta \otimes 1)g$ for some unique $g \in M_3(C_F)$.  We define $N(I;(b_1,b_2,b_3);(1,\omega,\theta)) = N_6(g) = n(g^*g)$ the norm of the ideal.  Of course, this norm depends on the choice of the good basis $(1,\omega,\theta)$ and the choice of ordered basis $(b_1,b_2,b_3)$. \end{definition}

We will parametrize the orbits of $G$ on $H_3(C)^2$ in terms of \emph{balanced} fractional ideals.  We now define when the based fractional ideal $I,(b_1,b_2,b_3)$ is balanced.
\begin{definition} Suppose $\beta \in L^\times$, and the good basis $(1,\omega,\theta)$ of $T$ is fixed.  One says that the data $((I,(b_1,b_2,b_3)),\beta)$ is \textbf{balanced} if $y^*x \in \beta C \otimes_R T$ for all $x, y$ in $I$ and $N((I,(b_1,b_2,b_3))) = n_{L/F}(\beta)$. If the data $(I,b_1,b_2,b_3,\beta)$ is balanced, we set 
\[X_{I,\beta} = \beta^{-1} b^* b \in H_3(C) \otimes T.\]
\end{definition}

Associated to a non-degenerate pair $(A,B) \in H_3(C)^2$, one can define a based balanced $T \otimes C$-module $(I,\beta)$ as follows. First, from the non-degenerate pair $(A,B)$, one defines the binary cubic form $f_{(A,B)}(x,y) = n(Ax+By)$, which is associated to a good-based cubic ring $T, (1,\omega,\theta)$.   Next, one has the rank one elements $X = X(A,B,\omega,\theta)$, $Y = Y(A,B,\omega,\theta)$ in $H_3(C) \otimes T$.  One has $X = - A\theta + B\omega + A^\# \times B^\#$ and $Y = \frac{1}{2} X \times_{T} X$.  We set $S(\omega) = -A^\#B$ and $S(\theta) = B^\#A$ in $M_3(C)$.

Finally, suppose given some auxiliary $v_0 \in C^3$ (row vectors), with $v_0 Y v_0^* = (v_0^*v_0,Y)$ in $L^\times$.  By Proposition \ref{B2freeRk1}, such a $v_0$ can always be found.  Define $\beta(v_0) = \frac{(v_0^*v_0,Y)}{Q}$, $Q = Q(A,B) = \mathrm{disc}(1,\omega,\theta)$ and $\Phi^{v_0}: C^3 \rightarrow C_L$ (this $C^3$ being column vectors) via the formula $\Phi^{v_0}(\ell) = v_0 \epsilon \ell$.  
\begin{proposition}\label{balancedProp} Write $E_1 = (1,0,0)^{t}$, $E_2 = (0,1,0)^{t}$, $E_3 = (0,0,1)^{t}$, the standard basis of column vectors.  Set $b_j = \Phi^{v_0}(E_j)$ and $I$ the $C$ submodule generated by these $b_j$, i.e., $I = \Phi^{v_0}(C^3)$.  Then the data $(I,(b_1,b_2,b_3),\beta(v_0))$ is a balanced fractional $T\otimes C$-ideal. \end{proposition}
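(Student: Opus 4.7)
I plan to verify the three requirements of being a fractional $T\otimes C$-ideal and then the two conditions in the balanced definition in turn. For the fractional-ideal axioms: the map $\Phi^{v_0}(\ell) = v_0\epsilon\ell$ is right-$C$-linear in $\ell$, so $I=\Phi^{v_0}(C^3)$ is closed under right $C$-multiplication; for $x \in T$, Lemma \ref{epsilonLemma} (with $\iota_1 = S_r$, $\iota_2 = \mathrm{id}$) gives $S_r(x)\epsilon = \epsilon x$, whence, using that $L$ is central in $M_3(C_L) = M_3(C)\otimes L$ and hence commutes past $v_0$,
\[x \cdot \Phi^{v_0}(\ell) = v_0 (x\epsilon)\ell = v_0 \epsilon S_r(x)\ell = \Phi^{v_0}(S_r(x)\ell) \in I,\]
since $S_r(x) \in M_3(C)$ so $S_r(x)\ell \in C^3$; finally, Proposition \ref{B2freeRk1} asserts that $\Phi^{v_0}$ is a $C_F$-linear isomorphism $C_F^3 \to C_L$, so $I$ is free of rank $3$ over $C$ and $I_F = C_L$ is free of rank one over $L\otimes C$.

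The containment $y^* x \in \beta(C\otimes_R T)$ follows directly from the refined lifting law: applying Theorem \ref{B2liftC} with $v = v_0$ gives
\[Q \cdot (v_0\epsilon)^*(v_0\epsilon) = (v_0 Y v_0^*)\, X,\]
which by the definition $\beta = (v_0 Y v_0^*)/Q$ rearranges to $(v_0\epsilon)^*(v_0\epsilon) = \beta X$ in $H_3(C_L)$. For $x = \Phi^{v_0}(\ell)$ and $y = \Phi^{v_0}(\ell')$ with $\ell, \ell' \in C^3$, this yields $y^* x = (\ell')^*(v_0\epsilon)^*(v_0\epsilon)\ell = \beta\,(\ell')^* X\ell$, and $(\ell')^* X\ell \in C\otimes_R T$ because $X \in H_3(C)\otimes_R T$ and $\ell, \ell' \in C^3$.

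The remaining, and principal, task is the norm equality $N(I;(b_1,b_2,b_3);(1,\omega,\theta)) = n_{L/F}(\beta(v_0))$. My plan is a prehomogeneity argument: both sides are $F$-valued rational functions of $(A, B, v_0)$, and clearing a suitable power of $Q((A,B))$ makes each a polynomial of degree $6$ in $v_0$ for fixed $(A,B)$. The equivariance data of Proposition \ref{equivariance} show that both quantities have matching transformation laws under $G$—invariance under $\SL_3(C)$ coupled with the reparametrization $v_0 \mapsto v_0 m^*$, rescaling by $\det(g)^{-6}$ under $g \in \GL_2(R)$, and the analogous $\GL_1$-scaling. Together with prehomogeneity of the $\GL_2 \times M_J$-action on $V$ after base-change to an algebraic closure, this reduces the identity to a check at a single nondegenerate base point. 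Taking $A_0 = I_3$, $B_0 = \diag(d,-d,0)$, $v_0 = (1,1,1)$ from the proof of Theorem \ref{B2liftC}—where $L = F\times F\times F$, $\omega = (-d,d,0)$, $\theta = (0,0,-d^2)$, and $\epsilon$ is diagonal—a direct computation yields $g$ with $\det(g) = -1/(2d^3)$ and $\beta = (-1/(2d^2), -1/(2d^2), 1/d^2)$, so $N(I) = \det(g)^2 = 1/(4d^6) = n_{L/F}(\beta)$. The genuine obstacle is that $\SL_3(C)$ acting on nondegenerate pairs with fixed $L,(1,\omega,\theta)$ is not transitive over $F$—its orbits are classified by the invariant $[\mu] \in L^\times/n(C_L^\times)$ of Theorem \ref{B2F}—so the reduction to a single base point must be carried out over an algebraic closure, with the resulting polynomial identity then extended back to $F$ by Zariski density.
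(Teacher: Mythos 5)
Your verification of the fractional-ideal axioms and of the containment $y^{*}x\in\beta\,(C\otimes_R T)$ is correct and is essentially the paper's argument: both rest on Proposition \ref{B2freeRk1} (with Lemma \ref{epsilonLemma} for the left $T$-action) and on rewriting Theorem \ref{B2liftC} as $(v_0\epsilon)^{*}(v_0\epsilon)=\beta X$ with $X\in H_3(C)\otimes_R T$. The gap is in the norm equality. Your reduction ``to a check at a single nondegenerate base point'' with the single vector $v_0=(1,1,1)$ does not follow from equivariance plus prehomogeneity: after clearing powers of $Q((A,B))$, the statement $n_{L/F}(\beta(v_0))=N_6(g(v_0))$ is a polynomial identity in $(A,B)$ \emph{and} in $v_0$, and the group only moves $v_0$ through the reparametrization $v_0\mapsto v_0m^{*}$ that is tied to the element $m$ acting on $(A,B)$. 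For a fixed nondegenerate $(A,B)$ both sides are degree-$6$ forms in $v_0$, and the only remaining freedom is the stabilizer of $(A,B)$ in $\GL_2\times M_J$ over $\overline{F}$, which is far too small to act with a dense orbit on $V_3(C_{\overline{F}})$ (for $C_{\overline{F}}=\overline{F}\times\overline{F}$ it is essentially a three-dimensional torus acting on a six-dimensional space); equivalently, by dimension count the orbit of the single triple $(A_0,B_0,v_0)$ is not Zariski dense in $J^2\oplus V_3(C)$. So agreement at one $v_0$ does not propagate. The repair inside your strategy is to verify the base-point identity for \emph{all} $v_0=(v_1,v_2,v_3)$, which is easy there: with $A_0=1$, $B_0=\diag(d,-d,0)$ one gets $g=S\cdot\diag(v_1,v_2,v_3)$ with $S\in M_3(F)$, $\det S=-1/(2d^3)$, hence $N_6(g)=n(v_1)n(v_2)n(v_3)/(4d^6)$, while $\beta=\bigl(-n(v_1)/(2d^2),\,-n(v_2)/(2d^2),\,n(v_3)/d^2\bigr)$ gives the same value of $n_{L/F}(\beta)$; only then do your equivariance and Zariski-density arguments in $(A,B)$ over $\overline{F}$ finish the proof.

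You should also know the paper's route, which avoids prehomogeneity for this step entirely and works directly over $F$, uniformly in $v_0$: writing $(b_1,b_2,b_3)=(1,\omega,\theta)g$, the lifting law gives $\beta X=g^{*}\,{}^{t}\!(1,\omega,\theta)\,(1,\omega,\theta)\,g$; applying $n(\tr_{L/F}(\cdot))$ to both sides, the left side equals $n_{L/F}(\beta)\,Q((A,B))$ because $X$ is rank one (the argument of Lemma \ref{nX0}, with the scalar $\beta$ carried along), and the right side equals $n(g^{*}Vg)=N_6(g)\,\mathrm{disc}(1,\omega,\theta)=N_6(g)\,Q((A,B))$, whence $n_{L/F}(\beta)=N_6(g)$ at once. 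That one-line trace computation is what your base-point-plus-density scheme is trying to reproduce, and it requires no passage to the algebraic closure and no discussion of orbits.
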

\begin{proof} By Proposition \ref{B2freeRk1} and its proof, $I=b_1 C + b_2C +b_3C$ is a fractional $T\otimes C$-ideal.  It remains to check that the data is balanced. 

Thus, suppose $g\in M_3(C)$ satisfies $(b_1,b_2,b_3) = (1,\omega,\theta)g$.  By the lifting law Theorem \ref{B2liftC}, we have 
\[\beta X_{A,B} = g^* \left(\begin{array}{c} 1 \\ \omega \\ \theta \end{array}\right)\left(\begin{array}{ccc} 1 & \omega& \theta \end{array}\right)g.\]
Now, take $n(\tr_{L/F}( \cdot ))$ of both sides.  On the left-hand side, one obtains $n_{L/F}(\beta)Q(A,B) \in F^\times$, by Lemma \ref{nX0}.  On the right-hand side, one obtains $n(g^*V g)$, with $V$ given below the proof of Proposition \ref{equivariance}.  But 
\[n(g^*Vg) = N_6(g)n(V) = N_6(g)\mathrm{disc}(1,\omega,\theta) = N_6(g)Q(A,B).\]
Since $Q(A,B) \in F^\times$, $n_{L/F}(\beta) = N_6(g)$, so the proposition follows. \end{proof}

We now define an equivalence relation on based fractional $T \otimes C$ ideals.  Suppose $I = b_1C + b_2C +b_3 C \subseteq C_L$ is a $C_T = C \otimes_{R} T$-module, and $\beta \in L^\times$ is a unit.  We fix the good basis $(1,\omega,\theta)$ of $T$.  Now say that $(b_1,b_2,b_3, \beta)$ is equivalent to $(b_1',b_2',b_3',\beta')$ if there exists $x \in C_{L}^\times$ so that
\[(b_1',b_2',b_3',\beta') = (xb_1, xb_2,xb_3, n_{C}(x)\beta).\]

We have the following lemma.
\begin{lemma}\label{equivMod} The tuples $(b_1,b_2,b_3, \beta)$ and $(b_1',b_2',b_3',\beta')$ are equivalent if and only if $X := X_{b,\beta} = X_{b',\beta'}=:X'$. \end{lemma}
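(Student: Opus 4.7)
For the $(\Rightarrow)$ direction, the calculation is immediate: if $b' = xb$ and $\beta' = n_C(x)\beta$ for some $x \in C_L^\times$, then using that $C$ is an associative composition algebra, $x^*x = n_C(x) \in L$ lies in the center of $C_L$, so
\[
(b')^*b' = (xb)^*(xb) = b^*(x^*x)b = n_C(x)\,b^*b,
\]
and dividing by $\beta' = n_C(x)\beta$ yields $X_{b',\beta'} = \beta^{-1}b^*b = X_{b,\beta}$.

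For the $(\Leftarrow)$ direction, assume $X_{b,\beta} = X_{b',\beta'}$ and set $u := \beta'/\beta \in L^\times$, so that $(b')^*b' = u\cdot b^*b$ in $M_3(C_L)$. The plan is first to reduce to the case where $b_1$ is invertible in $C_L$, and then to set $x := b_1' b_1^{-1}$ and verify everything. The reduction is accomplished by simultaneously replacing $(b,b')$ by $(bM, b'M)$ for a well-chosen $M \in \GL_3(C_F)$: such a change preserves both the rank-one identity $(b'M)^*(b'M) = u\cdot (bM)^*(bM)$ and the equivalence relation (since $b' = xb$ is equivalent to $b'M = xbM$ when $M$ is invertible), and a suitable $M$ exists because, after decomposing $L = \prod_i L_i$ into its field factors, the free-rank-one bimodule condition on each $I_F^{(i)} \subseteq C_{L_i}$ forces $I_F^{(i)} = e^{(i)} C_{L_i}$ with $e^{(i)} \in C_{L_i}^\times$; the set of $(m_1,m_2,m_3) \in C_F^3$ making $\sum_j b_j m_j$ a unit in each $C_{L_i}$ is a nonempty Zariski-open subset (and the intersection over the finitely many $i$ is nonempty in characteristic zero), so taking such a triple as the first column of $M$, and filling in the remaining two columns to make $M$ invertible, suffices. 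With $b_1 \in C_L^\times$ in hand, set $x := b_1' b_1^{-1}$; the $(1,1)$ entry of the rank-one identity gives $n_C(b_1') = u\cdot n_C(b_1)$, hence $n_C(x) = u \in L^\times$ and $x \in C_L^\times$; and the $(1,j)$ entries give $(b_1')^* b_j' = u\cdot b_1^* b_j$, into which substituting $(b_1')^* = b_1^* x^*$ and using $x^*x = n_C(x) = u$ cancels the invertible $b_1^*$ to yield $b_j' = x\,b_j$ for all $j$.

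The main obstacle is the reduction step: securing that in each field factor $L_i$ the ideal $I_F^{(i)}\subseteq C_{L_i}$ contains units, which is exactly where the free-rank-one bimodule hypothesis on $I$ pays off. Once this local existence of units is established, producing a global change of basis is a routine Zariski-density argument (possible because $F$ has characteristic zero), and the remaining identification of $x$ is the short computation above.
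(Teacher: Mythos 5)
Your proof is correct and follows essentially the same route as the paper: both hinge on the fact that the $b_i$ span $C_L$ over $C_F$, which you use to right-multiply by an invertible matrix making the first entry of $b$ a unit before extracting $x$ from the entries of the matrix identity, while the paper uses the same spanning fact (phrased as primitivity of $b$) to normalize $b$ all the way to $(1,0,0)$ via some $g \in \GL_3(C_L)$ and then reads off $x$. Your Zariski-density reduction is heavier than necessary --- since $I_F = C_L$ one can simply choose $m_j \in C_F$ with $\sum_j b_j m_j = 1$ --- but the argument is sound.
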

\begin{proof} It is clear that if the tuples are equivalent, then $X=X'$.  Conversely, since by assumption the $b_i$ span $C_L$ as a $C_F$-module, $b$ is primitive.  (This follows, for example, from the identity 
\[(b_1,b_2,b_3) = (1,0,0) \left(\begin{array}{ccc}1 & \omega & \theta \\ 0&1&0 \\ 0&0&1 \end{array}\right) g\]
with $g$ invertible.) Thus there exists $g \in \GL_3(C_L)$ so that $b_0 :=bg = (1,0,0)$.  Set $b_0' = b'g$.  Then one sees easily that there exists $x \in C_L^\times$ so that $b_0' = xb_0$ and $\beta' = n_C(x)\beta$.  The result for $b$ and $b'$ follows. \end{proof}

Associated to a non-degenerate pair $(A,B) \in H_3(C)^2$, one can define a based balanced $T \otimes C$-module $(I,\beta)$ as above, using some choice of $v_0$.  Then, if one changes $v_0$ to a different $v_0'$, the $I',\beta'$ one gets from $v_0'$ is equivalent to the $I,\beta$ obtained from $v_0$.  For instance, this follows from the lifting law and Lemma \ref{equivMod}.  

Conversely, associated to a based-balanced $T \otimes C$-modules $(I,(b_1,b_2,b_3),\beta)$, one obtains a pair $(A',B') \in H_3(C)^2$ by the equality $X_{b,\beta'} = -A'\theta + B'\omega + C'$.  It is clear that the association $(I,(b_1,b_2,b_3),\beta) \mapsto (A',B')$ descends to the level of equivalence classes.  

\begin{lemma}\label{ABnondeg} If $(I,(b_1,b_2,b_3),\beta)$ is a fractional $T \otimes C$-ideal, and, as always, $L$ is \'{e}tale, then the associated pair $(A',B')$ is non-degenerate.\end{lemma}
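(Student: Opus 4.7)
The plan is to reduce the claim to Lemma~\ref{QimpliesAdmis} applied to the rank one element $X := X_{I,b,\beta} = \beta^{-1} b^* b$ of $H_3(C)\otimes_R T$. By the balanced hypothesis, $X$ indeed lies in $H_3(C)\otimes_R T$, so we may write $X = -A'\theta + B'\omega + C'$ uniquely, and the $A', B'$ produced here are precisely those of the construction. Setting $Y = \tfrac{1}{2}X \times_T X$, Lemma~\ref{QimpliesAdmis} will yield that $X = X(A',B',\omega,\theta)$, i.e.\ that $n(A'x+B'y) = f_{(\omega,\theta)}(x,y)$, provided $(X,Y) = \mathrm{disc}(1,\omega,\theta)$, which is nonzero by the \'etale hypothesis on $L$. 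Since the discriminant of $f_{(\omega,\theta)}$ equals $\mathrm{disc}(1,\omega,\theta)\neq 0$, such an equality then gives the required non-degeneracy of $(A',B')$.

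Computing $(X,Y)$ is the main step. From the identity established in the proof of Lemma~\ref{nX0}, any rank one $Z \in J\otimes L$ satisfies $(Z,\tfrac{1}{2} Z \times_L Z) = n(\tr_{L/F}(Z))$; applied to $X$, this reduces the task to evaluating $n(\tr_{L/F}(X))$. Writing $u = (1,\omega,\theta)$ (a row vector with entries in $L$) and $b = ug$ with $g \in M_3(C_F)$, and using that $u^* = u^t$ since the involution is trivial on $L$, we have $X = \beta^{-1} g^*(u^*u) g$. Since $g$ has entries in $C_F$ and $\tr_{L/F}$ is $F$-linear, it follows that
\[
\tr_{L/F}(X) = g^*\widetilde V g, \qquad \widetilde V := \tr_{L/F}(\beta^{-1}u^*u) \in M_3(F).
\]
The cubic norm on $H_3(C)$ restricts to the determinant on $M_3(F) \subset H_3(C)$ and transforms by $n(g^* h g) = N_6(g)\,n(h)$, so $n(\tr_{L/F}(X)) = N_6(g)\cdot\det(\widetilde V)$.

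To finish I would evaluate the two factors separately. The balanced hypothesis immediately gives $N_6(g) = n_{L/F}(\beta)$. For $\det(\widetilde V)$, note that $\widetilde V_{ij} = \tr_{L/F}(\beta^{-1}\lambda_i\lambda_j)$ with $\lambda_1,\lambda_2,\lambda_3 = 1,\omega,\theta$. Letting $M$ be the matrix of multiplication-by-$\beta^{-1}$ on $L$ in this basis, a direct check gives $\widetilde V = VM$, where $V$ is the Gram matrix of the ordinary trace form on $L$ (whose determinant is $\mathrm{disc}(1,\omega,\theta)$). Since $\det M = n_{L/F}(\beta^{-1})$, one obtains $\det(\widetilde V) = \mathrm{disc}(1,\omega,\theta)/n_{L/F}(\beta)$, and the product is $(X,Y) = \mathrm{disc}(1,\omega,\theta)$, as needed. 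There is no real obstacle beyond spotting that Lemma~\ref{QimpliesAdmis} supplies the correct reduction; the main conceptual point is that the balanced condition $N_6(g) = n_{L/F}(\beta)$ exactly cancels the $\beta^{-1}$-twist of the trace form.
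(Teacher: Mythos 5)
Your proposal is correct and takes essentially the same route as the paper's proof: both write $X = \beta^{-1}g^*\bigl(\begin{smallmatrix}1\\ \omega\\ \theta\end{smallmatrix}\bigr)(1\;\omega\;\theta)g$, compute $(X,Y) = n_{L/F}(\beta)^{-1}N_6(g)\,\mathrm{disc}(1,\omega,\theta)$, use the balanced condition $N_6(g) = n_{L/F}(\beta)$ to get $(X,Y) = \mathrm{disc}(1,\omega,\theta)\neq 0$, and then invoke Lemma \ref{QimpliesAdmis}. Your explicit evaluation via $(X,Y) = n(\tr_{L/F}(X))$ and $\widetilde V = VM$ just fills in the computation the paper leaves implicit.
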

\begin{proof} To see this, consider the equality
\[X:=X_{I,\beta} = \beta^{-1} g^* \left(\begin{array}{c} 1 \\ \omega \\ \theta \end{array}\right)\left(\begin{array}{ccc} 1 & \omega& \theta \end{array}\right)g.\]
Then if $Y = \frac{1}{2} X \times_{T}X$, one has 
\[(X,Y) = n_{L/F}(\beta)^{-1}N_6(g)\mathrm{disc}(1,\omega,\theta) = \mathrm{disc}(1,\omega,\theta) \neq 0\]
since $I,\beta$ is assumed balanced.  It now follows from Lemma \ref{QimpliesAdmis} that $(A',B')$ is non-degenerate.\end{proof}

The following result says that the above associations between equivalence classes of based-balanced $T \otimes C$ fractional ideals and non-degenerate pairs $(A,B) \in H_3(C)^2$ are inverse to one another.
\begin{theorem} The associations 
\[(A,B) \mapsto (T,(1,\omega,\theta),(I,(b_1,b_2,b_3)),\beta)\]
and
\[(T,(1,\omega,\theta),(I,(b_1,b_2,b_3)),\beta) \mapsto (A',B')\]
define inverse bijections between non-degenerate pairs $(A,B)$ and based-balanced fractional $T \otimes C$-ideals.  These bijections are equivariant for the action of $G_1$, where $(\lambda, g,m) \in \GL_1(R) \times \GL_2(R) \times \GL_3(C)$ acts on the data $(T,(1,\omega,\theta),(I,(b_1,b_2,b_3),\beta)$ via
\[((\omega_0,\theta_0),(b_1,b_2,b_3),\beta;v_0)\mapsto (\lambda^3N_6(m)\det(g)(\omega_0,\theta_0)g,(b_1,b_2,b_3)m,(\lambda^4 N_6(m)\det(g)^2)^{-1}\beta;v_0m).\]
Here, $((\omega_0,\theta_0),(b_1,b_2,b_3),\beta;v_0)$ means that the data $(b_1,b_2,b_3,\beta)$ is computed via the element $v_0 \in C^3$.
\end{theorem}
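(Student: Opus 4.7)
My plan is to establish that the two maps are well-defined, mutually inverse, and equivariant, drawing on the structural results already developed in this section.

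For the forward direction $(A,B) \mapsto (T, (1,\omega,\theta), (I, b), \beta)$: the binary cubic $f_{(A,B)}(x,y) = n(Ax+By)$ provides the good-based cubic ring $(T, (1,\omega,\theta))$. Using Proposition \ref{B2freeRk1}, I would select $v_0 \in C^3$ with $(v_0^* v_0, Y) \in L^\times$. Defining $b_j = \Phi^{v_0}(E_j)$ and $\beta(v_0) = (v_0^* v_0, Y)/Q((A,B))$, Proposition \ref{balancedProp} shows $(I, b, \beta(v_0))$ is a balanced fractional $T \otimes C$-ideal, and the lifting law Theorem \ref{B2liftC} gives $X_{I,b,\beta(v_0)} = X(A,B,\omega,\theta)$. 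If $v_0'$ is another valid choice, the resulting data $(b', \beta')$ satisfies $X_{b',\beta'} = X(A,B,\omega,\theta) = X_{b,\beta}$, so by Lemma \ref{equivMod} the two triples are equivalent, confirming the forward map descends to a well-defined association to equivalence classes.

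For the backward direction, starting with balanced $(T, (1,\omega,\theta), (I, b), \beta)$, I would form $X_{b,\beta} = \beta^{-1} b^* b \in H_3(C) \otimes T$ and expand it as $-A'\theta + B'\omega + C'$. The balanced assumption, combined with the computation used in Lemma \ref{ABnondeg}, yields $(X_{b,\beta}, \tfrac{1}{2} X_{b,\beta} \times_T X_{b,\beta}) = \mathrm{disc}(1,\omega,\theta) \neq 0$; Lemma \ref{QimpliesAdmis} then shows $X_{b,\beta}$ is admissible, i.e.\ $X_{b,\beta} = X(A',B',\omega,\theta)$, and $(A',B')$ is non-degenerate. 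To see that the two compositions are identities, applying the backward map to the output of the forward map recovers $(A,B)$ since $X(A,B,\omega,\theta)$ determines $A$ and $B$ as the $\theta$- and $\omega$-components. Conversely, applying the forward map to the extracted $(A',B')$ yields some $(b'',\beta'')$ with $X_{b'',\beta''} = X(A',B',\omega,\theta) = X_{b,\beta}$, so Lemma \ref{equivMod} supplies the required equivalence.

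The equivariance is the most technical piece, but it reduces to unwinding the definitions against the transformation rules already recorded in Proposition \ref{equivariance}. Under $(\lambda,g,m) \in G_1$, the quantities $f$, $(\omega_0,\theta_0)$, $X$, $Y$, $\epsilon$, and $Q$ rescale by explicit factors; the rule $\epsilon \mapsto (m^*)^{-1} \epsilon m^*$ (combined with the action on $v_0$) dictates $b = v_0 \epsilon \mapsto (b_1,b_2,b_3)m$, and the prescribed rescaling of $\beta(v_0) = (v_0^* v_0,Y)/Q$ follows from how $Y$ and $Q$ transform, yielding the asserted factor $(\lambda^4 N_6(m)\det(g)^2)^{-1}$. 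I expect the main obstacle to be not any single conceptual step but the bookkeeping in this equivariance calculation: tracking how the various twist factors combine through the auxiliary quantity $\beta(v_0)$ requires care, particularly in separating the roles of $\lambda$, $\det(g)$, and $N_6(m)$, which enter $Y_0$ and $\epsilon$ with different weights. Everything else is essentially an application of the lifting law and the uniqueness results (Lemmas \ref{B2uniqueDelta}, \ref{B2lem:uniqueLift}, \ref{equivMod}, \ref{QimpliesAdmis}) already established.
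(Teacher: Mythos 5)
Your proposal is correct and follows the same overall skeleton as the paper: forward map via $\epsilon$, a choice of $v_0$ as in Proposition \ref{B2freeRk1} and Proposition \ref{balancedProp}, backward map via Lemma \ref{ABnondeg} and Lemma \ref{QimpliesAdmis}, first composition immediate from the lifting law Theorem \ref{B2liftC}, and equivariance from Proposition \ref{equivariance}. The one place you genuinely diverge is the composition $(I,b,\beta)\mapsto(A',B')\mapsto(b'',\beta'')$: you observe $X_{b'',\beta''}=X(A',B',\omega,\theta)=X_{b,\beta}$ and invoke Lemma \ref{equivMod} to conclude the tuples are equivalent, which is legitimate (both tuples are primitive, so the lemma applies) and suffices because the forward map depends on the auxiliary $v_0$ only up to equivalence anyway. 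The paper instead does something slightly sharper: it first checks that the action map $S_I$ determined by the basis $b$ coincides with $S_{A',B'}$ (using the invertibility of $\tr_{L/F}(X)$ from Lemma \ref{nX0}), and then shows that the canonical choice $v_0=\tr_{L/F}(b)$ satisfies $v_0\epsilon=b$, so the composition recovers $(b,\beta)$ exactly rather than merely up to equivalence. Your route is a bit shorter; the paper's buys the explicit statement that a specific $v_0$ reproduces the given basis on the nose, which makes the ``inverse'' claim literal for that choice. Either way the theorem follows, so there is no gap.
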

\begin{proof} The lifting law says that the composition $(A,B) \mapsto (b_1,b_2, b_3,\beta) \mapsto (A',B')$ is the identity.

The direction $(I,\beta) = (b_1,b_2,b_3,\beta) \mapsto (A,B) \mapsto (b_1',b_2',b_3',\beta')$ is also the identity.  First, we check that the action maps are the same.  For this, the basis $b$ gives rise to the action map $S_I: T \rightarrow M_3(C)$.  This map by definition satisfies the identity $\lambda b = b S_{I}(\lambda)$ for all $\lambda \in T$. One sees that this is the same as the action map $S_{A,B}: T \rightarrow M_3(C)$ by observing $X(I,\beta) = X(A,B)$.  Indeed, note that if $\lambda \in T$, then
\[ X(I,\beta)S_{I}(\lambda) = \lambda X(I,\beta) = \lambda X(A,B) = X(A,B)S_{A,B}(\lambda).\]
Taking $\tr_{L/F}$ of both sides gives $S_I = S_{A,B}$ since $\tr_{L/F}(X(A,B))$ is invertible, since we have already proved in \ref{ABnondeg} that $(A,B)$ is non-degenerate.  

Since we have just checked that the actions are the same, we may compute with the $\epsilon$ defined by $S_{I}$, as opposed to $S_{A,B}$.  Now, set $v_0 = \tr_{L/F}(b) = (\tr_{L/F}(b_1),\tr_{L/F}(b_2),\tr_{L/F}(b_3)) \in C_F^3$.  Then $v_0 \epsilon = \tr_{L/F}(b) \epsilon$ and one computes (very easily) that this latter expression is just $b$.  Hence with this choice of $v_0$, $(b_1,b_2,b_3) = (b_1',b_2',b_3')$.  Furthermore, since
\[\beta^{-1} b^* b = X(I,\beta) = X(A,B) = \beta(v_0)^{-1} (b')^* b' = \beta(v_0)^{-1} b^* b,\]
we get $\beta = \beta(v_0)$.  Thus, the composition $(I,\beta) \mapsto (A,B) \mapsto (I',\beta')$ is the identity as well.

Finally, the equivariance follows from Proposition \ref{equivariance}. 
\end{proof}
Call the fractional $T \otimes C$-ideal \emph{oriented} if it comes equipped with an $\SL_3(C)$-orbit of $C$-bases $(b_1,b_2,b_3)\SL_3(C)$.  The following corollary proves Theorem \ref{twistThm2} and implies Theorem \ref{introThm2} of the introduction.
\begin{corollary}\label{cor:equivG2} If $m \in \GL_3(C)$ and $g \in \GL_2(R)$ with $N_6(m) = \det(g)^2$, so that $h = (\det(g)^{-1},g,m)$ is in $G$, then $h$ acts on the data via
\[((\omega_0,\theta_0),(b_1,b_2,b_3),\beta;v_0)\mapsto ((\omega_0,\theta_0)g,(b_1,b_2,b_3)m,\beta;v_0m).\]
Consequently, if $(R^\times)^2 = 1$ (e.g., if $R = \Z$), then the $G$ orbits on $(R^2 \otimes H_3(C))^{open}$ parametrize $(T,I,\beta)$ up to equivalence and isomorphism, where $I$ is oriented and $(I,\beta)$ is balanced. \end{corollary}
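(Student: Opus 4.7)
The plan is to derive both assertions as essentially formal consequences of the preceding theorem, which already gives the full $G_1$-equivariance of the bijection between non-degenerate pairs $(A,B)\in H_3(C)^2$ and based-balanced fractional $T\otimes C$-ideals.

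First, I would verify the explicit action formula for $h=(\det(g)^{-1},g,m)\in G$ by substituting $\lambda=\det(g)^{-1}$ and $N_6(m)=\det(g)^2$ into the theorem's $G_1$-action formula. The scalar on the $(\omega_0,\theta_0)g$ component collapses to
\[\lambda^3 N_6(m)\det(g)=\det(g)^{-3}\det(g)^{2}\det(g)=1,\]
and the scalar on $\beta^{-1}$ collapses to
\[\lambda^4 N_6(m)\det(g)^{2}=\det(g)^{-4}\det(g)^{2}\det(g)^{2}=1.\]
The middle slots $(b_1,b_2,b_3)\mapsto(b_1,b_2,b_3)m$ and $v_0\mapsto v_0 m$ carry through unchanged, yielding the claimed action formula.

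Next, I would translate this equivariance into the orbit parametrization statement under the hypothesis $(R^{\times})^{2}=1$. The key observation is that under this hypothesis $\det(g)\in R^{\times}$ satisfies $\det(g)^{2}=1$, so the defining relation $N_{6}(m)=\det(g)^{2}$ forces $m\in\SL_{3}(C)$. Consequently, the change of $C$-basis $(b_1,b_2,b_3)\mapsto(b_1,b_2,b_3)m$ preserves the $\SL_{3}(C)$-orbit of bases, i.e.\ the orientation of $I$. Meanwhile, the action $(\omega_0,\theta_0)\mapsto(\omega_0,\theta_0)g$ is nothing but a change of good basis of $T$, which, together with the earlier observation that binary cubic forms parametrize based cubic rings, corresponds precisely to an isomorphism $\varphi\colon T\stackrel{\sim}{\to}T'$ in the sense of Theorem~\ref{twistThm2}.

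Finally, I would combine this with the already established equivalence relation on based fractional ideals, namely $(b,\beta)\sim(xb,n_{C}(x)\beta)$ for $x\in C_{L}^{\times}$, and observe that the composite equivalence matches the one displayed in Theorem~\ref{twistThm2} (with the involution $\tau=*$ giving $n_{C}(x)=xx^{*}=xx^{\tau}$). The preceding theorem already establishes that $G_{1}$-orbits are in bijection with based-balanced data up to this scaling equivalence; restricting to $G$ and quotienting further by the action just computed yields exactly the claimed bijection between $G$-orbits on $(R^{2}\otimes H_{3}(C))^{\mathrm{open}}$ and oriented balanced $(T,I,\beta)$ modulo the stated equivalence and isomorphism. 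There is no real obstacle here; the content of the corollary is entirely a matter of correctly bookkeeping how the full $G_{1}$-equivariance restricts to $G$ under the hypothesis $(R^{\times})^{2}=1$, and checking that orientation is preserved.
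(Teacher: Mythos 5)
Your verification of the explicit action formula is exactly the paper's first step (the paper dismisses it as ``immediate''), and your overall strategy --- specialize the $G_1$-equivariance of the preceding theorem to the subgroup $G$ --- is the paper's own. The problem is with the second assertion. The corollary speaks of triples $(T,I,\beta)$ in which $I$ is merely \emph{oriented}, not based, whereas ``balanced'' was defined only for \emph{based} ideals: the containment condition $y^\sigma x\in\beta\, C\otimes T$ is basis-free, but the norm condition $N(I;(b_1,b_2,b_3);(1,\omega,\theta))=n_{L/F}(\beta)$ a priori depends both on the choice of $C$-basis within the orientation class and on the good basis of $T$. The entire content of the paper's proof of the second statement is the check that this norm is well defined in $F^\times/(R^\times)^2$: writing $(b_1,b_2,b_3)=(1,\omega,\theta)g$, the matrix $g$ is determined only up to $g\mapsto ugg'$ with $g'\in\SL_3(C)$ and $u\in\GL_3(R)$, which multiplies $N_6(g)$ by $\det(u)^2\in(R^\times)^2$; hence when $(R^\times)^2=1$ the notion of a balanced pair $(I,\beta)$ for an oriented ideal is unambiguous, and only then does the second statement follow from the first. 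Your proposal never addresses this. The (correct) observation that $\det(g)^2=1$ forces $m\in\SL_3(C)$ shows that the $G$-action preserves orientations, but by itself it does not show that the balanced condition descends from based to oriented data, which is precisely what makes the corollary's statement well posed and what the passage to orbit classes requires; declaring the rest ``bookkeeping'' skips the one substantive check.

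The gap is repairable within your framework: either do the paper's direct computation with $u\in\GL_3(R)$ and $g'\in\SL_3(C)$ as above, or note that under $(R^\times)^2=1$ any $(g,m)\in\GL_2(R)\times\SL_3(C)$ yields an element $(\det(g)^{-1},g,m)$ of $G$, so that (granting that all changes of good basis of $T$ are realized by the $\GL_2(R)$-action on binary cubics) every change of basis within an orientation class together with every change of good basis is implemented by $G$, and equivariance of the theorem's bijection then forces the balanced condition to be constant over such choices. Either way the point must be stated. A smaller inaccuracy: the preceding theorem gives an equivariant bijection between nondegenerate pairs $(A,B)$ and equivalence classes of based-balanced data, not a bijection of $G_1$-orbits; passing to orbits is exactly the step at issue, so it should not be cited as already accomplished.
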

\begin{proof} The first part of the corollary is immediate.  For the second, note that if $(I,(b_1,b_2,b_3)\SL_3(C))$ is an oriented fractional $T \otimes C$-ideal, then $N(I,(b_1,b_2,b_3))$ is well-defined in $F^\times/(R^\times)^2$.  Indeed, this follows from the fact that in the equation $(b_1,b_2,b_3) = (1,\omega,\theta) g$, $g$ is well-defined up to $g \mapsto u g g'$ where $g' \in \SL_3(C)$ and $u \in \GL_3(R)$.  Hence if $(R^\times)^2 = 1$ and $I$ is oriented, the notion of a balanced pair $(I,\beta)$ is unambiguous.  Thus the second part of the corollary follows from the first. \end{proof}

\section{The second lifting law for $W_J$}\label{nccubes}
The purpose of this section is to give a second lifting law for the space $W_J$, where $J$ is of a certain form.  More specifically, suppose that $K$ is a quadratic \'{e}tale extension of $F$, and $B$ is an associative cubic norm structure over $K$.  Furthermore, assume that $B$ comes equipped with an involution of the second kind, i.e., an order-reversing involution $*: B\rightarrow B$, which induces the nontrivial $F$-linear involution on $K$.  This involution is required to be compatible with the cubic norm structure, in the sense that $n(x^*) = n(x)^*$ and $(x^*)^\# = (x^\#)^*$.  Define $J \subseteq B$ to be the $F$-subspace of elements fixed by $*$.  Then $J$ is a cubic norm structure over $F$.  In this section we will discuss a second lifting law for the space $W_J$.

Before explaining this lifting law, we list the common examples of such pairs $(J,B)$.
\begin{example}\label{ex:JBK} Here are the common examples of pairs $J, B$ as above.
\begin{enumerate}
\item Suppose $K$ is any quadratic \'{e}tale extension of $F$, $B = M_3(K)$ with its usual cubic norm structure, and $*$ conjugate transpose.  Then $J = H_3(K)$ is the Hermitian $3 \times 3$ matrices.
\item Suppose $K = F \times F$, and $A$ is a central simple $F$-algebra of degree three.  Then set $B = A \times A^{opp}$, with involution $*:(x,y^{op}) \mapsto (y,x^{op})$.
\item Suppose $A$ is an associative cubic norm structure over $F$, for which the multiplication in $A$ is commutative, and suppose that $K$ is any \'{e}tale quadratic $F$-algebra.  Set $B = A \otimes_{F} K$, with involution $(a\otimes \lambda)^* = a \otimes (\lambda^*)$ for $a \in A$ and $\lambda \in K$.
\end{enumerate}
\end{example}

Throughout this section, $J, B, K, F,$ and $*$ are as above.  In this setting, Tits defined a cubic norm structure over $F$ on the space $U = J \oplus B$, which we will review momentarily.  In this case, 
\[W_{U} = F \oplus (J \oplus B) \oplus (J \oplus B) \oplus F = W_{J} \oplus B^2\]
as $F$-vector spaces, and in fact as modules for a large subgroup $G$ of $H(W_J)$.  In this section we investigate when an element $v \in W_J$ of rank four can be lifted to a rank one element of $W_U$.  In case $(J,B)$ comes from a cubic \'{e}tale algebra $A$ over $F$, as in the third item of Example \ref{ex:JBK}, the lifting law we prove reduces to a lifting law essentially contained in \cite{ganSavin}.  

We begin this section by reviewing the second construction of Tits.  We then describe the group $G$ that acts on $W_J$ and $B^2$;  $G$ is essentially $\GU(2,B)$.  After this, we prove Theorem \ref{thm:LLWJ2}, which is the lifting law in this context.  Interestingly, to prove this lifting law, we use the \emph{first} lifting laws Theorems \ref{b1J} and \ref{b1A} for the spaces $W_J$ and $W_B$.  Finally, we end this section by giving a second construction of this lifting law, that is in a sense more explicit.  We also explain the relation between this more explicit construction and some of the results in \cite{ganSavin}.

\subsection{The second construction of Tits}\label{subsec:secondTits} In this subsection, we review the so-called second construction of Tits. More precisely, suppose that $\lambda \in K^\times$, $S \in J$, and $n(S) = \lambda \lambda^*$, so that in particular $S$ is rank three.  Out of this data, one puts on $U = J \oplus B$ the norm, adjoint, and pairing as follows: For $X \in J$ and $\alpha \in B$,
\begin{enumerate}
\item $n((X,\alpha))= n(X) -(X,\alpha S \alpha^*) + \tr_{K/F}(\lambda n(\alpha))$;
\item $(X,\alpha)^\# = (X^\#-\alpha S \alpha^*,-X\alpha + \lambda^{-1} (\alpha^*)^\# S^\#)$;
\item $((X,\alpha),(Y,\beta)) = (X,Y) + \tr_{J}(\alpha S \beta^* + \beta S \alpha^*)$.\end{enumerate}
It is a fact that with these definitions, $U$ is a cubic norm structure.  This is the so-called second construction of Tits.  We call the above cubic norm structure $U = U(S,\lambda)$. 

\subsection{Group actions}\label{subsec:GBgrp} In this subsection, we define an action of a large subgroup of $\GU(2,B)$ on $W_J$.  Here
\[\GU(2,B) = \{(g,\nu(g)) \in \GL_2(B)\times \GL_1: g^*\mm{}{1}{-1}{}g = \nu(g) \mm{}{1}{-1}{}\}.\]
Set $G \subseteq \GL(2,B)$ to be
\[ G = \{g \in \GU(2,B): \det(g) = \nu(g)^3\},\]
where $\det(g)$ in this definition is the degree $6$ reduced norm on $M_2(B)$ as a central simple $K$-algebra. This group acts naturally on $W_J$ as follows.  First, consider on $W_{B} = K \oplus B \oplus B \oplus K$ the involution $*: (a,b,c,d) \mapsto (a^*,b^*,c^*,d^*)$.  Clearly, $W_J$ is the fixed space of $*$ on $W_B$.  Since $\GL_2(B)$ acts on $W_B$, we have an action of $\GU(2,B)$ and thus $G$ on $W_B$.  We will check that this action of $G$ preserves $W_J$.  We use the left action of $\GL_2(B)$ and $\GU(2,B)$ on $W_{B}$.

One way to check this is simply by checking it on generators for $G$.  However, we will give a different proof, which is perhaps more enlightening, by checking that $\GU(2,B)$ preserves an extra $K$-valued Hermitian form on $W_B$.

On the column vectors $W_2(B)$, one has the $B$-valued Hermitian symplectic form 
\begin{equation}\label{eqn:BHerm}\left\langle \left(\begin{array}{c}  x \\ y \end{array}\right), \left(\begin{array}{c}  x' \\ y' \end{array}\right)\right\rangle_B = (x^*, y^*) \left(\begin{array}{cc} & 1 \\ -1 & \end{array}\right) \left(\begin{array}{c}  x' \\ y' \end{array}\right) = x^*y' - y^*x'.\end{equation}
This $B$-valued pairing induces on $W_2(B)^{\otimes 3}$ a $K$-valued Hermitian form.  Namely, for $\eta_i, \eta_j' \in W_2(B)$, define
\begin{equation}\label{Kherm}\langle \eta_1\otimes \eta_2 \otimes \eta_3, \eta_1'\otimes \eta_2' \otimes \eta_3' \rangle = (\langle \eta_1, \eta_1'\rangle_B, \langle \eta_2,\eta_2'\rangle_B, \langle \eta_3, \eta_3' \rangle_B),\end{equation}
where on the right-hand side we are applying the $K$-valued symmetric trilinear form on $B$ that is the polarization of the norm form.  

Note that the elements of the form $\eta_1 a\otimes \eta_2 a\otimes \eta_3 a - n(a) \eta_1 \otimes \eta_2 \otimes \eta_3$ are in the radical of the form (\ref{Kherm}).  That is, 
\[ \langle \eta_1 a\otimes \eta_2 a\otimes \eta_3 a - n(a) \eta_1 \otimes \eta_2 \otimes \eta_3, \eta_1'\otimes \eta_2' \otimes \eta_3' \rangle = 0\]
for all $\eta_1',\eta_2', \eta_3'$.  Thus, this form descends to $V_{B} \simeq W_{B}$.  Furthermore, it is clear that this form is Hermitian, and that $\GU(2,B)$ preserves it up to $\nu^3$: 
\[\langle g\eta_1\otimes g\eta_2 \otimes g\eta_3, g\eta_1'\otimes g\eta_2' \otimes g\eta_3' \rangle = \nu(g)^3 \langle \eta_1\otimes \eta_2 \otimes \eta_3, \eta_1'\otimes \eta_2' \otimes \eta_3' \rangle.\]

We calculate what this form is in coordinates of $W_B$.  For $v, v' \in W_{B}$, denote $\langle v,v'\rangle_K$ the Hermitian form (\ref{Kherm}), and denote $\langle v, v'\rangle$ (with no $K$-subscript) the usual $K$-valued symplectic form on $W_B$.
\begin{lemma}\label{lem:KHermform} If $v, v' \in W_B$, then $\langle v,v'\rangle_{K} = 6\langle v^*,v' \rangle$.  In particular, since $G$ preserves both this form and the symplectic form up to $\det(g) = \nu(g)^3$, $G$ preserves $W_J$. \end{lemma}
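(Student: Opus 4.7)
The plan is to verify the identity $\langle v, v' \rangle_K = 6 \langle v^*, v' \rangle$ by $F$-bilinearity on the four types of spanning vectors for $W_B$ coming from the isomorphism $V_B \simeq W_B$ of Proposition \ref{prop:VAWA}. These four types correspond to the four coordinates of $W_B = K \oplus B \oplus B \oplus K$: vectors of the form $(\alpha, 0, 0, 0)$, $(0, \beta, 0, 0)$, $(0, 0, \gamma, 0)$, $(0, 0, 0, \delta)$ with $\alpha, \delta \in K$ and $\beta, \gamma \in B$, which under the isomorphism $V_B \simeq W_B$ are $F$-linear combinations of the symmetrizations of $\alpha\, e \otimes e \otimes e$, $\beta f \otimes e \otimes e$, $\gamma e \otimes f \otimes f$, and $\delta\, f \otimes f \otimes f$ respectively. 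Inspecting the definition (\ref{eqn:BHerm}) of $\langle \cdot, \cdot \rangle_B$ shows that both sides of the claimed identity vanish on all but the pairings $(\alpha, \delta), (\delta, \alpha), (\beta, \gamma), (\gamma, \beta)$.

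In each of these four nonvanishing cases, two of the three factors $\langle \eta_i, \eta_j' \rangle_B$ appearing in $\langle v, v' \rangle_K$ equal $\pm 1 \in B$, while the third equals the quantity ($a^*d$, $-b^*c'$, etc.) that already appears in $\langle v^*, v' \rangle$. Thus $\langle v, v' \rangle_K$ reduces to $\pm (x, 1, 1)$ for some $x \in B$. The key ingredient is the identity
\[(x, 1, 1) = 2 \tr_B(x),\]
valid in every cubic norm structure, which is immediate from the polarization of the norm together with the axiom $N(u+v) = N(u) + (u^\#, v) + (u, v^\#) + N(v)$ and $1^\# = 1$. Combining this with $\tr_B(\lambda \cdot 1) = 3\lambda$ for $\lambda \in K$ (in the $(\alpha, \delta)$-type pairings) and the associative identity $(x, y) = \tr_B(xy)$ (in the $(\beta, \gamma)$-type pairings), each case matches $6 \langle v^*, v' \rangle$ exactly.

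For the second assertion: once $6 \langle v^*, v' \rangle = \langle v, v' \rangle_K$ is established, the fact that both forms are preserved under the action of $g \in G$ by the common factor $\nu(g)^3$ yields, for any $v, v' \in W_B$,
\[\langle (gv)^*, gv' \rangle = \tfrac{1}{6} \langle gv, gv' \rangle_K = \tfrac{\nu(g)^3}{6} \langle v, v' \rangle_K = \nu(g)^3 \langle v^*, v' \rangle = \langle g(v^*), gv' \rangle,\]
where the final equality uses that $\GL_2(B)$ preserves the $K$-valued symplectic form up to the degree-$6$ reduced norm $\det(g) = \nu(g)^3$. Non-degeneracy of the symplectic form forces $(gv)^* = g(v^*)$ for all $v$, so $g$ commutes with $*$ and therefore preserves the fixed subspace $W_J = (W_B)^{* = 1}$. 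The only real obstacle is bookkeeping the signs and factors through the four case computations, which is purely routine.
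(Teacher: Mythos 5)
Your proposal is correct and follows essentially the same route as the paper: reduce to the spanning vectors of $W_B$ coming from $V_B\simeq W_B$, evaluate the Hermitian form there (the identity $(x,1,1)=2\tr_B(x)$ is exactly what makes the paper's sample computation work), and then deduce $(gv)^*=g(v^*)$ from the two similitude properties together with non-degeneracy of the symplectic form, just as in the paper. The only bookkeeping point worth making explicit is that in the $(\beta,\gamma)$-type pairings the cyclic symmetrization contributes three aligned pure-tensor terms, so one gets $3(1,1,-b^*c')=-6(b^*,c')$ rather than a single $(x,1,1)$; with that factor of $3$ recorded, your case analysis matches $6\langle v^*,v'\rangle$ exactly.
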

\begin{proof} Recall that an element $(a,b,c,d)$ of $W_{B}$ is the image of
\[a e\otimes e\otimes e + \sum_{cyc}{e\otimes e \otimes bf} + \sum_{cyc}{f \otimes f \otimes ce} + d f\otimes f\otimes f.\]
The first part of the lemma is now a simple computation.  For example, 
\begin{align*}\langle (0,b,0,0), (0,0,c',0)\rangle_{K} &= \langle \sum_{cyc}{e\otimes e \otimes bf}, \sum_{cyc}{f \otimes f \otimes ce} \rangle_{K} \\ &= 3(1,1,-b^*c) \\ &=-6(b^*,c) \\&= 6 \langle (0,b,0,0)^*, (0,0,c,0) \rangle.\end{align*}

For the second part of the lemma, suppose $g \in \GU(2,B)$.  Then for all $v,v' \in W_B$ one has
\[6 \nu(g)^3 \langle v^*,v'\rangle = \nu(g)^3 \langle v,v'\rangle_{K} = \langle gv, gv'\rangle_{K} = 6\langle (gv)^*,gv'\rangle = 6 \det(g) \langle g^{-1}(gv)^*,v'\rangle.\]
Hence if $g \in G$ so that $\nu(g)^3 = \det_6(g)$, then $(gv)^* = g(v^*)$.  It follows that $G$ preserves $W_J$, as claimed.\end{proof}

From now on, we let $G$ act on $W_J$ as the $\nu^{-1}$ twist of the cubic polynomial action derived from the action of $\GL_2(B)$ on $W_{B}$.  With this action, we obtain a map $G \rightarrow H(W_J)$, preserving similitudes.  Indeed, this follows immediately from Lemma \ref{lem:KHermform} and the fact that the cubic polynomial action of $\GL_2(B)$ on $W_B$ preserves the symplectic and quartic form up to $\det_6(g)$ and $\det_6(g)^2$, respectively.

Since $G$ acts on $W_J$ and on $B^2$, it acts on $W_{U} = W_J \oplus B^2$.  The explicit identification of the right-hand side with the left-hand side is
\[v + \eta = (a,b,c,d) + \left(\begin{array}{c} u\\ v \end{array}\right) \mapsto (a,(b,-u),(c,v),d).\]
Here $(b,-u)$ and $(c,v)$ are considered elements of $U = J \oplus B$.  With this choice of identification, the action of $G$ on $W_J \oplus B^2 \simeq W_{U}$ preserves the symplectic and quartic form on $W_U$ up to similitude.
\begin{proposition}\label{prop:Gaction} Suppose $U = U(S,\lambda)$ for any $S, \lambda$ as above.  Then the map $G \rightarrow \GL(W_{U})$ defined by the action $g(v + \eta) = gv + g\eta$ preserves the symplectic and quartic form on $W_U$ up to $\nu$ and $\nu^2$, respectively.  That is, this action defines an embedding $G \rightarrow H(W_U)$, preserving similitudes.\end{proposition}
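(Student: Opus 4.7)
The strategy is to verify the claim on a generating set for $G$. Adapting Lemma~\ref{Pgens} to the context of $\GU(2,B)$ and imposing the determinant condition $\det_6(g) = \nu(g)^3$, the group $G$ is generated by the lower unipotents $\mm{1}{0}{X}{1}$ with $X \in J$, the Weyl element $J_2 = \mm{0}{1}{-1}{0}$, and the diagonal elements $\mm{m}{0}{0}{n}$ lying in $G$. Since the subset of $\GL(W_U)$ that preserves the symplectic form up to $\nu$ and the quartic form up to $\nu^2$ is closed under composition, and since $G \to \GL(W_U)$ is a homomorphism, it suffices to check the claim on this generating set.

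The central case is $g = \mm{1}{0}{X}{1}$ with $X \in J$. I claim that $g$ acts on $W_U$ as the operator $n_U((X,0)) \in H(W_U)$ from Proposition~\ref{HWJmaps}, where $(X,0) \in J \oplus B = U$ and the cubic norm structure is $U = U(S,\lambda)$. Under the identification $(a,b,c,d) + \mm{u}{w} \mapsto (a,(b,-u),(c,w),d) \in W_U$, the action of $g$ on the $W_J$-summand is $n_J(X)$ by Lemma~\ref{Paction} (the restriction of $n_B(X)$ to $W_J$ equals $n_J(X)$ since $\#_B$ preserves $J$), and the action on the $B^2$-summand sends $\mm{u}{w}$ to $\mm{u}{Xu + w}$. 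On the other hand, because the $B$-coordinate of $(X,0) \in U$ vanishes, all the $S$- and $\lambda$-dependent terms in the second Tits construction collapse: one checks directly that $(X,0)^{\#_U} = (X^\#,0)$, $n_U((X,0)) = n_J(X)$, $((c,w),(X,0))_U = (c,X)_J$, and a short computation gives $(b,-u) \times_U (X,0) = (b \times_J X, Xu)$. Substituting these values into the formula for $n_U$ in Proposition~\ref{HWJmaps} produces precisely the action just described, with similitude $1$.

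The remaining generators are handled similarly. The Weyl element $J_2$ acts on $W_B$ as $w_B$ by Lemma~\ref{Paction} and on $B^2$ as $\mm{u}{w} \mapsto \mm{w}{-u}$; tracing through the identification yields $w_U \in H(W_U)$. A diagonal element $\mm{m}{0}{0}{n} \in G$ acts on $W_B$ as $L(m,n)$ from Lemma~\ref{Paction} and on $B^2$ diagonally; this corresponds to the action of a suitable element of $\widetilde{M}(U,U)$ on $W_U$, and the condition $\det_6(g) = \nu(g)^3$ is exactly what is needed to ensure that the symplectic and quartic similitudes on $W_U$ come out to $\nu$ and $\nu^2$, respectively, after accounting for the $\nu^{-1}$-twist defining the $G$-action on $W_J$.

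The main obstacle is the unipotent case: one must check that the naive action on $W_J \oplus B^2$, defined without any reference to $S$ or $\lambda$, coincides with $n_U$ for the cubic norm structure $U(S,\lambda)$, which does depend on these parameters. What saves the day is that $(X,0) \in U$ has vanishing $B$-coordinate, so the $S$- and $\lambda$-dependent terms in the formulas of subsection~\ref{subsec:secondTits} vanish; the identification between the two actions then reduces to the routine bookkeeping sketched above.
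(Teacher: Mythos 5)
Your proposal is correct and takes essentially the same route as the paper, which simply observes that it suffices to verify the claim on generators of $G$; you have in addition carried out the key verification (that the lower unipotent $\mm{1}{0}{X}{1}$, $X \in J$, acts on $W_U$ as the operator $n_U((X,0))$ of Proposition \ref{HWJmaps}, the $S$- and $\lambda$-dependent terms dropping out because the $B$-coordinate of $(X,0)$ vanishes), and that computation checks out, as do the Weyl and diagonal cases. The only point stated more casually than it deserves is the generation of $G$ by these elements (Lemma \ref{Pgens} is proved for $\GL_2(A)$ and its adaptation to the unitary group $G$ needs the auxiliary element chosen Hermitian, or an appeal to the Bruhat decomposition for $G$ relative to its Siegel parabolic), but this is no less detailed than the paper's own treatment.
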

\begin{proof} It suffices to check this on generators of $G$, which may be done easily.\end{proof}

\subsection{The lifting law} With the above preparations, we can now state and prove the lifting law. Suppose given $v = (a,b,c,d) \in W_J$ of rank four.  We ask when we can lift $v$ to a rank one element of $W_{U}$.  This question is answered by the following theorem.
\begin{theorem}\label{thm:LLWJ2} Suppose $v \in W_J$ is rank four, and set $J_2 = \mm{}{1}{-1}{}$.
\begin{enumerate}
\item If there exists $(S,\lambda)$ as above, and $\eta \in B^2$, so that $v + \eta$ is rank one in $W_{U} \simeq W_J \oplus B^2$, then $q(v) = \omega^2$ for $\omega \in K^\times$ with $\omega^* = -\omega$.
\item Conversely, suppose $q(v) = \omega^2$ for $\omega \in K^\times$ with $\omega^* = -\omega$.  Then there exists $S, \lambda, \eta$ with $v + \eta$ rank one in $W_{U(S,\lambda)}$.  More precisely, by the lifting law Theorem \ref{b1A} for $W_{B}$, there exists $\lambda \in K^\times$ and $\eta \in B^2$ so that $\lambda \eta^{!} = \frac{-\omega v + v^\flat}{2}$.  Then $n\left(\frac{\langle \eta, \eta \rangle_B}{\omega}\right) = (\lambda \lambda^*)^{-1}$ is invertible.  Set $S = \left(\frac{\langle \eta, \eta \rangle_B}{\omega}\right)^{-1}.$  Then $v + \eta$ is rank one in $W_{U(S,\lambda)}$.  Furthermore, one has the identity $\eta S \eta^* = S(v) - \frac{\omega}{2}J_2$.
\item Finally, suppose $S,\lambda$ and $\eta$ are such that $v + \eta$ is rank one in $W_{U(S,\lambda)}$.  Then $\lambda \eta^{!} = \frac{\pm \omega v + v^\flat}{2}$.  Choose the element $\omega$ so that $\lambda \eta^{!} = \frac{-\omega v + v^\flat}{2}$.  Then $\frac{\langle \eta, \eta \rangle_B}{\omega} = S^{-1}$ and $\eta S \eta^* = S(v) - \frac{\omega}{2} J_2$.
\end{enumerate}
\end{theorem}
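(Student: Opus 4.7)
The plan is to combine the first lifting law (Theorems~\ref{b1J} and~\ref{b1A}) for $W_B$ with Tits' second construction $U(S,\lambda)$ recalled in subsection~\ref{subsec:secondTits}. The key observation is that $W_J \subseteq W_B$ as $F$-modules and that $\eta^{!} \in W_B$ is rank one for $\eta \in B^2$, so the lifting problem can be analyzed entirely inside $W_B$.

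I would begin with Part~(2). Given $v \in W_J$ with $q(v) = \omega^2$ and $\omega^* = -\omega$, view $v$ as an element of $W_B$; since $B$ is an ACNS over $K$ and $\omega \in K$, Theorem~\ref{b1J} applied in $W_B$ shows $\overline{X(v)} = (-\omega v + v^\flat)/2$ is rank one in $W_B$, and Theorem~\ref{b1A} then produces $\lambda \in K^\times$ and $\eta \in B^2$ with $\lambda \eta^{!} = \overline{X(v)}$. To evaluate $\langle \eta,\eta\rangle_B$, note that $v^* = v$ since $v \in W_J$, and that the ACNS identity $(u^\#)^* = (u^*)^\#$ gives $(\eta^{!})^* = (\eta^*)^{!}$. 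Combining this with Lemma~\ref{lem:nRC} yields
\[\lambda \lambda^* \, n_B(\langle \eta,\eta\rangle_B) = \langle (\lambda\eta^{!})^*, \lambda\eta^{!}\rangle = \tfrac{1}{4}\langle -\omega^* v + v^\flat, -\omega v + v^\flat\rangle = \tfrac{1}{2}\omega^2(\omega - \omega^*) = \omega^3,\]
using $\langle v,v\rangle = 0$, $\langle v,v^\flat\rangle = 2q(v) = 2\omega^2$, and $\omega^* = -\omega$. Hence $\langle \eta,\eta\rangle_B/\omega$ is a unit with norm $(\lambda\lambda^*)^{-1}$, and with $S := (\langle \eta,\eta\rangle_B/\omega)^{-1}$, the relations $S^* = S$ (from $\langle \eta,\eta\rangle_B^* = -\langle \eta,\eta\rangle_B$ and $\omega^* = -\omega$) and $n(S) = \lambda\lambda^*$ are immediate, legitimizing the Tits construction $U = U(S,\lambda)$.

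The remaining assertions of Part~(2) — that $v+\eta$ is rank one in $W_U$ and that $\eta S\eta^* = S(v) - (\omega/2)J_2$ — are polynomial identities in $v$ (with $\lambda, S, \eta$ determined as above), equivariant under $G$ by Proposition~\ref{prop:Gaction}. By the prehomogeneity argument discussed in the introduction, their verification reduces to a single well-chosen representative $v_0 \in W_J$ with $q(v_0)=\omega^2$ and $\omega^* = -\omega$, where they become direct checks using Tits' adjoint formula and the explicit shapes of $\overline{X(v_0)}$ and $R(v_0)$. Part~(3) is handled by the same equivariance: given a lifting $(v,\eta,S,\lambda)$ with $v+\eta$ rank one in $W_U$, the rank-one conditions in $W_U$ specialize at $v = v_0$ to equations that force $\lambda\eta^{!} = (\pm\omega v + v^\flat)/2$, and the remaining identities of Part~(3) follow from Part~(2). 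Finally, Part~(1) is an immediate consequence of Part~(3): if a lifting exists then $\eta S\eta^* = S(v) - (\omega/2)J_2$; applying $*$ to both sides and using $S^* = S$, $S(v)^* = S(v)$ (a direct consequence of \eqref{eqn:Sdef} and $v \in W_J$), and $J_2^* = -J_2$ forces $\omega^* = -\omega$.

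The main obstacle will be the specialization verification for the rank-one condition in $W_U$: Tits' adjoint $\#_U$ on $U = J\oplus B$ couples the $J$- and $B$-components through expressions involving $S$, $S^\#$, and $\lambda^{-1}$, so aligning the signs and scalar normalizations between the first lifting law in $W_B$ and the $\#_U$-based rank-one condition demands careful bookkeeping. Once verified at a single representative, equivariance under $G$ propagates the result to the full locus where $q(v) = \omega^2$ with $\omega^* = -\omega$.
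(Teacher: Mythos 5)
Your opening moves coincide with the paper's: lift $X(-\omega,v)=\frac{-\omega v+v^\flat}{2}$ inside $W_B$ via Theorem \ref{b1A}, compute $\lambda\lambda^*\,n(\langle\eta,\eta\rangle_B)=\omega^3$ exactly as you do, and set $S=(\langle\eta,\eta\rangle_B/\omega)^{-1}\in J$ with $n(S)=\lambda\lambda^*$. The gap is in everything after that. The assertions that $v+\eta$ is rank one in $W_{U(S,\lambda)}$, that $\eta S\eta^*=S(v)-\frac{\omega}{2}J_2$, and the converse claims of part (3) are \emph{not} identities between two polynomial maps on the prehomogeneous space $W_J$: the auxiliary data $(\lambda,\eta)$, and hence $S$, are noncanonical choices that do not depend polynomially (or even single-valuedly) on $v$. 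So the ``prehomogeneity argument discussed in the introduction'' does not reduce the verification to a single representative $v_0$; to argue that way you would need a density or transitivity statement for the action of $G$ (together with the rescalings $(\lambda,\eta)\mapsto(n(b)^{-1}\lambda,\eta b)$) on the incidence variety of tuples $(v,\omega,\lambda,\eta)$ satisfying $\lambda\eta^{!}=X(-\omega,v)$, which is not a vector space and for which you give no argument. The equivariance furnished by Proposition \ref{prop:Gaction} only reduces to the partial normal form $v=(1,0,c,d)$ with $c\in J$, $d\in F$ arbitrary; the remaining verification --- that $-uSu^*=c$, $\lambda^{-1}(u^*)^\#S^\#=v$, $\tr_{K/F}(\lambda n(u))=-d$, and the evaluation of the four entries of $\eta S\eta^*$ --- is a genuine computation resting on identities such as $u(u^*v-v^*u)^\#u^*=n(u^*)uv^\#-(vu^\#)\times(vu^\#)^*+n(u)(uv^\#)^*$, and this computation, which is the substantive content of parts (2) and (3), is entirely absent from your proposal. (Similarly, in part (3) the given $S$ must be shown to coincide with $(\langle\eta,\eta\rangle_B/\omega)^{-1}$ --- this does not ``follow from Part (2)'' since $S$ is prescribed, not constructed --- and the identification $\lambda\eta^{!}=\frac{\pm\omega v+v^\flat}{2}$ requires first proving $\tr_{K/F}(\lambda\eta^{!})=v^\flat$ and then invoking the uniqueness statement, Lemma \ref{uniqueLifts}.)

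Your route to part (1) is also circular. Part (3), both as stated and as you propose to prove it, presupposes an element $\omega\in K^\times$ with $\omega^2=q(v)$ (and your treatment of (2) and (3) assumes $\omega^*=-\omega$ from the outset), whereas part (1) is precisely the claim that $q(v)$ admits such an anti-Hermitian square root in $K$; a priori $q(v)$ need not be a square in $K$ at all. The paper proves (1) directly: reduce to $v=(1,0,c,d)$ by equivariance, read off from rank-one-ness in $W_{U(S,\lambda)}$ that $c=-uSu^*$ and $d=-\tr_{K/F}(\lambda n(u))$, and compute $q(v)=d^2+4n(c)=\tr_{K/F}(\lambda n(u))^2-4n_{K/F}(\lambda n(u))=\left(\lambda n(u)-(\lambda n(u))^*\right)^2$, which exhibits the required $\omega$ explicitly. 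You should prove (1) in this direct fashion (or by some other argument independent of (2)--(3)) rather than deducing it from (3).
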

\begin{proof} We prove the statements in turn.  Consider the first claim.  Thus suppose $(S,\lambda)$ are as above, and $v + \eta$ is rank one in $W_J \oplus B^2$.  By equivariance, i.e., Proposition \ref{prop:Gaction}, we may assume $v=(1,0,c,d)$.  Then $v +\eta = (1,(0,-u),(c,v),d)$, and thus
\[(c,v) = (0,-u)^\# = (-uSu^*,\lambda^{-1}(u^*)^\#S^\#)\]
and $d = n((0,-u)) = -\tr_{K/F}(\lambda n(u))$.  Consequently,
\begin{align*} q(v) &= d^2 + 4n(c) = \tr_{K/F}(\lambda n(u))^2 - 4n(S)n(u)n(u^*) = \tr_{K/F}(\lambda n(u))^2 - 4n_{K/F}(\lambda n(u)) \\ &= \left(\lambda n(u) - (\lambda n(u))^*\right)^2.\end{align*}
This proves the first part.

Now consider the second claim.  Thus assume $q(v) = \omega^2$ with $\omega^* = -\omega$ in $K$.  Recall the notation $X(\pm \omega, v) = \frac{\pm \omega v + v^\flat}{2}$.  By the lifting law for $W_{B}$, there exist $\lambda \in K^\times$ and $\eta \in B^2$ so that $\lambda \eta^{!} = X(-\omega,v)$.  Thus
\[\omega q(v) = \langle X(\omega,v), X(-\omega,v) \rangle = \langle \lambda^*( \eta^*)^{!}, \lambda \eta^{!} \rangle = \lambda \lambda^* n(\langle \eta, \eta \rangle_B).\]
It follows that $n(\langle \eta, \eta \rangle_B/\omega) = (\lambda \lambda^*)^{-1}$, as claimed.  Thus, $\langle \eta, \eta \rangle_B/\omega = S^{-1}$, with $S \in J$ and $n(S) = \lambda \lambda^*$. 

We claim that with these definitions of $S$ and $\lambda$, $v+\eta$ is rank one in $W_{U(S,\lambda)}$.  To check this, we may use equivariance of these conditions under the action of $G \subseteq \GU(2,B) \subseteq \GL(2,B)$, the group of $(g,\nu)$ such that $\det_6(g) = \nu(g)^3$.  Indeed, if $g \in G$, set $v' = gv$, $\eta' = g \eta$, $\omega' = \nu(g) \omega$. Then $\lambda (\eta')^{!} = \frac{-\omega' v' + (v')^\flat}{2}$, $\langle \eta', \eta' \rangle_B/\omega' = S^{-1}$, and $v+\eta$ is rank one if and only if $v' + \eta'$ is rank one.  Thus we may assume $v = (1,0,c,d)$, so that $v+\eta = (1,(0,-u),(c,v),d)$.  We must check $-uSu^* = c$, $\lambda^{-1} (u^*)^\# S^\# = v$, and $\tr_{K/F}(\lambda n(u)) = -d$.

For $v = (1,0,c,d)$, one has $v^\flat = (-d,2c^\#, dc,d^2+2n(c))$.  Thus
\[(\lambda n(u), \lambda v u^\#, \lambda u v^\#, \lambda n(v)) = \lambda\eta^{!} = \frac{-\omega v + v^\flat}{2} = (-(d+\omega)/2,c^\#,(d-\omega)c/2, (-\omega d  + d^2 + 2n(c))/2).\]
Immediately, we obtain $\tr_{K/F}(\lambda n(u)) = -d$.

Now, since $\langle \eta, \eta, \rangle_B/\omega = S^{-1}$, 
\[-u \frac{(\langle \eta, \eta \rangle_B)^\#}{q(v)}u^* = -n(S)^{-1} u S u^*.\]
It is an identity in associative cubic norm structures that
\[ u(\langle \eta, \eta \rangle_B)^\# u^* = u(u^*v - v^* u)^\# u^* = n(u^*)uv^\# - (vu^\#) \times (vu^\#)^* + n(u) (uv^\#)^*.\]
Thus
\begin{align*} -uSu^* &= \frac{n(S)}{q(v)}\left(-u(\langle \eta, \eta\rangle_B)^\# u^*\right) \\ &= \frac{\lambda \lambda^*}{q(v)}\left( (vu^\#) \times (vu^\#)^* - n(u^*)uv^\# -n(u)(uv^\#)^*\right) \\ & = \frac{1}{q(v)}\left( c^\# \times c^\# + \left(\frac{d-\omega}{2}\right)^2 c + \left(\frac{d+\omega}{2}\right)^2 c\right) \\ &=\frac{1}{q(v)}\left(\frac{d^2 + 4n(c) + \omega^2}{2} \right) c\\ &= c,\end{align*}
as desired.

Next, we must check that $\lambda^{-1} (u^*)^\# S^\# = v$.  But we have
\[\lambda^{-1} (u^*)^\# S^\# = \lambda^* (u^*)^\# S^{-1} = \frac{(\lambda u^\#)^* (u^* v - v^* u)}{\omega} = \frac{(\lambda n(u))^* v - (\lambda v u^\#)^* u}{\omega}.\]
But $(\lambda vu^\#)^* = (c^\#)^* = c^\# = \lambda vu^\#$, and thus $(\lambda v u^\#)^* u =\lambda n(u) v$.  Thus,
\[\lambda^{-1} (u^*)^\# S^\# = \left(\frac{(\lambda n(u))^* - (\lambda n(u))}{\omega}\right) v = v.\]
Thus, we have checked that if $\lambda \eta^{!} = X(-\omega,v)$ and $\langle \eta, \eta \rangle_B/ \omega = S^{-1}$, then $v + \eta$ is rank one.

To finish the proof of the second claim, we must evaluate $\eta S \eta^* = \left(\begin{array}{cc} uSu^* & uSv^* \\ vSu^* & vSv^* \end{array}\right)$.  We have already checked $uSu^* = -c$.  Since we have already checked that $v + \eta$ is rank one, we obtain
\[(0,-du) = (c,v)^\# = (c^\# - vSv^*, -cv+ \lambda^{-1}(v^*)^\# S^\#).\]
Thus $vSv^* =c^\#$.  Finally, as above,
\begin{align*} q(v) uSv^* &= \lambda \lambda^* u (u^*v -v^*u)^\# v^* =\lambda \lambda^* \left( (uv^\#) (vu^\#)^* - 1 \times ((uv^\#)^*(vu^\#)) + n(u)n(v)^*\right) \\ &= \left(\frac{d-\omega}{2}\right)c c^\# - 1 \times \left(\left(\frac{d+\omega}{2}\right) c c^\#\right) - \left(\frac{d+\omega}{2}\right)\left(\frac{\omega d + d^2 + 2n(c)}{2}\right) \\ &= -\frac{d}{4}(d^2 + 4n(c) + \omega^2) - \frac{\omega}{2}(d^2 + 4n(c)).\end{align*}
Hence $uSv^* = -d/2 - \omega/2$.  Taking the conjugate, one obtains $vSu^* = -d/2 + \omega/2$.  Since for $v =(1,0,c,d)$ one has $S(v) = \left(\begin{array}{cc} -c & -d/2 \\ -d/2 & c^\#\end{array}\right)$, this proves $\eta S \eta^*= S(v) -\frac{\omega}{2} J_2$, completing the proof of the second claim.

We now consider the third claim.  Thus suppose $S \in J$, $\lambda \in K^\times$, $n(S) = \lambda \lambda^*$.  Furthermore suppose $v \in W_J$ with $q(v) =\omega^2$ and $v + \eta$ is rank one in $W_{U(S,\lambda)}$.  Consider $\tr_{K/F}(\lambda \eta^{!}) := (\lambda \eta^{!})^* + \lambda \eta^{!}$ in $W_J$.  We first claim that $\tr_{K/F}(\lambda \eta^{!})= v^\flat$.  

Note that $W_J \otimes K = W_J \oplus \omega W_J = W_{B}$, and the action of $G$ commutes with this decomposition.  Thus the statement $\tr_{K/F}(\lambda \eta^{!}) = v^\flat$ is invariant under the $G$ action, so we may as usual assume $v = (1,0,c,d)$.  Since $v + \eta$ is rank one, $(c,v) = (-uSu^*,\lambda^{-1} (u^*)^\# S^\#)$ and $d = -\tr_{K/F}(\lambda n(u))$.  One obtains from these that $\lambda vu^\# =c^\#$, $\lambda uv^\# =-(\lambda n(u))^* c$ and $\lambda n(v) = (\lambda^* n(u)^*)^2.$  Thus
\[\tr_{K/F}(\lambda \eta^{!}) = (-d,2c^\#,-dc, \tr_{K/F}((\lambda n(u))^2)).\]
Now for $x \in K$, $\tr(x^2) = (x+x^*)^2 - 2xx^*$, and thus
\[\tr_{K/F}((\lambda n(u))^2) = (-d)^2 - 2\lambda \lambda^* n(u)n(u^*) = d^2 + 2n(-uSu^*) = d^2 + 2n(c).\]
Hence $\tr_{K/F}(\lambda \eta^{!}) = v^\flat$, as desired.

We obtain that $\lambda \eta^{!} = \frac{-\omega x + v^\flat}{2}$ is rank one, for some $x \in W_J$.  That $x = \pm v$ follows from the essential uniqueness of rank one lifts, Lemma \ref{uniqueLifts}.  Namely, one applies Lemma \ref{uniqueLifts} to the element $2\omega \lambda \eta^{!}$, and uses that $(x^\flat)^\flat = - q(x)x$ for $x \in W_J$ and $q(x^\flat) = q(x)^3.$  Now, choose $\omega$ so that $\lambda \eta^{!} = \frac{-\omega v + v^\flat}{2}$.  We must evaluate $\frac{\langle \eta, \eta \rangle_B}{\omega}$.  Again, by equivariance, we may assume $v = (1,0,c,d)$.  But then
\[ u^*v - v^*u = \lambda^{-1}n(u^*)S^\# - (\lambda^*)^{-1}S^\# n(u) = (\lambda^* n(u^*) - \lambda n(u)) S^{-1}.\]
Since $\lambda \eta^{!} = \frac{-\omega v + v^\flat}{2}$, $(\lambda^* n(u^*) - \lambda n(u))  = \omega$, giving that $\frac{\langle \eta, \eta \rangle_B}{\omega} = S^{-1}$.

The final part of the third claim follows as the final part of the second claim.  This completes the proof of the theorem.
\end{proof}

\begin{remark} Suppose $A$ is a commutative associative cubic norm structure, as in the third item of Example \ref{ex:JBK}.  If $v \in W_{A}$ is rank four, then one can define $K = F[x]/(x^2 - q(v))$ and $B = A \otimes K$ as in this example.  Theorem \ref{thm:LLWJ2} then applies.  Thus in the case that $A$ is commutative, every rank four element of $W_{A}$ can be lifted to a rank one element of $W_{U}$, for an appropriate $U$.\end{remark}

\subsection{A more explicit construction of the lift} In Theorem \ref{thm:LLWJ2}, given $v \in W_J$ of rank four with $q(v)$ of a certain form, we produced a cubic norm structure $U = J \oplus B$ and a rank one lift $\tilde{v} = v + \eta \in W_{U} \simeq W_J \oplus B^2$.  To give this construction one must choose $\eta \in B^2$ and $\lambda \in K^\times$ so that $\lambda \eta^{!} = \frac{-\omega v + v^\flat}{2}$. The existence of such $\lambda, \eta$ are guaranteed by the lifting law Theorem \ref{b1A}.  In this subsection we explain how one can construct the cubic norm structure $U$ and the rank one lift $\tilde{v} \in W_{U}$ without making any choices.  However, the price that is paid to do this is to first construct a space $\widetilde{U} = J + B^2$ and a rank one $B$-module $I(v,\omega) \subseteq B^2 \subseteq \widetilde{U}$.  The cubic norm structure $U$ is then given as $\widetilde{U}/I(v,\omega)$.  The rank one lift in in $W_{U}$ can then be produced more canonically, as the image of an element from $F \oplus \widetilde{U} \oplus \widetilde{U} \oplus F$ in $W_U = F \oplus \left(\widetilde{U}/I(v,\omega)\right) \oplus \left(\widetilde{U}/I(v,\omega)\right) \oplus F$.

After we describe $\widetilde{U}$, $U = \widetilde{U}/I(v,\omega)$, and the rank one lift, we explain the relation of some of these constructions to elements of \cite{ganSavin}.  Throughout this subsection, $v = (a,b,c,d) \in W_J$ is rank four.

\subsubsection{Formulas defining the cubic norm structure on $\widetilde{U}/I(v,\omega)$} We now define $I(v,\omega) \subseteq B^2$ and the explicit formulas for the norm, adjoint, and pairing on the quotient $\widetilde{U}/I(v,\omega)$.

Set
\[h(v,\omega) := -\frac{\omega}{2} J_2 + S(v)\]
where recall $J_2 = \mm{}{1}{-1}{}$ and
\[S(v) = \left(\begin{array}{cc} b^\# - ac & ad -cb - \tr(ad -cb)/2 \\ ad -bc - \tr(ad-bc)/2 & c^\# -db \end{array}\right).\]
Then $h(v,\omega) \in M_2(B)$ satisfies $h(v,\omega)^* = h(v,\omega)$, where $*$ on $M_2(B)$ is the composite of matrix transpose and $*$ on $B$.  We define $Ad(\cdot; v,\omega): \widetilde{U} \rightarrow \widetilde{U}$ as follows.  For $x \in J$ and a row vector $\ell \in B^2$, set
\[Ad((x,\ell); v,\omega) := (x^\# - \ell h(v,\omega) \ell^*, - x\ell + \delta(\ell;v)^*J_2)\]
where here if $\ell = (u,v)$,
\[\delta(\ell; v) := \left(\begin{array}{c} au^\# + u \times (vb) + cv^\# \\ bu^\# + (uc) \times v + dv^\#\end{array}\right).\]

The norm on $\widetilde{U}$ is defined as
\[n((x,\ell); v, \omega) := n(x) - (x,\ell h(v,\omega) \ell^*) + \tr_{K/F}\left(\left\langle \frac{-\omega v + v^\flat}{2}, J_2^{-1} \ell^{!}\right\rangle\right).\]
Here, $\ell$ is a row vector, so if $\ell = (u,v)$, then $\ell^{!} = (n(u),u^{\#}v, v^\#u, n(v)).$  Note that $\ell^{!}$ and $-\omega v + v^\flat$ live in $W_B \supseteq W_J$, not $W_J$.

The submodule $I(v,\omega)$ is defined to be the set of $\ell \in B^2$ for which $\ell h(v,\omega) = 0$.  Equivalently, since the trace pairing on $B$ is non-degenerate, $I(v,\omega)$ is the set of $\ell$ for which $\tr_{B/K}(\ell h(v,\omega) (\ell')^*) = 0$ for all $\ell' \in B^2$.

A pairing $\widetilde{U} \otimes \widetilde{U} \rightarrow F$ is defined as follows.  Suppose $(x_1,\ell_1)$, $(x_2,\ell_2)$ are in $\widetilde{U}$, with $x_i \in J$ and $\ell_i \in B^2$.  Define
\[((x_1,\ell_1), (x_2,\ell_2);v,\omega) := (x_1,x_2) + \tr_{B/K}(\ell_1 h(v,\omega) \ell_2^* + \ell_2 h(v,\omega)\ell_1^*).\]
It is clear that this lands in $F$, and descends to $U(v,\omega) \otimes U(v,\omega)$.

Define $Ad(\cdot, \cdot; v,\omega): \widetilde{U} \otimes \widetilde{U}\rightarrow \widetilde{U}$ via $Ad(y,z;v,\omega) = Ad(y+z;v,\omega) - Ad(y;v,\omega) - Ad(z;v,\omega)$.  When $v$ and $\omega$ are fixed, we abbreviate $Ad(y,z;v,\omega)$ as $y \times z$.

The above formulas do not define a cubic norm structure on $\widetilde{U}$.  However, they do define a cubic norm structure on $U = \widetilde{U}/I(v,\omega)$, as stated in Theorem \ref{quarticUCL}.  Furthermore, these formulas have the advantage that they depend explicitly, and in fact equivariantly, on the element $v \in W_J$.  We first prove that $U$ is a cubic norm structure, by producing an isomorphism with the second Tits construction $U(S,\lambda)= J \oplus B$.  We then discuss the equivariance properties of the formulas and the rank one lift.

\begin{theorem}\label{quarticUCL} Suppose $v$ in $W_J$ is rank $4$, with $q(v) = \omega^2$, and $\omega \in K^\times$ with $\omega^* = -\omega$.  The submodule $I(v,\omega)$ satisfies $\widetilde{U} \times I(v,\omega)\subseteq I(v,\omega)$, and the norm map $n(\cdot; v,\omega)$ is well-defined on $U(v,\omega) = \widetilde{U}/I(v,\omega)$.  The induced adjoint $Ad(\cdot;v,\omega)$, norm, and pairing on $U(v,\omega) := \widetilde{U}/I(v,\omega)$ make it a cubic norm structure.  In fact, $U \simeq U(S,\lambda)$ for appropriate $S,\lambda$ in the second construction of Tits.\end{theorem}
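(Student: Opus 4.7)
The plan is to identify the quotient $U(v,\omega) = \widetilde U / I(v,\omega)$ with the second Tits construction $U(S,\lambda)$ of subsection \ref{subsec:secondTits}, for the specific $(S,\lambda)$ supplied by Theorem \ref{thm:LLWJ2}, and to read off the cubic norm structure by transport. The point is that the formulas defining $n(\cdot;v,\omega)$, $Ad(\cdot;v,\omega)$, and $(\cdot,\cdot;v,\omega)$ on $\widetilde U$ will turn out to be precisely the pullbacks of the Tits norm, adjoint, and pairing under this identification, so their well-definedness on the quotient, the containment $\widetilde U \times I(v,\omega) \subseteq I(v,\omega)$, and the isomorphism $U(v,\omega) \simeq U(S,\lambda)$ all follow in one stroke.

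First I would apply Theorem \ref{thm:LLWJ2}(2) to obtain $\lambda \in K^\times$ and a primitive $\eta \in B^2$ with $\lambda \eta^{!} = (-\omega v + v^\flat)/2$, and set $S = (\langle \eta,\eta\rangle_B/\omega)^{-1} \in J$, so that $n(S) = \lambda\lambda^*$ and $\eta S \eta^* = h(v,\omega)$. Since $S$ is invertible and $\eta$ is primitive, a short argument shows that $\ell h(v,\omega) = \ell \eta S \eta^* = 0$ for a row vector $\ell \in B^2$ if and only if $\ell \eta = 0$: setting $x = \ell \eta S \in B$, one gets $x u^* = x v_\eta^* = 0$ where $\eta = \binom{u}{v_\eta}$, and primitivity of $\eta$ produces $p^*,q^* \in B$ with $u^*p^* + v_\eta^* q^* = 1$, forcing $x = 0$. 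Hence $I(v,\omega) = \{\ell \in B^2 : \ell\eta = 0\}$ is free of rank one as a right $B$-module, and $\ell \mapsto \ell \eta$ descends to an $F$-linear isomorphism $B^2/I(v,\omega) \xrightarrow{\sim} B$. This lets me define $\Psi : \widetilde U \to U(S,\lambda) = J \oplus B$ by $\Psi((x,\ell)) = (x,\ell\eta)$, a surjection with kernel $\{0\} \oplus I(v,\omega)$.

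The main task is then to verify, writing $\alpha = \ell \eta$, that the three canonical formulas on $\widetilde U$ agree with the pullbacks via $\Psi$ of the Tits formulas on $U(S,\lambda)$. For the norm, $\ell h(v,\omega) \ell^* = \alpha S \alpha^*$ is immediate from $h(v,\omega) = \eta S \eta^*$, and the cubic term reduces to the identity $\langle \eta^{!}, J_2^{-1}\ell^{!}\rangle = n(\ell\eta)$, which follows from $J_2^{-1} = -J_2$, the equivariance $(J_2^{-1}\eta)^{!} = J_2^{-1}\cdot\eta^{!}$, the skew-symmetry of $\langle\,,\,\rangle$, and Lemma \ref{lem:nRC}. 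The pairing identity is the polarization of the norm identity and follows similarly. The adjoint identity reduces to the single assertion
\[\delta(\ell;v)^* J_2 \eta = \lambda^{-1}(\alpha^*)^\# S^\#\]
in $B$. Both sides transform equivariantly under the group $G \subseteq \GU(2,B)$ of subsection \ref{subsec:GBgrp} by Proposition \ref{prop:Gaction}, which lets me reduce the verification to a single representative of the open orbit, say $v = (1,0,c,d)$, where $\delta$ collapses to $\binom{p^\# + cq^\#}{(pc)\times q + dq^\#}$ and the check becomes a direct manipulation using $(xy)^\# = y^\# x^\#$ in the associative $B$ together with the explicit expressions for $\lambda n(u)$, $\lambda v_\eta u^\#$, $\lambda u v_\eta^\#$, $\lambda n(v_\eta)$ that arise from $\lambda \eta^{!} = (-\omega v + v^\flat)/2$ and the identities of subsubsection \ref{special_Id}.

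Once these verifications are in place, the formulas on $\widetilde U$ are constant on fibers of $\Psi$ and descend to a cubic norm structure on $\widetilde U / I(v,\omega)$ isomorphic via $\Psi$ to $U(S,\lambda)$; in particular $\widetilde U \times I(v,\omega) \subseteq I(v,\omega)$ is the statement that the cross product descends. The main obstacle is the adjoint identity: the formula for $\delta(\ell;v)$ is engineered so that the formulas on $\widetilde U$ depend canonically on $v$ and $\omega$ rather than on the choice of $\eta$, and matching it against the Tits formula $\lambda^{-1}(\alpha^*)^\# S^\#$, which sees $\eta$ only through $\alpha = \ell\eta$ and $S$, is where the bulk of the explicit work lives; fortunately, equivariance under $G$ collapses this to a single concrete case.
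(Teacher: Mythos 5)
Your proposal is correct and follows essentially the same route as the paper: choose $\eta$ and $\lambda$ via the lifting law so that $\lambda\eta^{!} = \tfrac{-\omega v + v^\flat}{2}$, set $S = \bigl(\langle \eta,\eta\rangle_B/\omega\bigr)^{-1}$, and transport the Tits structure through $(x,\ell)\mapsto (x,\ell\eta)$, using $h(v,\omega) = \eta S \eta^*$ to identify $I(v,\omega)$ with the kernel and to match the norm, pairing, and adjoint formulas. The only cosmetic difference is that the paper takes $\eta = h(v,\omega)J_2\eta_0$ so that descent to the quotient is immediate, whereas you argue the kernel of $\ell\mapsto\ell\eta$ equals $I(v,\omega)$ via primitivity of $\eta$ and invertibility of $S$; both are fine.
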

\begin{proof} We explain the isomorphism $\widetilde{U}/I(\omega,v) \simeq U(S,\lambda)$, from which the other statements follow.  

Note that $h(v,\omega)J_2 = \frac{\omega + R(v)}{2}$, and this latter element of $M_2(B)$ occurs in the lifting law Theorem \ref{b1A}.  Thus for a column vector $\eta_{0}$ in $B^2$, by Theorem \ref{b1A} we have
\[ (h(v,\omega)J_2\eta_0)^{!} = \langle X(\omega,v), \eta_{0}^{!} \rangle X(-\omega,v).\]
Here recall that $X(\pm \omega, v) = \frac{\pm \omega v + v^\flat}{2}$.  Choose $\eta_0$ so that $\langle X(\omega,v), \eta_{0}^{!}\rangle \in K^\times$, and set $\eta = h(v,\omega)J_2\eta_0$.  Then with $\lambda = (\langle X(\omega,v), \eta_{0}^{!}\rangle)^{-1}$, one has $\lambda \eta^{!} = \frac{-\omega v+v^\flat}{2}$.  Define $S = \left(\frac{\langle \eta, \eta\rangle_{B}}{\omega}\right)^{-1}$ as in Theorem \ref{thm:LLWJ2}.

We now define a map $\tilde{U} \rightarrow U(S,\lambda)$ as 
\[(x,\ell) \mapsto (x, \ell \eta) = (x,\ell h(v,\omega) J\eta_{0}).\]
Note that $\ell \eta \in B$ since $\ell$ is a row vector and $\eta$ is a column vector.  Furthermore, it is clear that by the definition of $I(v,\omega)$, this map induces a map $\tilde{U}/I(v,\omega) \rightarrow U(S,\lambda)$.

We claim that $B^2/I(v,\omega) \rightarrow B$ via $\ell \mapsto \ell \eta$ is a left $B$-module isomorphism.  To see this, note that is injective, because if $\ell \eta = 0$, then $\ell \eta S \eta^* =0$.  But from Theorem \ref{thm:LLWJ2}, $\eta S \eta^* = h(v,\omega)$, so $\ell h(v,\omega) = 0$, so $\ell \in I(v,\omega)$.  To see that $B^2/I(v,\omega) \rightarrow B$ is surjective, note that since it is a $B$-module map, it suffices to check that the image contains a unit of $B$.  But if $\ell = \frac{1}{\omega} \eta^* J_2$, then $\ell \eta = \frac{1}{\omega}\langle \eta, \eta \rangle_{B} = S^{-1}$ is invertible.  Thus $\tilde{U}/I(v,\omega) \simeq U(S,\lambda)$ is an $F$-linear isomorphism.

Using the fact that $h(v,\omega) = \eta S \eta^*$ and $\frac{-\omega v + v^\flat}{2} = \lambda \eta^{!}$, it is easy to check that the norm and pairing defined above on $\tilde{U}$ factor through the map $\tilde{U}\rightarrow U(S,\lambda)$ and the norm and pairing on the $U(S,\lambda)$.  One can also show that the adjoint map $\#:\tilde{U} \rightarrow \tilde{U}$ commutes with the map $\tilde{U} \rightarrow U(S,\lambda)$.  The statements of the theorem now all follow.  
\end{proof}

As mentioned above, the formulas for the norm, adjoint, and pairing on $\tilde{U}$ have the advantage that they are explicitly defined in terms of $v \in W_J$, that they are suitably equivariant in $v$, and that they enable one to write down a rank one lift $\tilde{v} \in W_{U}$ somewhat more canonically than we could in Theorem \ref{thm:LLWJ2}.  We now address these equivariance and lifting claims.

\subsubsection{Equivariance of formulas on $\widetilde{U}$} For the equivariance, we have the following theorem.
\begin{theorem}\label{thm:equivU} Suppose $x, x_1, x_2 \in J$ and $\ell, \ell_1, \ell_2$ in $B^2$ are row vectors.  Furthermore, suppose $g \in G$.  Then
\begin{enumerate}
\item The norm is equivariant for the action of $G$: $n((x,\ell); gv,\nu(g) \omega) = n((x,\ell g); v,\omega)$.
\item The adjoint is equivariant for the action of $G$: $Ad((x,\ell); gv, \nu(g) \omega) = Ad((x,\ell g); v,\omega) g^{-1}$. The $g^{-1}$ on the right-hand side of this equality acts by multiplication on the $B^2$ component in $\widetilde{U}$ and acts trivially on the $J$ component.
\item The pairing is equivariant for the action of $G$: 
\[((x_1, \ell_1), (x_2, \ell_2); gv, \nu(g) \omega)= ((x_1, \ell_1 g), (x_2, \ell_2 g); v,\omega).\]
\item The $B$ submodule $I(v,\omega)$ is equivariant for the action of $G$: $I(gv,\nu(g) \omega) = I(v,\omega) g^{-1}$.
\end{enumerate}\end{theorem}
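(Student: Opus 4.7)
The plan is to deduce all four parts from a single master identity
\[h(gv,\nu(g)\omega)=g\,h(v,\omega)\,g^*\qquad(g\in G),\]
after which the cubic-in-$\ell$ piece of the norm and the $B^2$-component of the adjoint require separate treatment. For the master identity, I would write $h(v,\omega)=-\tfrac{\omega}{2}J_2+\tfrac{1}{2}R(v)J_2^{-1}$, apply Lemma~\ref{lem:Requiv} together with the fact that the $G$-action on $W_J$ is the $\nu^{-1}$-twist of the left $W_B$-action (so that $R(gv)=\nu(g)\,gR(v)g^{-1}$ for $g\in G$, using $\det_6(g)=\nu(g)^3$ and that $R$ is quadratic in $v$), and finally use the defining relation $g^*J_2g=\nu(g)J_2$, equivalently $gJ_2g^*=\nu(g)J_2$ (obtained by inverting the defining relation and swapping $g\leftrightarrow g^{-1}$), to match the $J_2$-terms.

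Parts (3) and (4), together with the $J$-component of Part (2) and the ``middle'' term $(x,\ell h(v,\omega)\ell^*)$ of the norm, follow immediately: in each case the only $v,\omega$-dependence is through $h(v,\omega)$, so the substitution $h(gv,\nu(g)\omega)=g\,h(v,\omega)\,g^*$ converts $(v,\ell)$ into $(v,\ell g)$. For Part (4) one additionally uses that $g^*$ is invertible to conclude $\ell\in I(gv,\nu(g)\omega)\Leftrightarrow \ell g\in I(v,\omega)$.

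The cubic term $\tr_{K/F}\langle X(-\omega,v),J_2^{-1}\ell^{!}\rangle$ of the norm is more delicate. Three ingredients are needed: (i) $X(-\nu(g)\omega,gv)=g\cdot_B X(-\omega,v)$, which follows from $(gv)^\flat=\nu(g)\,g\cdot_J v^\flat$ combined with $g\cdot_B=\nu(g)\,g\cdot_J$ on $W_B=W_J\otimes_F K$; (ii) the identity $g^*\cdot_L\ell^{!}=(\ell g)^{!}$ for $g\in G$, obtained by combining the column-vector rule $(g\eta)^{!}=g\cdot_L\eta^{!}$, the compatibility $(g\cdot w)^*=g\cdot w^*$ established in the proof of Lemma~\ref{lem:KHermform}, and the involution relation $(\ell^{!})^*=(\ell^*)^{!}$ between row- and column-vector shrieks; and (iii) the symplectic similitude $\langle g\cdot_B x,y\rangle=\det_6(g)\langle x,g^{-1}\cdot_B y\rangle$, combined with the cubic-scaling rule $(\alpha I)\cdot_B w=\alpha^3 w$, which contributes a factor of $\nu(g)^{-3}$ when the scalar $\nu(g)^{-1}$ is extracted from the matrix identity $g^{-1}J_2^{-1}=\nu(g)^{-1}J_2^{-1}g^*$. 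The factors $\det_6(g)=\nu(g)^3$ and $\nu(g)^{-3}$ then cancel, giving the claim.

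The main obstacle is the $B^2$-component of the adjoint in Part (2). After taking $*$ and using $J_2^{-1}g^{-*}J_2=\nu(g)^{-1}g$, this reduces to the polynomial identity
\[\delta(\ell;gv)=\nu(g)^{-1}\,g\,\delta(\ell g;v).\]
I would verify this by reduction to generators of $G$---the diagonals $\diag(m,n)$, the upper unipotents $\mm{1}{X}{0}{1}$ with $X\in J$ and their lower-triangular analogues, and $J_2$---on each of which the identity becomes a cubic polynomial consequence of the relations recorded in subsubsection~\ref{special_Id}. A more conceptual alternative is to note that, modulo $I(v,\omega)$, the adjoint on $\widetilde U/I(v,\omega)\cong U(S,\lambda)$ is determined by the norm and pairing via polarization, so its equivariance follows formally from Parts (1), (3), and (4); the explicit formula for $\delta$ then provides a canonical $G$-equivariant lift, to be pinned down against the generator computations.
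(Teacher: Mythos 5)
Your reductions of the parts the paper calls ``straightforward checks'' are correct and worked out in more detail than the paper gives: the master identity $h(gv,\nu(g)\omega)=g\,h(v,\omega)\,g^*$ does follow from Lemma \ref{lem:Requiv}, the quadraticity of $R$ in $v$, $\det_6(g)=\nu(g)^3$ and $gJ_2g^*=\nu(g)J_2$, and it disposes of (3), (4), the $J$-component of (2) and the middle term of the norm; your three ingredients for the cubic term also work, with the small caveat that in (ii) you need the $*$-compatibility of Lemma \ref{lem:KHermform} applied to $g^*$ rather than $g$, so you should note that $g\in G$ forces $g^*\in G$ (from $gJ_2g^*=\nu(g)J_2$ and $\det_6(g^*)=\det_6(g)^*=\nu(g)^3$). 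Your reduction of part (2) to $\delta(\ell;gv)=\nu(g)^{-1}g\,\delta(\ell g;v)$ is also exactly the paper's reduction; by linearity of $\delta$ in $v$ it is the untwisted statement $g\,\delta(\ell g;v)=\delta(\ell;g\cdot v)$ for the cubic polynomial action on $W_B$.

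The gap is in the one step that constitutes the paper's actual proof. First, checking the $\delta$-identity ``on generators of $G$'' presupposes that the diagonals in $G$, the unipotents with $X\in J$, and $J_2$ generate $G$ (at least Zariski-densely); no such statement is proved by you or available in the paper. The paper avoids this by proving the stronger fact that $g\,\delta(\ell g;v)=\delta(\ell;g\cdot v)$ for \emph{all} $g\in\GL_2(B)$, where Lemma \ref{Pgens} supplies generators ($P$ and $J_2$), the only nontrivial case being the unipotents $n(X)$ with $X\in B$. Second, that unipotent verification is not a formal consequence of the relations in subsubsection \ref{special_Id}: besides identities such as $X(b\times X)+bX^{\#}=(b,X^{\#})$ and pairing arguments, one needs the further identity $(b\times X)(v\times w)=(vb)\times(wX)+(vX)\times(wb)$, obtained by linearizing $(va)\times(wa)=a^{\#}(v\times w)$; this computation is the heart of the theorem and your proposal leaves it undone. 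Your ``conceptual alternative'' cannot replace it: polarizing the norm determines the adjoint only on the quotient $U=\widetilde{U}/I(v,\omega)$, whereas statement (2) is an exact identity for the explicit formula $Ad(\cdot;v,\omega)$ on $\widetilde{U}$ itself.
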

\begin{proof} Most of the statements of the theorem are straightforward checks.  We explain the proof of the most difficult step, which is the equivariance of the map $\delta(\ell; v)$, which is needed to prove that the adjoint map is suitably equivariant.  In fact, the map $\delta(\ell;v)$ satisfies an equivariance property for all of $\GL_2(B)$: $g \delta(\ell g; v) = \delta(\ell; g \cdot v)$ for $g \in \GL_2(B)$, where here $g \cdot v$ is the cubic polynomial action of $\GL_2(B)$ on $W_{B}$.  To check this equivariance statement, it suffices to check it on generators for $\GL_2(B)$, and the only nontrivial part is the equivariance for the action of the elements $n(X)$, $X \in B$.

Suppose $\ell = (u,v)$, and recall $v = (a,b,c,d) \in W_{B}$.  To check the equivariance for $n(X)$, one must verify the identity
\[ \left(\begin{array}{cc} 1 & \\ X & 1 \end{array}\right) \left(\begin{array}{c} a (u+ vX)^\# + (u + vX) \times (vb) + c v^\# \\ b(u+vX)^\# + ((u+vX)c) \times v + dv^\#\end{array}\right) = \left(\begin{array}{c} a' u^\# + u \times (v b') + c' v^\# \\ b' u^\# + (uc') \times v + d' v^\#\end{array}\right),\]
where $a' = a$, $b' = b + aX$, $c' = c + b\times X + a X^\#$, and $d' = d + (c,X) + (b, X^\#) + a n(X)$.  The top row of this equality is immediate, using the identity $(vX) \times (vb) = (X \times b)v^\#$.  For the second row, one must prove that \begin{itemize}
\item $b( u \times (vX)) + X(u \times (vb)) = \left( u(b \times X)\right) \times v$
\item $X (u \times (vX)) = (uX^\#) \times v$
\item $d' = d + (Xc) \times 1 + bX^\# + Xc'$.
\end{itemize}
This last identity follows from $(Xc) \times 1 = (c,X) -Xc$ and $X(b \times X) + bX^\# = (b,X^\#)$.  The second identity follows from pairing each side against an arbitrary $w \in B:$
\[(w, X(u \times (vX))) = (w X, u \times (vX)) = ((w X) \times (vX), u) = (X^\# (w \times v), u) = (w, (u X^\#) \times v).\]  
The first identity similarly follows by pairing against an arbitrary $w$ in $B$, while now using the following claim.
\begin{claim} If $b,X, v, w$ are in $B$, then $(b \times X)(v \times w) = (vb) \times (w X) + (vX) \times (w b)$. \end{claim}
\begin{proof} One obtains the claim by linearizing the identity $\left(v(b+X)\right) \times \left(w(b+X)\right) = (b+X)^\# (v \times w)$. \end{proof}
Applying the claim, one obtains
\begin{align*}(w, (u(b \times X)) \times v) &= ((b\times X)(w \times v), u) = ((vb) \times (w X) + (vX) \times (w b), u) \\ &= (w, b( u \times (vX)) + X(u \times (vb))),\end{align*}
finishing the proof.
\end{proof}

\subsubsection{The rank one lift} We now explain the lifting law from the point of view of $\widetilde{U}$.  Since $\widetilde{U} = J \oplus B^2$, one has a decomposition $F \oplus \widetilde{U} \oplus \widetilde{U} \oplus F = W_{J} \oplus M_2(B)$.  Explicitly, if $\ell_1, \ell_2 \in B^2$ are row vectors, $m = \left(\begin{array}{c} \ell_1 \\ \ell_2 \end{array}\right) \in M_2(B)$, and $v' = (a',b',c',d') \in W_J$, then we identify $v' + m \in W_J \oplus M_2(B)$ with the element $(a',(b',-\ell_1),(c',\ell_2),d')$ in $F \oplus \widetilde{U} \oplus \widetilde{U} \oplus F$.

We let $G$ act on $M_2(B)$ via left translation: $m \mapsto gm$ in $M_2(B)$.  Taking the quotient of $\widetilde{U}$ by $I(v,\omega)$, we obtain a map
\begin{equation}\label{eqn:LLmap} W_J \oplus M_2(B) \simeq F \oplus \widetilde{U} \oplus \widetilde{U} \oplus F \rightarrow F \oplus \left(\widetilde{U}/I(v,\omega)\right) \oplus \left(\widetilde{U}/I(v,\omega)\right) \oplus F \simeq W_{U}.\end{equation}
It is clear that this map is equivariant for the action of $G$ on both sides.  We now have the following result.
\begin{theorem}\label{quarticUCL2} Suppose $v$ in $W_J$ is rank $4$, with $q(v) = \omega^2$, and $\omega \in K^\times$ with $\omega^* = -\omega$.  Let $\widetilde{U}, I(v,\omega)$ and $U = \widetilde{U}/I(v,\omega)$ be as above.  Then the image of the element $\tilde{v} = v + 1_2$ in $W_U$ under the map (\ref{eqn:LLmap}) is rank one.  \end{theorem}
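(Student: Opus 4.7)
The plan is to reduce the statement directly to the second part of Theorem \ref{thm:LLWJ2}, via the isomorphism $U = \widetilde{U}/I(v,\omega) \simeq U(S,\lambda)$ constructed in the proof of Theorem \ref{quarticUCL}.

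First I would recall that in the proof of Theorem \ref{quarticUCL}, one chooses a column vector $\eta_0 \in B^2$ with $\langle X(\omega,v),\eta_0^{!}\rangle \in K^\times$, sets $\eta = h(v,\omega)J_2\eta_0 \in B^2$ and $\lambda = \langle X(\omega,v),\eta_0^{!}\rangle^{-1}$, and then shows that the $F$-linear map $\Phi: \widetilde{U} \to U(S,\lambda)$ given by $(x,\ell) \mapsto (x,\ell\eta)$ descends to an isomorphism of cubic norm structures $U = \widetilde{U}/I(v,\omega) \xrightarrow{\sim} U(S,\lambda)$. Consequently, $\Phi$ induces an isomorphism $W_U \xrightarrow{\sim} W_{U(S,\lambda)}$ of Freudenthal constructions, which preserves rank.

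Next I would unwind the map (\ref{eqn:LLmap}) on $\tilde v = v + 1_2$. Writing $1_2 = \binom{\ell_1}{\ell_2}$ with $\ell_1 = (1,0)$, $\ell_2 = (0,1)$, the element $v + 1_2$ corresponds to
\[
\bigl(a,\,(b,-\ell_1),\,(c,\ell_2),\,d\bigr) \in F \oplus \widetilde{U} \oplus \widetilde{U} \oplus F.
\]
Applying $\Phi$ to the middle two components and writing $\eta = \binom{u}{v'}$ (so $\ell_1\eta = u$ and $\ell_2\eta = v'$), the image in $W_{U(S,\lambda)}$ is $(a,(b,-u),(c,v'),d)$, which is precisely the element $v+\eta$ of $W_J \oplus B^2 \simeq W_{U(S,\lambda)}$ appearing in Theorem \ref{thm:LLWJ2}.

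Finally, the second part of Theorem \ref{thm:LLWJ2} asserts exactly that $v+\eta$ is rank one in $W_{U(S,\lambda)}$ for this choice of $S,\lambda,\eta$. Since $\Phi$ is an isomorphism of cubic norm structures, it induces a rank-preserving isomorphism $W_U \xrightarrow{\sim} W_{U(S,\lambda)}$, so the image of $\tilde v = v + 1_2$ in $W_U$ is rank one as well. There is essentially no obstacle here beyond bookkeeping: the substantive content (existence of $S,\lambda,\eta$ with the required properties, and rank-one-ness of $v+\eta$) has already been established in Theorems \ref{b1A}, \ref{thm:LLWJ2} and \ref{quarticUCL}; the present statement is the intrinsic, choice-free reformulation obtained by passing to the quotient $\widetilde{U} \twoheadrightarrow U$.
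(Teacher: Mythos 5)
Your proposal is correct and follows essentially the same route as the paper: compose the map (\ref{eqn:LLmap}) with the isomorphism $U = \widetilde{U}/I(v,\omega) \simeq U(S,\lambda)$ from Theorem \ref{quarticUCL}, observe that $v + 1_2$ maps to $(a,(b,-u),(c,v'),d) = v + \eta$ in $W_{U(S,\lambda)}$, and invoke the second part of Theorem \ref{thm:LLWJ2}. Your explicit remark that an isomorphism of cubic norm structures induces a rank-preserving isomorphism of the Freudenthal constructions is exactly the (implicit) bookkeeping the paper relies on.
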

This theorem gives the somewhat more canonical rank one lift $\tilde{v} \in W_{U}$ of $v \in W_J$ promised above.
\begin{proof} The cubic norm structure $U = \widetilde{U}/I(v,\omega)$ was proved in Theorem \ref{quarticUCL} to be isomorphic to $U(S,\lambda)$, in the notation of that theorem.  The map constructed in Theorem \ref{quarticUCL} sends $x + \ell \in J \oplus B^2$ to $(x,\ell \eta)$, where $\eta \in B^2$ is a column vector with $\lambda \eta^{!} = \frac{-\omega v + v^\flat}{2}$.  Thus if $\eta = \left(\begin{array}{c} u\\ v\end{array}\right)$, the image of $v +1_2$ under the map (\ref{eqn:LLmap}), composed with the isomorphism $U \simeq U(S,\lambda)$, is $(a,(b,-u),(c,v),d) \in W_{U(S,\lambda)}$.  But this latter element was proved to be rank one in Theorem \ref{thm:LLWJ2}.  This completes the proof of the theorem.\end{proof}

\begin{remark} Denote $\omega_0$ the image of the element $(1,0) \in B^2$ in $B^2/I(v,\omega) \subseteq U$, and similarly set $\theta_0$ the image of the element $(0,1) \in B^2$ in $B^2/I(v,\omega) \subseteq U$.  Then Theorem \ref{quarticUCL2} says that $(a,b-\omega_0,c+\theta_0,d)$ is rank one in $W_U$.  This theorem generalizes the lifting law discussed in subsection \ref{subsec:LLbcfs}.\end{remark}

\subsubsection{Relation with work of Gan-Savin \cite{ganSavin}} We close this section by discussing the relation of the lifting laws in this section with elements of the paper \cite{ganSavin}. 

More specifically, consider the case when the pair $J$ and $B$ are as in the third item of Example \ref{ex:JBK}, so that $B = A \otimes_{F} K$, with $A$ a commutative associative cubic norm structure.  Then $A^2 \subseteq B^2$, and in fact the map $A^2 \rightarrow B^2/I(v,\omega)$ is an isomorphism of left $A$-modules.  Indeed, both sides have the same dimension as $F$-vector spaces, and it is injective because $A^2 \cap I(v,\omega) = 0$.  That $A^2 \cap I(v,\omega) = 0$ is immediate: If $ 0=\ell h(v,\omega) = \ell S(v) -\frac{\omega}{2} \ell J_2 $, then clearly $\ell = 0$, by looking at the coefficient of $\omega$.

It follows from this fact that in this setting, one can write the formulas for the cubic norm structure $U = J \oplus B^2/I(v,\omega) \simeq A \oplus A^2$ entirely in terms of $A$.  One obtains the following formulas.  Here $v = (a,b,c,d) \in W_{A}$ is rank four, $x, x_1, x_2 \in A$ and $\ell, \ell_1, \ell_2 \in A^2$ are row vectors:
\begin{enumerate}
\item The norm on $U$ is given by $n((x,\ell);v) = n(x) - (x,\ell S(v) \ell^{t}) + \langle v^\flat, J_2^{-1} \ell^{!}\rangle$.
\item The adjoint on $U$ is given by $Ad((x,\ell);v) = (x^\# - \ell S(v) \ell^{t}, -x\ell + \delta(\ell;v)^{t} J_2)$.
\item The pairing on $U$ is given by $((x_1,\ell_1),(x_2,\ell_2); v) = (x_1,x_2) + \tr_{A/F}(\ell_1 S(v) \ell_2^{t} + \ell_2 S(v) \ell_1^{t})$.
\end{enumerate}
Here if $\ell = (u,v)$,
\[\delta(\ell;v) = \left(\begin{array}{c} au^\# + u \times (vb) + cv^\# \\ bu^\# + (uc) \times v + dv^\# \end{array}\right).\]
Note that the $\omega$ drops out in all of these formulas.  These explicit formulas for the cubic norm structure $U$ associated to $v \in W_{A}$ of rank four are (in different notation) the ones given by Gan-Savin given in \cite[Section 10]{ganSavin}.

As in Theorem \ref{thm:equivU}, the above formulas for the cubic norm structure on $U$ are equivariant for the action of a certain subgroup $G \subseteq \GL_2(A)$.  Namely, denote $\det_2: \GL_2(A) \rightarrow \GL_1(A)$ the usual (degree two) determinant map, which we denote with a subscript $2$ to distinguish it from the degree $6$ map $\GL_2(A) \rightarrow \GL_1(F)$.  Then if $G = \{g \in \GL_2(A): \det_2(g) \in \GL_1(F)\}$, the above formulas defining the cubic norm structure on $U$ are equivariant for the action of $G$.  Here, the action of $G$ on $W_{A}$ is the $\det_2^{-1}$ twist of the cubic polynomial action.

Finally, we restate the lifting law in this context, which is implicit in \cite{ganSavin}.  Define a map $W_{A} \oplus M_2(A) \rightarrow W_{U}$ by 
\[v + m = (a',b',c',d') + \left(\begin{array}{c} \ell_1 \\ \ell_2\end{array}\right) \mapsto (a',(b',-\ell_1),(c',\ell_2),d') \in W_{U}.\]
Then $G \subseteq \GL_2(A)$ acts on $M_2(A)$ by left translation, $m \mapsto gm$, and by the action on $W_{A}$ just described.  This defines an action of $G$ on $W_{U}$, and one has the following result.  As above, denote by $\omega_0 = (1,0) \in A^2 \subseteq U$ and $\theta_0 = (0,1) \in A^2 \subseteq U$.
\begin{theorem}[Gan-Savin]\label{ganSavinlift} Suppose $v\in W_A$ is rank $4$.  With the formulas defined above, $U$ is a cubic norm structure.  The action of $G$ on $W_{U}$ preserves the symplectic and quartic form on $W_{U}$ up to similitude, thus giving a map $G \rightarrow H(W_{U})$, preserving similitudes.  The lifted element
\[v + 1_2 = (a,b,c,d) + (0,-\omega_0, \theta_0,0) =: (a,-\omega,\theta,d) \in W_{U}\]
is rank one.\end{theorem}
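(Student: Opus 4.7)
The plan is to deduce Theorem \ref{ganSavinlift} from Theorems \ref{quarticUCL}, \ref{thm:equivU}, and \ref{quarticUCL2} by specializing the general construction of subsections \ref{subsec:secondTits}--\ref{subsec:GBgrp} to the third case of Example \ref{ex:JBK}. Given a rank-four $v \in W_A$, set $K = F[x]/(x^2 - q(v))$, let $\omega$ denote the image of $x$ (so $\omega^* = -\omega$ for the nontrivial $F$-involution on $K$), and form $B = A \otimes_F K$ with involution of the second kind fixing $A$. Then $J = B^{*=1} = A$, and all hypotheses of Theorem \ref{quarticUCL} are in force, yielding a cubic norm structure on $\widetilde{U}/I(v,\omega) = (A \oplus B^2)/I(v,\omega)$.

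The key reduction is to identify this quotient with $A \oplus A^2$. Since $B = A \oplus \omega A$ as a left $A$-module, we have $A^2 \hookrightarrow B^2$, and I verify that $A^2 \cap I(v,\omega) = 0$: if $\ell \in A^2$ satisfies $\ell h(v,\omega) = \ell S(v) - \tfrac{\omega}{2}\ell J_2 = 0$, then the decomposition $B^2 = A^2 \oplus \omega A^2$ and comparison of coefficients of $1, \omega$ forces $\ell J_2 = 0$ and hence $\ell = 0$. A dimension count then upgrades the injection $A^2 \hookrightarrow B^2/I(v,\omega)$ to a left $A$-module isomorphism, giving an identification $A \oplus A^2 \xrightarrow{\sim} \widetilde{U}/I(v,\omega)$.

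Under this identification I claim the cubic norm structure pushes forward to the formulas stated before Theorem \ref{ganSavinlift}, because the $\omega$-dependent terms vanish on $A^2$. For the pairing and adjoint, the point is that $h(v,\omega) = -\tfrac{\omega}{2}J_2 + S(v)$, but for $\ell = (u,v) \in A^2$ commutativity of $A$ gives $\ell J_2 \ell^t = -vu + uv = 0$, and analogously the symmetric combination $\ell_1 J_2 \ell_2^t + \ell_2 J_2 \ell_1^t$ vanishes for all $\ell_1,\ell_2 \in A^2$. For the norm, the term $\tr_{K/F}\bigl(\langle \tfrac{-\omega v + v^\flat}{2}, J_2^{-1}\ell^{!}\rangle\bigr)$ is evaluated by noting that $v$, $v^\flat$, $\ell^{!}$ all lie in $W_A$, so both pairings land in $F$; the $\omega$-contribution then dies under $\tr_{K/F}$ because $\omega + \omega^* = 0$, leaving exactly $\langle v^\flat, J_2^{-1}\ell^{!}\rangle$. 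Once this simplification is verified, parts (1), (2), (3) of Theorem \ref{ganSavinlift} follow respectively from Theorem \ref{quarticUCL}, Theorem \ref{thm:equivU}, and Theorem \ref{quarticUCL2}. For part (2) one also checks that any $g \in G = \{g \in \GL_2(A) : \det_2(g) \in F^\times\}$, viewed in $\GL_2(B)$, satisfies $g^* = g^t$ (since $A$ is $*$-fixed), hence $g^* J_2 g = \det_2(g) J_2$, placing $g$ in $\GU(2,B)$ with $\nu(g) = \det_2(g) \in F^\times$; moreover $\det_6(g) = n_A(\det_2(g)) = \det_2(g)^3 = \nu(g)^3$ since $A$ is a commutative cubic norm structure, so $g$ lies in the group of subsection \ref{subsec:GBgrp} and the $\det_2^{-1}$-twisted action here matches the $\nu^{-1}$-twisted action there.

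The main obstacle, while not technically deep, is the bookkeeping of the third step: carefully verifying that each of the three defining identities of $U$ specializes as claimed, so that the $\omega$ cleanly disappears. The adjoint formula requires in addition checking that the map $\delta(\ell;v)$ introduced via the $B$-linear description lies in $A^2$ and matches the expression given — this ultimately rests on the fact that $\delta$ depends only on $v \in W_A$ and not on the enveloping $K$-structure, which can be seen directly from its polynomial definition in $a,b,c,d$ and $\ell$. All remaining steps are then either invocations of the three cited theorems or straightforward compatibility checks on the group actions.
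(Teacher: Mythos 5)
Your proposal is correct and follows essentially the same route as the paper: the paper also obtains Theorem \ref{ganSavinlift} by specializing to $B = A\otimes_F K$ with $K = F[x]/(x^2-q(v))$, identifying $A^2 \simeq B^2/I(v,\omega)$ via $A^2\cap I(v,\omega)=0$ (coefficient of $\omega$) plus a dimension count, observing that commutativity of $A$ makes the $\omega$-terms in the norm, adjoint, and pairing vanish identically so the formulas descend to the stated $A$-only ones, and then citing Theorems \ref{quarticUCL}, \ref{thm:equivU}, and \ref{quarticUCL2} together with the embedding of $G=\{g\in\GL_2(A):\det_2(g)\in F^\times\}$ into the unitary similitude group of subsection \ref{subsec:GBgrp}. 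The compatibility checks you flag (that $\delta(\ell;v)$ lies in $A^2$, and that $\nu(g)=\det_2(g)$ with $\det_6(g)=\nu(g)^3$) are exactly the ones implicit in the paper's discussion.
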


\section{Lower rank lifting laws}\label{lowerRank}
In this section we give a few examples of lower rank lifting laws.  That is, we show how to lift elements of non-maximal rank in certain prehomogeneous spaces, i.e., elements not in the open orbit, to rank one elements of bigger prehomogeneous vector spaces. More precisely, we use the Cayley-Dickson construction to lift elements of $H_3(C)$ of rank two, for $C$ an associative composition algebra, to rank one elements of $H_3(D)$, with $D$ a composition algebra produced from $C$ via the Cayley-Dickson construction.  We again use the Cayley-Dickson construction to lift rank two elements of $W_{H_3(C)}$ to rank one elements of $W_{H_3(D)}$.  Finally, we lift rank $3$ elements of $W_J$ to rank one elements of $W_{J \oplus B}$ by using the second Tits construction.

\subsection{The Cayley-Dickson construction and rank two elements}\label{lowRank} Throughout this subsection, $F$ is the ground field, and $C$ is an associative composition $F$-algebra.  In this subsection, we use the Cayley-Dickson construction to lift rank two elements of $H_3(C)$ to rank one elements of $H_3(D)$, and rank two elements of $W_{H_3(C)}$ to rank one elements of $W_{H_3(D)}$.  Here $D = C \oplus C$ is a composition algebra obtained from $C$ by the Cayley-Dickson construction.

Before stating the results, we recall the Cayley-Dickson construction.  Suppose $C$ is an associative composition algebra over $F$, and $\gamma \in \GL_1(F) = F^\times$.  Define $D= C(\gamma)$ to be $C^2$ with addition defined component-wise and multiplication given by
\[(x_1,y_1) \cdot (x_2, y_2) = (x_1 x_2 +\gamma y_2^* y_1, y_2 x_1 + y_1 x_2^*).\]
The conjugation $*$ on $C(\gamma)$ is $(x,y)^* = (x^*,-y)$ and the norm is $n((x,y)) = n(x) - \gamma n(y)$.  With these definitions, $D$ is a composition $F$-algebra.

\subsubsection{Rank two elements of $H_3(C)$} Denote by $V_3$ the vector space $F^3$, considered as row-vectors, i.e., the defining (right) representation of $\GL_3$. For $\gamma \in F^\times$, define $U(\gamma) = H_3(C) \oplus V_3(C)$, with norm, adjoint, and pairing given as follows: If $X \in H_3(C)$ and $v \in V_3(C)$ a row vector, then
\begin{enumerate}
\item $n((X,v)) = n(X) + \gamma v Xv^*$;
\item $(X,v)^\# = (X^\# + \gamma v^* v, -vX)$;
\item $\langle (X,v) , (Y,w) \rangle = (X,Y) - \gamma (v,w)$ where $(v,w) = v w^* + w v^* = (v_1,w_1) + (v_2, w_2) + (v_3, w_3)$.\end{enumerate}
Via these definitions, $U(\gamma)$ becomes a cubic norm structure.  In fact, $U(\gamma) \simeq H_3(C(\gamma))$, where $C(\gamma)$ is the composition algebra formed from  $C$ via the Cayley-Dickson construction.

To see that $U(\gamma) \simeq H_3(C(\gamma))$, define an $F$-linear map $\phi: U(\gamma) \rightarrow H_3(C(\gamma))$ via
\[\phi((X,v)) = \left(\begin{array}{ccc} c_1 & a_3 & a_2^* \\ a_3^* & c_2 & a_1 \\ a_2 & a_1^* & c_3 \end{array}\right)\]
where $c_i = X_{ii}$ and $a_i = (a_i(X), v_i) = (X_{i+1,i+2},v_i)$.  Here the indices are taken modulo $3$.
\begin{lemma} The map $\phi$ is an isomorphism of cubic norm structures. In particular, $U(\gamma)$ is a cubic norm structure.\end{lemma}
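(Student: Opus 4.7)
The plan is to check directly that the explicit $F$-linear map $\phi$ is a bijection that identifies the putative cubic norm data on $U(\gamma)$ with the genuine cubic norm structure on $H_3(C(\gamma))$ coming from Example \ref{ex:H3C}; since $H_3(C(\gamma))$ is already known to be a cubic norm structure, transport through $\phi$ will immediately yield both assertions of the lemma.

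First, $\phi$ is manifestly $F$-linear and sends the basepoint $(1_3,0)$ of $U(\gamma)$ to the identity of $H_3(C(\gamma))$. Both source and target have $F$-dimension $3 + 6\dim_F C$, and $\phi$ is obviously injective (if $\phi((X,v)) = 0$ then each diagonal $c_i$ vanishes and each off-diagonal component $(X_{i+1,i+2}, v_i)$ vanishes in $C(\gamma) = C \oplus C$), hence bijective.

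The substantive step is checking that $\phi$ carries the prescribed norm, adjoint, and pairing of $U(\gamma)$ to those of $H_3(C(\gamma))$. Writing $x_i = X_{i+1,i+2}$ and $a_i = (x_i, v_i)$, I would expand $n_{H_3(C(\gamma))}(\phi((X,v)))$ using the Example \ref{ex:H3C} formula. The diagonal contribution $c_1 c_2 c_3$ is immediate, and $-c_i\, n_{C(\gamma)}(a_i) = -c_i n_C(x_i) + \gamma c_i n_C(v_i)$ splits off a classical part and a new $\gamma$-term. The one nontrivial piece is $\tr_{C(\gamma)}(a_1 a_2 a_3)$: using the Cayley--Dickson multiplication $(y_1,w_1)(y_2,w_2) = (y_1 y_2 + \gamma w_2^* w_1, w_2 y_1 + w_1 y_2^*)$ to expand the triple product, reading off the first coordinate, and adding its conjugate, one obtains $\tr_C(x_1 x_2 x_3) + \gamma\, \tr_C(v_2^* v_1 x_3 + v_3^* v_2 x_1 + v_3^* v_1 x_2^*)$. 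Invoking associativity and cyclicity of $\tr_C$ on $C$, the $\gamma$-term collapses to $\gamma(v X v^* - \sum_i c_i n_C(v_i))$, and assembling everything recovers $n_C(X) + \gamma v X v^* = n_{U(\gamma)}((X,v))$.

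For the adjoint and pairing, the cleanest route is a coordinate check: the diagonal entries of $\phi((X,v))^\#$ give $c_j c_k - n_C(x_i) + \gamma n_C(v_i)$, matching the diagonal entries of $\phi((X^\# + \gamma v^* v, -vX))$; the off-diagonal entries come from expanding $a_j^* a_i^* - c_k a_k$ via Cayley--Dickson and reading off the two $C$-components, which correspond respectively to the appropriate entry of $X^\# + \gamma v^* v$ and of $-vX$. The pairing identity then follows by a parallel (and simpler) direct check, or by polarizing the already-established norm identity. The main obstacle is the notational bookkeeping in the off-diagonal adjoint calculation: each $a_j^* a_i^*$ produces four Cayley--Dickson summands that must be reassembled against the corresponding entries of $X^\#$, $\gamma v^* v$, and $-vX$, and one must keep careful track of $*$-conjugation since $C$ need not be commutative. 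Once these identities are in hand, the cubic norm structure on $H_3(C(\gamma))$ pulls back through $\phi$ to precisely the norm, adjoint, and pairing prescribed on $U(\gamma)$, finishing the proof.
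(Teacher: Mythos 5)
Your proposal is correct and takes essentially the same route as the paper: the paper's proof is just the assertion that checking $\phi$ is an isomorphism of cubic norm structures is a straightforward computation (so that $U(\gamma)$ inherits the structure from $H_3(C(\gamma))$), which is exactly the verification you outline. Your norm expansion, the matching of the diagonal and off-diagonal adjoint entries, and the pairing check are all accurate.
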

\begin{proof} The proof is a straightforward computation.  We remark that it also is easy to verify directly that $U(\gamma)$ is a cubic norm structure. \end{proof}

Here is the first lower-rank lifting law.
\begin{theorem} Suppose $X \in H_3(C)$, and there exists $\gamma \in F^\times$ and $v \in V_3(C)$ so that $(X,v)$ is rank one in $U(\gamma)$.  Then, $n(X) = 0$, so that $X$ has rank at most $2$.  If $X$ has rank exactly two, then $(X,v)$ is rank one in $U(\gamma)$ if and only if $X^\# = -\gamma v^* v$, and given $X \in H_3(C)$ of rank two, the set of such $\gamma \in F^\times$ and  $v \in V_3(C)$ so that $X = -\gamma v^* v$ is non-empty.  Finally, the class of $\gamma \in F^\times/n(C^\times)$ is uniquely determined.\end{theorem}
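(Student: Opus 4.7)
The plan is to establish the four assertions of the theorem in order, using throughout that since $C$ is associative, $M_3(C)$ is an associative $F$-algebra in which the cubic norm adjoint of $X \in H_3(C)$ agrees with the classical matrix adjugate, so that $X \cdot X^\# = X^\# \cdot X = n(X)\,I_3$ holds in $M_3(C)$.

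For the first claim, I simply unpack the defining equation $(X,v)^\# = 0$ in $U(\gamma)$, which by the explicit adjoint formula yields simultaneously $X^\# + \gamma v^* v = 0$ and $vX = 0$; since rank one implies $n((X,v)) = 0$ and $n((X,v)) = n(X) + \gamma vXv^*$, the vanishing $vX = 0$ then forces $n(X) = 0$. The ``only if'' direction of the second claim is immediate from the same formula, so the substantive content is the converse: assume $X$ has rank exactly two and $X^\# = -\gamma v^*v$, and deduce $vX = 0$. Associativity of $M_3(C)$ combined with $n(X) = 0$ gives $X X^\# = X^\# X = 0$, hence $X v^*v = 0$ and $v^*v X = 0$. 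The main obstacle will be extracting $vX = 0$ from these bilinear relations when $v$ has only zero-divisor entries, which can genuinely occur in the split cases. I plan to sidestep this by passing to an algebraic closure $\bar F$ and using that $\GL_3(\bar C) \times \GL_1(\bar F)$ acts transitively on the rank-two locus of $H_3(\bar C)$ via $(g,\lambda)\colon X \mapsto \lambda\, g X g^*$. Reducing by equivariance to the model $X_0 = \diag(1,1,0)$, the equation $X_0^\# = \diag(0,0,1) = -\gamma v^*v$ pins down $n(v_3) = -\gamma^{-1} \in F^\times$, and the off-diagonal equations $v_1^* v_3 = v_2^* v_3 = 0$ combined with the invertibility of $v_3$ force $v_1 = v_2 = 0$; then $v = (0,0,v_3)$ and $vX_0 = 0$ by inspection. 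Since $vX = 0$ is a Zariski-closed condition that holds generically on the rank-two locus, it holds identically.

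For the third claim I will show directly that every nonzero rank-one $Y \in H_3(C)$ is of the form $-\gamma v^*v$ for some $\gamma \in F^\times$ and $v \in V_3(C)$, and then apply this to $Y = X^\#$. When some diagonal entry $Y_{ii}$ is nonzero in $F$, say $Y_{11} = c_1 \neq 0$, I set $\gamma = -c_1$, $v_1 = 1$, and read off $v_2 = Y_{12}/c_1$ and $v_3 = Y_{13}/c_1$ from the top row; the remaining required equalities (such as $c_1 Y_{22} = n(Y_{12})$ and $c_1 Y_{23} = Y_{12}^* Y_{13}$) are exactly the vanishing conditions on the entries of $Y^\#$. The subcase where every diagonal entry of $Y$ vanishes forces $C$ to be split, and a short case analysis using the idempotents of $C$ produces $v$ directly. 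Finally, for the uniqueness of the class of $\gamma$, I compare two representations $X^\# = -\gamma v^*v = -\gamma' w^*w$ entry-by-entry to obtain $v_i^* v_j = (\gamma'/\gamma)\, w_i^* w_j$ for all $i,j$. Picking an index $i$ with $n(v_i) \neq 0$ and setting $c = w_i v_i^{-1}$ (which lies in $C^\times$ since $n(w_i) = (\gamma/\gamma')\, n(v_i) \neq 0$), this relation rearranges to $w_j = cv_j$ for every $j$; a brief direct check handles the split subcase in which no such index exists. Then $w^*w = c^*c \cdot v^*v = n(c)\, v^*v$, so $\gamma = \gamma' n(c)$, and the class of $\gamma$ in $F^\times/n(C^\times)$ is determined by $X$ alone.
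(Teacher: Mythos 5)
Your proposal is correct in substance but follows a genuinely different route from the paper's, and two steps deserve tightening. The paper avoids any base change in the converse direction: it first shows (Lemma \ref{wPrim}) that any $v$ with $\gamma v^*v = Y \neq 0$ is automatically primitive, and then obtains $vX=0$ in one line from $0 = n(X) = X^\# X = -\gamma\, v^*(vX)$ --- note that this is exactly the relation you derive and then set aside as the ``main obstacle''; primitivity of $v$ resolves it at once, with no orbit reduction needed. Your reduction over $\bar{F}$ does work, because the group acts transitively on the rank-exactly-two stratum of $H_3(\bar{C})$ and the implication ``$X^\# = -\gamma v^*v \Rightarrow vX=0$'' is equivariant (from $(\lambda g X g^*)^\# = \lambda^2\nu\,(g^*)^{-1}X^\# g^{-1}$ one finds $v \mapsto vg^{-1}$, $\gamma \mapsto \lambda^2\nu\gamma$, and $vX=0$ transported); but that transitivity is an unproved input of comparable weight to the paper's lemmas, and your closing sentence that ``$vX=0$ is a Zariski-closed condition holding generically on the rank-two locus'' is not right as stated, since the condition involves the auxiliary data $(v,\gamma)$ rather than $X$ alone --- it is also unnecessary once transitivity is invoked, so simply delete it and record the equivariance explicitly. (Also, for noncommutative $C$ the justification for $XX^\#=X^\#X=n(X)1_3$ is that $H_3(C)$ is a special cubic norm structure, not that $\#$ is the literal adjugate.) For existence and uniqueness of $\gamma$ the paper just cites Lemma \ref{lem:rk1Field} (every nonzero rank-one element of $H_3(C)$ is $\mu v_0v_0^*$ with $v_0$ primitive and $\mu$ unique modulo $n(C^\times)$), whose proof creates a nonzero diagonal entry by unipotent conjugation and thereby absorbs the degenerate cases you defer. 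In your version the uniqueness fallback (all $n(v_i)=0$) really is brief --- it forces $C$ split, whence $n(C^\times)=F^\times$ and there is nothing to prove --- but the existence fallback (rank-one $Y=X^\#$ with all diagonal entries zero) still needs an actual argument; conjugating by a unipotent to make some diagonal entry nonzero, as in the paper, is the cleanest way to supply it and renders the idempotent case analysis unnecessary.
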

\begin{proof} Suppose one has $\gamma \in F^\times$ and $v \in V_3(C)$ so that $(X,v)$ is rank one in $U(\gamma)$.  Then $X^\# = -\gamma v^* v$, and $Xv = 0$.  Since $X^\#$ is then rank one, we conclude $n(X) = 0$.  

Now suppose $X$ has rank exactly two.  Then $X^\#$ has rank one, and thus we know from Lemma \ref{lem:rk1Field} that there exists a $v \in V_3(C)$ and a gamma in $F^\times$ so that $-X^\# =\gamma v^*v$, and that furthermore this $\gamma$ is uniquely determined in $F^\times/n(C^\times)$.  

We shall require the following lemma.
\begin{lemma}\label{wPrim} Suppose $Y \in H_3(C)$, $Y \neq 0$, and $Y = \gamma w^* w$ for some $\gamma \in F^\times$ and $w \in V_3(C)$.  Then $w$ is primitive. \end{lemma}
\begin{proof} By $\GL_3(C)$-equivariance, we may assume 
\[Y = \left(\begin{array}{ccc} 1 &0 &0\\ 0& 0& 0\\ 0&0&0\end{array}\right).\]
Write $w = (w_1, w_2, w_3)$.  Then from the equation $Y = \gamma w^* w$, we deduce $n(w_1) \in F^\times$, and hence $w$ is primitive. \end{proof}

Now suppose $X^\# = -\gamma v^* v$.  By the lemma, $v$ is primitive.  But we get
\[0 = n(X) = X^\# X = -\gamma v^* (vX).\]
Since $v$ is primitive, $v^* (vX) = 0$ implies $vX = 0$.  Hence $vX = 0$, and so $(X,v)^\# = 0$.  This completes the proof. \end{proof}

\subsubsection{Rank two elements of $W_{H_3(C)}$} In this subsection we consider the lifting of rank two elements of $W_{H_3(C)}$ to rank one elements of $W_{H_3(C(\gamma))} = W_{U(\gamma)}$, with notation as in the previous subsubsection.  Before giving the lifting law, we make a definition.  If $x=(a,b,c,d) \in W_{H_3(C)}$, then we define
\[S(x) = \left(\begin{array}{cc} b^\#-ac & ad-cb - \tr(ad-cb)/2 \\ ad -bc -\tr(ad-bc)/2 & c^\#-bd\end{array}\right).\]
Here the products $bc$ and $cb$ are taken in $M_3(C) \supseteq H_3(C)$.  Thus, $S(x)$ is an element of $M_6(C)$.  This is an identical definition to that given in (\ref{eqn:Sdef}), but we restate it since $H_3(C)$ is not an associative algebra.

Denote by $W_6 = V_3 \oplus V_3$ the defining right representation of $\GSp_6$ on row vectors, and $W_6(C) = W_6 \otimes C$.  Put on $W_6(C)$ the $C$-valued symplectic-Hermitian form $\langle u, u'\rangle_C = u J_6 (u')^*$, where $J_6 = \mm{}{1_3}{-1_3}{}$.  Thus if $v,w,v',w' \in V_3(C)$, $u = (v,w)$ and $u'=(v',w')$, then $\langle u, u'\rangle_C = v(w')^*-w(v')^* \in C$.

For $\gamma \in F^\times$, we make an identification $W_{H_3(C)} \oplus W_6(C) \simeq W_{U(\gamma)}$ via
\[(a,b,c,d) + (v,w) \mapsto (a,(b,-v),(c,w),d).\]
Here $v, w \in V_3(C)$.  We have the following lifting law.
\begin{theorem}\label{thm:rk2WLift} Suppose $x = (a,b,c,d) \in W_{H_3(C)}$.  If there exists $\gamma \in F^\times$ and $u = (v,w) \in W_6(C)$ so that $x + u$ is rank one in $W_{U(\gamma)}$, then $x^\flat = 0$, so that $x$ has rank at most $2$.  For $\gamma \in F^\times$, define the set $\mathrm{Lift}(x,\gamma)$ as
\[\mathrm{Lift}(x,\gamma) := \{ u \in W_6(C): \langle u, u \rangle_C = 0, - S(x) = \gamma u^* u\}.\]
For $x \in W_{H_3(C)}$ of rank exactly two, one has $x + u$ is rank one in $W_{U(\gamma)}$ if and only if $u \in \mathrm{Lift}(x,\gamma)$.  Furthermore, given $x$ of rank two, there exists $\gamma \in F^\times$ so that $\mathrm{Lift}(x,\gamma)$ is non-empty, and the class of such $\gamma$ is uniquely determined in $F^\times/n(C^\times)$. \end{theorem}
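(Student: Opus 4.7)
The plan is to unfold the rank-one condition on $\tilde{x} = x + u = (a, (b,-v), (c,w), d) \in W_{U(\gamma)}$ into an explicit list of equations in $b, c, v, w$ by expanding $\tilde{x}^\# = 0$ and $(B', C') = 3ad$ (with $B' = (b,-v)$, $C' = (c,w)$) via the Cayley--Dickson formulas for the adjoint and pairing on $U(\gamma)$. When $a$ or $d$ is nonzero, this will give the five equations $b^\# + \gamma v^*v = ac$, $vb = aw$, $c^\# + \gamma w^*w = bd$, $wc = dv$, and $(b,c) + \gamma(vw^* + wv^*) = 3ad$. A key preliminary observation I will exploit is that the two ``off-diagonal'' identities $vb = aw$ and $wc = dv$, combined with the Hermiticity of $b$ and $c$, already force $\langle u, u\rangle_C = vw^* - wv^* = 0$: conjugating $vb = aw$ gives $bv^* = aw^*$, and then $avw^* = vbv^* = awv^*$ yields $vw^* = wv^*$ as long as $a \neq 0$ (the case $d \neq 0$ is symmetric).

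For the forward direction, I will start from rank-one $\tilde{x}$, deduce $\langle u, u\rangle_C = 0$ as just described, and read off the diagonal of $-S(x) = \gamma u^*u$ from the first and third of the equations above. The off-diagonal of $-S(x) = \gamma u^*u$ I will obtain by left-multiplying $vb = aw$ by $v^*$ and using the Jordan identity $b^\# b = n(b)$, which holds because $C$ associative makes $H_3(C)$ a special cubic norm structure. The resulting expression $\gamma v^* w = cb - n(b)/a$ matches $-S(x)_{12}$ exactly under the identity $2n(b) = a((b,c) - ad)$, i.e.\ the $a$-component $a^\flat = 0$ of $x^\flat = 0$. To secure $x^\flat = 0$ itself, I will compute each component of $\tilde{x}^\flat \in W_{U(\gamma)}$ using the cross product formula $(X_1,v_1) \times (X_2,v_2) = (X_1 \times X_2 + \gamma(v_1^*v_2 + v_2^*v_1), -v_1 X_2 - v_2 X_1)$ derived by polarizing $\#$, project onto $W_{H_3(C)}$, and check that the correction to $(x^\flat)_i$ vanishes once the four off-diagonal equations and $\langle u, u\rangle_C = 0$ are substituted.

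For the backward direction, assume $x$ has rank exactly two and $u \in \mathrm{Lift}(x,\gamma)$. The diagonal of $-S(x) = \gamma u^*u$ immediately gives $b^\# + \gamma v^*v = ac$ and $c^\# + \gamma w^*w = bd$; the pairing identity follows by taking the $3 \times 3$ matrix trace of the off-diagonal blocks, using $\tr(ad-cb) + \tr(ad-bc) = 6ad - 2(b,c)$. The main obstacle will be recovering $vb = aw$ and $wc = dv$. The plan is to reverse the forward-direction calculation: $\gamma v^*(vb - aw)$ rewrites as $(ac - b^\#)b - a\gamma v^* w$, and plugging in the off-diagonal of $-S(x) = \gamma u^*u$ together with $b^\# b = n(b)$ and the identity $2n(b) = a((b,c)-ad)$ from $a^\flat = 0$ shows it vanishes. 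To conclude $vb = aw$ from $v^*(vb - aw) = 0$ in $M_3(C)$ I need $v$ to be primitive in $V_3(C)$, which by Lemma \ref{wPrim} follows whenever $\gamma v^*v = ac - b^\# \neq 0$. The degenerate subcase where $ac - b^\#$ or $bd - c^\#$ is zero I intend to dispatch by an initial $H(W_{H_3(C)})$-equivariance reduction, since the whole statement and the definition of $\mathrm{Lift}$ are covariant under this action.

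Finally, for existence of $\gamma$ and uniqueness modulo $n(C^\times)$, I will use the same equivariance to reduce to the normal form $x_0 = (1, 0, c, 0)$. The rank-two condition $x_0^\flat = 0$ forces $d = 0$, $c^\# = 0$, and $n(c) = 0$, so $c \in H_3(C)$ has rank exactly one. By Lemma \ref{lem:rk1Field}, $c = \gamma \xi^* \xi$ for some $\gamma \in F^\times$ and primitive $\xi \in V_3(C)$, with $\gamma$ unique modulo $n(C^\times)$. Setting $u_0 = (\xi, 0)$, each of the five rank-one identities for $x_0 + u_0$ reduces directly to $c = \gamma \xi^* \xi$ and $c^\# = 0$, so $u_0 \in \mathrm{Lift}(x_0, \gamma)$ and $x_0 + u_0$ is rank one in $W_{U(\gamma)}$. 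Equivariance transports this back to the original $x$, and uniqueness of $\gamma$ in $F^\times/n(C^\times)$ is inherited from that in Lemma \ref{lem:rk1Field}. The hardest step throughout will be the backward direction in the degenerate primitivity subcase, which will hinge on arranging the $H(W_{H_3(C)})$-equivariance reduction so that it simultaneously covers every rank-two orbit.
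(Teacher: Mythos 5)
Your generic-case computations (the five equations when $a$ or $d$ is nonzero, the derivation of $\langle u,u\rangle_C=0$ from $vb=aw$, the matching of the off-diagonal of $-S(x)=\gamma u^*u$ via $b^\#b=n(b)$ and $a^\flat=0$, and the normal-form construction $u_0=(\xi,0)$ with $c=\gamma\xi^*\xi$ via Lemma \ref{lem:rk1Field} and Lemma \ref{wPrim}) are consistent with what is true, but the proposal has a genuine gap: every place where you say ``dispatch by an $H(W_{H_3(C)})$-equivariance reduction'' --- the case $a=d=0$ in the forward direction, the degenerate subcases $ac-b^\#=0$ or $bd-c^\#=0$ in the backward direction (which really occur, e.g.\ for split $C$ one can have $v^*v=0$ with $v\neq 0$), and the transport of the normal form $x_0=(1,0,c,0)$ back to $x$ for existence and uniqueness of $\gamma$ --- you are invoking a covariance statement that is not even well-posed as written. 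The group $H(W_{H_3(C)})$ does not act on the $W_6(C)$-component of $W_{U(\gamma)}=W_{H_3(C)}\oplus W_6(C)$; to make ``the statement and the definition of $\mathrm{Lift}$ are covariant'' meaningful you must produce pairs $(h,g)\in H(W_{H_3(C)})\times \GU_6(C)$ whose joint action lands in $H(W_{U(\gamma)})$ (so that rank one is preserved), with the projection to $H(W_{H_3(C)})$ surjective, and you must prove the transformation law $S(xh)=g^*S(x)g$ together with the similitude behavior of $\langle u,u\rangle_C$, so that $\mathrm{Lift}(x,\gamma)$ is carried to $\mathrm{Lift}(xh,\gamma')$. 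This is exactly Proposition \ref{prop:GCequiv} of the paper, which is the central technical input of its proof: it is established by checking generators (with the nontrivial identities (\ref{eqn:n(X)21}) and (\ref{eqn:n(X)22}) for the unipotent elements) and by appeal to an appendix of another paper, and it cannot be waved through as routine prehomogeneity/equivariance.

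It is also worth noting that once you do have this equivariance, most of your coordinate work becomes unnecessary: the paper simply moves $x$ to the form $(1,0,c,d)$, where rank one of $x+u=(1,(0,-v),(c,w),d)$ immediately reads $c=\gamma v^*v$, $w=0$, $d=0$, and the whole theorem (including $x^\flat=0$, the identification with $\mathrm{Lift}(x,\gamma)$, existence of $\gamma$, and its uniqueness in $F^\times/n(C^\times)$ via $\gamma_1 n(u_1u^*)=\gamma_2 n(u_2u^*)$) follows in a few lines. So either supply a proof of the Proposition \ref{prop:GCequiv}-type compatibility (in which case your long generic-case computations can be discarded), or else eliminate equivariance entirely and treat the cases $a=d=0$ and the degenerate primitivity subcases by direct arguments --- as the proposal stands, those cases are not covered.
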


To prove this result, we will use the following proposition.  Define $\GU_6(C) =\{g \in \GL_6(C): gJ_6 g^* = \nu(g)J_6\}$, i.e., the group preserving the symplectic-Hermitian form $\langle \cdot,\cdot \rangle_C$ up to similitude.  Consider the group $H(W_{H_3(C)}) \times \GU_6(C)$.  This group acts on $W_{U(\gamma)} = W_{H_3(C)} \oplus W_6(C)$ by letting $H(W_{H_3(C)})$ act on $W_{H_3(C)}$ and $\GU_6(C)$ on $W_6(C)$; we use right actions for both of these groups.  Define the group $G(\gamma,C)$ to be the subgroup of $(h,g) \in H(W_{H_3(C)}) \times \GU_6(C)$ with $\nu(h) = \nu(g)$ and such that $(h,g)$ preserves the symplectic and quartic form on $W_{U(\gamma)}$ up to this common similitude.  Thus, $G(\gamma,C)$ maps to $H(W_{U(\gamma)})$ by definition.
\begin{proposition}\label{prop:GCequiv} Denote by $\overline{F}$ an algebraic closure of $F$.  The projections $G(\gamma,C) \rightarrow H(W_{H_3(C)})$ and $G(\gamma,C) \rightarrow \GU_6(C)$ are surjections on $\overline{F}$-points, whose kernels are $1 \times \mu_2 \subseteq H(W_{H_3(C)}) \times \GU_6(C)$ and $\mu_2\times 1 \subseteq H(W_{H_3(C)}) \times \GU_6(C)$, respectively.  Furthermore, if $(h,g) \in G(\gamma,C)$, then $S(x h) = g^*S(x)g$ for all $x \in W_{H_3(C)}$.\end{proposition}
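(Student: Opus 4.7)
The plan is to prove everything from a single polynomial identity expressing $q_{U(\gamma)}(x+u)$ as a polynomial in $\gamma$ with coefficients built from the structures on $W_{H_3(C)}$ and on $W_6(C)$. Writing elements of $W_{U(\gamma)}$ under the identification $W_{H_3(C)}\oplus W_6(C)\simeq W_{U(\gamma)}$ as $(a,(b,-v_0),(c,w_0),d)$ and expanding the Freudenthal quartic form using the second Tits construction formulas for $U(\gamma)=H_3(C)\oplus V_3(C)$, the constant, linear, and quadratic terms in $\gamma$ separate cleanly. After collecting terms one obtains
\begin{equation*}
q_{U(\gamma)}(x+u)=q_{H_3(C)}(x)-4\gamma\cdot u\bigl(J_2\, S(x)\, J_2^{*}\bigr)u^{*}+\gamma^{2}\,\langle u,u\rangle_{C}^{2},
\end{equation*}
where $J_2=\mm{0}{1_3}{-1_3}{0}\in M_6(C)$. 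The cross-term coefficient is exactly the Hermitian form on $W_6(C)$ built from $J_2 S(x) J_2^{*}\in H_6(C)$, and the $\gamma^{2}$ term comes from the identity $(x+x^{*})^{2}-4n(x)=(x-x^{*})^{2}$ applied to $x=v_0 w_0^{*}$. The similitude-preservation of the symplectic form on $W_{U(\gamma)}$ reduces to the direct-sum decomposition since the cross-pairing between $W_{H_3(C)}$ and $W_6(C)$ vanishes.

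The equivariance $S(xh)=g^{*}S(x)g$ then drops out. For $(h,g)\in G(\gamma,C)$ with common similitude $\nu$, applying the identity to $(x+u)\cdot(h,g)=xh+ug$ and comparing the coefficients of $\gamma$ gives the identity of Hermitian forms
\begin{equation*}
(ug)\bigl(J_2\, S(xh)\, J_2^{*}\bigr)(ug)^{*}=\nu^{2}\, u\bigl(J_2\, S(x)\, J_2^{*}\bigr)u^{*}\qquad\text{for all } u\in W_6(C),
\end{equation*}
equivalent to the matrix equation $g\,(J_2 S(xh) J_2^{*})\, g^{*}=\nu^{2}J_2 S(x) J_2^{*}$. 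Conjugating by $J_2$ and using $g J_2 g^{*}=\nu J_2$ together with $J_2^{*}=-J_2$ (so $J_2 g^{*}=\nu g^{-1}J_2$), a short manipulation extracts $S(xh)=g^{*}S(x)g$. The $\gamma^{0}$ and $\gamma^{2}$ coefficients are automatic: the first from $h\in H(W_{H_3(C)})$, the third from $g\in\GU_6(C)$.

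The kernel computations also fall out. If $h=1$, the $\gamma^{1}$ coefficient forces $g(J_2 S(x) J_2^{*})g^{*}=J_2 S(x) J_2^{*}$ for every $x$; one then checks that the image $\{J_2 S(x) J_2^{*}:x\in W_{H_3(C)}\}$ contains elements of $H_6(C)$ that are invertible and generic enough over $\overline{F}$ (for instance those $S(x)$ with $x=(0,b,c,0)$, $b,c$ diagonal in $H_3(C)$, computed to be block-diagonal with nonzero block determinants), whose simultaneous centralizer in $\GU_6(C)$ under $h\mapsto g h g^{*}$ is $\mu_2$. The same argument, interchanging roles, handles the kernel $\mu_2\times 1$ of the other projection.

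Finally, for surjectivity on $\overline{F}$-points I would identify $G(\gamma,C)$ as the stabilizer inside $H(W_{U(\gamma)})$ of the symplectically orthogonal decomposition $W_{U(\gamma)}=W_{H_3(C)}\oplus W_6(C)$. Indeed, an element of $H(W_{U(\gamma)})$ preserving both summands is exactly a pair $(h,g)\in H(W_{H_3(C)})\times\GU_6(C)$ with $\nu(h)=\nu(g)$ and preserving the cross-term Hermitian form $u(J_2 S(x) J_2^{*})u^{*}$; by the key identity this coincides with the defining condition for $G(\gamma,C)$. This realizes $G(\gamma,C)$ as a Levi-type subgroup of a parabolic of $H(W_{U(\gamma)})$, so the projections onto the two block factors $H(W_{H_3(C)})$ and $\GU_6(C)$ are surjective on $\overline{F}$-points (their kernels being the already identified $\mu_2$ factors). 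The hardest point will be verifying cleanly that the generic-image claim about $S$ is strong enough to cut the scalar centralizer down to $\mu_2$ uniformly in the three cases $C=F$, $C$ quadratic étale, and $C$ quaternion; this is where the associativity of $C$ (and hence the special identity $U_xy=xyx$ on $H_3(C)$) intervenes to rule out extraneous scalar symmetries.
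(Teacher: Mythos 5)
Your key identity is correct, and it gives a genuinely different proof of the equivariance statement than the paper's. Expanding Freudenthal's quartic form on $W_{U(\gamma)}$ via $n((X,v))=n(X)+\gamma vXv^*$ and $(X,v)^\#=(X^\#+\gamma v^*v,-vX)$, one finds only terms of bidegree $(4,0)$, $(2,2)$, $(0,4)$ in $(x,u)$, the middle one being exactly $-4\gamma\, u\,(J_6 S(x)J_6^*)\,u^*$ and the last $\gamma^2\langle u,u\rangle_C^2$ (using $(v^*v,w^*w)=n_C(vw^*)$, which needs associativity of $C$). One caveat: you cannot literally ``compare coefficients of $\gamma$,'' since $(h,g)$ lies in $G(\gamma,C)$ for a single fixed $\gamma$; but, as you note, the $(4,0)$ and $(0,4)$ terms are separately controlled by $h\in H(W_{H_3(C)})$ and $g\in\GU_6(C)$ with common similitude, so the cross terms must match, and polarizing the resulting identity of Hermitian forms and using $gJ_6g^*=\nu J_6$, $J_6^*=-J_6$ does yield $S(xh)=g^*S(x)g$. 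The paper instead verifies the equivariance on an explicit list of generators (the elements $n_\Theta(X)$, $J_\Theta$, $m_\Theta(\lambda)$, $z_\Theta(\lambda)$, etc.\ of the cited appendix), the only nontrivial check being $n_\Theta(X)$, which reduces to two identities in special cubic norm structures, and it needs the surjectivity and kernel statements to pass from generators to general $(h,g)$. Your identity-based argument needs neither the generator list nor those inputs, which is a real gain for that part of the proposition.

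The surjectivity and kernel parts, however, have genuine gaps. First, $G(\gamma,C)$ is not the full stabilizer in $H(W_{U(\gamma)})$ of the decomposition $W_{H_3(C)}\oplus W_6(C)$: an element preserving both summands need not act $C$-linearly on $W_6(C)$, and over $\overline F$ in the quadratic \'etale case (where $H(W_{U(\gamma)})$ is of type $D_6$ and the decomposition is $\wedge^3V_6\oplus V_6\otimes V_2$) the stabilizer contains an extra $\SL_2$ mixing the two $C\otimes\overline F$-eigenspaces of $W_6(C)$, which is not of the form $(h,g)$; moreover the relevant subgroups are subsystem subgroups, not Levi factors of parabolics, so the ``Levi of a parabolic'' deduction of surjectivity does not work -- one really has to exhibit lifts of generators of $H(W_{H_3(C)})$, which is what the cited appendix does. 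Second, the kernel-via-centralizer computation cannot produce $\mu_2$ uniformly: when $C$ is commutative of rank two, any central norm-one scalar $\lambda 1_6$ (i.e.\ $\lambda\in C$, $\lambda\lambda^*=1$) lies in $\GU_6(C)$ with $\nu=1$, satisfies $gMg^*=M$ for every Hermitian $M$, and by your own identity preserves the symplectic and quartic forms of $W_{U(\gamma)}$; so the simultaneous centralizer of the matrices $J_6S(x)J_6^*$ contains a one-dimensional torus over $\overline F$, and no choice of ``generic'' $S(x)$ can cut it down to $\pm1$. This is exactly the point you flag as the hardest one, and it is not a verification detail but an obstruction to the argument as proposed (the commutative quadratic case needs separate treatment, and care even at the level of the kernel statement); note that it is harmless for your proof of $S(xh)=g^*S(x)g$, since such $\lambda$ act trivially in $g\mapsto g^*S(x)g$.
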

\begin{proof} This proposition was essentially proved in \cite[Appendix A]{pollack}.  More specifically, when $C$ is a quaternion algebra, all but the last statement was proved in \emph{loc. cit.}  For a general associative composition algebra, the surjectivity and kernel statements are proved identically.  We take this opportunity, however, to correct a typo in \cite{pollack}: In equation (A.4) of \emph{loc. cit.}, the term $\tr(b \times B, c \times C)$ should be multiplied by $2$.  I.e., (A.4) should read
\[\tr(B,C)(ad-(b,c))+2\tr(b\times B,c \times C) + 2\tr(B^\#,c^\#-bd) + 2\tr(C^\#,b^\#-ac).\]
(This has no affect on the arguments of \cite[Appendix A]{pollack}.)

The claim $S(xh) = g^*S(x)g$ is essentially the content of Proposition \ref{Requiv} or Lemma \ref{lem:Requiv}, but we state it again because we are now working in the context of the special cubic norm structure $H_3(C)$ instead of an associative cubic norm structure $A$, as was the case in Proposition \ref{Requiv} and Lemma \ref{lem:Requiv}. For the claim that $S(x h) = g^*S(x)g$, it suffices to prove this over the algebraic closure.  To do so, one first checks this claim directly for the elements called $n_{\Theta}(X), J_{\Theta}, m_{\Theta}(\lambda), z_{\Theta}(\lambda)$ and $(M(m,\delta),\mm{m}{}{}{\,^*m^{-1}})$ on page 1428 of \emph{loc. cit.}.  Assuming this for the moment, then for a general $(h,g) \in G(\gamma,C)$, the equivariance $S(x h) = g^*S(x)g$ follows from the same fact for these specific elements, plus the statement about $\mu_2$-kernels.

Checking that $S(xh) = g^*S(x)g$ for the elements $J_{\Theta}, m_{\Theta}(\lambda), z_{\Theta}(\lambda)$ and $(M(m,\delta),\mm{m}{}{}{\,^*m^{-1}})$ is immediate from their definitions.  The only nontrivial check that must be performed is for the element $n_{\Theta}(X)$.  To do so, one must verify the following two identities.  Define $(a',b',c',d')$ via
\[(a',b',c',d') = (a,b+aX,c+b\times X + a X^\#,d+(c,X)+(b,X^\#)+an(X)).\]
Then we must check
\begin{equation}\label{eqn:n(X)21} a'd'-b'c' -\tr(a'd'-b'c')/2 = ad-bc -\tr(ad-bc)/2 + X(b^\#-ac)\end{equation}
and 
\begin{equation}\label{eqn:n(X)22} (c')^\#-d'b' = c^\#-db + X(ad-cb -\tr(ad-cb)/2) + (ad-bc-\tr(ad-bc)/2)X + X(b^\#-ac)X.\end{equation}
The proof of (\ref{eqn:n(X)21}) reduces to the identity $(b,X^\#) = bX^\# + X(X \times b)$, valid in special cubic norm structures.  The proof of (\ref{eqn:n(X)22}) follows from $c \times (b \times X) = (c,X)b + (c,b)X - (bcX+Xcb)$, again valid in special cubic norm structures, and other similar identities. 
\end{proof} 

\begin{proof}[Proof of Theorem \ref{thm:rk2WLift}]  By proposition \ref{prop:GCequiv}, we may assume $x = (1,0,c,d)$.  With $x$ in this form, we find $x + u = (1, (0,-v), (c,w),d)$, and this is rank one if and only if
\[ (c,w) = (0,-v)^\# = (\gamma v^*v, 0)\]
and $d = n((0,-v)) = 0$.  Hence $c^\# = 0$, and $d = 0$, and thus
\begin{equation}\label{xflat}x^\flat = (-d,2c^\#,dc,d^2 + 2n(c)) = 0.\end{equation}
Thus if $x$ has a rank one lift for some $\gamma$, then $x$ has rank at most $2$.

Now suppose $x$ has rank exactly two.  Then by (\ref{xflat}), we get $d = 0$, $c^\# = 0$, and thus $S(x) = \mm{-c}{-d/2}{-d/2}{c^\#} = \mm{-c}{0}{0}{0}$.  Furthermore, for $u = (v,w)$, we see that $x + u$ is rank one in $W_{U(\gamma)}$ if and only if $c = \gamma v^* v$ and $w = 0$.  In particular $u \in \mathrm{Lift}(x,\gamma)$.  Since $c^\# = 0$, there exists $\gamma \in F^\times$ and $v \in V_3(C)$ so that $c = \gamma v^*v$, and hence for this $\gamma$, $\mathrm{Lift}(x,\gamma)$ is non-empty.  Conversely, suppose $u =(v,w) \in \mathrm{Lift}(x,\gamma)$.  Then since $S(x) = -\gamma u^*u$, $c = \gamma v^* v$ and $v^* w = 0$.  Since $x$ has rank exactly two, $c \neq 0$.  Thus by Lemma \ref{wPrim}, $v$ is primitive and hence since $v^* w = 0$, $w = 0$.  Hence $u \in \mathrm{Lift}(x,\gamma)$ implies $x + u$ is rank one in $W_{U(\gamma)}$.

Finally, we must check that for $x$ of rank two, $\gamma$ is uniquely determined in $F^\times/n(C^\times)$.  Thus suppose $x+u_1$ is rank one in $W_{U(\gamma_1)}$, and $x+ u_2$ is rank one in $W_{U(\gamma_2)}$.  Then $\gamma_1 u_1^* u_1 = -S(x) = \gamma_2 u_2^* u_2$.  By multiplying these quantities by $u$ on the left and $u^*$ on the right, one obtains $\gamma_1 n(u_1 u^*) = \gamma_2 n(u_2 u^*)$ for all $u \in W_6(C)$.  Since $u$ can be chosen so that $n(u_1 u^*) \neq 0$ (as follows from equivariance and the calculation of the previous paragraph), we deduce that $\gamma_1$ and $\gamma_2$ represent the same class in $F^\times/n(C^\times)$.  This completes the proof.\end{proof}

\subsection{Rank three elements of $W_{J}$ and the Tits construction} In this subsection we assume $J, B$ and $K$ are as in section \ref{nccubes}.  That is, suppose $K$ is a quadratic \'{e}tale extension of $F$, $B$ is a cubic associative algebra over $K$ with an involution of the second kind $*$ compatible with the cubic norm, and $J = B^{*=1}$.  See the very beginning of section \ref{nccubes} for details.  Given $h \in J$ of rank three, a special case of the second construction of Tits can be used to make $U(h) = J \oplus B$ into a cubic norm structure.  In this subsection, we discuss how this construction can be used to lift rank $3$ elements of $W_J$ to rank one elements of $W_{U(h)}$.

First, let us recall this construction of Tits, in a somewhat special case.  See subsection \ref{subsec:secondTits} for the general construction. One defines a norm, adjoint, and pairing on $U(h) = J \oplus B$ as follows: For $X,Y \in J$ and $\alpha, \beta \in B$,
\begin{itemize}
\item $n((X,\alpha)) = n(X) - (X,\alpha h^\#\alpha^*) + n(h)\tr_{K/F}(n(\alpha))$
\item $((X,\alpha))^\# = (X^\# - \alpha h^\# \alpha^*, -X\alpha + (\alpha^*)^\# h)$
\item $((X,\alpha),(Y,\beta)) = (X,Y) + \tr_{J}(\alpha h^\# \beta^* + \beta h^\# \alpha^*).$
\end{itemize}
It is a fact that these formulas make $U(h)$ into a cubic norm structure\footnote{One can even consider this construction itself as a lifting law.  We leave the formulation of such a lifting law to the interested reader.}.  In the notation of subsection \ref{subsec:secondTits}, this is the special case where $(S,\lambda) = (h^\#,n(h))$.

Now, recall the group $G \subseteq \GU_2(B)$ from subsection \ref{subsec:GBgrp}.  Namely, $G$ is the subgroup of $\GL_2(B)$ for which $g^* \mm{}{1}{-1}{}g = \nu(g) \mm{}{1}{-1}{}$ and $\det_6(g) = \nu(g)^3$.  The group $G$ acts on the left of $W_J$ by the $\nu^{-1}$-twist of the restriction of the cubic polynomial action of $\GL_2(B)$ on $W_B$; see subsection \ref{subsec:GBgrp}.  Denote by $W_2(B) = B^2$ the $2 \times 1$ column vectors with coefficients in $B$.  Then we identify $W_J \oplus W_2(B)$ with $W_{U(h)}$ via $x  + \eta \mapsto (a,(b,-u),(c,v),d)$, where $x = (a,b,c,d)$ and $\eta = \left(\begin{array}{c} u \\ v \end{array}\right)$. Via this identification, $G$ acts on the left of $W_{U(h)}$, and in fact one obtains a map $G \rightarrow H(W_{U(h)})$.  That is, $G$ preserves the symplectic and quartic form on $W_{U(h)}$, up to similitude.  This fact was stated in Proposition \ref{prop:Gaction}.

Recall that we have a $B$-valued symplectic-Hermitian form on $W_2(B)$ defined by $\langle \eta, \eta'\rangle_B = \eta^* J_2 \eta'$, where $J_2 = \mm{}{1}{-1}{}$.  A vector $\eta \in W_2(B)$ is said to be \emph{isotropic} if $\langle \eta, \eta \rangle_B = 0$. Here is the lifting law.
\begin{theorem} Suppose $x = (a,b,c,d) \in W_J$.  If there exists a rank $3$ $h$ in $J$ and an isotropic $\eta = \left(\begin{array}{c} u \\ v \end{array}\right)$ in $W_2(B)$ with $x + \eta$ rank one in $W_{U(h)}$, then $q(x) = 0$, so that $x$ has rank at most $3$.  Given $x \in W_J$, and $h \in J$ of rank $3$, define
\[\mathrm{Lift}(x,h) =\{\eta \in W_2(B): \langle \eta, \eta \rangle_{B} = 0, S(x) =  \eta h^\# \eta^*, \frac{x^\flat}{2} = n(h)\eta^{!}\}.\]
Suppose $x$ has rank $3$.  Then if $\eta \in W_2(B)$ is isotropic, $x + \eta$ is rank one in $W_{U(h)}$ if and only if $\eta \in \mathrm{Lift}(x,h)$.  Furthermore, for $x$ of rank $3$, there exists $h \in J$ with $n(h) \neq 0$ so that $\mathrm{Lift}(x,h)$ is non-empty. \end{theorem}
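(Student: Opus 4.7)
The plan is to adapt the argument for Theorem~\ref{thm:LLWJ2} to the degenerate setting $\omega=0$. Observe first that the two equations defining $\mathrm{Lift}(x,h)$ are precisely the $\omega=0$ specializations of the identities $\lambda\eta^! = (-\omega v + v^\flat)/2$ and $\eta S\eta^* = S(v) - (\omega/2)J_2$ from Theorem~\ref{thm:LLWJ2}, applied with the Tits data $(S,\lambda) = (h^\#, n(h))$. Throughout the argument I use Proposition~\ref{prop:Gaction} to reduce verifications to the canonical form $x = (1,0,c,d)$.

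For the first claim, after reducing to $x = (1,0,c,d)$ and writing $\eta = (u,v)^t$, expanding the rank-one relations $b^\# = ac$, $c^\# = db$, $(b,c) = 3ad$ in $W_{U(h)}$ (using the Tits formulas) forces $c = -uh^\# u^*$, $v = (u^*)^\# h$, and $d = -\tr_{K/F}(n(h)n(u))$. A short computation, using that $*$ and $\#$ commute and $h^* = h$, gives $\langle\eta,\eta\rangle_B = (n(u^*) - n(u))h$, so isotropy forces $n(u)\in F$. Setting $\alpha := n(h)n(u)\in F$, one then has
\[ q(x) = d^2 + 4n(c) = \tr_{K/F}(\alpha)^2 - 4 n_{K/F}(\alpha) = (\alpha - \alpha^*)^2 = 0, \]
so $x$ has rank at most three.

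Now assume $x$ has rank exactly three and reduce again to $x = (1,0,c,d)$. For the forward direction of the characterization, starting from the rank-one data above I compute the matrix $\eta h^\# \eta^*$ entry by entry, using $uh^\# u^* = -c$, $vh^\# v^* = c^\#$, and $uh^\# v^* = vh^\# u^* = n(h)n(u) = -d/2$, to match $S(x)$; and I compute $n(h)\eta^!$ componentwise and match it with $x^\flat/2$, the crucial input being $d^2 = -4n(c)$. For the reverse direction, given $\eta = (u,v)^t \in \mathrm{Lift}(x,h)$, the four entries of $S(x) = \eta h^\# \eta^*$ determine $c = -uh^\# u^*$ and $n(h)n(u) = -d/2 \in F$, and the identity $n(h)\eta^! = x^\flat/2$ forces $v = (u^*)^\# h$; these are precisely the rank-one conditions, so $x+\eta$ is rank one in $W_{U(h)}$.

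The existence of $h$ is the most delicate step. After reducing to $x = (1,0,c,d)$, in the generic subcase $d\ne 0$ and $n(c)\ne 0$ I set $u = 1$ and $h = -2c^\#/d$: the rank-three identity $d^2 = -4n(c)$ yields $h^\# = (4/d^2)(c^\#)^\# = (4/d^2)n(c) c = -c$ and $n(h) = -8 n(c)^2/d^3 = -d/2$, so $\eta := (1,h)^t$ lies in $\mathrm{Lift}(x,h)$ and is automatically isotropic since $h^* = h$. The hard part will be the remaining degenerate subcases, principally $d = 0$ (in which case $n(c) = 0$ and $c$ has rank two in $J$). There the recipe $h = -2c^\#/d$ breaks down, and one must either use further $G$-equivariance to move $x$ within its $G$-orbit to the generic stratum, or construct $u \in B$ of rank two with $n(u) \in F$ together with a rank-three $h$ satisfying $-c = uh^\# u^*$. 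Handling these degenerate strata uniformly across the three families $(J,B,K)$ of Example~\ref{ex:JBK} is the main obstacle.
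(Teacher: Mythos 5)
Your reduction to $x=(1,0,c,d)$, the computation establishing $q(x)=0$, the forward direction of the characterization, and the generic existence construction ($d\neq 0$, $u=1$, $h=-2d^{-1}c^\#$) all match the paper's argument. But two steps you leave unargued are exactly where the content lies, and the second is a genuine gap. First, in the reverse direction you assert that $n(h)\eta^{!}=x^\flat/2$ ``forces'' $v=(u^*)^\#h$. When $d\neq 0$ this is easy because $u$ is then invertible ($n(h)n(u)=-d/2$), but when $d=0$ one has $n(u)=0$ and the map $v\mapsto vu^\#$ is not injective, so the three componentwise identities do not by themselves pin down $v$. The paper supplies the missing argument: using both defining identities of $\mathrm{Lift}(x,h)$ one checks $(v-(u^*)^\#h)h^\#u^*=0$ and $(v-(u^*)^\#h)h^\#v^*=0$, i.e.\ $(v-(u^*)^\#h)h^\#\eta^*=0$; since $x$ has rank exactly $3$, $n(h)\eta^{!}=x^\flat/2\neq 0$, so $\eta$ is primitive, which together with $n(h)\neq 0$ yields $v=(u^*)^\#h$.

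Second, and more seriously, you explicitly leave open the existence of a pair $(h,\eta)$ on the degenerate stratum $d=0$ (so $n(c)=0$ but $c^\#\neq 0$), calling it ``the main obstacle.'' This case cannot be avoided: one cannot in general move such an $x$ to the stratum $d\neq 0$ and stay in the normalized form, and the paper does not attempt to. Instead it first uses $G$-equivariance with the elements $m=\mm{y^{-1}}{\,}{\,}{y}$, $y\in J$, $n(y)\neq 0$, which send $(1,0,c,0)$ to $(n(y)^{-1},0,y^{-1}cy^\#,0)$ and change $\tr(c^\#)$ into $(y^2,c^\#)$; a Zariski-density argument then reduces to the case $\tr(c^\#)\neq 0$. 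In that case one takes $S=c-c^\#$, $u=1-\tr(c^\#)^{-1}c^\#$, $h=\tr(c^\#)^{-1}S^\#$ and $v=(u^*)^\#h$: the rank-one identity $(c^\#)^2=\tr(c^\#)c^\#$ gives $uSu=c$, one computes $n(S)=-\tr(c^\#)^2$ hence $h^\#=-S$ and $n(h)\neq 0$, and $n(u)=0$ makes $\eta=(u,v)$ isotropic, so $x+\eta$ is rank one in $W_{U(h)}$. Without this construction (or a substitute for it), your proposal proves the first claim and the equivalence only partially, and does not prove the final existence assertion of the theorem.
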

\begin{proof} Suppose $x + \eta$ is rank one in $W_{U(h)}$ for some isotropic $\eta$.  Applying Lemma \ref{lem:Requiv}, one sees that the conditions defining $\mathrm{Lift}(x,h)$ are equivariant for the action of $G$.  Thus by this equivariance, we may assume $x = (1,0,c,d)$, so $x + \eta = (1,(0,-u),(c,v),d)$.  Since this is rank one, $(c,v) = (0,-u)^\# = (-uh^\#u^*,(u^*)^\# h)$ and $d = n((0,-u)) = -n(h)\tr_{K/F}(n(u))$.  Since $\eta$ is istropic, $u^* v = n(u)^* h$ is Hermitian, and thus $n(u) = n(u)^*$, so $d = -2n(h)n(u)$.  Hence 
\[q(x) = d^2 + 4n(c) = (-2n(h)n(u))^2 + 4n(-uh^\#u^*) = 0\]
since $n(u) = n(u)^*$.  Thus $x$ has rank at most $3$.

Suppose now that $x$ has rank exactly $3$.  To check that $x + \eta$ is rank one in $W_{U(h)}$ for an isotropic $\eta$ if and only if $\eta \in \mathrm{Lift}(x,h)$, by equivariance for the action of $G$, it suffices to consider the case $x = (1,0,c,d)$.  Then recall that $x^\flat = (-d,2c^\#,dc,d^2+2n(c))$ and $S(x) = \mm{-c}{-d/2}{-d/2}{c^\#}$.  Then as above, if $\eta$ is isotropic, $x + \eta = (1,(0,-u),(c,v),d)$ is rank one if and only if $n(u) \in F$, $(c,v) = (-uh^\#u^*,(u^*)^\# h)$, and $d = - 2n(u)n(h)$.

We check that these conditions are equivalent to $\eta$ being in $\mathrm{Lift}(x,h)$.  First suppose $\eta$ isotropic and $x + \eta$ is rank one.  Then $-d/2 = n(h)n(u)$, 
\[c^\# = (-uh^\#u^*)^\# = n(h)(u^*)^\#h u^\# = n(h)vu^\#,\]
\[dc/2 = -n(h)n(u)(-uh^\#(u^*)^\#) = n(h)u (n(u) h^\# (u^*)^\#) = n(h) u v^\#,\] and
\[d^2/2 + n(c) = 2n(h)^2n(u)^2 - n(u)^2n(h)^2 = n(h)n(u^2)n(h) = n(h) n(v).\]
Thus $x^\flat/2 = n(h) \eta^{!}$.  Furthermore
\begin{align*} \eta h^\# \eta^* &=  \left(\begin{array}{c} u\\v \end{array}\right)h^\# \left(\begin{array}{cc} u^* & v^* \end{array}\right) = \left(\begin{array}{cc} u h^\# u^* & uh^\# v^* \\ v h^\# u^* & vh^\# v^* \end{array}\right) \\ &= \left(\begin{array}{cc} uh^\#u^* & n(h)n(u) \\ n(h)n(u) & n(h)(u^*)^\# h u^\# \end{array}\right) = \left(\begin{array}{cc} -c & -d/2 \\ -d/2 & c^\# \end{array}\right)\\ &= S(x).\end{align*}

Conversely, suppose $x$ is rank $3$, $n(h) \neq 0$, and $\eta \in \mathrm{Lift}(x,h)$.  Then immediately $c = - uh^\# u^*$ and $d = - 2n(h)n(u)$, and hence $n(u) \in F$, and thus $d = n(h)\tr_{K/F}(-n(u))$.  We wish to show $v = (u^*)^\# h$.  If $d \neq 0$, this is easy.  For the general case, however, we argue as follows.  One has 
\[(v - (u^*)^\#h) h^\# u^* = vh^\#u^* - n(h)n(u) = (-d/2) - (-d/2) = 0\]
and
\[(v - (u^*)^\#h) h^\# v^* = vh^\# v^* - n(h)(vu^\#)^* = c^\# - c^\# = 0.\]
Hence $(v - (u^*)^\#h)h^\# \eta^* = 0$.

Now, since $x$ has rank exactly $3$, $n(h) \eta^{!} = x^\flat/2 \neq 0$.  Hence $\eta$ is primitive, and so $(v - (u^*)^\#h)h^\# \eta^* = 0$ implies $(v - (u^*)^\#h)h^\# = 0$, and thus $v = (u^*)^\#h$, since $n(h) \neq 0$. 

Finally, it follows from equivariance and Lemma \ref{lem:liftJexist} below that there exists $h \in J$ of rank three for which $\mathrm{Lift}(x,h)$ is non-empty. This completes the proof.  \end{proof}

\begin{lemma}\label{lem:liftJexist} Given $x = (1,0,c,d)$ in $W_J$ of rank three, there exists $h \in J$ with $n(h) \neq 0$ and $u,v \in B$ so that $(1,(0,-u),(c,v),d) \in W_{U(h)}$ is rank one. Furthermore, the element $\eta = (u,v)$ of $W_2(B)$ can be chosen to be isotropic.\end{lemma}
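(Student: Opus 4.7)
The plan is to split based on whether $d \neq 0$ or $d = 0$; the first case admits an explicit one-step construction, and the second is reduced to it using the $G$-equivariance provided by Proposition \ref{prop:Gaction}.

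When $d \neq 0$, I would set $u := 1 \in B$, $h := -2c^\#/d \in J$, and $v := h$, so that $\eta := \left(\begin{smallmatrix}u\\v\end{smallmatrix}\right) \in W_2(B)$. The rank-three hypothesis $q(x) = d^2 + 4n(c) = 0$ gives $n(c) = -d^2/4$, so $h^\# = 4(c^\#)^\#/d^2 = 4n(c)c/d^2 = -c$ and $n(h) = (-2/d)^3 n(c^\#) = -8n(c)^2/d^3 = -d/2 \neq 0$. Isotropy is immediate since $u^* = 1$ and $v = v^* = h$, so $\langle\eta,\eta\rangle_B = h - h = 0$. Each of the three conditions defining $\mathrm{Lift}(x,h)$ then reduces to a tautology: $-uh^\#u^* = -h^\# = c$, $(u^*)^\# h = h = v$, and $-2n(h)n(u) = d$. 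The preceding theorem then yields that $x + \eta$ is rank one in $W_{U(h)}$.

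When $d = 0$, the hypotheses force $n(c) = 0$ and $c^\# \neq 0$, so $c$ has rank exactly two. Here the plan is to transport $x$ into the form $(1,0,c'',d'')$ with $d'' \neq 0$ by a sequence of $G$-elements, apply the first case, and pull back. Since $\left(\begin{smallmatrix}1 & 0\\X & 1\end{smallmatrix}\right)$ and $\left(\begin{smallmatrix}1 & Y\\0 & 1\end{smallmatrix}\right)$ lie in $G$ for every $X, Y \in J$ (with images $n_J(X)$ and $n_{J^\vee}(Y)$ respectively in $H(W_J)$, as computed in subsection \ref{subsec:GBgrp}), applying $n_J(X)$ for generic $X$ gives $x' = (1, X, c+X^\#, (c,X)+n(X))$ with fourth component nonzero; solving $X + (c+X^\#)\times Y + d'Y^\# = 0$ for some $Y \in J$ and then applying a suitable diagonal scaling in $G$ to renormalize the $a$-component to $1$ produces an element $x''$ in the required form. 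Because Proposition \ref{prop:Gaction} ensures $G$ preserves the symplectic and quartic forms on $W_{U(h)}$ up to similitude, rank and isotropy are preserved, and the rank-one isotropic lift of $x''$ constructed in Case~1 pulls back to a rank-one isotropic lift of $x$.

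The main obstacle is making the reduction step in the $d = 0$ case fully rigorous over an arbitrary base field $F$: one must show that the quadratic equation for $Y$ above has a solution over $F$ and that the final cubic rescaling normalizing $a = 1$ can be performed within $\GL_1(F)$. Both steps are routine for generic $X$ over a sufficiently large $F$, but a uniform treatment is likely to require either a careful dimension-theoretic argument, or a direct construction bypassing the reduction altogether—for instance, producing $(h,u)$ with $n(u) = 0$ and $-uh^\#u^* = c$ by exploiting the surjectivity of the ``twisted norm'' map $(u,h)\mapsto -uh^\#u^*$ from $\{u\in B:n(u)=0\}\times\{h\in J:n(h)\neq 0\}$ onto the rank-at-most-two locus of $J$.
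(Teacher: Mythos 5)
Your first case ($d \neq 0$) is correct and is exactly the paper's construction: $u=1$, $h=-2d^{-1}c^{\#}$, $v=h$, with $h^{\#}=-c$ and $n(h)=-d/2$ following from $d^{2}+4n(c)=0$; the verification goes through (though the identities you list are the rank-one conditions for $(1,(0,-u),(c,v),d)$ rather than literally the three conditions defining $\mathrm{Lift}(x,h)$ -- both routes work here).

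The second case ($d=0$) has a genuine gap, which you yourself flag but do not close. Your reduction requires moving $x=(1,0,c,0)$ inside its $G(F)$-orbit to an element of the form $(1,0,c'',d'')$ with $d''\neq 0$, and the only tools available in $G$ are $n_J(X)$, $\overline{n}_J(Y)$ with $X,Y\in J$, the Weyl element, and Levi elements. After applying $n_J(X)$ to create a nonzero $d$-component you must clear the resulting $b$-component, and the equation $X+(c+X^{\#})\times Y+d'Y^{\#}=0$ is a system of quadratic equations in $Y$ over $F$; nothing guarantees an $F$-rational solution, and a Zariski-density argument only produces solutions over an algebraic closure. So the reduction to Case 1 is not established over an arbitrary field of characteristic $0$, and this is precisely where the difficulty of the lemma lives. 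The paper avoids this entirely: when $d=0$ (so $n(c)=0$ and $c^{\#}\neq 0$) it first reduces, by acting with $\mm{y^{-1}}{}{}{y}\in G$ and Zariski density (which only requires a point of a nonempty open subset of $J$ over the infinite field $F$, namely $y$ with $n(y)\neq 0$ and $(y^{2},c^{\#})\neq 0$), to the case $\tr(c^{\#})\neq 0$, and then writes down the lift explicitly: $S=c-c^{\#}$ with $n(S)=-\tr(c^{\#})^{2}\neq 0$, $u=1-c^{\#}/\tr(c^{\#})$ (so $n(u)=0$ and $uSu=c$), $h=\tr(c^{\#})^{-1}S^{\#}$, $v=(u^{*})^{\#}h$. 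Note that this explicit construction is exactly the ``twisted norm surjectivity'' statement you propose as a possible bypass -- producing $(h,u)$ with $n(u)=0$, $n(h)\neq 0$ and $-uh^{\#}u^{*}=c$ -- so you identified the right missing ingredient, but supplying it is the actual content of the $d=0$ case, and your proposal leaves it unproved.
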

\begin{proof} Since $x$ is rank three, $q(x) = d^2 + 4n(c) =0$.  We break the proof up into two cases, according as to whether $d=0$ or not.  

Suppose first that $d \neq 0$.  It follows that $n(c) \neq 0$.  In this case, set $h = -2 d^{-1} c^\#$, $u = 1$ and $v = h$.  Then $(0,-1)^\# = (-h^\#,h) = (-(2/d)^2n(c) c, v) = (c,v)$ and $n((0,-1)) = -2n(h) = 2(2/d)^3 n(c)^2 = d$.  Thus, $x +\eta = (1,(0,-1),(c,v),d)$ is rank one in $W_{U(h)}$.  It is clear that $\eta = (1,h)$ is isotropic, and thus this completes the proof of the lemma when $d \neq 0$.

Now suppose that $d=0$.  Then $n(c) = 0$, but $c^\# \neq 0$ since $x^\flat = (-d,2c^\#,dc,d^2+2n(c)) \neq 0$  by assumption.  Hence $c$ is rank two, and $c^\#$ is rank one.  We claim that we may reduce to the case that $\tr(c^\#) \neq 0$.  Suppose for now that $c$ is such that $\tr(c^\#) \neq 0$.  Set $S = c - c^\#$.  Note that $n(S) = -\tr((c^\#)^2) = -\tr(c^\#)^2$, since $c^\#$ is rank one.  Furthermore, if $u = 1 - \frac{c^\#}{\tr(c^\#)}$, then
\[uSu = \left(1 - \frac{c^\#}{\tr(c^\#)}\right)(c-c^\#)\left(1 - \frac{c^\#}{\tr(c^\#)}\right) = c \left(1 - \frac{c^\#}{\tr(c^\#)}\right) = c,\]
where we have used the identity $(c^\#)^2 = \tr(c^\#)c^\#$, since $c^\#$ is rank one.  Now, set $h = (\tr(c^\#))^{-1} S^\#$ and $v = (u^*)^\# h$.  Since $n(u) \in F$ (in fact, $n(u) = 0$), $u^*v = n(u)h$ is in $J$, and thus $\eta = (u,v)$ is isotropic.  Furthermore, $-h^\# = S$ and thus $-uh^\# u^* = -uh^\# u = c$.  Since $n(u) = 0$, it now follows that $x + \eta = (1,(0,-u),(c,v),0)$ is rank one in $W_{U(h)}$.  This completes the proof of the lemma in case $n(c) = 0$ but $\tr(c^\#) \neq 0$.

Finally, we explain the reduction to the case $\tr(c^\#) \neq 0$.  This follows by equivariance for the action of the group $G$.  Indeed, if $y \in J$ with $n(y) \neq 0$, then the element $m =\mm{y^{-1}}{}{}{y}$ of $\GL_2(B)$ is in $G$, and $m x = (n(y)^{-1},0,y^{-1}cy^\#,0)$.   Furthermore, $\tr((y^{-1}cy^\#)^\#) = \tr(yc^\# y) = (y^2,c^\#)$.  Since $c^\# \neq 0$, it follows by a Zariski density argument that there exists $y \in J$ with $n(y) \neq 0$ and $(y^2,c^\#) \neq 0$.  Scaling $mx$ by $n(y)$ gives an element $(1,0,c',0)$ in the $G$ orbit of $x$ with $\tr((c')^\#) \neq 0$.  This completes the proof of the lemma.\end{proof}

\bibliography{liftingLaws_Bib} 
\bibliographystyle{amsalpha}
\end{document}